\title{Characterizing slopes for $5_2$}
\author[John A. Baldwin]{John A. Baldwin}
\address{Department of Mathematics \\ Boston College}
\email{john.baldwin@bc.edu}
\author[Steven Sivek]{Steven Sivek}
\address{Department of Mathematics \\ Imperial College London}
\email{s.sivek@imperial.ac.uk}
\thanks{JAB was supported by NSF FRG Grant DMS-1952707.}
\newtheorem*{rep@theorem}{\rep@title}
\newcommand{\newreptheorem}[2]{%
\newenvironment{rep#1}[1]{%
 \def\rep@title{#2 \ref{##1}}%
 \begin{rep@theorem}}%
 {\end{rep@theorem}}}
\newtheorem {theorem}{Theorem}
\newtheorem {lemma}[theorem]{Lemma}
\newtheorem {proposition}[theorem]{Proposition}
\newtheorem {corollary}[theorem]{Corollary}
\numberwithin{equation}{section}
\numberwithin{theorem}{section}
\theoremstyle{definition}
\newtheorem{definition}[theorem]{Definition}
\newtheorem{remark}[theorem]{Remark}
\newtheorem*{remark*}{Remark}
\newtheorem{example}[theorem]{Example}
\newlist{pcases}{enumerate}{1}
\setlist[pcases]{
  label=\bf{Case~\arabic*:}\protect\thiscase.~,
  ref=\arabic*,
  align=left,
  labelsep=0pt,
  leftmargin=0pt,
  labelwidth=0pt,
  parsep=0pt
}
\newcommand{\case}[1][]{%
  \if\relax\detokenize{#1}\relax
    \def\thiscase{}%
  \else
    \def\thiscase{~#1}%
  \fi
  \item
}
\newcommand{\Z}{\mathbb{Z}}
\newcommand{\Q}{\mathbb{Q}}
\newcommand{\spc}{\operatorname{Spin}^c}
\newcommand{\spinc}{\mathfrak{s}}
\newcommand\bA{\mathbb{A}}
\newcommand\bB{\mathbb{B}}
\newcommand{\cF}{\mathcal{F}}
\newcommand{\cT}{\mathcal{T}}
\newcommand{\bX}{\mathbb{X}}
\newcommand\cf{\mathit{CF}}
\newcommand\cfhat{\widehat{\cf}}
\newcommand\hfk{\mathit{HFK}}
\newcommand\hfkhat{\widehat{\hfk}}
\newcommand\cfkinfty{\mathit{CFK}^\infty}
\newcommand\Hred{H_{\text{red}}}
\DeclareFontFamily{U}{mathx}{\hyphenchar\font45}
\DeclareFontShape{U}{mathx}{m}{n}{
      <5> <6> <7> <8> <9> <10>
      <10.95> <12> <14.4> <17.28> <20.74> <24.88>
      mathx10
      }{}
\DeclareSymbolFont{mathx}{U}{mathx}{m}{n}
\DeclareMathAccent{\widecheck}{0}{mathx}{"71}
\newcommand{\hfhat}{\widehat{\mathit{HF}}}
\newcommand{\cfp}{\mathit{CF}^+}
\newcommand{\hfp}{\mathit{HF}^+}
\newcommand{\hfred}{\mathit{HF}^+_{\mathrm{red}}}
\newcommand{\coker}{\operatorname{coker}}
\newcommand{\mirror}[1]{\overline{#1}}
\newcommand{\nuhat}{\hat{\nu}}
\newcommand{\rhat}{\hat{r}_0}
\newcommand{\dcover}{\Sigma_2}
\newcommand{\Wh}{\operatorname{Wh}}
\DeclareFontFamily{OMX}{MnSymbolE}{}
\DeclareSymbolFont{MnLargeSymbols}{OMX}{MnSymbolE}{m}{n}
\DeclareFontShape{OMX}{MnSymbolE}{m}{n}{
    <-6>  MnSymbolE5
   <6-7>  MnSymbolE6
   <7-8>  MnSymbolE7
   <8-9>  MnSymbolE8
   <9-10> MnSymbolE9
  <10-12> MnSymbolE10
  <12->   MnSymbolE12
}{}
\DeclareFontShape{OMX}{MnSymbolE}{b}{n}{
    <-6>  MnSymbolE-Bold5
   <6-7>  MnSymbolE-Bold6
   <7-8>  MnSymbolE-Bold7
   <8-9>  MnSymbolE-Bold8
   <9-10> MnSymbolE-Bold9
  <10-12> MnSymbolE-Bold10
  <12->   MnSymbolE-Bold12
}{}
\let\llangle\@undefined
\let\rrangle\@undefined
\DeclareMathDelimiter{\llangle}{\mathopen}%
                     {MnLargeSymbols}{'164}{MnLargeSymbols}{'164}
\DeclareMathDelimiter{\rrangle}{\mathclose}%
                     {MnLargeSymbols}{'171}{MnLargeSymbols}{'171}
\tikzset{every picture/.style=thick}
\tikzset{link/.style = { white, double = black, line width = 1.75pt, double distance = 1.25pt, looseness=1.75 }}
\tikzset{linkred/.style = { white, double = red, line width = 1.75pt, double distance = 1.25pt, looseness=1.75 }}
\tikzset{thinlink/.style = { white, double = black, line width = 1.25pt, double distance = 0.75pt, looseness=1.75 }}
\tikzset{link2/.style = { white, double = blue, line width = 1.75pt, double distance = 1.25pt, looseness=1.75 }}
\tikzset{crossing/.style = {draw, circle, dotted, minimum size=0.5cm, inner sep=0, outer sep=0}}
\pgfplotsset{compat=1.12}
\begin{document}

\begin{abstract}
We prove that all rational slopes are characterizing for the knot $5_2$, except possibly for positive integers.  Along the way, we classify the Dehn surgeries on knots in $S^3$ that produce the Brieskorn sphere $\Sigma(2,3,11)$, and we study knots on which large integral surgeries are almost L-spaces.
\end{abstract}

\maketitle
\section{Introduction} \label{sec:intro}

Let $K \subset S^3$ be a knot.  Then a rational number $r$ is said to be a \emph{characterizing slope} for $K$ if the result $S^3_r(K)$ of Dehn surgery on $K$ with slope $r$ does not arise as $r$-surgery on any other knot: in other words, if whenever there is an orientation-preserving homeomorphism
\[ S^3_r(K) \cong S^3_r(K'), \]
the knot $K'$ must be isotopic to $K$.

All rational numbers are characterizing slopes for the unknot, as well as for the trefoils and the figure eight knot.  These are theorems of Kronheimer--Mrowka--Ozsv\'ath--Szab\'o \cite{kmos} and of Ozsv\'ath--Szab\'o \cite{osz-characterizing}, respectively, each relying on a theorem (due to Ghiggini \cite{ghiggini} in the latter case) asserting that some form of Floer homology detects the knot in question.  Ni--Zhang and McCoy \cite{ni-zhang-torus-1,mccoy-torus,mccoy-sharp} have proved that many slopes are characterizing for torus knots, especially $T_{2,5}$ \cite{ni-zhang-torus}.  More generally, Lackenby \cite{lackenby-characterizing} has shown that every knot has infinitely many characterizing slopes, and McCoy \cite{mccoy-hyperbolic} has strengthened this in the hyperbolic case.

Our main result, Theorem \ref{thm:main-characterizing}, says that almost all slopes are characterizing for the knot $5_2$, shown in Figure \ref{fig:5_2}. This is strongest result to date for any non-fibered knot and for any hyperbolic knot other than the figure eight:

\begin{theorem} \label{thm:main-characterizing}
Let $r$ be any rational number other than a positive integer.  If for some knot $K \subset S^3$ there is an orientation-preserving homeomorphism
\[ S^3_r(K) \cong S^3_r(5_2), \]
then $K$ is isotopic to $5_2$.  In other words, every $r \in \Q \setminus \Z_{>0}$ is characterizing for $5_2$.
\end{theorem}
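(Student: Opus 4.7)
The plan is to run the standard Heegaard Floer characterizing-slope machinery for as many slopes as possible, closing the remaining gaps using the two auxiliary results advertised in the abstract: the classification of Dehn surgeries producing $\Sigma(2,3,11)$, and the structural theorem for knots whose large integer surgeries are almost L-spaces. At the highest level, the given homeomorphism $S^3_r(K) \cong S^3_r(5_2)$ transports the graded $\hfp$-module data on the right to $K$ on the left, and Ozsv\'ath--Szab\'o's mapping cone formula then translates this equality into strong constraints on the knot Floer complex $\cfkinfty(K)$ and its classical invariants ($\Delta_K$, $g(K)$, $\tau(K)$, $\nu(K)$, $\epsilon(K)$, the torsion coefficients $t_i$, and so on).

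For non-integer slopes $r = p/q$ with $q \geq 2$, I would first argue that the graded $\hfp$ data pin down $\Delta_K(t) = 2t - 3 + 2t^{-1}$ (up to units) and $g(K) = 1$, so that $K$ is a genus one, non-fibered knot. Since the $\cfkinfty$ of a genus one knot is essentially determined by $\Delta_K$ together with $\tau$ and $\epsilon$, this step should yield a filtered chain homotopy equivalence $\cfkinfty(K) \simeq \cfkinfty(5_2)$, and the remaining task is to upgrade ``same $\cfkinfty$'' to ``same knot.'' For non-positive integer slopes the mapping cone argument is more delicate because $q = 1$, but the $d$-invariants of $S^3_r(K)$ across all spin$^c$ structures still compute all torsion coefficients of $K$ via Ni--Wu style formulas, and should again pin down $\cfkinfty(K)$.

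The genuinely new input, which I expect to be the main obstacle, is to rule out the exotic possibility that $K$ could be some genus one knot sharing the knot Floer package of $5_2$ but not isotopic to it. Knot Floer homology is not known to detect $5_2$ (in contrast to Ghiggini's theorem for $4_1$ underlying Ozsv\'ath--Szab\'o's treatment of the figure eight knot), so this last step cannot be resolved by Floer homology alone. My strategy is to exploit the fact that a small integer surgery on $5_2$ produces $\Sigma(2,3,11)$, and to use the classification of knots with such a surgery together with the almost L-space structure of large surgeries on $5_2$ to force $K$ to be isotopic to $5_2$. The exclusion of positive integer slopes in the theorem is presumably tied to the small Seifert fibered nature of those fillings, which currently allow for competing surgery descriptions that neither the $\Sigma(2,3,11)$ classification nor the almost L-space results can dispel.
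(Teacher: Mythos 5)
Your high-level outline (transport the graded $\hfp$ data, constrain $\cfkinfty(K)$, then deal separately with the genus-one competitors) points in the right general direction, but there are three concrete gaps. First, your plan for the final step is circular: in the paper the classification of surgeries yielding $\Sigma(2,3,11)$ is a \emph{corollary} of the characterizing-slope theorem (it is proved by applying Theorem~\ref{thm:m5_2-nonnegative} at slope $1$ on $\mirror{5_2}$, together with the known result for the trefoil), so it cannot be used as an input; and in any case it only sees the single slope $r=-1$. The input that actually substitutes for the missing detection theorem is the classification of genus-one knots with $\dim\hfkhat(K,1)=2$ (Theorem~\ref{thm:main-hfk}, from the authors' earlier work on nearly fibered knots). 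Without that list one cannot enumerate the ``exotic genus-one competitor,'' and without Ito's LMO/finite-type obstruction (comparing $a_4$ and $v_3$) one cannot rule out the knots $15n_{43522}$ and $\Wh^-(T_{2,3},2)$ that survive it — Floer homology genuinely fails to distinguish $S^3_r$ of those knots from $S^3_r(5_2)$ in some cases, so no amount of $\hfp$ bookkeeping closes that gap. Neither ingredient appears in your proposal.

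Second, your claim that for non-integer slopes the graded $\hfp$ data ``pin down $g(K)=1$'' is unsupported and is where most of the paper's labor actually sits: for $g(K)\geq 2$ one needs the full mapping-cone analysis of \S\ref{sec:5_2}--\S\ref{sec:5_2-mapping-cone} (computing $H_*(A^+_s)$, the gradings $d_t$ of the towers, and the location of $\ker(U)$) to derive a contradiction, and even then positive integer slopes dividing $2g-2$ are \emph{not} excluded — which is precisely why the theorem omits positive integers. Relatedly, the assertion that $\cfkinfty$ of a genus-one knot is determined by $\Delta_K$, $\tau$, and $\epsilon$ is false as stated: one must separately establish $\dim\hfkhat(K,1)=2$ (the paper does this via the invariants $\rhat$, $\nuhat$ and the large-surgery formula), since $\Delta_K$ only fixes the Euler characteristic of $\hfkhat(K,1)$, not its dimension. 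Finally, for $r\leq 0$ the paper does not fight the $q=1$ mapping cone at all; it mirrors to positive surgery on $\mirror{5_2}$ and exploits the fact that $\mirror{5_2}$ is an almost L-space knot, for which the classification (left-handed trefoil, figure eight, $\mirror{5_2}$ in genus one) makes the argument nearly formal. That asymmetry between $5_2$ and $\mirror{5_2}$ — only the mirror is an almost L-space knot — is the structural reason the two halves of the proof look so different, and it is absent from your sketch.
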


It is possible that no positive integer is characterizing for $5_2$ (and hence that Theorem \ref{thm:main-characterizing} is  optimal).  Indeed, Baker--Motegi \cite{baker-motegi} have exhibited hyperbolic knots such as $8_6$ with no integral characterizing slopes, and Abe--Tagami \cite{abe-tagami} proved similar results for many other low-crossing knots.  At the very least, Proposition~\ref{prop:slope-1} says that the positive integer $1$ is not characterizing for $5_2$:

\begin{proposition}
There is an orientation-preserving homeomorphism
\[ S^3_1(5_2) \cong S^3_1(P(-3,3,8)), \]
so $1$ is not a characterizing slope for $5_2$.
\end{proposition}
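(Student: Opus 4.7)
The strategy is to exhibit the desired homeomorphism via an annulus twist in the spirit of Osoinach and Baker--Motegi, which is the standard source of non-characterizing integral slopes for hyperbolic knots. Concretely, I would look for an unknotted circle $C \subset S^3 \setminus 5_2$ with $\lk(C, 5_2) = 0$ such that $5_2$ and $C$ cobound an embedded annulus $A$ in $S^3$, and such that a single $\pm 1$ Rolfsen twist on $C$ carries $5_2$ to $P(-3,3,8)$. The crucial point is to arrange the geometry so that, under this twist (which is a self-homeomorphism of $S^3$ that acts on $5_2$ by an annular Dehn twist along $A$), the $+1$ framing on $5_2$ is preserved; then the induced map descends to an orientation-preserving homeomorphism $S^3_1(5_2) \cong S^3_1(P(-3,3,8))$.

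The plan breaks into three steps. First, I would display an explicit planar diagram of $5_2$ together with the unknotted curve $C$ satisfying $\lk(C, 5_2) = 0$, and verify by an isotopy that $C \cup 5_2$ bounds an annulus $A \subset S^3$ whose surface framing agrees with the integer slope $+1$ on $5_2$. Second, I would perform the Rolfsen twist on $C$: this is a blow-up of $C$ with framing $\pm 1$ followed by a blow-down that twists the crossings of $5_2$ with any disk bounded by $C$ in $S^4$; the resulting knot $K'$ can be redrawn in standard form. Third, I would identify $K'$ with the pretzel knot $P(-3,3,8)$ by Reidemeister moves, using Alexander/Jones polynomials, signatures, or hyperbolic volume as sanity checks to rule out confusion with nearby pretzel knots.

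The main obstacle is step two together with the identification in step three: one must be careful to verify that the annulus $A$ actually exists with the correct surface framing, and that after the Rolfsen twist the resulting knot really is $P(-3,3,8)$ rather than a visually similar pretzel or two-bridge knot. Bookkeeping under the annular twist is mechanical but error-prone, since both the strand count through any bounding disk of $C$ and its sign determine the number and sense of added twists. A useful consistency check is to compute invariants of both $S^3_1(5_2)$ and $S^3_1(P(-3,3,8))$ independently (e.g.\ $H_1$, the Casson invariant, and $\widehat{\hf}$ with its absolute $\Q$-grading), and confirm that they agree; if these all match, together with an explicit Kirby diagram transformation, we conclude the required homeomorphism and hence that $1$ is not a characterizing slope for $5_2$.
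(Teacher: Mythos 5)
Your proposal is a strategy rather than a proof: every substantive step is phrased as ``I would look for / I would display / I would identify,'' and none is carried out. The entire content of the proposition lives in those unperformed steps --- exhibiting an explicit curve $C$, performing the twist, and verifying that the result is $P(-3,3,8)$ --- so as written there is nothing to check. Worse, the setup is internally inconsistent: if $5_2$ and $C$ cobound an embedded annulus $A$ in $S^3$, then $C$ is the pushoff of $5_2$ determined by $A$, so $\lk(C,5_2)$ \emph{equals} the surface framing that $A$ induces on $5_2$. You cannot simultaneously demand $\lk(C,5_2)=0$ and that the annulus framing be $+1$. And even with a correctly framed annulus or a correctly chosen twisting circle, the twist is a priori only a homeomorphism of the link exterior; to conclude $S^3_1(5_2)\cong S^3_1(K')$ you must show it extends over the filled solid torus for the slope $1$, which is exactly the nontrivial dual-pattern/seiferter condition in Osoinach and Baker--Motegi, not an automatic consequence of the annulus existing. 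Finally, there is no evidence offered that $5_2$ and $P(-3,3,8)$ are related by a single such twist at all; the invariant comparisons you list at the end (Casson invariant, $\hfhat$) are consistency checks and cannot establish the homeomorphism.

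For comparison, the paper proves this by the Montesinos trick: both $5_2$ and $P(-3,3,8)$ are strongly invertible, so each $1$-surgery is the double branched cover of an explicit knot obtained by quotienting the exterior and filling in a rational tangle (Figures~\ref{fig:bdc-5_2} and \ref{fig:P-8-dcover}). The two quotient knots are both identified (via SnapPy) with $14n_{14254}$ up to mirroring, giving a homeomorphism of the branched covers up to orientation, and the orientation is then pinned down by the Casson invariant surgery formula, since $\lambda(S^3_1(5_2))=2$ while $\lambda(-S^3_1(P(-3,3,8)))=-2$. Note that the Casson invariant enters there only to exclude the orientation-reversing possibility after the unoriented homeomorphism is already in hand --- which is the one role such invariants can legitimately play in an argument of this kind.
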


This fact was originally discovered by Akbulut \cite{akbulut-exotic}, who also showed that the traces of the corresponding surgeries are homeomorphic but not diffeomorphic.

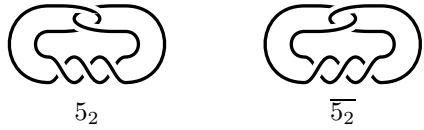
\begin{figure}
\begin{tikzpicture}[scale=0.85,font=\small]
\begin{scope}
\draw[link] (0.2,-0.4) to[out=0,in=180,looseness=0.75] ++(0.4,-0.4) to[out=0,in=0] ++(0,1.2) -- ++(-0.2,0) arc (90:180:0.6 and 0.2) ++(0.4,0) arc (360:270:0.6 and 0.2) -- ++(-0.2,0) to[out=180,in=180] ++(0,-0.4) to[out=0,in=180,looseness=0.75] ++(0.4,-0.4);
\draw[link] (-0.2,-0.4) to[out=180,in=0,looseness=0.75] ++(-0.4,-0.4) to[out=180,in=180] ++(0,1.2) -- ++(0.2,0) arc (90:0:0.6 and 0.2) ++(-0.4,0) arc (180:270:0.6 and 0.2) -- ++(0.2,0) to[out=0,in=0] ++(0,-0.4) to[out=180,in=0,looseness=0.75] ++(-0.4,-0.4);
\begin{scope}
\clip ([shift={(-\pgflinewidth,-0.5*\pgflinewidth)}]-0.5,-0.9) rectangle ([shift={(\pgflinewidth,0.5*\pgflinewidth)}]0.5,-0.3);
\draw[link,looseness=0.75] (-0.6,-0.4) to[out=0,in=180] ++(0.4,-0.4) to[out=0,in=180] ++(0.4,0.4) to[out=0,in=180] ++(0.4,-0.4);
\draw[link,looseness=0.75] (-0.6,-0.8) to[out=0,in=180] ++(0.4,0.4) to[out=0,in=180] ++(0.4,-0.4) to[out=0,in=180] ++(0.4,0.4);
\foreach \i in {-1,1} {
  \begin{scope}
  \clip ([shift={(-\pgflinewidth,-\pgflinewidth)}]0.4*\i,-0.4) ++(-0.1,0) rectangle ++([shift={(\pgflinewidth,1.5*\pgflinewidth)}]0.2,-0.4);
  \draw[link,looseness=0.75] (0.4*\i,-0.4) ++ (-0.2,0) to[out=0,in=180] ++(0.4,-0.4);
  \end{scope}
}
\end{scope}
\begin{scope}
\clip ([shift={(-0.5*\pgflinewidth,-0.5*\pgflinewidth)}]-0.5,0.15) rectangle ([shift={(0.5*\pgflinewidth,1*\pgflinewidth)}]0.5,0.25);
\draw[link] (-0.4,0.4) arc (90:-90:0.6 and 0.2);
\draw[link] (0.4,0.4) arc (90:270:0.6 and 0.2);
\end{scope}
\node at (0,-1.25) {$5_2\vphantom{\mirror{5_2}}$};
\end{scope}
\begin{scope}[xshift=4cm]
\draw[link] (-0.2,-0.4) to[out=180,in=0,looseness=0.75] ++(-0.4,-0.4) to[out=180,in=180] ++(0,1.2) -- ++(0.2,0) arc (90:0:0.6 and 0.2) ++(-0.4,0) arc (180:270:0.6 and 0.2) -- ++(0.2,0) to[out=0,in=0] ++(0,-0.4) to[out=180,in=0,looseness=0.75] ++(-0.4,-0.4);
\draw[link] (0.2,-0.4) to[out=0,in=180,looseness=0.75] ++(0.4,-0.4) to[out=0,in=0] ++(0,1.2) -- ++(-0.2,0) arc (90:180:0.6 and 0.2) ++(0.4,0) arc (360:270:0.6 and 0.2) -- ++(-0.2,0) to[out=180,in=180] ++(0,-0.4) to[out=0,in=180,looseness=0.75] ++(0.4,-0.4);
\begin{scope}
\clip ([shift={(-\pgflinewidth,-0.5*\pgflinewidth)}]-0.5,-0.9) rectangle ([shift={(\pgflinewidth,0.5*\pgflinewidth)}]0.5,-0.3);
\draw[link,looseness=0.75] (-0.6,-0.8) to[out=0,in=180] ++(0.4,0.4) to[out=0,in=180] ++(0.4,-0.4) to[out=0,in=180] ++(0.4,0.4);
\draw[link,looseness=0.75] (-0.6,-0.4) to[out=0,in=180] ++(0.4,-0.4) to[out=0,in=180] ++(0.4,0.4) to[out=0,in=180] ++(0.4,-0.4);
\foreach \i in {-1,1} {
  \begin{scope}
  \clip ([shift={(-\pgflinewidth,-\pgflinewidth)}]0.4*\i,-0.4) ++(-0.1,0) rectangle ++([shift={(\pgflinewidth,1.5*\pgflinewidth)}]0.2,-0.4);
  \draw[link,looseness=0.75] (0.4*\i,-0.8) ++ (-0.2,0) to[out=0,in=180] ++(0.4,0.4);
  \end{scope}
}
\end{scope}
\begin{scope}
\clip ([shift={(-0.5*\pgflinewidth,-0.5*\pgflinewidth)}]-0.5,0.15) rectangle ([shift={(0.5*\pgflinewidth,1*\pgflinewidth)}]0.5,0.25);
\draw[link] (-0.4,0.4) arc (90:-90:0.6 and 0.2);
\draw[link] (0.4,0.4) arc (90:270:0.6 and 0.2);
\end{scope}
\node at (0,-1.25) {$\mirror{5_2}$};
\end{scope}
\end{tikzpicture}
\caption{The knot $5_2$ (left), and its mirror $\mirror{5_2}$ (right).}
\label{fig:5_2}
\end{figure}

\begin{remark}
The orientation-preserving condition is a necessary part of Theorem~\ref{thm:main-characterizing}.  For example, there are homeomorphisms
\begin{align*}
S^3_{1/2}(5_2) &\cong -S^3_{1/2}(6_1), &
S^3_1(5_2) &\cong -S^3_1(\mirror{6_1}).
\end{align*}
This can be deduced from \cite[Proposition~7.2]{bs-concordance}, in which $5_2 = K(2,4)$ and $6_1 = K(-2,4)$.
\end{remark}

As an application, we determine all of the ways in which the Brieskorn sphere $\Sigma(2,3,11)$ can arise from Dehn surgery on a knot in $S^3$:

\begin{theorem} \label{thm:main-2-3-11}
Given a knot $K \subset S^3$ and a rational number  $r$, there exists an orientation-preserving homeomorphism \[S^3_r(K) \cong \Sigma(2,3,11)\] if and only if $(K,r)$ is either $(T_{-2,3},-\frac{1}{2})$ or $(5_2,-1)$.
\end{theorem}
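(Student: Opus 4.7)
The plan is to first reduce the set of possible slopes, then handle each remaining case. Since $\Sigma(2,3,11)$ is an integer homology sphere, any surgery $S^3_r(K)$ that realizes it must have slope $r = \pm 1/n$ for some positive integer $n$. The Casson invariant provides the first strong reduction: using the surgery formula $\lambda(S^3_{\pm 1/n}(K)) = \pm n \cdot a_2(K)$, where $a_2(K) = \tfrac{1}{2}\Delta_K''(1) \in \Z$, together with the standard computation $\lambda(\Sigma(2,3,11)) = -2$, we obtain $n \cdot a_2(K) = \mp 2$. This limits us to the four candidate pairs $(r, a_2(K)) \in \{(-1, 2),\ (+1, -2),\ (-\tfrac{1}{2}, 1),\ (+\tfrac{1}{2}, -1)\}$.

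The two negative-slope cases are disposed of by invoking characterizing-slope theorems for $5_2$ and for the left-handed trefoil. For $r = -1$, one checks directly that $S^3_{-1}(5_2) \cong \Sigma(2,3,11)$, and since $-1 \notin \Z_{>0}$, Theorem \ref{thm:main-characterizing} immediately forces $K = 5_2$. For $r = -\tfrac{1}{2}$, one checks that $S^3_{-1/2}(T_{-2,3}) \cong \Sigma(2,3,6\cdot 2 - 1) = \Sigma(2,3,11)$, and the Ozsv\'ath--Szab\'o theorem asserting that every rational number is a characterizing slope for the trefoil yields $K = T_{-2,3}$.

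The remaining task, which is the main obstacle, is to rule out the two positive-slope candidates $r \in \{+1, +\tfrac{1}{2}\}$. For both I would use the $d$-invariant and the Ozsv\'ath--Szab\'o mapping-cone formula, as refined by Ni--Wu for $1/n$-surgery: $d(S^3_{+1/n}(K))$ is determined by $V_0(K)$, and comparing with the known value of $d(\Sigma(2,3,11))$ pins down $V_0(K)$. One then compares the full $\mathit{HF}^+(\Sigma(2,3,11))$ with the mapping-cone output; since $\Sigma(2,3,11)$ is an almost L-space (its reduced Heegaard Floer homology has small rank), this forces stringent structural constraints on $\mathit{CFK}^\infty(K)$, on the $V_k$ and $H_k$ sequences, and on the Alexander polynomial. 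Combined with the Casson-based sign constraints $a_2(K) < 0$, these restrictions are incompatible with any genuine knot, ruling out both positive slopes.

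The delicate part is this last step, which requires the detailed knot-Floer analysis alluded to in the abstract (the authors' study of knots with almost-L-space large integral surgeries). Once positive slopes are excluded, the four-way reduction from Casson collapses to exactly the two asserted pairs, completing the classification.
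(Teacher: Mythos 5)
Your proposal is correct in outline but differs from the paper's argument in how the slopes are constrained, and it leaves the decisive step for the positive slopes unexecuted. The paper works with $Y=-\Sigma(2,3,11)$, writes $r=1/n$, rules out $n<0$ by a one-line $d$-invariant comparison, and rules out $n\geq 3$ by the dimension count $3=\dim\hfhat(Y)=n\,\rhat(K)+|1-n\nuhat(K)|\geq 3\rhat(K)+1$, which forces $\rhat(K)=\nuhat(K)=0$ and hence a contradiction; the cases $n=1,2$ are then handled exactly as you do, via Theorem~\ref{thm:main-m5_2-nonnegative} (equivalently Theorem~\ref{thm:main-characterizing} for $r=-1$) and the Ozsv\'ath--Szab\'o characterizing-slope theorem for the trefoil. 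Your Casson-invariant reduction $n\cdot a_2(K)=\mp 2$, hence $n\mid 2$, is a genuinely different and more elementary way to bound the denominator, and it buys you the extra information $a_2(K)=\mp 2/n$ for free; its cost is that it cannot distinguish orientations beyond the sign of $\lambda$, so unlike the paper's $d$-invariant step it leaves the two positive-slope candidates alive, and it relies on the specific fact that $\lambda(\Sigma(2,3,11))=-2$ has few divisors (just as the paper's argument relies on $\dim\hfhat=3$ being small).

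The one real issue is your treatment of $r=+1$ and $r=+\tfrac12$. You correctly name the right tool --- the Ni--Wu formula $d(S^3_{+1/n}(K))=-2V_0(K)$ --- but then claim the comparison ``pins down $V_0(K)$'' and that one must afterwards run a delicate mapping-cone and $\cfkinfty$ analysis to reach a contradiction. In fact the comparison already finishes the job: from $\hfp(S^3_1(\mirror{5_2}))\cong\cT^+_{(-2)}\oplus\Q_{(-2)}$ one reads off $d(-\Sigma(2,3,11))=-2$, so $d(\Sigma(2,3,11))=+2$, whereas $d(S^3_{+1/n}(K))=-2V_0(K)\leq 0$ for every knot $K$ and every $n>0$ since $V_0(K)\geq 0$. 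There is no value of $V_0$ to pin down; the two positive slopes are excluded immediately, with no almost-L-space or mapping-cone input needed. As written, your final step is a sketch that substantially overestimates the difficulty, but it closes in one line once the $d$-invariant of $\Sigma(2,3,11)$ is actually computed, so the gap is easily repaired.
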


Similar results have been achieved for $\Sigma(2,3,5)$ by Ghiggini \cite[Corollary~1.7]{ghiggini}, and for $\Sigma(2,3,7)$ by Ozsv\'ath--Szab\'o \cite[Corollary~1.3]{osz-characterizing}.

The proof of Theorem~\ref{thm:main-characterizing} relies on our recent classification \cite{bs-nonfibered} of genus-1 knots which are \emph{nearly fibered} from the point of view of knot Floer homology:

\begin{theorem}[{\cite[Theorem 1.2]{bs-nonfibered}}] \label{thm:main-hfk}
Let $K \subset S^3$ be a knot of Seifert genus 1 such that \[\dim_\Q \hfkhat(K,1;\Q) = 2.\]  Then $K$ is one of the knots
\[5_2,  \,\, 15n_{43522},\,\, \Wh^-(T_{2,3},2),\,\,
\Wh^+(T_{2,3},2), \,\,P(-3,3,2n+1) \ (n\in\Z)\]
or their mirrors; the   knot Floer homologies of these knots are given in Table~\ref{fig:hfk-table}.
\end{theorem}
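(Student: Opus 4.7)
The plan is to reformulate the problem as a classification of taut sutured manifolds. Let $F$ be a minimal genus Seifert surface for $K$, so $F$ is a once-punctured torus. Form the sutured manifold $(M,\gamma) = S^3(F)$ with $M = S^3 \setminus N(F)$ and annular suture $\gamma$ on $\partial N(K)$. Then $M$ is taut because $F$ is minimal genus, and by Juh\'asz's surface decomposition formula
\[
\dim_\Q \SFH(M;\Q) \;=\; \dim_\Q \hfkhat(K,1;\Q) \;=\; 2.
\]
So it suffices to classify taut sutured manifolds of this type (genus-$2$ boundary, a single annular suture, two once-punctured torus complementary regions) with $\dim \SFH = 2$, and then determine which knots in $S^3$ produce each one.

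The first main step is a structural theorem: any taut sutured manifold $(M,\gamma)$ with $\dim_\Q \SFH(M;\Q) = 2$ is obtained from a product sutured manifold by a single controlled modification. This extends Juh\'asz's result that $\dim \SFH = 1$ characterizes product sutured manifolds. To prove it I would build a sutured hierarchy, performing product disk and product annulus decompositions (which preserve $\SFH$) as long as possible, and then study the first non-product decomposing surface $S$ in detail. After decomposing along $S$, the rank of $\SFH$ can drop by at most the number of compatible $\spc$ structures on $S$, which forces $S$ into a short list of combinatorial types. Iterating, one obtains a finite family of models for $M$.

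The second main step is to enumerate these models in the once-punctured torus setting and to identify the corresponding knot in $S^3$ in each case. Several natural constructions produce sutured manifolds with $\dim \SFH = 2$: plumbing a Hopf band onto a $(\pm 2)$-twisted band (giving the twisted Whitehead doubles $\Wh^\pm(T_{2,3},2)$), Stallings-type twisting along a nonseparating curve on a fibered once-punctured torus (giving the pretzel family $P(-3,3,2n+1)$), and more exotic plumbings yielding the sporadic knots $5_2$ and $15n_{43522}$. For each model, one determines the possible ways its boundary genus-$2$ surface can fill back to $S^3$, which is essentially a problem about Heegaard embeddings of genus-$2$ handlebodies, and one verifies that only the listed knots arise.

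The main obstacle is the structural theorem of step one: controlling a sutured hierarchy well enough to force $M$ into a small explicit list of models. This requires delicate use of Gabai's sutured manifold theory together with a careful accounting of how $\SFH$ behaves under non-product decompositions. By contrast, the matching in step two is much more routine; the most efficient way to rule out accidental additional examples is to compute the bigraded $\hfkhat$ of each candidate directly and to check that nothing else is consistent with the constraints imposed by the structure theorem.
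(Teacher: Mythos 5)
This theorem is not proved in the present paper at all: it is imported verbatim from the authors' earlier work \cite[Theorem 1.2]{bs-nonfibered}, so there is no internal proof here to compare your attempt against. Judged on its own terms, your opening move is the right one and does match the strategy of the cited paper: pass to the sutured complement $S^3(F)$ of a minimal genus Seifert surface, use Juh\'asz's isomorphism $\SFH(S^3(F)) \cong \hfkhat(K,1)$ to translate the hypothesis into $\dim_\Q \SFH = 2$, classify the possible complements, and then enumerate the ways each model embeds in $S^3$.

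The gap is at the center of your argument. The ``structural theorem'' you lean on --- that \emph{any} taut sutured manifold with $\dim_\Q \SFH = 2$ is a product sutured manifold with a single controlled modification --- is not a known theorem in that generality, and your sketch of its proof does not establish it. Saying that after decomposing along the first non-product surface $S$ ``the rank of $\SFH$ can drop by at most the number of compatible $\spc$ structures'' gets the direction of Juh\'asz's decomposition theorem right ($\SFH$ of the decomposed manifold is a direct summand of $\SFH(M,\gamma)$), but a bound on how much the rank can drop does not by itself force $S$ into a finite combinatorial list: one needs to control what $S$ can look like inside a specific manifold. The workable route, and the one the cited paper actually takes, uses the very particular topology of the situation --- $M = S^3(F)$ has genus-$2$ boundary, $H_2(M)=0$, and two once-punctured-torus complementary regions --- together with Juh\'asz's depth bound (a taut, reduced, horizontally prime sutured manifold with $H_2=0$ and $\dim\SFH < 2^{k+1}$ has depth at most $2k$, so here depth at most $2$), followed by a substantial case analysis of what a depth-$\leq 2$ complement of a once-punctured torus in $S^3$ can be. Without that input, your step one is an assertion rather than a proof, and it is exactly where all the difficulty lives; step two (identifying the knots realizing each model, and computing their $\hfkhat$) is comparatively routine, as you say.
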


\begin{table}
\[ \arraycolsep=1em
\begin{array}{cccc}
K & \hfkhat(K,1;\Q) & \hfkhat(K,0;\Q) & \hfkhat(K,-1;\Q) \\[0.25em] \hline &&& \\[-0.75em]
5_2 & \Q^2_{(2)} & \Q^3_{(1)} & \Q^2_{(0)} \\[0.5em]
15n_{43522} & \Q^2_{(0)} & \Q^4_{(-1)} \oplus \Q^{\vphantom{0}}_{(0)} & \Q^2_{(-2)} \\[0.5em]
\Wh^-(T_{2,3},2) & \Q^2_{(0)} & \Q^4_{(-1)} \oplus \Q^{\vphantom{0}}_{(0)} & \Q^2_{(-2)} \\[0.5em] \hline&&&\\[-0.75em]
P(-3,3,2n+1) & \Q^2_{(1)} & \Q^5_{(0)} & \Q^2_{(-1)} \\[0.5em]
\Wh^+(T_{2,3},2) & \Q^2_{(-1)} & \Q^4_{(-2)} \oplus \Q^{\vphantom{0}}_{(0)} & \Q^2_{(-3)}
\end{array} \]
\caption{The knot Floer homologies of the knots in Theorem~\ref{thm:main-hfk}, grouped by whether the Alexander polynomial  is $2t-3+2t^{-1}$ or $-2t+5-2t^{-1}$.  The subscripts denote Maslov gradings.}
\label{fig:hfk-table}
\end{table}

Theorem~\ref{thm:main-characterizing}   is then a combination of  Theorems \ref{thm:main-m5_2-nonnegative} and \ref{thm:main-5_2-positive} below.  By way of notation, whenever we discuss an isomorphism between Heegaard Floer homologies of the form \[\hfp(Y;\Q) \cong \hfp(Y';\Q)\] in this paper, we will always mean an isomorphism of $\Q[U]$-modules which respects a decomposition of each side into summands indexed by $\spc$ structures on $Y$ and $Y'$, respectively.

\begin{theorem}[Theorem~\ref{thm:m5_2-nonnegative}] \label{thm:main-m5_2-nonnegative}
Suppose for some knot $K \subset S^3$ and rational number $r\geq 0$ that there is an isomorphism
\[ \hfp(S^3_r(K);\Q) \cong \hfp(S^3_r(\mirror{5_2});\Q) \]
of graded $\Q[U]$-modules.  Then $K$ is isotopic to $\mirror{5_2}$.
\end{theorem}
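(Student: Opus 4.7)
The plan is to show that any such $K$ must share its key knot Floer invariants with $\mirror{5_2}$, then to invoke the nearly fibered classification of Theorem~\ref{thm:main-hfk} to reduce to a short list of candidates, and finally to eliminate all impostors.

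\emph{Step 1: extract invariants of $K$.} The hypothesized isomorphism respects the $\spc$ decomposition and the $\Q[U]$-module structure, so for each $\spinc$ on $S^3_r(K)$ it records both the correction term $d(S^3_r(K),\spinc)$ and the graded reduced summand $\hfred(S^3_r(K),\spinc;\Q)$. Through the Ozsv\'ath--Szab\'o rational surgery formula (equivalently, the Ni--Wu mapping cone), these data at a single slope $r\geq 0$ determine the Alexander polynomial $\Delta_K$ and enough of the Ni--Wu local invariants $V_i(K), H_i(K)$ to pin down the Seifert genus. Comparing against the corresponding data for $\mirror{5_2}$, I expect to conclude
\[ \Delta_K(t) = 2t - 3 + 2t^{-1}, \qquad g(K) = 1, \qquad \dim_\Q \hfkhat(K,1;\Q) = 2. \]

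\emph{Step 2: apply the nearly fibered classification.} With these conditions verified, Theorem~\ref{thm:main-hfk} implies that $K$ is, up to mirroring, one of
\[ 5_2, \quad 15n_{43522}, \quad \Wh^{\pm}(T_{2,3},2), \quad P(-3,3,2n+1). \]
Of these, only $5_2$, $15n_{43522}$, and $\Wh^-(T_{2,3},2)$ (together with their mirrors) have Alexander polynomial $2t-3+2t^{-1}$, leaving a six-element shortlist of candidates.

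\emph{Step 3: eliminate impostors.} The Maslov gradings recorded in Table~\ref{fig:hfk-table} give $\tau(5_2) = 1$ and $\tau(15n_{43522}) = \tau(\Wh^-(T_{2,3},2)) = 0$, hence $\tau(\mirror{5_2}) = -1$ while the mirrors of the other two knots again satisfy $\tau = 0$. Since $\tau(K)$ can be read off from the $d$-invariants of $S^3_r(K)$ through the Ni--Wu formulas (indeed through the very $V_0(K), H_0(K)$ already extracted in Step 1), the isomorphism forces $\tau(K) = -1$, ruling out every candidate except $\mirror{5_2}$.

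\emph{Main obstacle.} The crux lies in Step 1: for small $r \geq 0$, a single slope's $\hfp$ does not visibly recover the full $\hfkhat(K)$, and one must work through the mapping cone $\spinc$ structure by $\spinc$ structure, tracking how the combinatorics of $V_i(K), H_i(K)$ are constrained by the given graded module data. The case $r = 0$ should be handled directly using the identification $\hfp(S^3_0(K),\spinc_i;\Q) \cong \hfkhat(K,i;\Q)$ for $|i| \geq g(K)$, but the small positive cases --- in particular $r$ of the form $1/q$ --- are delicate, since then each $\spinc$ structure on $S^3_r(K)$ pools together many Alexander gradings of $K$ and the Ni--Wu data becomes harder to separate from accidental coincidences. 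I anticipate splitting the argument along the ranges of $r$ (large, small positive, and zero) and matching the spin-c spectrum in each regime against that of $\mirror{5_2}$.
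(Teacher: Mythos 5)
Your overall architecture (reduce to a finite list of genus-one knots with the right knot Floer homology, then eliminate impostors) is the right shape, and your Step 3 would work once the shortlist is in hand, but Step 1 as written is not a proof strategy: it defers the entire difficulty to ``working through the mapping cone $\spc$ structure by $\spc$ structure,'' and that computation does not, in general, recover $g(K)$ or $\hfkhat(K)$ from $\hfp$ of a single surgery. Indeed, the companion result for $5_2$ with positive integer slopes shows exactly this failure mode: there the mapping cone alone cannot exclude knots of arbitrarily large genus. What makes the $\mirror{5_2}$ case tractable is a specific mechanism you have not identified. The paper first packages $\dim_\Q\hfhat(S^3_{p/q}(K))$ into the two integers $\rhat(K),\nuhat(K)$ via the formula $\dim\hfhat(S^3_{p/q}(K)) = q\,\rhat(K)+|p-q\nuhat(K)|$; comparing with $(\rhat(\mirror{5_2}),\nuhat(\mirror{5_2}))=(3,1)$ forces $\rhat(K)-\nuhat(K)=2$ in essentially all cases, i.e.\ $K$ is an \emph{almost L-space knot}. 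The heavy lifting is then done by the structural theorem that an almost L-space knot is either $T_{-2,3}$, the figure eight, $\mirror{5_2}$, or fibered and strongly quasipositive of genus $g\geq 2$ with $\nuhat(K)=2g-1$ and $V_{g-1}(K)=1$; the genus-$\geq 2$ branch is then killed either by the bound $|\nuhat(K)|\leq 1$ (for $r\leq 1$) or by the $d$-invariant comparison forcing $V_s(K)=0$ for $s\geq 1$ (for $r>1$), which contradicts $V_{g-1}(K)=1$. Without this input, your claim that the genus can be ``pinned down'' from the Ni--Wu data at a single small slope is unsubstantiated, and the subsequent claims $\Delta_K(t)=2t-3+2t^{-1}$ and $\dim\hfkhat(K,1)=2$ (the Casson--Walker argument only yields $\Delta_K''(1)=4$ until the genus is known, and the dimension bound on $\hfkhat(K,1)$ needs a large-surgery computation) do not follow.

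Two smaller points. First, once the genus-one reduction is made via the almost L-space classification, the shortlist is only $\{T_{-2,3},\,4_1,\,\mirror{5_2}\}$ and the elimination is immediate from $\Delta_K''(1)=4$; your longer six-element shortlist and $\tau$-based elimination are only needed in the $r=0$ case (where the paper does argue much as you sketch, using adjunction to bound the genus, the fact that $\hfp(S^3_0(K))$ determines $\Delta_K(t)$, and the correction terms $d_{\pm 1/2}$ to recover $V_0(K)$ and $V_0(\mirror{K})$). Second, your signs are reversed: with the paper's conventions $\tau(\mirror{5_2})=+1$ and $\tau(5_2)=-1$; and note that $|\tau|$ alone does not separate $\mirror{5_2}$ from $5_2$, so the elimination must ultimately rest on data such as $V_0(K)=1$ versus $V_0(5_2)=0$, which is where the $d$-invariants enter.
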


Theorem~\ref{thm:main-m5_2-nonnegative} immediately implies the case $r \leq 0$ of Theorem~\ref{thm:main-characterizing}, via the relation
\[ S^3_r(K) \cong -S^3_{-r}(\mirror{K}), \] and the relationship between the Heegaard Floer homologies of $Y$ and $-Y$.
For the case $r > 0$, we prove the following:

\begin{theorem} \label{thm:main-5_2-positive}
Suppose for some knot $K \subset S^3$ and rational number $r > 0$ that there is an orientation-preserving homeomorphism
\[ S^3_r(K) \cong S^3_r(5_2), \]
but that $K$ is not isotopic to $5_2$.  Then $r$ is a positive integer, and $g=g(K)$ is at least $2$; if $g$ is even then $r$ divides $g-1$, while if $g$ is odd then $r$ divides $2g-2$.  Moreover, $K$ has Alexander polynomial
\[ \Delta_K(t) = t^g - 2t^{g-1} + t^{g-2} + 1 + t^{2-g} - 2t^{1-g} + t^{-g}, \]
and the knot Floer homology $\hfkhat(K;\Q)$ is completely determined as a bigraded $\Q$-vector space by $r$ and $g$: it is 9-dimensional, and there is a $\Q$ summand in Alexander--Maslov bigrading $(0,0)$ while the rest is supported in bigradings $(a,m) = (a,a+\delta)$, where
\[ \delta = 2-g + \begin{cases} -(g-1)\left(\frac{g-1}{r}-1\right), & r \mid g-1 \\ -\frac{1}{4r}(2g-2-r)^2, & r \nmid g-1. \end{cases} \]
\end{theorem}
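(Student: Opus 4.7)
My plan is to use the Ozsv\'ath--Szab\'o rational surgery formula to transfer the hypothesized homeomorphism into structural information about $\cfkinfty(K)$, then to combine this with Theorem~\ref{thm:main-hfk} in the non-integer case and with a direct mapping-cone analysis in the integer case.

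First, the orientation-preserving homeomorphism induces an isomorphism $\hfp(S^3_r(K);\Q) \cong \hfp(S^3_r(5_2);\Q)$ of graded $\Q[U]$-modules respecting the $\spc$-structure decomposition. Writing $r = p/q$ in lowest terms and invoking the rational surgery formula, both sides become the homology of a mapping cone built from the groups $H_*(A^+_s(K))$ and $H_*(B^+_s(K))$ and their analogues for $5_2$. Matching these $\spc$-by-$\spc$ then forces $V_s(K) = V_s(5_2)$ for every relevant $s$ and bounds the reduced summands of $H_*(A^+_s(K))$ in terms of those for $5_2$.

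Next I would handle the non-integer case $q \geq 2$, aiming at the contradiction $K \simeq 5_2$. The $q$-fold redundancy of the rational mapping cone, together with the completely explicit $\cfkinfty(5_2)$ encoded in the first row of Table~\ref{fig:hfk-table}, should be rigid enough to force $g(K) = 1$ and $\dim \hfkhat(K,1;\Q) = 2$. Theorem~\ref{thm:main-hfk} then restricts $K$ to a short explicit list, and a direct comparison of Alexander polynomial and $d$-invariant data eliminates every candidate other than $5_2$.

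For the remaining case $r \in \Z_{>0}$, the equalities $V_s(K) = V_s(5_2)$ determine the torsion coefficients $t_s(K)$ and thereby reconstruct $\Delta_K$ in the stated form; the bound $g(K) \geq 2$ follows since $5_2$ would be the unique genus-$1$ candidate via Theorem~\ref{thm:main-hfk}. Combining the $V_s$-equalities with $\dim\hfred(S^3_r(K)) = \dim\hfred(S^3_r(5_2))$ then forces $K$ to be an almost L-space knot, with $\dim\hfkhat(K;\Q) = 9$ concentrated along the diagonal $m = a + \delta$ except for one distinguished generator at $(0,0)$; the dimension count matches the coefficient sum of $\Delta_K$. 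The divisibility constraint on $r$ comes from requiring the piecewise-quadratic function $s \mapsto d(S^3_r(K),\spinc_s)$ to match that of $S^3_r(5_2)$ $\spc$-by-$\spc$: the breakpoint between the linear and quadratic regimes must land on an integer lattice value, and an elementary calculation separates the cases $r \mid g-1$ and $r \nmid g-1$, yielding the two branches of the formula for $\delta$. I expect the main obstacle to be the non-integer case, where ruling out every slope with $q \geq 2$ requires a rational mapping-cone analysis precise enough to conclude $g(K) = 1$ and then a careful case-by-case elimination of the short list provided by Theorem~\ref{thm:main-hfk}.
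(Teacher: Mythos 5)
There is a genuine gap, and it is the same missing ingredient in two places: your proposal tries to prove the entire theorem with Heegaard Floer data, but the paper's own remark after Theorem~\ref{thm:main-5_2-positive} points out that this is impossible. Concretely, in the genus-$1$ case your plan is to reduce to the list of Theorem~\ref{thm:main-hfk} and then eliminate the survivors by ``a direct comparison of Alexander polynomial and $d$-invariant data.'' The surviving candidates are $15n_{43522}$ and $\Wh^-(T_{2,3},2)$ (and their mirrors), which have \emph{exactly} the same Alexander polynomial $2t-3+2t^{-1}$ as $5_2$ and, by Proposition~\ref{prop:nf-1-surgery-plus}, also satisfy $V_0=0$; since for a genus-$1$ knot all of the $V_s$ and $H_s$ are determined by $V_0$, Theorem~\ref{thm:ni-wu-d} shows that every correction term of every positive surgery on these knots agrees with the corresponding one for $5_2$. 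So your proposed comparison eliminates nothing. The paper instead invokes Ito's finite-type/LMO obstruction (Theorem~\ref{thm:ito-obstruction}), distinguishing the knots by the Jones-polynomial invariant $v_3$ in Proposition~\ref{prop:5_2-positive-genus-1}; some input beyond Floer homology is unavoidable here.

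The second instance is the claim that $r$ divides $g-1$ when $g$ is even. Your mechanism --- matching the piecewise behavior of $s\mapsto d(S^3_r(K),\spinc_s)$ against that of $5_2$ --- can at best reproduce what the mapping-cone analysis already gives, namely $r\mid 2g-2$ (and in fact the divisibility in the paper comes from a rank-and-grading analysis of $\ker(U)\subset\hfp$ in Propositions~\ref{prop:5_2-medium-fibered} and~\ref{prop:5_2-medium-fibered-2}, not from the $d$-invariants, which are forced to agree once $V_s(K)=V_s(5_2)$). The refinement to $r\mid g-1$ for even $g$ is again invisible to $\hfp$: it follows from Proposition~\ref{prop:quantum-obstruction-g-even}, where Ito's obstruction expresses $r$ as a divisor of the odd integer $5(g-1)^2$, so that $r$ is odd and $r\mid 2g-2$ upgrades to $r\mid g-1$. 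Your outline contains no step that could produce this parity constraint. The remaining architecture of your proposal (mapping cone comparison, $V_s(K)=V_s(5_2)$, grading bookkeeping to pin down $\delta$) is broadly in line with \S\ref{sec:5_2} and \S\ref{sec:5_2-mapping-cone}, but without the quantum input the theorem as stated cannot be reached.
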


Most of the content of Theorem~\ref{thm:main-5_2-positive} is in Theorem~\ref{thm:5_2-positive}, which makes heavy use of the Heegaard Floer mapping cone formula for Dehn surgeries.  However, the latter assumes that $g \geq 2$, and it only concludes that $r$ divides $2g-2$.  We use an obstruction due to Ito \cite{ito-lmo} involving finite type invariants to handle the case $g=1$ in Proposition~\ref{prop:5_2-positive-genus-1}, and to improve the condition $r\mid 2g-2$ to $r \mid g-1$ for even $g$ in Proposition~\ref{prop:quantum-obstruction-g-even}.

\begin{remark}
In fact, the proof of Theorem~\ref{thm:main-5_2-positive} shows that \[\hfp(S^3_r(K);\Q) \not\cong \hfp(S^3_r(5_2);\Q)\] in nearly all cases where it asserts that $S^3_r(K) \not\cong S^3_r(5_2)$.  The exceptions are when $g(K) \geq 2$ is even and $r$ divides $2g(K)-2$ but not $g(K)-1$, and when $g(K)=1$ and $K$ is one of the knots listed in Theorem~\ref{thm:main-hfk} with Alexander polynomial $2t-3+2t^{-1}$.  In the latter case, we require the full strength of Theorem~\ref{thm:main-hfk}, rather than just the claim that $\hfkhat$ detects $5_2$, in order to enumerate the remaining cases and to rule them out one by one in Proposition~\ref{prop:5_2-positive-genus-1}.
\end{remark}

\begin{remark}
If $g(K)=2$ and $S^3_r(K) \cong S^3_r(5_2)$, then Theorem~\ref{thm:main-5_2-positive} says that $r=1$ and $\delta=0$.  This implies that $K$ has the same knot Floer homology as of any of the pretzel knots $P(-3,3,2n)$, where $n\in\Z$.  We conjecture that it must then actually be isotopic to $P(-3,3,2n)$ for some $n$, in which case Remark~\ref{rem:pretzel-1-surgery} will show that it is $P(-3,3,8)$.
\end{remark}

The proofs of Theorems~\ref{thm:main-m5_2-nonnegative} and \ref{thm:main-5_2-positive} rely heavily on formulas which determine the Heegaard Floer homology of Dehn surgeries on a knot $K$ in terms of the canonical $\Z\oplus\Z$ filtration on $\cfkinfty(K)$.  This includes both the ``large surgeries'' formula of \cite{osz-knot}, which applies to surgeries of integral slope $n\geq 2g(K)-1$, and the ``mapping cone'' formula of \cite{osz-rational}, which applies to surgeries of any positive rational slope.  This should come as no surprise to readers familiar with previous works on characterizing slopes such as \cite{osz-characterizing}, although the application of these formulae to the problem considered here is substantially more involved.  We briefly outline their uses below.

First, for Theorem~\ref{thm:main-m5_2-nonnegative}, we are able to avoid the heavy machinery of the mapping cone formula by making use of the fact that $\mirror{5_2}$ is very nearly an L-space knot.

\begin{definition} \label{def:almost-lspace}
We say that a closed 3-manifold $Y$ is an \emph{almost L-space} if it is a rational homology 3-sphere and satisfies \[\dim \hfhat(Y;\Q) = |H_1(Y;\Z)|+2.\]
We say that a nontrivial knot $K \subset S^3$ is an \emph{almost L-space knot} if
\[ \dim \hfhat(S^3_n(K);\Q) = n+2 \]
(that is, if $S^3_n(K)$ is an almost L-space) for some integer $n \geq 2g(K)-1$, in which case one can show that it holds for all $n \geq 2g(K)-1$.
\end{definition}

Then $\mirror{5_2}$ is an almost L-space knot, and there are very few other examples with genus 1.  The following is a combination of Propositions~\ref{prop:almost-fibered} and \ref{prop:almost-implies-sqp}.

\begin{theorem} \label{thm:main-almost}
If $K \subset S^3$ is an almost L-space knot, then  one of the following is true:
\begin{enumerate}
\item $K$ is the left-handed trefoil, figure eight knot, or $\mirror{5_2}$.
\item $g(K) \geq 2$, and $K$ is fibered and strongly quasipositive.
\end{enumerate}
\end{theorem}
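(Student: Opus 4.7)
My plan is to prove the theorem in two stages, matching the split into Propositions~\ref{prop:almost-fibered} and \ref{prop:almost-implies-sqp}. The central tool is the large surgery formula of Ozsv\'ath--Szab\'o: for $n \geq 2g(K)-1$,
\[ \hfhat(S^3_n(K);\Q) \cong \bigoplus_{s=0}^{n-1} H_*(\widehat{A}_s(K)), \]
where $\widehat{A}_s$ is a quotient of $\cfkinfty(K)$ associated to the Alexander filtration. The almost L-space condition translates to $\sum_s \dim H_*(\widehat{A}_s) = n+2$; since $\dim H_*(\widehat{A}_s) \geq 1$ for every $s$, the total excess over an L-space count is exactly two, concentrated in at most two $\spc$ structures.

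The first stage is to show that $\dim \hfkhat(K, g(K)) \leq 2$ by relating the extremal $\widehat{A}_s$ to the top Alexander grading of $\hfkhat$. If this dimension equals $1$, then $K$ is fibered by Ni's theorem. If instead it equals $2$, then $K$ is nearly fibered: for $g(K)=1$ I would invoke Theorem~\ref{thm:main-hfk} to enumerate the candidates and test each directly against the almost L-space condition, which should single out $\mirror{5_2}$; for $g(K) \geq 2$ I would combine the conjugation symmetry of $\cfkinfty(K)$ with the rigid count $\sum \dim H_*(\widehat{A}_s) = n+2$ to derive a contradiction. Together with the classical fact that the only fibered genus-one knots are the trefoils and the figure eight, and the observation that the right-handed trefoil is an L-space knot (so has surgery dimension $n$, not $n+2$), this isolates the list in case~(1) and reduces case~(2) to establishing strong quasipositivity for fibered almost L-space knots of genus at least $2$.

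For the second stage I would invoke Hedden's theorem that a fibered knot is strongly quasipositive if and only if $\tau(K) = g(K)$, and verify the latter identity from the almost L-space hypothesis. One approach is via the $d$-invariants of large positive surgeries, which determine $\tau$: being within two dimensions of an L-space should force the extremal corner of $\widehat{A}_{g-1}$ to retain the L-space-like shape that yields $\tau(K) = g(K)$. Alternatively, $\tau(K) \geq g(K)$ can be read off directly from the constraints on the $\widehat{A}_s$ obtained in the first stage, and this suffices since $\tau \leq g$ in general.

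The main obstacle I expect is the genus $\geq 2$ nearly fibered case analysis in the first stage: one must track how the two extra dimensions in $\bigoplus_s H_*(\widehat{A}_s)$ can distribute over the bigraded complex, use the conjugation symmetry $\widehat{A}_s \simeq \widehat{A}_{-s}$ to pair contributions, and verify that no configuration is simultaneously compatible with a two-dimensional top Alexander grading and the strict excess count. This is the sort of staircase-and-box bookkeeping familiar from work on L-space knots, but here adapted to a slightly weaker hypothesis, and I expect it to consume most of the technical effort.
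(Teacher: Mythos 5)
Your first stage tracks the paper's own proof of Proposition~\ref{prop:almost-fibered} closely and is sound: the excess of two dimensions must sit in a single self-conjugate $\spc$ structure (each $\dim H_*(\hat{A}_s)$ is odd and conjugation identifies $s$ with $-s$), hence in $s=0$; combined with $\dim \hfkhat(K,g) - V_{g-1}(K) = \tfrac{1}{2}\left(\dim \hfhat(S^3_{2g-1}(K),g-1)-1\right)$ from Corollary~\ref{cor:hfhat-vs-hfkhat} and $V_{g-1}(K)\leq 1$, this forces $\dim\hfkhat(K,g)=1$ when $g\geq 2$ (since then $g-1\neq 0$ in $\Z/(2g-1)\Z$), and for $g=1$ it reduces to computing $V_0$ for the finitely many nearly fibered candidates of Theorem~\ref{thm:main-hfk}, exactly as the paper does.

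The genuine gap is in your second stage. The $d$-invariants of large positive surgeries determine the integers $V_s(K)$ and $H_s(K)$, hence $\nu^+(K)$, but they do not determine $\tau(K)$: from $V_{g-1}(K)=1$ and $V_g(K)=0$ you only get $\nu^+(K)=g$, and the inequality $\tau(K)\leq\nu(K)\leq\nu^+(K)$ points the wrong way, so no contradiction with $\tau(K)\leq g-1$ can be extracted from $d$-invariants alone (and there is no general theorem that a fibered knot with $g_4=g$ or $\nu^+=g$ has $\tau=g$; the figure eight already shows $g_4=g$ does not suffice). Your fallback, that $\tau(K)\geq g$ ``can be read off directly from the constraints on the $\hat{A}_s$,'' is precisely the nontrivial step and cannot simply be asserted. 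The paper's Proposition~\ref{prop:almost-implies-sqp} closes this as follows: assuming $\tau(K)\leq g-1$, one computes $H_*(C\{i=0,j\leq g-1\})\cong\Q^2$ (the generator of $\hfhat(S^3)$ together with the class from $\hfkhat(K,g)\cong\Q$); feeding this and $H_*(\hat{A}_{g-1})\cong\Q$ into the long exact sequence of $0\to C\{-1,g-1\}\to\hat{A}_{g-1}\to C\{i=0,j\leq g-1\}\to 0$ forces the inclusion-induced map $H_*(C\{-1,g-1\})\to H_*(\hat{A}_{g-1})$ to vanish; since $C\{-1,g-1\}\hookrightarrow A^+_{g-1}$ factors through $\hat{A}_{g-1}$, this contradicts the injectivity of $H_*(C\{-1,g-1\})\to H_*(A^+_{g-1})$ supplied by the short exact sequence of Lemma~\ref{lem:ker-v-g-1}. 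You would need to supply an argument of this kind to establish $\tau(K)=g$ before invoking Hedden's criterion.
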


With this theorem at hand, we are able to show quickly that if there is an isomorphism \[\hfp(S^3_r(K);\Q) \cong \hfp(S^3_r(\mirror{5_2});\Q)\] for some rational $r \geq 0$, then $K$ must also be an almost L-space knot of genus 1, and then we only have to rule out the left-handed trefoil and the figure eight.  The following is also a straightforward consequence of Theorem~\ref{thm:main-almost}.

\begin{theorem}[Theorem~\ref{thm:1-surgery-3d}] \label{thm:main-1-surgery-3d}
Let $K \subset S^3$ be a knot.  Then $\dim_\Q \hfhat(S^3_1(K);\Q) = 3$ if and only if $K$ is either the left-handed trefoil, figure eight, or $\mirror{5_2}$.
\end{theorem}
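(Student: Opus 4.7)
The plan is to prove the biconditional in both directions.

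For the ``if'' direction, I would verify by direct computation that each of $K=T_{-2,3}$, $4_1$, and $\mirror{5_2}$ satisfies $\dim_\Q\hfhat(S^3_1(K);\Q)=3$.  For the left-handed trefoil one uses the classical identification $S^3_1(T_{-2,3})\cong -\Sigma(2,3,7)$ together with the well-known computation $\dim_\Q\hfhat(\Sigma(2,3,7);\Q)=3$.  For $4_1$ and $\mirror{5_2}$, which have Seifert genus one, the slope $n=1$ equals $2g(K)-1$, so the large-surgery formula of Ozsv\'ath--Szab\'o \cite{osz-knot} applies and yields $\dim_\Q\hfhat(S^3_1(K);\Q)=3$ directly from $\hfkhat(K;\Q)$; for $\mirror{5_2}$ one reads this off of Table~\ref{fig:hfk-table}.

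For the ``only if'' direction, suppose $\dim_\Q\hfhat(Y;\Q)=3$ for $Y:=S^3_1(K)$, so that $Y$ is an almost L-space.  The strategy is to invoke Theorem~\ref{thm:main-almost}, which requires that $K$ itself be an almost L-space knot.  Because $|H_1(Y)|=1$ and the defining relation $\dim\hfhat(S^3_n(K);\Q)=n+2$ is already realized at $n=1$, by Definition~\ref{def:almost-lspace} it suffices to verify $1\ge 2g(K)-1$, i.e., $g(K)\le 1$.  The case $g(K)=0$ is excluded immediately: the unknot is the only genus-zero knot (by the Floer detection of the unknot), but $\hfhat(S^3)=\Q$.

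The main obstacle is ruling out $g(K)\ge 2$.  For this I would use the Ozsv\'ath--Szab\'o mapping cone formula for $+1$-surgery together with the identity
\[ \dim_\Q\hfhat(Y;\Q) = 1 + 2\cdot(\text{number of }U\text{-Jordan blocks in }\hfred(Y)), \]
valid for any integer homology sphere $Y$, which forces $\hfred(Y)$ to be a single $U$-Jordan block.  For $g(K)\ge 2$, however, the mapping cone expresses $\hfred(Y)$ in terms of the reduced pieces $A^{\mathrm{red}}_s(K)$ for $|s|\le g(K)-1$, and the large-surgery correction terms $d(S^3_n(K),s)$ take distinct values for distinct $s$ in general, placing the contributions of $A^{\mathrm{red}}_0$ and $A^{\mathrm{red}}_{\pm 1}$ in different absolute Maslov grading ranges.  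Even after cancellation by the vertical and horizontal maps $v_s, h_s$ in the cone, at least two independent Jordan blocks survive in $\hfred(Y)$, forcing $\dim_\Q\hfhat(Y;\Q)\ge 5$ and contradicting our hypothesis.  Verifying this non-cancellation is the technical crux of the argument.  Once $g(K)=1$ is established, $K$ is an almost L-space knot, and Theorem~\ref{thm:main-almost} applies; its case (2) requires $g(K)\ge 2$ and is hence excluded, so case (1) yields $K\in\{T_{-2,3},\,4_1,\,\mirror{5_2}\}$.
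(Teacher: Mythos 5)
Your overall architecture (show $K$ is an almost L-space knot, then invoke Theorem~\ref{thm:main-almost}) matches the paper's, and your ``if'' direction is fine. But the step you yourself flag as ``the technical crux'' --- ruling out $g(K)\geq 2$, so that realizing $\dim\hfhat(S^3_1(K))=3$ at $n=1\geq 2g(K)-1$ actually certifies $K$ as an almost L-space knot --- is a genuine gap, and the mapping-cone sketch you offer for it is aimed in the wrong direction. You argue that for $g(K)\geq 2$ the reduced pieces $\Hred(A^+_0)$ and $\Hred(A^+_{\pm1})$ sit in different gradings and contribute at least two surviving Jordan blocks. But these reduced pieces can all vanish: for an L-space knot of genus $g\geq 2$ (e.g.\ $T_{2,5}$) every $\Hred(A^+_s)=0$, and yet $\dim\hfhat(S^3_1(K))=2g+1\geq 5$. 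For a small surgery on a high-genus knot the excess homology comes from the kernel of the truncated map between the \emph{towers} $\cT^+$ in $\bA^+$ and $\bB^+$ (cf.\ Proposition~\ref{prop:truncate}), not from the $\Hred(A^+_s)$; so ``non-cancellation of the reduced parts'' is not the right mechanism, and no actual argument is supplied for the needed bound $\dim\hfhat(S^3_1(K))\geq 5$ when $g(K)\geq 2$.

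The paper sidesteps the mapping cone entirely here. Proposition~\ref{prop:r-nu} gives $3=\rhat(K)+|1-\nuhat(K)|$; the constraints of Proposition~\ref{prop:r-nu-properties} (parity of $\nuhat$, $\rhat\geq|\nuhat|$, $\rhat-|\nuhat|$ even) force $\rhat(K)-\nuhat(K)=2$, so $K$ is an almost L-space knot by Lemma~\ref{lem:almost-r-nu} \emph{with no prior genus hypothesis}; and then Corollary~\ref{cor:almost-r-nu} (which rests on $\tau(K)=g$ for almost L-space knots of genus $\geq 2$) would give $\nuhat(K)=2g(K)-1\geq 3$, contradicting $\nuhat(K)\leq 1$, hence $g(K)=1$. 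If you want to keep your structure, the cleanest repair is to replace your mapping-cone step with this $\rhat/\nuhat$ computation.
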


Theorem~\ref{thm:main-5_2-positive} requires substantially more effort than Theorem~\ref{thm:main-m5_2-nonnegative}.  The key input is a computation in \S\ref{ssec:5_2-computations} showing that for any $r > 0$ and any $\spc$ structure $\spinc$ on $S^3_r(5_2)$, the Heegaard Floer homology $\hfp(S^3_r(5_2),\spinc;\Q)$ is always isomorphic to something of the form
\[ \cT^+_{(0)} \oplus \Q^{2n}_{(0)} \]
as a relatively graded $\Q[U]$-module.  Here, \[\cT^+ \cong \frac{\Q[U,U^{-1}]}{U\cdot \Q[U]},\] the $U$-action lowers the grading by $2$, and the ``$(0)$'' subscripts indicate that the element $1\in \cT$ and the $\Q^{2n}$ summand both lie in grading $0$.  If $S^3_r(K) \cong S^3_r(5_2)$ for some $r > 0$, then in \S\ref{sec:5_2} we find that this imposes strong restrictions on $\cfkinfty(K)$.  In the case $g(K) \geq 2$, we see in \S\ref{sec:5_2-mapping-cone} that these restrictions often imply that for some $\spinc \in \spc(S^3_r(K))$, either:
\begin{itemize}
\item $\ker(U) \subset \hfp(S^3_r(K),\spinc;\Q)$ cannot lie in a single grading, or
\item $\hfp(S^3_r(K),\spinc;\Q) \cong \cT^+ \oplus \Q$.
\end{itemize}
The first of these applies when $0 < r < 1$, or when $r = p/q \geq 1$ is non-integral and $p \mid 2g(K)-2$, and the second applies when $r = p/q \geq1$ and $p \nmid 2g(K)-2$.  Both of these contradict the computation of $\hfp(S^3_r(5_2);\Q)$, completing the proof in these cases.


\subsection{Notation}
All Floer homologies  in this paper will be taken with coefficients in $\Q$. We will therefore omit the coefficients from the notation going forward.

\subsection{Organization}

In \S\ref{sec:surgery}, we review some facts about knot Floer homology and the large surgery and mapping cone formulas, and then carry out some computations for the knots of Theorem~\ref{thm:main-hfk}.  In \S\ref{sec:dim-hfhat}, we use this to study the dimension of $\hfhat$ of Dehn surgeries, proving Theorem~\ref{thm:main-almost} about almost L-space knots.  We apply this in \S\ref{sec:mirror} to prove Theorem~\ref{thm:main-m5_2-nonnegative}.  In \S\ref{sec:5_2}, we begin to work toward Theorem~\ref{thm:main-5_2-positive}, eliminating all but finitely many $K$ in the case $g(K)=1$ and then obtaining some restrictions in the case $g(K) \geq 2$, and in \S\ref{sec:5_2-mapping-cone} we apply the mapping cone formula together with these restrictions to complete the proof of Theorem~\ref{thm:main-5_2-positive} for $g(K) \geq 2$, modulo the modest improvement of Proposition~\ref{prop:quantum-obstruction-g-even}.  In \S\ref{sec:quantum}, we use finite type invariants to achieve that improvement and to finish off the case $g(K)=1$, completing the proof of Theorem~\ref{thm:main-5_2-positive} and hence of Theorem~\ref{thm:main-characterizing}.

In the last few sections we study some specific examples of surgeries.  In \S\ref{sec:non-characterizing}, we prove Proposition~\ref{prop:slope-1}, asserting that $1$ is not a characterizing slope for $5_2$, and then in \S\ref{sec:2-3-11} we prove Theorem~\ref{thm:main-2-3-11} on the Dehn surgery characterization of $\Sigma(2,3,11)$.

\subsection{Acknowledgements}

We thank Tetsuya Ito for helpful correspondence, and the anonymous referees for their feedback.

\section{Heegaard Floer homology of surgeries on knots} \label{sec:surgery}

\subsection{The Heegaard Floer mapping cone formula}

Knot Floer homology \cite{osz-knot,rasmussen-thesis} assigns to any nullhomologous knot $K\subset S^3$ a graded, $\Z\oplus\Z$-filtered chain complex
\[ (\cfkinfty(K),\partial^\infty), \]
whose filtered chain homotopy type completely determines the Heegaard Floer homology of Dehn surgeries on $K$, where we recall that we are working with coefficients in $\Q$ throughout.

As a matter of convention, we use coordinates $(i,j)$ to refer to the two filtration levels, and notation like
\[ C\{i=0,j \leq 1\} \subset \cfkinfty(K) \]
to refer to the subquotient spanned by generators which lie in the indicated subset of the $(i,j)$-plane.  We will also use the shorthand
\[ C\{i_0,j_0\} := C\{i=i_0,j=j_0\}. \]
The differential lowers the grading by $1$ and does not increase either filtration, meaning that each $C_*\{i\leq i_0,\ j\leq j_0\}$ is a subcomplex: we have
\[ \partial^\infty\left( C_*\{i \leq i_0,\ j\leq j_0\}\right ) \subset C_{*-1}\{i \leq i_0,\ j\leq j_0\} \]
for all $(i_0,j_0)$.

With this in mind, following \cite[\S4.5 and \S5.1]{rasmussen-thesis}, one can take $\cfkinfty$ to be freely generated over $\Q[U,U^{-1}]$ by $\hfkhat(K;\Q)$.  We take
\[ C_*\{0,a\} \cong \hfkhat_*(K,a;\Q), \]
and the $U$-action gives isomorphisms
\[ U^k: C_*\{0,a\} \xrightarrow{\cong} C_{*-2k}\{-k,a-k\} \]
for all $k\in\Z$.  In the form specified here, the restriction of the differential $\partial^\infty$ to each $C\{i_0,j_0\}$ is zero.  See the ``reduction lemma'' of \cite[\S2.1]{hedden-watson} for details.

Given this, there are by definition a pair of chain homotopy equivalences
\[ C\{i=0\} \simeq \cfhat(S^3), \]
so the induced complex $(C\{i=0\},\partial')$ has homology $\hfhat(S^3) \cong \Q$ supported in grading $0$.  The Ozsv\'ath--Szab\'o tau invariant $\tau(K)$ \cite{osz-tau} is the minimum $j$-filtration level at which this generator appears.  Similarly, we have a chain homotopy equivalence
\[ C\{i \geq 0\} \simeq \cfp(S^3) \]
and then
\[ H_*\{i \geq 0\} \cong \hfp(S^3) \cong \cT^+ :=  \frac{\Q[U,U^{-1}]}{U\cdot \Q[U]}. \]

\begin{definition}
Given $\cfkinfty(K)$ as above, we define subquotient complexes
\begin{align*}
A^+_s &= C\{ \max(i,j-s) \geq 0 \}, &
B^+ &= C \{i \geq 0\} \\
\hat{A}_s^{\hphantom{+}} &= C\{ \max(i,j-s) = 0 \}, &
\hat{B}^{\hphantom{+}} &= C\{i = 0\}
\end{align*}
with differentials induced by $\partial^\infty$, for all $s \in \Z$.  These come with chain maps
\begin{align*}
v^+_s: A^+_s &\to B^+, &
h^+_s: A^+_s &\to B^+
\end{align*}
in which $v^+_s$ is defined by projection onto $C\{i\geq 0\}$, and $h^+_s$ is a composition
\[ A^+_s \xrightarrow{\text{proj}} C\{j\geq s\} \xrightarrow{U^{s}} C\{j\geq 0\} \xrightarrow{\simeq} C\{i\geq 0\} = B^+. \]
The last arrow is a homotopy equivalence which exchanges the $i$ and $j$ filtrations; we omit its definition.
\end{definition}

\begin{remark} \label{rem:v-h-isomorphisms}
The projection $v^+_s$ is an isomorphism at the chain level for all $s \geq g(K)$, since the kernel consists of the direct sum of subspaces
\[ C\{i,j\} \cong C\{0,j-i\} \cong \hfkhat(K,j-i) \]
with $i\leq -1$ and $j \geq s \geq g(K)$, and then $\hfkhat(K,j-i)=0$ because $j-i \geq g(K)+1$.  Similarly, each $h^+_s$ is an isomorphism for all $s \leq -g(K)$.
\end{remark}

These complexes determine the Heegaard Floer homology of ``large'' surgeries on $K$, in the following sense.

\begin{theorem}[{\cite[Theorem~4.4]{osz-knot}}] \label{thm:large-surgeries}
Choose a positive integer $p \geq 2g(K) - 1$.  Then there is a canonical affine map $\spc(S^3_p(K)) \cong \Z/p\Z$ (see \cite[Lemma~2.2]{osz-integral}) such that we have relatively graded isomorphisms
\[ \hfp(S^3_p(K),s) \cong H_*(A^+_s) \quad\text{and}\quad \hfhat(S^3_p(K),s) \cong H_*(\hat{A}_s) \]
for any integer $s$ with $|s| \leq \frac{p}{2}$.
\end{theorem}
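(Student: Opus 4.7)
The plan is to follow the original approach of Ozsv\'ath--Szab\'o in \cite{osz-knot}: build a Heegaard diagram for $S^3_p(K)$ out of a doubly-pointed one for $(S^3,K)$ by winding, identify generators and the $\spc$ decomposition combinatorially, and then match holomorphic disks.

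First I would start from a doubly-pointed Heegaard diagram $(\Sigma,\boldsymbol\alpha,\boldsymbol\beta,w,z)$ for $(S^3,K)$ in which $\beta_g$ is a meridian of $K$ separating the basepoints $w$ and $z$.  Replacing $\beta_g$ with a surgery curve $\gamma$ obtained by winding a longitude $p$ times around $\beta_g$ yields a Heegaard diagram for $S^3_p(K)$.  The new intersections $\gamma\cap\alpha_g$ in the winding region place generators of $\cfhat(S^3_p(K))$ in bijection with pairs $[\mathbf{x},k]$, where $\mathbf{x}$ is an intersection point from the original knot diagram and $k\in\{0,1,\dots,p-1\}$ records the sheet in the winding region.

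Next I would compute the $\spc$ grading.  Under the affine identification $\spc(S^3_p(K))\cong\Z/p\Z$ of \cite[Lemma~2.2]{osz-integral}, the generator $[\mathbf{x},k]$ represents a class of the form $A(\mathbf{x})+k\pmod p$ up to a global shift, where $A$ denotes the Alexander grading.  Fix $s$ with $|s|\leq p/2$.  Then each $\mathbf{x}$ contributes exactly one representative in class $s$, at a specific value of $k$.  Reinterpreting the coordinates as $(i,j)=(k,\,A(\mathbf{x})+k-s)$ in the $\cfkinfty$ plane, the hypothesis $p\geq 2g(K)-1$ combined with $|s|\leq p/2$ ensures that these lattice points range over precisely the region $\{\max(i,j-s)\geq 0\}$, with no repetition or omission, because $\hfkhat(K,a)=0$ for $|a|>g(K)$.

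The main step, and where the real work lies, is matching the two differentials.  Stretching the neck along a separating circle that isolates the winding region forces every holomorphic disk in the surgery diagram either to project to a Whitney disk in the original knot diagram — one that respects both the $i$-filtration (from $w$) and the $j$-filtration (from $z$) in the way demanded by the subquotient $C\{\max(i,j-s)\geq 0\}$ — or to be a small disk confined to the winding region.  After arranging admissibility, the former reproduce exactly the induced differential $\partial^\infty|_{A^+_s}$, while the latter only connect generators in different $\spc$ classes and can therefore be ignored.  The relative-grading statement then follows from a Maslov-index computation.  The hat version is obtained on both sides by further quotienting to $\hat{A}_s = C\{\max(i,j-s)=0\}$, which on the surgery side corresponds to using the second basepoint to kill disks with $n_z\neq 0$.
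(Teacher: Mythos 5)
This theorem is not proved in the paper at all: it is imported verbatim from Ozsv\'ath--Szab\'o (\cite[Theorem~4.4]{osz-knot}, as the bracketed citation in the theorem header indicates), so there is no internal argument to compare yours against. What you have written is a faithful outline of the original Ozsv\'ath--Szab\'o proof: the winding construction, the identification of generators $[\mathbf{x},k]$ with the $\spc$ decomposition governed by the Alexander grading, the observation that $p\geq 2g(K)-1$ and $|s|\leq p/2$ together with the vanishing of $\hfkhat(K,a)$ for $|a|>g(K)$ pin down a unique representative of each class, and the neck-stretching argument isolating the winding region to match differentials. Two caveats. First, your generator count is stated as if each $\mathbf{x}$ contributes a single lattice point; that is correct for the hat version $\hat{A}_s=C\{\max(i,j-s)=0\}$, but for $A^+_s=C\{\max(i,j-s)\geq 0\}$ each $\mathbf{x}$ contributes a half-infinite family $[\mathbf{x},i,j]$, matched with $[\mathbf{y}_{\mathbf{x}},\max(i,j-s)]$ on the surgery side, so the bijection is between $U$-orbits rather than single generators. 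Second, the entire analytic core --- that after stretching, every relevant holomorphic disk either projects to one counted by $\partial^\infty|_{A^+_s}$ or is confined to the winding region and changes the $\spc$ class, together with admissibility and the Maslov index comparison --- is asserted rather than established; as written this is a roadmap to the published proof rather than a proof. That is an entirely reasonable thing to produce for a cited background result, but it should be presented as such.
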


\begin{remark} \label{rem:s-conjugate}
The definition of the map $\spc(S^3_p(K)) \cong \Z/p\Z$ in \cite[Lemma~2.2]{osz-integral} implies that if $\spinc \in \spc(S^3_p(K))$ is identified with $s\in\Z/p\Z$, then the conjugate $\spc$ structure $\overline{\spinc}$ is identified with $-s$.
\end{remark}

They also determine the invariants of arbitrary Dehn surgery, though in a more complicated way.  Given relatively prime integers $p,q > 0$ and arbitrary $i\in\Z$, we define 
\begin{align*}
\bA^+_i &= \bigoplus_{s\in\Z} \left(s, A^+_{\lfloor(i+ps)/q\rfloor}\right), &
\bB^+_i &= \bigoplus_{s\in\Z} (s, B^+)
\end{align*}
and a chain map
\begin{align*}
D^+_{i,p/q}: \bA^+_i &\to \bB^+_i \\
(s,a_s) &\mapsto \left(s, v^+_{\lfloor (i+ps)/q\rfloor}(a_s)\right) + \left(s+1, h^+_{\lfloor (i+ps)/q\rfloor}(a_s)\right).
\end{align*}
The various $A^+$ and $B^+$ summands each inherit relative gradings from $\cfkinfty(K)$.  We place a relative grading on their direct sums $\bA^+_i$ and $\bB^+_i$, respecting the relative gradings on each individual summand, so that $D^+_{i,p/q}$ lowers the grading by $1$.

\begin{theorem}[{\cite[Theorem~1.1]{osz-rational}}] \label{thm:mapping-cone}
Let $\bX^+_{i,p/q}$ denote the mapping cone of the chain map $D^+_{i,p/q}: \bA^+_i \to \bB^+_i$.  Then there is a natural identification $\spc(S^3_{p/q}(K)) \cong \Z/p\Z$ for which we have a relatively graded isomorphism
\[ H_*(\bX^+_{i,p/q}) \cong \hfp(S^3_{p/q}(K),i) \]
for all $i\in \Z/p\Z$.
\end{theorem}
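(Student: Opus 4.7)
My plan is to realize $S^3_{p/q}(K)$ through a sequence of surgery exact triangles relating large integer surgeries, and to assemble the resulting iterated mapping cone into the complex $\bX^+_{i,p/q}$ by invoking Theorem~\ref{thm:large-surgeries}. The three main ingredients are the large surgery isomorphism already in hand, the surgery exact triangle in $\hfp$ applied to suitable two--handle cobordisms, and an explicit identification of the cobordism maps with the chain maps $v^+_s$ and $h^+_s$ defined on subquotients of $\cfkinfty(K)$.

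First, I would fix a sufficiently large integer $N$ (depending on a truncation parameter) and present $S^3_{p/q}(K)$ as a two--handle attachment on an unknot linking the surgery core of $S^3_N(K)$, e.g.\ via a Rolfsen--style link presentation. The surgery exact triangle then decomposes $\hfp(S^3_{p/q}(K))$ into pieces that, by Theorem~\ref{thm:large-surgeries}, are isomorphic to $H_*(A^+_s)$ and $H_*(B^+)$, producing a finite complex whose differentials come from cobordism maps. The shift $s \mapsto \lfloor (i+ps)/q\rfloor$ arises from tracking how a $\spc$ structure on $S^3_{p/q}(K)$ labeled by $i \in \Z/p\Z$ restricts to $\spc$ structures on the nearby integer surgeries. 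The maps $v^+_s$ and $h^+_s$ should then be identified with the two cobordism--induced maps $\hfp(S^3_M(K),s) \to \hfp(S^3)$ coming from the two $\spc$ extensions of a fixed $\spc$ structure across the handle attachment; the projection onto $C\{i \geq 0\}$ matches the identification in \cite[Theorem~4.4]{osz-knot}, and the factor $U^s$ appearing in $h^+_s$ reflects the grading shift of the opposite $\spc$ extension together with conjugation symmetry.

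To pass from the truncated mapping cone to the full complex $\bX^+_{i,p/q}$, I would use a direct--limit argument based on Remark~\ref{rem:v-h-isomorphisms}: since $v^+_s$ is an isomorphism for $s \geq g(K)$ and $h^+_s$ is an isomorphism for $s \leq -g(K)$, the contribution of summands outside a finite window to the mapping cone homology is zero, so enlarging the truncation window does not change the answer. The main obstacle will be $\spc$ bookkeeping: one must verify that the natural identification $\spc(S^3_{p/q}(K)) \cong \Z/p\Z$ arising from the mapping cone exactly matches the extension data on each two--handle cobordism, so that the summand $(s, A^+_{\lfloor (i+ps)/q\rfloor})$ maps precisely to $(s,B^+)$ and $(s+1,B^+)$ via the specified maps, and the subscripts $\lfloor(i+ps)/q\rfloor$ come out correctly. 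A related challenge is arranging the relative gradings on the direct sums so that $D^+_{i,p/q}$ lowers grading by exactly $1$; this requires carefully choosing grading shifts on each $A^+_{\lfloor(i+ps)/q\rfloor}$ and $B^+$ summand, using the grading formula for the cobordism maps involved.
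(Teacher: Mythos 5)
There is no proof of this statement in the paper to compare against: Theorem~\ref{thm:mapping-cone} is quoted verbatim from Ozsv\'ath--Szab\'o \cite[Theorem~1.1]{osz-rational} and is used as a black box throughout (e.g.\ in Proposition~\ref{prop:truncate} and \S\ref{sec:5_2-mapping-cone}). So the relevant question is only whether your sketch is a viable account of the known proof.

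Your outline captures the right ingredients for the \emph{integer} surgery formula --- iterated surgery exact triangles, identification of the connecting maps with $v^+_s$ and $h^+_s$ via Theorem~\ref{thm:large-surgeries}, and a truncation argument justified by Remark~\ref{rem:v-h-isomorphisms} --- but it does not engage with the step that makes the rational case genuinely different. For $q>1$ the manifold $S^3_{p/q}(K)$ does not sit at distance one from a pair of integer surgeries on $K$, so a single exact triangle relating it to $S^3_N(K)$ and $S^3$ is not available; the published proof instead realizes $S^3_{p/q}(K)$ as a \emph{large integer} surgery on the connected sum of $K$ with the core $O_{q/r}$ of a solid torus in a lens space, and then applies the (already established) integer surgery formula for null-homologous knots in that ambient manifold. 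The repetition of each $A^+$-value $q$ times and the indexing $\lfloor(i+ps)/q\rfloor$ fall out of computing $\cfkinfty$ of that connected sum, not from tracking $\spc$ restrictions across a single two-handle. Moreover, even in the integer case, the surgery exact triangle by itself only yields a long exact sequence; promoting this to a quasi-isomorphism with the mapping cone, and pinning down that the two components of the connecting map are exactly $v^+_s$ (projection) and $h^+_s$ (projection followed by $U^s$ and the flip map), is where most of the technical work in \cite{osz-rational} lives, and your sketch defers precisely those points. As a roadmap it is reasonable; as a proof it is missing the mechanism by which the denominator $q$ enters.
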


The $A^+_s$ complexes have homology of the form
\[ H_*(A^+_s) \cong \cT^+ \oplus \Hred(A^+_s), \]
 where $\Hred(A^+_s)$ is finitely generated over $\Q$, and the maps $v^+_s$ and $h^+_s$ restrict to surjections
\[ (v^+_s)_*, (h^+_s)_*: \cT^+ \to H_*(B^+) \cong \cT^+. \]
Each of these maps is then multiplication by some nonnegative power of $U$, and we define
\[ V_s(K), H_s(K) \in \Z_{\geq 0} \]
to be these exponents.

\begin{proposition}[\cite{ni-wu,hlz}] \label{prop:vs-hs}
The invariants $V_s = V_s(K)$ and $H_s = H_s(K)$ satisfy the following constraints.
\begin{enumerate}
\item $V_s \geq V_{s+1}$ and $H_s \leq H_{s+1}$ for all $s\in\Z$. \cite[Lemma~2.4]{ni-wu}
\item $V_s = 0$ for all $s \geq g(K)$. \cite[\S2.2]{ni-wu}
\item $V_{-s} = V_s+s$ for all $s\in\Z$. \cite[Lemma~2.5]{hlz}
\item $H_{-s} = V_s$ for all $s\in\Z$. \cite[Lemma~2.3]{hlz}
\item $V_{s+1} \leq V_s \leq V_{s+1}+1$ for all $s\in\Z$. \label{i:vs-plus-one}
\end{enumerate}
\end{proposition}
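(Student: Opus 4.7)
The first inequality $V_{s+1}\le V_s$ is a restatement of part~(1), so the plan is to establish the new bound $V_s \le V_{s+1}+1$. The strategy is to extract structural information from a short exact sequence relating $A^+_s$ and $A^+_{s+1}$ and from the action of $U$ on its long exact sequence in homology.

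The starting point is the observation that $A^+_{s+1}$ is the quotient of $A^+_s$ by the subcomplex $K_s := C\{i<0,\, j=s\}$, yielding a short exact sequence of chain complexes
\[ 0 \to K_s \to A^+_s \xrightarrow{\pi} A^+_{s+1} \to 0. \]
The critical fact is that $U$ acts as zero on $K_s$: since $U$ shifts bidegrees by $(-1,-1)$, the image $U\cdot K_s$ lies in $C\{i<-1,\, j=s-1\}$, which has already been quotiented out of $A^+_s$. Consequently $U$ also annihilates $H_*(K_s)$.

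The next step is to analyze $\pi_*$ restricted to the tower summand $\cT^+\subset H_*(A^+_s)$. Because $\cT^+$ is $U$-divisible while the reduced part of $H_*(A^+_{s+1})$ contains no nonzero divisible submodule, $\pi_*$ carries the source tower into the target tower. Combining the factorization $(v^+_s)_* = (v^+_{s+1})_*\circ \pi_*$ with the surjectivity of $(v^+_s)_*|_{\cT^+}$ shows that $\pi_*|_{\cT^+}\colon\cT^+\twoheadrightarrow\cT^+$ is itself surjective; since the submodules of $\cT^+$ are totally ordered, comparing $U$-exponents in the factorization forces its kernel to be exactly the top $V_s-V_{s+1}$ elements of the source tower.

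To conclude, suppose for contradiction that $V_s-V_{s+1}\ge 2$, and let $x_0^s, x_1^s$ denote the two topmost tower elements, so that $Ux_1^s = x_0^s\ne 0$ and $Ux_0^s = 0$. Both lie in $\ker(\pi_*)$, so by exactness $x_1^s = \iota_*(w)$ for some $w\in H_*(K_s)$. But then
\[ x_0^s = U x_1^s = U\iota_*(w) = \iota_*(Uw) = 0, \]
a contradiction. The main obstacle is the divisibility and grading analysis in the third step that pins down the kernel of $\pi_*|_{\cT^+}$; the short exact sequence setup and the final contradiction are routine once this is in place.
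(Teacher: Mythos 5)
Your proof is correct, but it takes a genuinely different route from the paper. The paper disposes of item \eqref{i:vs-plus-one} in two lines of algebra: using item (3) to write $V_s = V_{-s}-s$ and item (1) in the form $V_{-s}\leq V_{-s-1}$, it gets $V_s \leq V_{-(s+1)}-(s+1)+1 = V_{s+1}+1$ immediately. You instead give a direct homological argument from the short exact sequence $0 \to C\{i\leq -1, j=s\} \to A^+_s \xrightarrow{\pi} A^+_{s+1} \to 0$ (the same sequence the paper later exploits in Lemma \ref{lem:pi_s-exact-sequence}), the key points being that $U$ annihilates the subcomplex, that divisibility forces $\pi_*$ to map tower to tower surjectively with kernel $\ker(U^{V_s-V_{s+1}})$, and that $\ker(\pi_*)=\operatorname{im}(\iota_*)$ is killed by $U$, so $V_s-V_{s+1}\leq 1$. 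All of these steps check out: $U$ shifts $C\{i\leq-1,j=s\}$ into $C\{i\leq -2,j=s-1\}$, which vanishes in $A^+_s$; the image of the divisible module $\cT^+$ must avoid the finitely generated torsion summand $\Hred(A^+_{s+1})$ and, being a nonzero divisible submodule of $\cT^+$, must be everything; and the exponent count $V_s = k + V_{s+1}$ pins down the kernel. What each approach buys: the paper's derivation is essentially free but leans on the symmetry $V_{-s}=V_s+s$ imported from \cite{hlz}, whereas yours is self-contained, using only the structure of the $A^+_s$ complexes, and in effect reproves the statement from scratch in the style of the original references. One cosmetic remark: the elements of $\cT^+$ killed by $U$ are the \emph{bottom} of the tower in this paper's conventions, so your ``top $V_s-V_{s+1}$ elements'' should read ``bottom''; this does not affect the argument.
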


\begin{proof}
Only the inequality $V_s \leq V_{s+1}+1$ of item \eqref{i:vs-plus-one} needs to be proved.  Combining the other parts of the proposition, we have
\[ V_s = V_{-s}-s \leq V_{-s-1}-s = \left(V_{-(s+1)}-(s+1)\right)+1 = V_{s+1}+1 \]
as desired.
\end{proof}

The following results relate the invariants $V_s(K)$ and $H_s(K)$ to $\hfp(S^3_{p/q}(K))$.

\begin{theorem}[{\cite[Proposition~1.6]{ni-wu}}] \label{thm:ni-wu-d}
Given relatively prime $p,q > 0$ and an integer $i$ with $0 \leq i \leq p-1$, we have
\[ d(S^3_{p/q}(K),i) - d(S^3_{p/q}(U),i) = -2\max\big( V_{\lfloor\frac{i}{q}\rfloor}(K), H_{\lfloor\frac{i-p}{q}\rfloor}(K) \big). \]
\end{theorem}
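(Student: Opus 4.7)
The plan is to apply the mapping cone formula of Theorem~\ref{thm:mapping-cone}, which identifies $\hfp(S^3_{p/q}(K), i)$ with the homology of the cone $\bX^+_{i, p/q}$ of $D^+_{i, p/q} : \bA^+_i \to \bB^+_i$.  Since $d$-invariants are computed from the grading of the generator of the unique $\cT^+$ summand in $\hfp$, we may restrict attention to the ``tower part'' of each summand: on tower parts, $v^+_s$ and $h^+_s$ act as multiplication by $U^{V_s(K)}$ and $U^{H_s(K)}$, respectively, so the computation reduces to a concrete chain complex determined by these exponents together with the relative grading shifts among the $\bA$- and $\bB$-summands encoded in the mapping cone formula.

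Next, I would use Remark~\ref{rem:v-h-isomorphisms} to collapse the mapping cone to a finite one by cancelling paired $\bA$- and $\bB$-summands whenever $v^+$ or $h^+$ becomes a chain isomorphism (which happens for all $|s|$ sufficiently large, since the index $\lfloor(i+ps)/q\rfloor$ eventually exceeds $g(K)$ or drops below $-g(K)$).  In the resulting finite complex, the tower-part cokernel contains a single $\cT^+$ summand, whose top generator's grading I would compute directly.  The $s$-th $\bB^+$-summand receives multiplication by $U^{V_{\lfloor(i+ps)/q\rfloor}}$ from the $s$-th $\bA^+$-summand via $v^+$, and $U^{H_{\lfloor(i+p(s-1))/q\rfloor}}$ from the $(s-1)$-th $\bA^+$-summand via $h^+$.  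The monotonicities $V_{s+1} \leq V_s$ and $H_s \leq H_{s+1}$ from Proposition~\ref{prop:vs-hs} concentrate the critical bottleneck at $s=0$ when $0 \leq i \leq p-1$: the relevant exponents there are exactly $V_{\lfloor i/q\rfloor}$ and $H_{\lfloor(i-p)/q\rfloor}$.  Performing the identical analysis for the unknot---where all $V_s, H_s$ vanish and one recovers $\hfp$ of the lens space $S^3_{p/q}(U)$---computes the reference value $d(S^3_{p/q}(U), i)$, and subtracting the two calculations yields the formula.

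The main obstacle is verifying that the grading shift of the surviving tower is precisely $2\max(V_{\lfloor i/q\rfloor}, H_{\lfloor(i-p)/q\rfloor})$, rather than some more intricate function of several $V$'s and $H$'s across multiple summands.  This requires care with the absolute grading shifts between summands of the mapping cone (which are determined so that $D^+_{i,p/q}$ lowers grading by $1$), together with a cokernel computation exploiting the fact that multiplication by $U^k$ on $\cT^+$ is surjective with kernel of dimension $k$.  Combined with monotonicity and the symmetry $H_{-s} = V_s$ from Proposition~\ref{prop:vs-hs}, this pins the surviving tower generator to the $s=0$ $\bB^+$-summand at a depth controlled by the larger of the two incoming exponents at that site, which is exactly $\max(V_{\lfloor i/q\rfloor}, H_{\lfloor(i-p)/q\rfloor})$.
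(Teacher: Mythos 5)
The paper offers no proof of this statement at all---it is imported verbatim as \cite[Proposition~1.6]{ni-wu}---so your proposal can only be measured against Ni--Wu's original argument, which it does reproduce in outline: truncate the mapping cone, restrict to the towers, and isolate the bottleneck at the $t=0$ summand of $\bB^+_i$ using the monotonicity of the $V_s$ and $H_s$. That said, there are two issues worth flagging.

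The first is a slip that would derail the computation as written: since $(D^+_{i,p/q})_*$ is surjective on homology (its restriction to the towers is already surjective, exactly as in the proof of Proposition~\ref{prop:truncate}), the cokernel you propose to analyze is zero. The surviving tower lives in $\ker(D^+_{i,p/q})_*$, and what one computes is the minimal grading of its infinitely $U$-divisible part. Concretely, writing the $t$-th component of such an element as $U^{-j_t}$ times the bottom of the tower in $H_*(A^+_{\lfloor(i+pt)/q\rfloor})$, the zig-zag relations force $j_t - j_{t-1} = V_{\lfloor(i+pt)/q\rfloor} - H_{\lfloor(i+p(t-1))/q\rfloor}$; Proposition~\ref{prop:vs-hs} together with the identity $H_s = V_s + s$ shows this difference is $\geq 1$ for $t \leq -1$ and $\leq 0$ for $t \geq 1$ (here the hypothesis $0 \leq i \leq p-1$ enters), so $\max_t j_t = \max(j_{-1},j_0)$ and the bottom of the kernel tower sits exactly $2\max\big(V_{\lfloor i/q\rfloor},H_{\lfloor(i-p)/q\rfloor}\big)$ below the reference position determined by the $t=0$ copy of $B^+$. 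This confirms your ``bottleneck at $s=0$'' heuristic, but it is a kernel computation, not a cokernel one. The second issue is more substantive: Theorem~\ref{thm:mapping-cone} as quoted here is only a \emph{relatively} graded isomorphism (the paper explicitly says it uses the present theorem to recover absolute gradings), so your final step of ``subtracting the two calculations'' for $K$ and for $U$ silently assumes that the absolute gradings of the corresponding $\bB^+$-summands coincide for the two knots. This is true---each $B^+$ is a copy of $\cfp(S^3)$ whose grading shift is fixed by the surgery cobordism independently of the knot---but it is an input from Ozsv\'ath--Szab\'o's construction of the absolutely graded mapping cone that must be invoked explicitly; it cannot be deduced from the statements available in this paper alone.
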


\begin{lemma} \label{lem:ker-v-g-1}
If $K \subset S^3$ has genus $g\geq 1$, then there is a short exact sequence%
\[ 0 \to \hfkhat_{*+2}(K,g) \to H_*(A^+_{g-1}) \xrightarrow{(v^+_{g-1})_*} H_*(B^+) \to 0 \]
of $\Q[U]$-modules, in which $\hfkhat(K,g)$ has trivial $U$-action and $A^+_{g-1}$ and $B^+$ are equipped with absolute gradings as quotients of $\cfkinfty(K)$.  In particular, for $N \geq 2g-1$ we have $U \cdot \hfred(S^3_N(K), g-1) = 0$ and
\[ \dim \hfred(S^3_N(K),g-1) = \dim \hfkhat(K,g) - V_{g-1}(K). \]
\end{lemma}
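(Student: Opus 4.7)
The plan is to exhibit a short exact sequence of chain complexes whose kernel is $C\{-1,g-1\}$, and then to check that the induced long exact sequence collapses because $(v^+_{g-1})_*$ is already surjective.

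First I would identify the chain-level kernel of the quotient map $v^+_{g-1}\colon A^+_{g-1} \to B^+$, namely $A^+_{g-1}\cap C\{i<0\} = C\{i<0,\ j\geq g-1\}$. Because each subquotient $C\{i,j\}$ of $\cfkinfty$ is naturally isomorphic to $\hfkhat(K,j-i)$ and so vanishes unless $|j-i|\leq g$, the constraints $i\leq -1$ and $j\geq g-1$ leave only $(i,j)=(-1,g-1)$, where $j-i=g$ is tight.  So the kernel is the single box $C\{-1,g-1\} = U\cdot C\{0,g\}$, identified as a graded vector space with $\hfkhat_{*+2}(K,g)$ since $U$ lowers Maslov grading by $2$.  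The same box-counting, applied to $\partial$ on $C\{-1,g-1\}$ inside $A^+_{g-1}$, shows $C\{-1,g-1\}$ is a subcomplex, and its internal differential is trivial by the reduction lemma of \cite[\S2.1]{hedden-watson}.  This yields a short exact sequence of chain complexes
\[0 \to C\{-1,g-1\} \to A^+_{g-1} \xrightarrow{v^+_{g-1}} B^+ \to 0.\]

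Next I would promote this to a short exact sequence on homology by verifying that $(v^+_{g-1})_*$ is surjective.  By the definition of $V_{g-1}(K)$, its restriction to the tower $\cT^+ \subset H_*(A^+_{g-1})$ is multiplication by $U^{V_{g-1}(K)}$, and since $U$ acts surjectively on $\cT^+ = \Q[U,U^{-1}]/U\Q[U]$, any power of $U$ does too.  Hence $(v^+_{g-1})_*$ surjects onto $H_*(B^+)=\cT^+$ and the connecting homomorphism vanishes, producing the claimed SES.

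For the $U$-action triviality and the dimension formula, observe that $U\cdot C\{-1,g-1\} \subset C\{-2,g-2\}$, which lies outside $A^+_{g-1}$ (since $\max(-2,-1)=-1<0$); hence $U$ acts as zero on $C\{-1,g-1\}$ as a subcomplex of $A^+_{g-1}$.  The injection $\iota_*\colon \hfkhat_{*+2}(K,g) \hookrightarrow H_*(A^+_{g-1})$ is therefore $\Q[U]$-linear with source annihilated by $U$, so its image $\ker(v^+_{g-1})_*$ is $U$-annihilated.  This kernel decomposes as a vector space into the $V_{g-1}(K)$-dimensional kernel of $U^{V_{g-1}(K)}$ on the tower together with all of $\Hred(A^+_{g-1})$, so exactness gives $\dim\hfkhat(K,g) = V_{g-1}(K) + \dim\Hred(A^+_{g-1})$ and in particular $U\cdot\Hred(A^+_{g-1}) = 0$.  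Applying the large surgery formula (Theorem~\ref{thm:large-surgeries}, valid because $|g-1|\leq N/2$ whenever $N\geq 2g-1$) as an isomorphism of relatively graded $\Q[U]$-modules transfers both conclusions to $\hfred(S^3_N(K),g-1)$.  The main subtle point is the $U$-bookkeeping: the a priori tension between the trivial $U$-action on $\iota_*$'s source and the a priori nontrivial $U$-action on the tower piece of $\ker(v^+_{g-1})_*$ is reconciled by Proposition~\ref{prop:vs-hs}(5), which forces $V_{g-1}(K)\in\{0,1\}$, so the tower contribution to the kernel is at most the class $1\in\cT^+$, on which $U$ does act as zero.
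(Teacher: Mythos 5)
Your proof is correct and follows essentially the same route as the paper's: the short exact sequence of complexes $0 \to C\{-1,g-1\} \to A^+_{g-1} \xrightarrow{v^+_{g-1}} B^+ \to 0$, the identification of the kernel with $U\cdot C\{0,g\} \cong \hfkhat_{*+2}(K,g)$ with trivial $U$-action, the splitting of the long exact sequence via surjectivity of $(v^+_{g-1})_*$ on the tower, and the same dimension count for $\hfred(S^3_N(K),g-1)$ via the large surgery formula. Your additional observations --- the box-counting justification that only $C\{-1,g-1\}$ survives in the chain-level kernel, and the closing remark that $V_{g-1}(K)\in\{0,1\}$ reconciles the $U$-actions --- are correct refinements of details the paper leaves implicit.
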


\begin{proof}
The short exact sequence is \cite[Lemma~3.3]{osz-characterizing}.  To prove it, we use the short exact sequence of chain complexes
\begin{equation} \label{eq:hfk-top-sequence}
0 \to C\{-1,g-1\} \to A^+_{g-1} \xrightarrow{v^+_{g-1}} B^+ \to 0
\end{equation}
defined by the natural inclusion and projection maps, which induces a long exact sequence
\[ \dots \to H_*(C\{-1,g-1\}) \to H_*(A^+_{g-1}) \xrightarrow{(v^+_{g-1})_*} H_*(B^+) \to \dots \]
on homology.  The complex $C\{-1,g-1\}$ has zero differential and trivial $U$ action, and it is the image under $U$ of
\[ C\{0,g\} \cong \hfkhat(K,g), \]
hence its homology is just $\hfkhat_{*+2}(K,g)$.  Meanwhile we know that $H_*(B^+) \cong \cT^+$, and $v^+_{g-1}$ is an isomorphism in all sufficiently large gradings, so it follows that $H_*(A^+_{g-1})$ also contains a tower $\cT^+$ which surjects onto $H_*(B^+)$.  Thus the long exact sequence splits.

The claim about $\dim \hfred(S^3_N(K),g-1)$ now follows quickly from Theorem~\ref{thm:large-surgeries}, because we can identify $\ker v^+_{g-1}$ with all of $\hfred(S^3_N(K),g-1)$ plus whatever portion of $\cT^+ \subset H_*(A^+_{g-1})$ is in the kernel, and the latter has dimension $V_{g-1}$ by definition.
\end{proof}

Although Theorem~\ref{thm:mapping-cone} as stated only determines the relative grading on $\hfp(S^3_{p/q}(K))$, we can use the integers $V_s$ and $H_s$ to recover the absolute grading by Theorem \ref{thm:ni-wu-d}.

\begin{proposition} \label{prop:compare-gradings}
Suppose for some knots $K,K' \subset S^3$ and some relatively prime $p,q > 0$ that
\[ \hfp(S^3_{p/q}(K)) \cong \hfp(S^3_{p/q}(K')) \]
as graded $\Q[U]$-modules.  Then we have $\Delta''_K(1) = \Delta''_{K'}(1)$.
Moreover, if $g(K)=1$ then $V_0(K)=V_0(K')$, and if in addition $\frac{p}{q} > 1$ then $V_s(K')=0$ for all $s \geq 1$.
\end{proposition}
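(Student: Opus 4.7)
The $\spc$-respecting isomorphism matches the multisets of $d$-invariants and graded $\hfp_{\mathrm{red}}$-summands for $S^3_{p/q}(K)$ and $S^3_{p/q}(K')$, and in particular the global sums $\sum_{i=0}^{p-1} d(S^3_{p/q}(\cdot), i)$ agree.  For the first claim, the Casson--Walker invariant $\lambda$ of a rational homology sphere is determined by its $d$-invariants and the graded Euler characteristics of $\hfp_{\mathrm{red}}$ (via a formula of Ozsv\'ath--Szab\'o and Rustamov), so $\lambda(S^3_{p/q}(K)) = \lambda(S^3_{p/q}(K'))$.  Walker's surgery formula $\lambda(S^3_{p/q}(K)) - \lambda(L(p,q)) = \tfrac{q}{2p}\Delta''_K(1)$ then yields $\Delta''_K(1) = \Delta''_{K'}(1)$.

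For the remaining claims, assume $g(K)=1$, so Proposition~\ref{prop:vs-hs} forces $V_s(K) = 0$ for $s \geq 1$, $V_0(K) \in \{0,1\}$, and $H_s(K) = V_s(K)+s$ (in particular $H_s(K) = 0$ for $s \leq -1$).  Noting that $\lfloor(i-p)/q\rfloor \leq -1$ for $i \in [0,p-1]$, Theorem~\ref{thm:ni-wu-d} gives that the shift $d(S^3_{p/q}(K),i) - d(L(p,q),i)$ equals $-2V_0(K)$ if $i < q$ and $0$ otherwise; summing yields $-2\min(p,q)V_0(K)$.  Writing $v_s = V_s(K')$ and using $H_s(K') = V_{-s}(K')$, the corresponding $K'$-side sum is $-2\sum_i \max\bigl(v_{\lfloor i/q\rfloor},\, v_{\lceil (p-i)/q\rceil}\bigr)$.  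Since $v_0 \geq v_1 \geq \cdots \geq 0$, each summand is at most $v_0$, and at each of the $\min(p,q)$ indices with $i < q$ it equals exactly $v_0$.  This gives the sandwich
\[ \min(p,q)\, v_0 \,\leq\, \textstyle\sum_i \max(\cdots) \,\leq\, p\, v_0, \]
which combined with the matching sums forces $v_0 \leq V_0(K) \leq \tfrac{p}{\min(p,q)}\, v_0$.  Because $V_0(K) \in \{0,1\}$ and $v_0$ is a nonnegative integer, this pinches to $v_0 = V_0(K)$, proving $V_0(K') = V_0(K)$.

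For the third claim, assume in addition that $p/q > 1$, so $\min(p,q) = q < p$.  Since $v_0 = V_0(K)$, the $q$ summands with $i < q$ contribute exactly $qV_0(K)$ to the total, which matches the total $qV_0(K)$.  The remaining $p-q$ nonnegative summands (for $i \in [q,p-1]$) therefore each vanish; in particular $v_{\lfloor i/q\rfloor} = 0$ at $i = q$, yielding $v_1 = 0$ and hence $v_s = 0$ for all $s \geq 1$ by monotonicity.  The main potential obstacle is citing the precise Ozsv\'ath--Szab\'o/Rustamov expression for $\lambda$ in terms of Heegaard Floer data along with the correct normalization in Walker's surgery formula; once these are in place, the remainder is an elementary analysis of the Ni--Wu formula using the very restrictive behavior of $V_s$ forced by $g(K)=1$.
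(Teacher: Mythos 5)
Your proof is correct and follows essentially the same route as the paper: Rustamov's formula for the Casson--Walker invariant plus Walker's surgery formula for the first claim, and the Ni--Wu $d$-invariant formula summed over $\spc$ structures for the statements about $V_s$. Your sandwich $\min(p,q)\,v_0 \leq \sum_i \max(\cdots) \leq p\,v_0$ and the ``remaining summands all vanish'' step are just mild repackagings of the paper's inequalities \eqref{eq:sum-d-k-surgery}--\eqref{eq:sum-d-kp-surgery} and its analysis at $i=q$.
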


\begin{proof}
Let $Y$ be the rational homology 3-sphere $S^3_{p/q}(K)$.  Rustamov \cite[Theorem~3.3]{rustamov} proved that its Casson--Walker invariant satisfies
\[ |H_1(Y;\Z)| \lambda(Y) = \sum_{\spinc \in \spc(Y)} \left( \chi(\hfred(Y,\spinc)) - \tfrac{1}{2}d(Y,\spinc) \right), \]
and the right hand side is completely determined by $\hfp(Y)$, hence so is $\lambda(Y)$.  The surgery formula for the Casson--Walker invariant \cite[Theorem~4.2]{walker} then says that
\[ \lambda(Y) - \lambda(S^3_{p/q}(U)) = \frac{q}{p} \frac{\Delta''_K(1)}{2}, \]
so we conclude that $\Delta''_K(1)$ is determined by $\frac{p}{q}$ and $\hfp(S^3_{p/q}(K))$.  By hypothesis the same data determines $\Delta''_{K'}(1)$ in exactly the same way, so these second derivatives are equal.

Now suppose that $g(K)=1$.  Then Proposition~\ref{prop:vs-hs} says that $V_s(K)=H_{-s}(K) = 0$ for all $s \geq 1$, and then that $V_0(K)$ is either $0$ or $1$ since $V_1(K)=0$. We therefore have
\[ d(S^3_{p/q}(K),i) - d(S^3_{p/q}(U),i) = \begin{cases} -2V_0(K), & 0 \leq i \leq \min(p,q)-1 \\ 0, & \min(p,q) \leq i \leq p-1 \end{cases} \]
by Theorem~\ref{thm:ni-wu-d}.  It follows that
\begin{equation} \label{eq:sum-d-k-surgery}
\sum_{i\in\Z/p\Z} \left(d(S^3_{p/q}(K),i) - d(S^3_{p/q}(U),i)\right) = -2V_0(K) \cdot \min(p,q).
\end{equation}
By the same argument we have
\begin{equation} \label{eq:sum-d-kp-surgery}
\sum_{i\in\Z/p\Z} \left(d(S^3_{p/q}(K'),i) - d(S^3_{p/q}(U),i)\right) \leq -2V_0(K') \cdot \min(p,q),
\end{equation}
and the left sides of \eqref{eq:sum-d-k-surgery} and \eqref{eq:sum-d-kp-surgery} are equal, so $V_0(K') \leq V_0(K) \leq 1$.    If $V_0(K') = 0$ then $V_s(K')=0$ for all $s \geq 0$, so the left side of \eqref{eq:sum-d-kp-surgery} is equal to $0$, hence $V_0(K) = 0$ as well.  Otherwise $V_0(K')=1$ implies that $V_0(K)=1$, so in any case we have $V_0(K)=V_0(K')$.

Finally, if $V_0(K) = V_0(K') = 1$ and $p > q$ then we have by Theorem \ref{thm:ni-wu-d} that
\begin{align*}
d(S^3_{p/q}(K'),q) - d(S^3_{p/q}(U),q) &= -2\max(V_1(K'),H_{\lfloor\frac{q-p}{q}\rfloor}(K')) \\
&\leq -2 V_1(K'),
\end{align*}
which implies that the left side of \eqref{eq:sum-d-kp-surgery} is at most $-2q V_0(K') - 2V_1(K')$.  But this is equal to the left side of \eqref{eq:sum-d-k-surgery},
which is equal to 
\[ -2V_0(K)\cdot q = -2q V_0(K'), \]
so we must have $V_1(K') = 0$.  Then $V_s(K')=0$ for all $s \geq 1$ by Proposition~\ref{prop:vs-hs}.
\end{proof}

\subsection{Computations for nearly fibered knots} \label{sec:compute-nearly-fibered}

In this subsection we work out some examples of the large surgery formula.  Let $K$ be a genus-1 knot for which $\hfkhat(K,1)$ is 2-dimensional.  Then $K$ is one of the knots listed in Theorem~\ref{thm:main-hfk}, with $\hfkhat(K)$ shown in Table~\ref{fig:hfk-table}, and in every case there is some integer $m\in\Z$ such that
\[ \hfkhat(K,1) \cong \Q^2_{(m)}, \]
where the subscripts denote the Maslov grading.  (For the mirrors of the knots in Table~\ref{fig:hfk-table}, this follows from the relation $\hfkhat_m(\mirror{K},a) \cong \hfkhat_{-m}(K,-a)$.)

We first determine $\hfp(S^3_1(K))$ in the cases where $K$ is either $5_2$ or its mirror.

\begin{proposition} \label{prop:1-surgery-5_2}
We have 
\[ \hfp(S^3_1(5_2)) \cong \cT^{+}_{(0)} \oplus \Q^{2\vphantom{+}}_{(0)} \quad\text{and}\quad \hfp(S^3_1(\mirror{5_2})) \cong \cT^+_{(-2)} \oplus \Q^{\vphantom{+}}_{(-2)} \]
as graded $\Q[U]$-modules.
\end{proposition}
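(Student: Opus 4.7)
The plan is to exploit the fact that $g(5_2)=1$ to apply Theorem~\ref{thm:large-surgeries} with $p=1=2g-1$, giving graded $\Q[U]$-module isomorphisms $\hfp(S^3_1(K))\cong H_*(A^+_0(K))$ for $K\in\{5_2,\mirror{5_2}\}$.  Lemma~\ref{lem:ker-v-g-1} with $g=1$ then provides a short exact sequence of graded $\Q[U]$-modules
\[
0 \to \hfkhat_{*+2}(K,1) \to H_*(A^+_0(K)) \xrightarrow{(v^+_0)_*} H_*(B^+) \cong \cT^+_{(0)} \to 0
\]
on whose kernel $U$ acts trivially.  From Table~\ref{fig:hfk-table}, together with the mirror formula $\hfkhat_m(\mirror{K},a)=\hfkhat_{-m}(K,-a)$ to handle $\mirror{5_2}$ and the Maslov shift of $-2$ encoded in $\hfkhat_{*+2}$, this kernel is $\Q^2_{(0)}$ for $5_2$ and $\Q^2_{(-2)}$ for $\mirror{5_2}$.

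To finish I need $V_0(K)$, which controls both $d(S^3_1(K))=-2V_0(K)$ via Theorem~\ref{thm:ni-wu-d} and the action of $(v^+_0)_*$ on towers, namely multiplication by $U^{V_0(K)}$.  Proposition~\ref{prop:vs-hs} forces $V_0(K)\in\{0,1\}$ when $g=1$, and the two knots are distinguished by where the Maslov-zero class in $H_*(C\{i=0\})\cong\hfhat(S^3)\cong\Q_{(0)}$ must lie: from the Maslov gradings in Table~\ref{fig:hfk-table}, this class sits at $j=-1$ for $5_2$ (so $\tau(5_2)=-1$ and hence $V_0(5_2)=0$) and at $j=+1$ for $\mirror{5_2}$ (so $\tau(\mirror{5_2})=1$ and hence $V_0(\mirror{5_2})=1$).

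For $5_2$ the map $(v^+_0)_*$ is an isomorphism on towers, so the kernel of $(v^+_0)_*$ is exactly $\hfred=\Q^2_{(0)}$ and $\hfp(S^3_1(5_2))\cong\cT^+_{(0)}\oplus\Q^2_{(0)}$.  For $\mirror{5_2}$ the tower in $H_*(A^+_0)$ sits at $\cT^+_{(-2)}$ (shifted so that $(v^+_0)_*=U^1$ lands in grading $0$); its bottom $\Q_{(-2)}$ lies in $\ker(v^+_0)_*$, and the remaining $\Q_{(-2)}$ inside the $2$-dimensional kernel is $\hfred$, yielding $\hfp(S^3_1(\mirror{5_2}))\cong\cT^+_{(-2)}\oplus\Q_{(-2)}$.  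Each short exact sequence splits as graded $\Q[U]$-modules because $U$ acts trivially on $\hfred$.

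The hard part is really just identifying $V_0$, which reduces to reading $\tau$ off Table~\ref{fig:hfk-table}; everything else is bookkeeping with absolute Maslov gradings and the $\Q[U]$-module structure, combined with Theorems~\ref{thm:large-surgeries} and~\ref{thm:ni-wu-d} and Lemma~\ref{lem:ker-v-g-1}.
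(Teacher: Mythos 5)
Your skeleton is close to the paper's: both arguments run through Theorem~\ref{thm:large-surgeries}, the short exact sequence of Lemma~\ref{lem:ker-v-g-1} (which identifies $\ker(v^+_0)_*$ with $\hfkhat_{*+2}(K,1)$), and the formula $d(S^3_1(K))=-2V_0(K)$ from Theorem~\ref{thm:ni-wu-d}, so everything does reduce to the kernel (read off Table~\ref{fig:hfk-table}) plus the single integer $V_0(K)$. Your extraction of $\tau$ from the table is correct, and the chain $\tau(\mirror{5_2})=1\Rightarrow \nu^+(\mirror{5_2})\geq 1\Rightarrow V_0(\mirror{5_2})\geq 1$, combined with $V_0\leq V_1+1=1$ in genus one, legitimately gives $V_0(\mirror{5_2})=1$. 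The $\Q[U]$-module bookkeeping at the end is also fine.

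The gap is the step ``$\tau(5_2)=-1$ and hence $V_0(5_2)=0$.'' The inequality $\tau\leq\nu\leq\nu^+$ only runs in one direction: a positive $\tau$ forces $V_0>0$, but $\tau\leq 0$ does \emph{not} force $V_0=0$. Indeed $\nu^+(K)=\min\{s:V_s(K)=0\}$ can strictly exceed $\max(\tau(K),0)$ --- this is precisely the phenomenon that motivated Hom and Wu to introduce $\nu^+$ --- so no general theorem gets you from $\tau(5_2)=-1$ to $V_0(5_2)=0$. Since in genus one you only know $V_0(5_2)\in\{0,1\}$, and the two values produce different answers ($\cT^+_{(0)}\oplus\Q^2_{(0)}$ versus a module of the shape $\cT^+_{(-2)}\oplus\Q_{(-2)}$), this is the crux of the $5_2$ half and cannot be waved through. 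The paper fills exactly this hole by using that $5_2$ is alternating, hence thin: thinness together with $(\partial^\infty)^2=0$ forces the entire complex $\cfkinfty(5_2)$ of Figure~\ref{fig:cfk-5_2}, from which one sees directly that $(v^+_0)_*$ is an isomorphism on towers, i.e.\ $V_0(5_2)=0$. Alternatively you could invoke the Ozsv\'ath--Szab\'o formula for correction terms of surgeries on alternating knots (equivalently $V_0=\max(0,\lceil -\sigma/4\rceil)$). Either way, some input beyond the value of $\tau$ is required here.
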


\begin{proof}
In these cases $K$ is alternating, so $\hfkhat(K)$ is \emph{thin} --- there is some $s\in\Z$ such that each $\hfkhat(K,a)$ is supported in homological grading $a-s$ --- and for alternating knots we have $s=-\frac{1}{2}\sigma(K)$ \cite[Theorem~1.3]{osz-alternating}, where $\sigma(K)$ is the signature.
(This uses the convention that positive knots such as $\mirror{5_2}$ have negative signature, so $\sigma(\mirror{5_2}) = -2$ and $\sigma(5_2) = 2$.)  In this case the differential on $\cfkinfty(K)$ has a fairly simple form, namely
\[ \partial^\infty\left( C\{i_0,j_0\} \right) \subset C\{i_0-1,j_0\} \oplus C\{i_0,j_0-1\}, \]
by the fact that $\deg(\partial^\infty)=-1$.  Since $H_*(C\{i=0\}) \cong \Q$ is supported at Alexander grading $j=\tau(K)$ in homological grading $0$, we have $\tau(K) = -\frac{1}{2}\sigma(K)$, so
\[ \tau(5_2) = -1 \quad\text{and}\quad \tau(\mirror{5_2}) = 1. \]

We can therefore find bases for the complexes $(C\{i=0\}, \partial')$ so that they are represented by the diagrams
\[ \begin{tikzpicture}
\foreach \j in {-1,0,1} {
  \node[left] at (-2,\j) {$j=\j$:};
}
\begin{scope}
\foreach \i/\j in {-1/1,1/1,-2/0,0/0,2/0,-1/-1,1/-1} {
  \draw[fill=black] (0.15*\i,\j) coordinate (x\i_\j) circle (0.075);
}
\draw[-latex] ($(x-1_1)+(0,-0.125)$) -- ($(x0_0)+(0,0.1)$);
\draw[-latex] ($(x1_1)+(0,-0.125)$) -- ($(x2_0)+(0,0.1)$);
\draw[-latex] ($(x-2_0)+(0,-0.125)$) -- ($(x-1_-1)+(0,0.1)$);
\end{scope}
\node at (1.5,0) {or};
\begin{scope}[xshift=3cm]
\foreach \i/\j in {-1/1,1/1,-2/0,0/0,2/0,-1/-1,1/-1} {
  \draw[fill=black] (0.15*\i,\j) coordinate (x\i_\j) circle (0.075);
}
\draw[-latex] ($(x1_1)+(0,-0.125)$) -- ($(x2_0)+(0,0.1)$);
\draw[-latex] ($(x-2_0)+(0,-0.125)$) -- ($(x-1_-1)+(0,0.1)$);
\draw[-latex] ($(x0_0)+(0,-0.125)$) -- ($(x1_-1)+(0,0.1)$);
\end{scope}
\end{tikzpicture} \]
for $5_2$ and $\mirror{5_2}$ respectively.  (Here each dot represents a generator of a $\Q$ summand, and an arrow of the form ``$\bullet\to\bullet$'' means that the corresponding generators $x$ and $y$ satisfy $\partial'x=y$.)  In turn, this together with the chain homotopy equivalence $C\{i=0\} \simeq C\{j=0\}$ and the requirement that $(\partial^\infty)^2=0$ completely determines $\cfkinfty(K)$ for each of these knots $K$.

\begin{figure}
\begin{tikzpicture}
\begin{scope}[scale=0.8] 
\begin{scope}
\clip (-3.3,-3.3) rectangle (3.3,3.3);
\path[fill=gray!10] (-4,-4) rectangle (4,4);
\path[fill=white] (-0.5,-0.5) rectangle (-4,-4);
\draw[gray!30,step=1.5] (-4,-4) grid (4,4);
\end{scope}
\draw[gray!50,very thick,<->] (-3.5,0) -- (3.5,0) node[right,black] {\small$i$};
\draw[gray!50,very thick,<->] (0,-3.5) -- (0,3.5) node[above,black] {\small$j$};
\begin{scope}
\clip (-3.3,-3.3) rectangle (3.3,3.3);
\foreach \i in {-2,...,2} {
  \foreach \j in {0,1} {
    \draw[fill=black] (1.5*\i,1.5*\i+1.5) ++ (0.2*\j,-0.2*\j) coordinate (a1_i\i_j\j) circle (0.075);
  }
  \foreach \j in {-1,0,1} {
    \draw[fill=black] (1.5*\i,1.5*\i) ++ (0.2*\j,0.2*\j) coordinate (a0_i\i_j\j) circle (0.075);
  }
  \foreach \j in {-1,0} {
    \draw[fill=black] (1.5*\i,1.5*\i-1.5) ++ (0.2*\j,-0.2*\j) coordinate (a-1_i\i_j\j) circle (0.075);
  }
}
\foreach \i in {-2,-1,0,1} { 
  \draw[-latex] ($(a1_i\i_j0)+(0,-0.125)$) -- ($(a0_i\i_j0)+(0,0.1)$); 
}
\foreach \i/\k in {-1/-2,0/-1,1/0,2/1} { 
  \draw[-latex] ($(a-1_i\i_j0)+(-0.125,0)$) -- ($(a0_i\k_j0)+(0.1,0)$); 
  \draw[-latex] ($(a0_i\i_j-1)+(-0.125,0)$) -- ($(a1_i\k_j1)+(0.1,0)$); 
  \draw[-latex] ($(a0_i\i_j-1)+(0,-0.125)$) -- ($(a-1_i\i_j-1)+(0,0.1)$); 
  \draw[-latex] ($(a1_i\k_j1)+(0,-0.125)$) -- ($(a0_i\k_j1)+(0,0.1)$); 
  \draw[-latex] ($(a-1_i\i_j-1)+(-0.125,0)$) -- node[pos=0.45,above,inner sep=1pt,outer sep=0pt] {\tiny$-$} ($(a0_i\k_j1)+(0.1,0)$); 
}
\node[above left,inner sep=2pt] at (a1_i-1_j0) {\small$a$};
\node[left,inner sep=3pt] at ($(a1_i0_j0)+(0,0.075)$) {\scriptsize$U^{-1}a$};
\node[left,inner sep=3pt] at ($(a1_i1_j0)+(0,0.075)$) {\scriptsize$U^{-2}a$};
\node[below right,inner sep=2pt] at (a-1_i0_j0) {\small$d$};
\node[below right,inner sep=2pt] at (a-1_i1_j0) {\scriptsize$U^{-1}d$};
\node[below left,inner sep=3pt] at ($(a-1_i2_j0)+(0.375,0.025)$) {\scriptsize$U^{-2}d$};
\draw[thin,->] (-1.5,0.75) node[left,inner sep=2pt] {\small$b$} to[bend left=50] ($(a1_i-1_j1)+(0.025,0.1)$);
\draw[thin,->] (0.75,-1.625) node[below,inner sep=2pt] {\small$c$} to[bend right=50] ($(a-1_i0_j-1)+(0.1,0.025)$);
\end{scope}
\node[below] at (0,-3.5) {$\cfkinfty(5_2)\vphantom{\mirror{5_2}}$};
\end{scope}

\begin{scope}[scale=0.8,xshift=8cm] 
\begin{scope}
\clip (-3.3,-3.3) rectangle (3.3,3.3);
\path[fill=gray!10] (-4,-4) rectangle (4,4);
\path[fill=white] (-0.5,-0.5) rectangle (-4,-4);
\draw[gray!30,step=1.5] (-4,-4) grid (4,4);
\end{scope}
\draw[gray!50,very thick,<->] (-3.5,0) -- (3.5,0) node[right,black] {\small$i$};
\draw[gray!50,very thick,<->] (0,-3.5) -- (0,3.5) node[above,black] {\small$j$};
\begin{scope}
\clip (-3.3,-3.3) rectangle (3.3,3.3);
\foreach \i in {-2,...,2} {
  \foreach \j in {0,1} {
    \draw[fill=black] (1.5*\i,1.5*\i+1.5) ++ (0.2*\j,-0.2*\j) coordinate (a1_i\i_j\j) circle (0.075);
  }
  \foreach \j in {-1,0,1} {
    \draw[fill=black] (1.5*\i,1.5*\i) ++ (0.2*\j,0.2*\j) coordinate (a0_i\i_j\j) circle (0.075);
  }
  \foreach \j in {-1,0} {
    \draw[fill=black] (1.5*\i,1.5*\i-1.5) ++ (0.2*\j,-0.2*\j) coordinate (a-1_i\i_j\j) circle (0.075);
  }
}
\foreach \i/\k in {-1/-2,0/-1,1/0,2/1} { 
  \draw[-latex] ($(a0_i\i_j0)+(-0.125,0)$) -- ($(a1_i\k_j0)+(0.1,0)$); 
  \draw[-latex] ($(a0_i\i_j0)+(0,-0.125)$) -- ($(a-1_i\i_j0)+(0,0.1)$); 
  \draw[-latex] ($(a0_i\i_j-1)+(-0.125,0)$) -- ($(a1_i\k_j1)+(0.1,0)$); 
  \draw[-latex] ($(a0_i\i_j-1)+(0,-0.125)$) -- ($(a-1_i\i_j-1)+(0,0.1)$); 
  \draw[-latex] ($(a1_i\k_j1)+(0,-0.125)$) -- ($(a0_i\k_j1)+(0,0.1)$); 
  \draw[-latex] ($(a-1_i\i_j-1)+(-0.125,0)$) -- node[pos=0.45,above,inner sep=1pt,outer sep=0pt] {\tiny$-$} ($(a0_i\k_j1)+(0.1,0)$); 
}
\node[above left,inner sep=2pt] at (a1_i-1_j0) {\small$x$};
\node[left,inner sep=3pt] at ($(a1_i0_j0)+(0,0.075)$) {\scriptsize$U^{-1}x$};
\node[left,inner sep=3pt] at ($(a1_i1_j0)+(0,0.075)$) {\scriptsize$U^{-2}x$};
\draw[thin,->] (a1_i-1_j1) ++(-1,-0.2) node[left, inner sep=2pt] {\small$y$} to[bend right=20] ++(0.9,0.2);
\end{scope}
\node[below] at (0,-3.5) {$\cfkinfty(\mirror{5_2})$};
\end{scope}
\end{tikzpicture}
\caption{The complexes $(\cfkinfty(K),\partial^\infty)$ for $K=5_2$ and $K=\mirror{5_2}$, with $A^+_0$ shaded.  The dots represent generators of $C\{i,j\}$, all of which lie in grading $i+j+\frac{\sigma(K)}{2}$.  Minus signs on arrows indicate a coefficient of $-1$.}
\label{fig:cfk-5_2}
\end{figure}
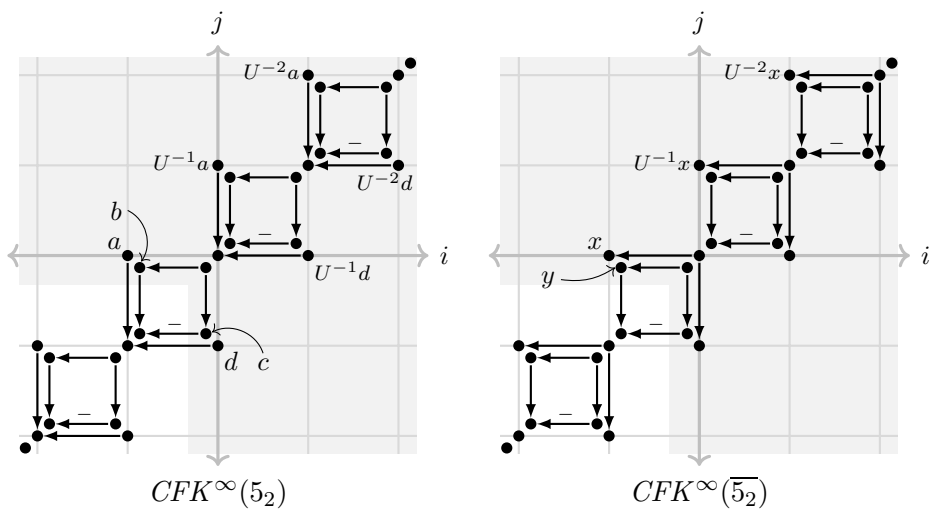

Now by inspecting Figure~\ref{fig:cfk-5_2} we see that
\begin{equation} \label{eq:5_2-A0}
H_*(A^+_0(5_2)) \cong \frac{\Q[U,U^{-1}]}{U\cdot \Q[U]} \langle d-a\rangle \oplus \Q\langle a,b \rangle \cong \cT^{+}_{(0)} \oplus \Q^{2\vphantom{+}}_{(0)},
\end{equation} since the indicated elements $a,b,d$ all have homological grading $-1+\frac{\sigma(5_2)}{2} = 0$.  The homology $H_*(B^+(5_2)) \cong \cT^+$ has bottom-most element $[d] = (v^+_0)_*([d-a])$, so then $(v^+_0)_*|_{\cT^+}: \cT^+ \to \cT^+$ is an isomorphism and we have $V_0(5_2) = 0$.  Now Theorem~\ref{thm:large-surgeries} says that $\hfp(S^3_1(5_2)) \cong H_*(A^+_0(5_2))$ as relatively graded groups, while Theorem~\ref{thm:ni-wu-d} says that the tower $\cT^+$ in $\hfp(S^3_1(5_2))$ has bottom-most grading $d(S^3_1(5_2))=-2V_0(5_2) = 0$, so we conclude that $\hfp(S^3_1(5_2))$ is exactly as claimed.

Similarly, we see from Figure~\ref{fig:cfk-5_2} that
\[ H_*(A^+_0(\mirror{5_2})) \cong \frac{\Q[U,U^{-1}]}{U\cdot \Q[U]} \langle x\rangle \oplus \Q\langle y\rangle \cong \cT^+_{(-2)} \oplus \Q^{\vphantom{+}}_{(-2)}, \]
since the indicated elements $x,y \in C\{-1,0\}$ have homological grading $-1+\frac{\sigma(\mirror{5_2})}{2} = -2$.  The kernel of $(v^+_0)_*$ contains $[x]$ but not $[U^{-1}x]$, so the restriction $(v^+_0)_*|_{\cT^+}: \cT^+ \to \cT^+$ is multiplication by $U$, hence $V_0(\mirror{5_2}) = 1$.  Now we conclude exactly as before that $d(S^3_1(\mirror{5_2})) = -2V_0(\mirror{5_2}) = -2$ and hence that $\hfp(S^3_1(\mirror{5_2}))$ is exactly as claimed.
\end{proof}

For the knots of Theorem~\ref{thm:main-hfk} other than $5_2$ and $\mirror{5_2}$, it is a little bit harder to determine $\cfkinfty(K)$.  We will avoid this problem by using the large surgery formula to compute $\hfhat(S^3_1(K))$, and then deducing $\hfp(S^3_1(K))$ from this in Proposition~\ref{prop:nf-1-surgery-plus}.

\begin{proposition} \label{prop:nf-1-surgery-hat}
Let $K$ be a genus-1 knot for which $\hfkhat(K,1) \cong \Q^2_{(m_0+1)}$.  If $K$ is neither $5_2$ nor its mirror, then $\tau(K)=0$ and
\[ \hfhat(S^3_1(K)) \cong \Q_{(0)} \oplus \left(\Q_{(m_0)} \oplus \Q_{(m_0-1)}\right)^{\oplus 2} \]
as relatively graded $\Q$-vector spaces.
\end{proposition}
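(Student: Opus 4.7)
The plan is to apply Theorem~\ref{thm:large-surgeries} at $p=1$, which identifies $\hfhat(S^3_1(K))\cong H_*(\hat{A}_0)$ since $2g(K)-1=1$.  For $g(K)=1$ the complex $\hat{A}_0=C\{\max(i,j)=0\}$ has underlying graded vector space $C\{0,0\}\oplus C\{0,-1\}\oplus C\{-1,0\}$, where $C\{0,0\}=\hfkhat(K,0)$ and both $C\{0,-1\}=\hfkhat(K,-1)$ and $C\{-1,0\}\cong U\cdot C\{0,1\}$ are isomorphic to $\Q^2_{(m_0-1)}$ via the conjugation symmetry of $\hfkhat$.  By the reduction lemma, the only nonzero components of the induced differential are $\partial_v:C\{0,0\}\to C\{0,-1\}$ and $\partial_h:C\{0,0\}\to C\{-1,0\}$; the diagonal differential from $C\{0,0\}$ lands in $C\{-1,-1\}$, which is outside $\hat{A}_0$.

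I would first verify $\tau(K)=0$ case by case: the ribbon pretzels $P(-3,3,2n+1)$ satisfy $|\tau|\leq g_4=0$; the twisted Whitehead doubles $\Wh^\pm(T_{2,3},2)$ have $\tau=0$ by Hedden's theorem on the $\tau$-invariant of doubles; and $15n_{43522}$ is handled via \cite{bs-nonfibered}.  Then the short exact sequence $0\to V\to C\{i=0\}\to\hfkhat(K,1)\to 0$, together with $\tau(K)=0$, gives $H_*(V)\cong\Q_{(0)}\oplus\Q^2_{(m_0)}$ via its long exact sequence; a dimension count yields $\rk\partial_v=2$, and applying the conjugation symmetry to the horizontal analog $H$ gives $\rk\partial_h=2$ as well.

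Next I would establish $\dim H_*(\hat{A}_0)=5$ by matching upper and lower bounds.  For the upper bound, $\dim\hat{A}_0=\dim\hfkhat(K,0)+4=9$ in every case, and since $\rk(\partial_v\oplus\partial_h)\geq\rk\partial_v=2$, we have $\dim H_*(\hat{A}_0)\leq 9-4=5$.  For the lower bound, $\tau(K)=0$ forces $V_0(K)=0$ and hence $d(S^3_1(K))=0$; then Rustamov's formula (as used in Proposition~\ref{prop:compare-gradings}) gives $\chi(\hfred(S^3_1(K)))=\lambda(S^3_1(K))=\tfrac{1}{2}\Delta''_K(1)=\pm 2$.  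Since none of the knots in the list are L-space knots (ruling out $\dim\hfhat=1$) and $\dim\hfhat=3$ is incompatible with $|\chi(\hfred)|=2$ together with the rank computation, we get $\dim\hfhat=5$.  With the dimension established, the homology is forced to sit in the three gradings $0$, $m_0$, $m_0-1$ present in $\hat{A}_0$, with dimensions $1$, $2$, $2$ respectively, yielding the claimed structure.

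The main obstacle is ruling out $\dim\hfhat=3$, which amounts to showing that $\ker\partial_v=\ker\partial_h$ as $2$-dimensional subspaces of $C\{0,0\}_{(m_0)}$, so that $\rk(\partial_v\oplus\partial_h)=2$ rather than $3$ or $4$.  The cleanest resolution combines the Casson-invariant lower bound $\dim\hfred\geq 2$ with the observation that the generator of $\hfhat(S^3)$ at grading $0$ lies in both kernels, together with the conjugation involution on $\cfkinfty(K)$ which intertwines $\partial_v$ and $\partial_h$ and hence carries $\ker\partial_v$ onto $\ker\partial_h$.
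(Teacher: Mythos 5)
Your setup is correct and matches the paper's: $\hfhat(S^3_1(K))\cong H_*(\hat{A}_0)$ by the large surgery formula, the complex $\hat{A}_0 = C\{-1,0\}\oplus C\{0,0\}\oplus C\{0,-1\}$ is $9$-dimensional with differential $\partial_v\oplus\partial_h$ supported on $C\{0,0\}$, and each of $\partial_v$, $\partial_h$ has rank $2$. You also correctly identify the crux: one must show that $\operatorname{rank}(\partial_v\oplus\partial_h)=2$ rather than $3$ or $4$, i.e.\ that $\ker\partial_v=\ker\partial_h$ as subspaces of $C\{0,0\}$ (note these kernels are $3$-dimensional, not $2$-dimensional as you write). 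Neither of your two proposed resolutions of this crux actually works.

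First, the assertion that $\tau(K)=0$ forces $V_0(K)=0$ is unjustified, and it is not a valid implication in general: what is true is the converse direction $V_0(K)=0\Rightarrow\tau(K)\leq 0$ (this is how Proposition~\ref{prop:r-nu-properties}(5) is proved), and $V_0$ can be positive for knots with vanishing $\tau$. Worse, the paper deduces $V_0(K)=0$ for these knots \emph{from} the present proposition, in Proposition~\ref{prop:nf-1-surgery-plus}, via $V_0(K)=\dim\hfkhat(K,1)-\dim\hfred(S^3_1(K))=2-2$; since Lemmas~\ref{lem:ker-v-g-1} and \ref{lem:hfhat-vs-hfred} give $\dim\hfhat(S^3_1(K))=1+2\left(2-V_0(K)\right)$ directly, your "lower bound'' is exactly equivalent to the unproved statement $V_0(K)=0$, so this route is essentially circular. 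Second, the flip exchanging the $i$ and $j$ filtrations is only a filtered chain homotopy equivalence of $\cfkinfty(K)$, not an automorphism of the reduced model fixing $C\{0,0\}$; and even an honest automorphism carrying $\ker\partial_v$ onto $\ker\partial_h$ would only show the two kernels are abstractly isomorphic (which you already know from the two rank computations), not equal as subspaces. The ingredient you are missing is $(\partial^\infty)^2=0$: anticommutativity of the vertical and horizontal components forces $\img(\partial_v|_{C\{0,1\}})$ and $\img(\partial_h|_{C\{1,0\}})$ to lie in $\ker\partial_v\cap\ker\partial_h$, and the paper then pins down the remaining generator by determining the full complex explicitly as in Figure~\ref{fig:cfk-nearly-fibered}. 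Some argument of this kind is unavoidable in particular for the pretzel knots $P(-3,3,2n+1)$, where $m_0=0$ and all of $C\{0,0\}$ sits in a single Maslov grading, so grading considerations alone cannot separate the surviving homology generator from the rest of either kernel.
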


\begin{proof}
We attempt to construct the full knot Floer complex $\cfkinfty(K)$.  The relation
\[ \hfkhat_m(K,a) \cong \hfkhat_{m-2a}(K,-a) \]
tells us that if $\hfkhat(K,1) \cong \Q^2_{(m_0+1)}$ then $\hfkhat(K,-1) \cong \Q^2_{(m_0-1)}$, so the model complex $(C\{i=0\},\partial')$ for $\cfhat(S^3)$ has the form
\[ \begin{tikzcd}
\Q^2_{(m_0+1)} \ar[d,"\partial'_1"] \ar[dd,bend right=90,swap,"\partial'_2"] \\
\hfkhat(K,0) \ar[d,"\partial'_0"] \\
\Q^2_{(m_0-1)},
\end{tikzcd} \]
and in fact the $\partial'_2$ component of the differential must be zero since it cannot lower the grading by $2$. 

If $K$ is neither $5_2$ nor its mirror, then we can read $\dim \hfkhat(K,0) = 5$ off of Table~\ref{fig:hfk-table}, and so $H_*(C\{i=0\},\partial') \cong \Q$ is only possible if $\partial'_1$ is injective and $\partial'_2$ is surjective.  Moreover, the homology is necessarily supported in Alexander grading $j=0$, so $\tau(K) = 0$.  This completely determines the $i$-preserving (vertical) component of $\partial^\infty$, as illustrated in Figure~\ref{fig:cfk-nearly-fibered}.
\begin{figure}
\begin{tikzpicture}
\begin{scope}
\path[fill=gray!15] (0.75,0.75) -- ++(-4,0) -- ++(0,-1.5) -- ++(2.5,0) -- ++(0,-2.5) -- ++(1.5,0) -- ++(0,4);
\draw[gray!30,step=2] (-3.25,-3.25) grid (3.25,3.25);
\draw[gray!50,very thick,<->] (-3.5,0) -- (3.5,0) node[right,black] {\small$i$};
\draw[gray!50,very thick,<->] (0,-3.5) -- (0,3.5) node[above,black] {\small$j$};
\begin{scope}
\clip (-3.25,-3.25) rectangle (3.25,3.25);
\foreach \i in {-2,...,2} {
  \foreach \j in {-2,-1,1,2} {
    \draw[fill=black] (2*\i,2*\i) ++ (0.2*\j,0.2*\j) coordinate (i\i_j\j-0) circle (0.075);
  }
  \draw[fill=white] (2*\i,2*\i) circle (0.1);
  \foreach \j in {2,1} {
    \draw[fill=black] (2*\i,2*\i+2) ++ (0.2*\j,-0.2*\j) coordinate (i\i_j\j_plus1) circle (0.075);
    \draw[fill=black] (2*\i,2*\i-2) ++ (-0.2*\j,0.2*\j) coordinate (i\i_j\j_minus1) circle (0.075);
  }
  \draw[-latex] ($(i\i_j2_plus1)+(0,-0.125)$) -- ($(i\i_j2-0)+(0,0.1)$); 
  \draw[-latex] ($(i\i_j1_plus1)+(0,-0.125)$) -- ($(i\i_j1-0)+(0,0.1)$);
  \draw[-latex] ($(i\i_j-2-0)+(0,-0.125)$) -- ($(i\i_j2_minus1)+(0,0.1)$); 
  \draw[-latex] ($(i\i_j-1-0)+(0,-0.125)$) -- ($(i\i_j1_minus1)+(0,0.1)$);
}
\foreach \i/\k in {-1/-2,0/-1,1/0,2/1} { 
  \draw[-latex] ($(i\i_j-2-0)+(-0.125,0)$) -- ($(i\k_j2_plus1)+(0.1,0)$);
  \draw[-latex] ($(i\i_j-1-0)+(-0.125,0)$) -- ($(i\k_j1_plus1)+(0.1,0)$);
  \draw[-latex] ($(i\i_j2_minus1)+(-0.125,0)$) -- node[above,midway,inner sep=1pt,outer sep=0pt] {\tiny$-$} ($(i\k_j2-0)+(0.1,0)$);
  \draw[-latex] ($(i\i_j1_minus1)+(-0.125,0)$) -- node[below,midway,inner sep=1pt,outer sep=0pt] {\tiny$-$} ($(i\k_j1-0)+(0.1,0)$);
}
\draw[thin,->] (-1.5,1) node[left,inner sep=2pt] {\small$x$} to[bend left=20] (-0.1,0.1);
\end{scope}
\end{scope}
\end{tikzpicture}
\caption{The complex $(\cfkinfty(K),\partial^\infty)$, with $\hat{A}_0$ shaded and possible diagonal arrows omitted.  The black dots represent generators of $C\{i,j\}$ in grading $m_0+i+j$, while the white dots represent generators $U^{-i} x$ in grading $i+j$.  The minus signs on some arrows indicate a coefficient of $-1$ in $\partial^\infty$.}
\label{fig:cfk-nearly-fibered}
\end{figure}
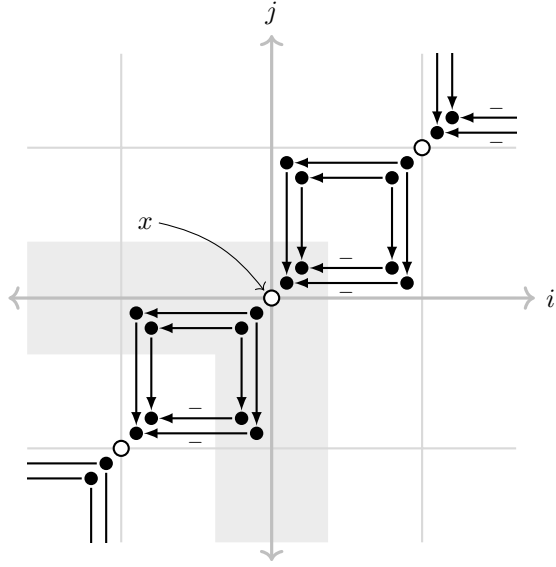
The chain homotopy equivalence $C\{i=0\} \simeq C\{j=0\}$ and the relation $(\partial^\infty)^2 = 0$ then nearly suffice to determine $\partial^\infty$; the only ambiguity is whether there are any arrows involving the generators $U^k x \in C\{i=j=-k\}$, and these must be diagonal (meaning neither vertical nor horizontal) if they exist.

This discussion completely determines the subquotient complex $\hat{A}_0$, which is shaded in Figure~\ref{fig:cfk-nearly-fibered}, since it does not see any diagonal arrows that might exist in $\cfkinfty(K)$.  The complex has nine generators, only two of which have nonzero differentials, and the hat version of the large surgery formula in Theorem~\ref{thm:large-surgeries} tells us that
\[ \hfhat(S^3_1(K)) \cong H_*(\hat{A}_0) \cong \Q_0 \oplus \Q^2_{(m_0)} \oplus \Q^2_{(m_0-1)} \]
as relatively graded vector spaces.
\end{proof}

If $Y$ is an arbitrary 3-manifold with torsion $\spc$ structure $\spinc$, so that its homological grading is $\Z$-valued, then the short exact sequence of complexes
\[ 0 \to \cfhat_*(Y,\spinc) \to \cfp_*(Y,\spinc) \xrightarrow{U} \cfp_{*-2}(Y,\spinc) \to 0 \]
turns into a long exact sequence of $\Q[U]$-modules
\[ \dots \to \hfp_{*+1}(Y,\spinc) \xrightarrow{U} \hfp_{*-1}(Y,\spinc) \to \hfhat_*(Y,\spinc) \to \hfp_*(Y,\spinc) \xrightarrow{U} \hfp_{*-2}(Y,\spinc) \to \dots, \]
from which we can extract a short exact sequence
\begin{equation} \label{eq:hfhat-from-u}
0 \to \frac{\hfp_{*-1}(Y,\spinc)}{U \cdot \hfp_{*+1}(Y,\spinc)} \to \hfhat_*(Y,\spinc) \to \ker(U|_{\hfp_*(Y,\spinc)}) \to 0.
\end{equation}
Equation~\eqref{eq:hfhat-from-u} immediately implies the following.

\begin{lemma} \label{lem:hfhat-vs-hfred}
If $U\cdot \hfred(Y,\spinc) = 0$ and we have an isomorphism
\[ \hfp(Y) \cong \cT^+_{(d)} \oplus \bigoplus_{i=1}^k \Q_{(n_i)} \]
of graded $\Q[U]$-modules, where the $(d)$ subscript denotes the grading of $\ker(U) \subset \cT^+$, then
\[ \hfhat(Y) \cong \Q_{(d)} \oplus \bigoplus_{i=1}^k \left( \Q_{(n_i+1)} \oplus \Q_{(n_i)} \right). \]
This implies in particular that $\dim \hfhat(Y,\spinc) = 1 + 2\dim\hfred(Y,\spinc)$.
\end{lemma}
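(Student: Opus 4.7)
My plan is to feed the given decomposition of $\hfp(Y)$ directly into the short exact sequence~\eqref{eq:hfhat-from-u} stated in the excerpt, and then split it over $\Q$. So the whole task reduces to identifying both $\ker(U|_{\hfp(Y)})$ and the cokernel $\hfp(Y)/U\cdot\hfp(Y)$ as graded $\Q$-vector spaces.

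First I would handle each summand of $\hfp(Y)$ in turn. On the tower $\cT^+_{(d)} \cong \Q[U,U^{-1}]/U\cdot\Q[U]$, a direct inspection shows that the $U$-action is surjective, with kernel the single copy of $\Q_{(d)}$ generated by the bottom element; so the tower contributes $\Q_{(d)}$ to $\ker(U)$ and nothing to the cokernel. On each reduced summand $\Q_{(n_i)}$, the hypothesis $U\cdot\hfred(Y,\spinc)=0$ forces $U$ to act as zero, so $\Q_{(n_i)}$ contributes itself to both the kernel and the cokernel. Combining, I get
\[ \ker(U|_{\hfp(Y)}) \cong \Q_{(d)} \oplus \bigoplus_{i=1}^k \Q_{(n_i)}, \qquad \hfp(Y)/U\cdot\hfp(Y) \cong \bigoplus_{i=1}^k \Q_{(n_i)}. \]

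Substituting these into~\eqref{eq:hfhat-from-u} and remembering the grading shift — the cokernel enters the sequence as $\hfp_{*-1}/U\cdot\hfp_{*+1}$, so a class of $\hfp$-grading $n_i$ lands in $\hfhat$-grading $n_i+1$ — yields a short exact sequence
\[ 0 \to \bigoplus_{i=1}^k \Q_{(n_i+1)} \to \hfhat(Y) \to \Q_{(d)} \oplus \bigoplus_{i=1}^k \Q_{(n_i)} \to 0 \]
of graded $\Q$-vector spaces. This splits since we are working over a field, giving precisely the claimed decomposition of $\hfhat(Y)$; the dimension statement follows by counting summands.

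There is no real obstacle here. The one point worth being careful about is the asymmetry between the two parts of $\hfp(Y)$: because $U$ is already surjective on $\cT^+$ but zero on $\hfred$, the tower contributes only on the kernel side of~\eqref{eq:hfhat-from-u}, producing a single $\Q_{(d)}$ in $\hfhat$, while each reduced summand contributes on both sides and so produces a pair $\Q_{(n_i+1)}\oplus\Q_{(n_i)}$.
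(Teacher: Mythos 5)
Your proposal is correct and is exactly the argument the paper has in mind: the paper states that equation~\eqref{eq:hfhat-from-u} ``immediately implies'' the lemma, and your computation of $\ker(U)$ and $\coker(U)$ on each summand, together with the grading shift and the splitting over $\Q$, is the intended (and only) content of that implication.
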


\begin{corollary} \label{cor:hfhat-vs-hfkhat}
If $K \subset S^3$ has genus $g\geq 1$, then
\[ \frac{\dim\hfhat(S^3_{2g-1}(K),g-1)-1}{2} = \dim \hfkhat(K,g) - V_{g-1}(K). \]
\end{corollary}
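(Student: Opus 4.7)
The plan is to deduce this immediately by chaining together Lemma~\ref{lem:ker-v-g-1} and Lemma~\ref{lem:hfhat-vs-hfred}, applied with $N = 2g-1$ and the $\spc$ structure $\spinc$ corresponding to $s=g-1$ under the affine identification $\spc(S^3_{2g-1}(K)) \cong \Z/(2g-1)\Z$ of Theorem~\ref{thm:large-surgeries}.

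First, Lemma~\ref{lem:ker-v-g-1} applied at $N=2g-1$ furnishes two pieces of information simultaneously: the $U$-action on $\hfred(S^3_{2g-1}(K),g-1)$ is trivial, and
\[ \dim\hfred(S^3_{2g-1}(K),g-1) = \dim\hfkhat(K,g) - V_{g-1}(K). \]
Because $S^3_{2g-1}(K)$ is a rational homology sphere (its first homology is finite of order $2g-1$), every $\spc$ structure on it is torsion, and the standard splitting $\hfp(S^3_{2g-1}(K),g-1) \cong \cT^+ \oplus \hfred(S^3_{2g-1}(K),g-1)$ of $\Q[U]$-modules together with the triviality of the $U$-action on the reduced part puts $\hfp$ into exactly the form required to apply Lemma~\ref{lem:hfhat-vs-hfred}.

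That lemma then yields
\[ \dim\hfhat(S^3_{2g-1}(K),g-1) = 1 + 2\dim\hfred(S^3_{2g-1}(K),g-1), \]
and substituting the dimension computation from the first step and rearranging produces the claimed equality. There is no real obstacle here: the content has been packaged into the two preceding lemmas, and this corollary is just their conjunction specialized to the smallest admissible surgery coefficient, $p = 2g-1$, in the middle $\spc$ structure $s=g-1$.
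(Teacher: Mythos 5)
Your proof is correct and is essentially identical to the paper's: the paper also deduces the corollary by applying Lemma~\ref{lem:ker-v-g-1} at $N=2g-1$ to get the trivial $U$-action and the dimension formula for $\hfred(S^3_{2g-1}(K),g-1)$, and then invokes Lemma~\ref{lem:hfhat-vs-hfred}. No issues.
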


\begin{proof}
Lemma~\ref{lem:ker-v-g-1} says that $U\cdot \hfred(S^3_{2g-1}(K),g-1) = 0$ and that
\[ \dim \hfred(S^3_{2g-1}(K),g-1) = \dim \hfkhat(K,g) - V_{g-1}(K). \]
Now apply Lemma~\ref{lem:hfhat-vs-hfred}.
\end{proof}

\begin{proposition} \label{prop:nf-1-surgery-plus}
Let $K$ be a genus-1 knot for which $\dim_\Q \hfkhat(K,1) = 2$.  If $K$ is neither $5_2$ nor its mirror, then $V_0(K)=0$ and
\[ \hfp(S^3_1(K)) \cong \cT^+_{(0)} \oplus \hfkhat(K,-1) \]
as graded $\Q[U]$-modules.
\end{proposition}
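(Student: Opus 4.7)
The plan is to apply the large surgery formula with $p = 1 = 2g(K)-1$, so that Theorem~\ref{thm:large-surgeries} gives $\hfp(S^3_1(K)) \cong H_*(A^+_0)$, and then to extract both the tower and the reduced summand using the short exact sequence of Lemma~\ref{lem:ker-v-g-1}.

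First I would show that $V_0(K) = 0$. Since $S^3_1(K)$ is an integer homology sphere, $\hfhat(S^3_1(K))$ is concentrated in the unique $\spc$ structure, and Proposition~\ref{prop:nf-1-surgery-hat} computes its total dimension to be $5$. Corollary~\ref{cor:hfhat-vs-hfkhat} applied with $g=1$ then gives
\[ \frac{5 - 1}{2} = \dim \hfkhat(K,1) - V_0(K) = 2 - V_0(K), \]
so $V_0(K) = 0$.

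Next, I would use the short exact sequence from Lemma~\ref{lem:ker-v-g-1} (with $g = 1$ and $N = 1$):
\[ 0 \to \hfkhat_{*+2}(K,1) \to H_*(A^+_0) \xrightarrow{(v^+_0)_*} H_*(B^+) \to 0, \]
together with the vanishing $V_0(K) = 0$. The latter forces $(v^+_0)_*$ to restrict to an isomorphism on the tower $\cT^+ \subset H_*(A^+_0)$, so the reduced part $\Hred(A^+_0)$ is exactly the image of $\hfkhat_{*+2}(K,1) \cong \Q^2_{(m_0-1)}$, carrying trivial $U$-action. Via the symmetry $\hfkhat_m(K,a) \cong \hfkhat_{m-2a}(K,-a)$, this reduced summand is identified as a graded $\Q$-vector space with $\hfkhat(K,-1)$. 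Combined with the large surgery formula, this yields
\[ \hfp(S^3_1(K)) \cong \cT^+_{(d)} \oplus \hfkhat(K,-1) \]
as graded $\Q[U]$-modules, for some $d \in \Z$.

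Finally I would pin down the absolute grading of the tower using $d$-invariants. Theorem~\ref{thm:ni-wu-d} with $p/q = 1$ and $i=0$ gives
\[ d(S^3_1(K)) - d(S^3) = -2\max\bigl(V_0(K), H_0(K)\bigr) = 0, \]
so $d = d(S^3_1(K)) = 0$, completing the computation. There is no significant obstacle here: once $V_0(K) = 0$ is established by the Euler-characteristic-style count of Corollary~\ref{cor:hfhat-vs-hfkhat}, the rest follows immediately from Lemma~\ref{lem:ker-v-g-1} and Theorem~\ref{thm:ni-wu-d}; the mild subtlety is just remembering that the SES in Lemma~\ref{lem:ker-v-g-1} shifts the grading of $\hfkhat(K,1)$ downward by $2$, which is precisely the shift making it match $\hfkhat(K,-1)$.
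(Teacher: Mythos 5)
Your proof is correct, and the second half takes a genuinely different (and cleaner) route than the paper's. The first step coincides: the paper also derives $V_0(K)=0$ from $\dim\hfhat(S^3_1(K))=5$ via Lemma~\ref{lem:ker-v-g-1} and Lemma~\ref{lem:hfhat-vs-hfred}, which is exactly what Corollary~\ref{cor:hfhat-vs-hfkhat} packages. Where you diverge is in locating the grading of the reduced summand. The paper writes $\hfp(S^3_1(K)) \cong \cT^+_{(0)}\oplus\Q_{(d)}\oplus\Q_{(e)}$, converts back to $\hfhat$ with Lemma~\ref{lem:hfhat-vs-hfred}, and matches the resulting gradings against the relatively graded answer of Proposition~\ref{prop:nf-1-surgery-hat}; this leaves an unresolved case ($m_0=-1$, $\{d,e\}=\{0,1\}$) which the paper has to kill with a Casson invariant computation. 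You instead read the grading of $\Hred$ directly off the graded short exact sequence of Lemma~\ref{lem:ker-v-g-1}: since $V_0(K)=0$ the tower of $H_*(A^+_0)$ maps isomorphically onto $H_*(B^+)\cong\cT^+_{(0)}$ in the $\cfkinfty$-induced grading, so the kernel $\hfkhat_{*+2}(K,1)\cong\Q^2_{(m_0-1)}$ sits exactly $m_0-1$ above the tower bottom, and the $d$-invariant then fixes the overall shift. This avoids the case analysis and the Casson invariant entirely, at the cost of leaning on the absolute-grading clause of Lemma~\ref{lem:ker-v-g-1} (which the paper does state, so this is legitimate). Two cosmetic points: your intermediate expression $\cT^+_{(d)}\oplus\hfkhat(K,-1)$ mixes a floating grading on the tower with a pinned grading on the reduced part, when strictly only the relative grading is known before the $d$-invariant step; and the Ni--Wu formula at slope $1$ involves $H_{-1}(K)$ rather than $H_0(K)$, though both vanish here so nothing is affected.
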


\begin{proof}
We write $\hfkhat(K,1) \cong \Q^2_{(m_0+1)}$ as before, and then the symmetry
\[ \hfkhat_m(K,a) \cong \hfkhat_{m-2a}(K,-a) \]
of \cite[Equation~(2)]{osz-knot} implies that $\hfkhat(K,-1) \cong \Q^2_{(m_0-1)}$.

We observe from Lemma~\ref{lem:ker-v-g-1} that $U \cdot \hfred(S^3_1(K)) = 0$, since $g(K)=1$.  In Proposition~\ref{prop:nf-1-surgery-hat} we saw that $\dim \hfhat(S^3_1(K))=5$, so Lemma~\ref{lem:hfhat-vs-hfred} says that $\dim \hfred(S^3_1(K)) = 2$.  But then
\[ V_0(K) = \dim \hfkhat(K,1) - \dim \hfred(S^3_1(K)) = 2 - 2 = 0 \]
by another application of Lemma~\ref{lem:ker-v-g-1}.  With this information at hand, Theorem~\ref{thm:ni-wu-d} tells us that
\[ d(S^3_1(K)) = d(S^3_1(U)) - 2 V_0(K) = 0. \]

Now if we write 
\[ \hfp(S^3_1(K)) \cong \cT^+_{(0)} \oplus \Q^{\vphantom{+}}_{(d)} \oplus \Q^{\vphantom{+}}_{(e)} \]
for some integers $d$ and $e$, then Lemma~\ref{lem:hfhat-vs-hfred} says that
\[ \hfhat(S^3_1(K)) \cong \Q_{(0)} \oplus \Q_{(d)} \oplus \Q_{(d+1)} \oplus \Q_{(e)} \oplus \Q_{(e+1)}. \]
Up to translation by an overall constant, Proposition~\ref{prop:nf-1-surgery-hat} says that these gradings are $0,m_0,m_0,m_0-1,m_0-1$ in some order.  This is only possible if that constant is zero and $d=e=m_0-1$, except possibly if $m_0=-1$ and $\{d,e\}=\{0,1\}$.  But we can rule out this last case because it would imply that $S^3_1(K)$ has Casson invariant
\[ \lambda(S^3_1(K)) = \chi(\hfred(S^3_1(K))) - \frac{1}{2}d(S^3_1(K)) = 0 - 0 = 0 \]
by \cite[Theorem~1.3]{osz-absolutely}, and yet $\lambda(S^3_1(K)) = \frac{\Delta_K''(1)}{2} = \pm2$ by the surgery formula for the Casson invariant.  This completes the proof.
\end{proof}

\section{The dimension of $\hfhat$} \label{sec:dim-hfhat}

\subsection{The invariants $\rhat$ and $\nuhat$}

For a fixed knot $K \subset S^3$, the dimension of $\hfhat(S^3_{p/q}(K))$ varies in a predictable way with $p$ and $q$.  We will make use of this where possible, since it is easier to apply in practice than the mapping cone formula.

\begin{proposition} \label{prop:r-nu}
Let $K \subset S^3$ be a knot.  Then there are integers $\rhat(K)$ and $\nuhat(K)$ such that
\[ \dim_\Q \hfhat(S^3_{p/q}(K)) = q\cdot \rhat(K) + |p-q\nuhat(K)| \]
for all coprime integers $p\neq 0$ and $q > 0$.
\end{proposition}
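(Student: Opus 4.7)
The approach is to apply the $\hfhat$ analog of the Heegaard Floer mapping cone formula (Theorem~\ref{thm:mapping-cone}) and then carefully analyze the resulting infinite mapping cone. Summing over $\spc$-structures and reindexing via $j = i + ps$ with $i \in \{0, \ldots, p-1\}$ and $s \in \Z$, one obtains (for $p > 0$)
\[
\hfhat(S^3_{p/q}(K)) \;\cong\; H_*\Bigl(\mathrm{cone}\bigl(\hat{D}_{p/q}: \bigoplus_{j \in \Z} \hat{A}_{\lfloor j/q\rfloor} \to \bigoplus_{j \in \Z} \hat{B}\bigr)\Bigr),
\]
where $\hat{D}_{p/q}$ sends the $\hat{A}$-summand at index $j$ to $\hat{v}_{\lfloor j/q\rfloor}$ in the $\hat{B}$-summand at index $j$ plus $\hat{h}_{\lfloor j/q\rfloor}$ in the $\hat{B}$-summand at index $j+p$. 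The case $p < 0$ then reduces to the case $p > 0$ via the identification $S^3_{p/q}(K) \cong -S^3_{-p/q}(\mirror{K})$, together with the fact that $\hfhat$-dimensions are invariant under orientation reversal.

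The crucial structural input is the hat analog of Remark~\ref{rem:v-h-isomorphisms}: $\hat{v}_s$ is a chain-level isomorphism whenever $s \geq g(K)$, and $\hat{h}_s$ is one whenever $s \leq -g(K)$. Iteratively cancelling the $(\hat{A},\hat{B})$ pairs in the cone where one of these maps is invertible reduces the infinite cone to a much smaller complex, chain homotopy equivalent to the original. Combining this cancellation with the long exact sequence
\[
\cdots \to H_*(\hat{\mathbb{A}}) \xrightarrow{(\hat{D}_{p/q})_*} H_*(\hat{\mathbb{B}}) \to H_*(\mathrm{cone}) \to H_{*-1}(\hat{\mathbb{A}}) \to \cdots
\]
and with the identification $H_*(\hat{A}_s) \cong \hfhat(S^3_N(K),s)$ from Theorem~\ref{thm:large-surgeries}, one can express $\dim\hfhat(S^3_{p/q}(K))$ as an ``interior'' plus a ``tail'' contribution.

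The interior contribution collects $q$ copies of each $H_*(\hat{A}_s)$ with $|s|<g(K)$ (together with the corresponding $\hat{B}$-summands cancelled against them), producing an integer of the form $q\cdot \rhat(K)$ which depends only on $K$ and is independent of $p$. The tail contribution counts the $\hat{B}$-summands at indices $j$ which escape both the $\hat{v}$-cancellation region ($\lfloor j/q\rfloor\geq g(K)$) and the $\hat{h}$-cancellation region ($\lfloor (j-p)/q\rfloor\leq -g(K)$), or—in the regime where those two regions overlap—the $\hat{A}$-summands left stranded once the competing cancellations have been resolved. A direct enumeration shows that this tail contributes exactly $|p-q\,\nuhat(K)|$ for an integer $\nuhat(K)$ determined by $K$, with the absolute value reflecting the sign flip in the $p$-dependence at the critical slope $p/q = \nuhat(K)$ at which the two cancellation mechanisms trade dominance.

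The main obstacle is the precise bookkeeping needed to carry out this enumeration and, in particular, to verify that $\rhat(K)$ and $\nuhat(K)$ are genuinely integer invariants independent of the pair $(p,q)$. I would handle this by analyzing the two asymptotic regimes $p/q\to +\infty$ and $p/q\to -\infty$ separately: in each of these regimes only one of $\hat{v}, \hat{h}$ governs the tail, so the function $p \mapsto \dim\hfhat(S^3_{p/q}(K))$ is affine in $p$ with slope $\pm 1$, and its intercepts extract the two combinations $\rhat(K)-\nuhat(K)$ and $\rhat(K)+\nuhat(K)$. Matching these across the interpolating regime determines $\rhat(K)$ and $\nuhat(K)$ uniquely and confirms that the single formula $q\rhat(K)+|p-q\nuhat(K)|$ holds for all coprime $p\neq 0$ and $q>0$.
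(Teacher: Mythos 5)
The paper does not prove Proposition~\ref{prop:r-nu} itself: it quotes the statement from Hanselman (\cite[Proposition~15]{hanselman-cosmetic}), who extracts it from \cite[Proposition~9.6]{osz-rational}, and it notes in passing that one can instead argue purely formally from the surgery exact triangle and the adjunction inequality, as in \cite[Theorem~1.1]{bs-concordance}. Your outline follows the first of these routes --- the hat-flavored mapping cone, truncated using the chain-level invertibility of $\hat{v}_s$ for $s \geq g(K)$ and of $\hat{h}_s$ for $s \leq -g(K)$, followed by a count of the surviving summands --- so the overall strategy is the standard one and is sound.

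The step where your write-up substitutes assertion for argument is the intermediate regime, and that is exactly where the content of the proposition lives. Knowing that $p \mapsto \dim\hfhat(S^3_{p/q}(K))$ is eventually affine of slope $+1$ as $p/q \to +\infty$ and of slope $-1$ as $p/q \to -\infty$ does not by itself show that the function equals the maximum of those two affine functions for \emph{all} coprime $(p,q)$, i.e.\ that there is exactly one transition slope; a priori the interpolating region could contain several kinks, and then no single pair $(\rhat,\nuhat)$ would work. What rules this out in \cite[Proposition~9.6]{osz-rational} is a homological input your sketch never isolates: the map $(\hat{v}_s)_*\colon H_*(\hat{A}_s) \to H_*(\hat{B}) \cong \Q$ is surjective if and only if $s \geq \nu(K)$ (this is essentially the definition of $\nu$), and dually for $(\hat{h}_s)_*$ and $\nu(\mirror{K})$. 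It is this single threshold that controls the cokernel term in the long exact sequence of the cone (note that, unlike in the plus flavor, $\hat{D}_*$ is \emph{not} automatically surjective) and forces the count to change behavior exactly once, at $p/q = \max(2\nu(K)-1,0)$ when $\nu(K) \geq \nu(\mirror{K})$ --- which is how $\nuhat(K)$ is identified in \eqref{eq:nuhat-nu}. Without pinning down this threshold, the ``direct enumeration'' of the tail is not determined. If you prefer to avoid the mapping-cone bookkeeping entirely, the alternative the paper points to --- repeating the argument of \cite[Theorem~1.1]{bs-concordance} verbatim for $\hfhat$, using only the surgery exact triangle and the adjunction inequality --- yields the single-kink structure by a formal induction and is arguably cleaner.
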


Hanselman \cite[Proposition~15]{hanselman-cosmetic} proved a version of Proposition~\ref{prop:r-nu} with coefficients in $\Z/2\Z$, though he pointed out that it can be extracted from \cite[Proposition~9.6]{osz-rational}, where it is proved with the desired $\Q$ coefficients.  (It can also be proved in exactly the same way as its instanton Floer analogue \cite[Theorem~1.1]{bs-concordance}, using only the surgery exact triangle and an adjunction inequality.)  In fact, if the Heegaard Floer $\nu$ invariant of $K$ \cite[Definition~9.1]{osz-rational} satisfies $\nu(K) \geq \nu(\mirror{K})$, then \cite[Equation~(40)]{osz-rational} implies the relation
\[ \rhat(K)-\nuhat(K) = \sum_{s\in\Z} \left(\dim H_*(\hat{A}_s)-1\right). \]
Moreover, we know from \cite[Lemma~10.4]{bs-concordance} that
\begin{equation} \label{eq:nuhat-nu}
\nuhat(K) = \begin{cases} \max(2\nu(K)-1,0), & \nu(K) \geq \nu(\mirror{K}) \\ -\max(2\nu(\mirror{K})-1,0), & \nu(K) \leq \nu(\mirror{K}). \end{cases}
\end{equation}

\begin{proposition} \label{prop:r-nu-properties}
The invariants $\rhat(K)$ and $\nuhat(K)$ satisfy the following properties.
\begin{enumerate}
\item The invariants of $K$ and its mirror are related by $(\rhat(\mirror{K}),\nuhat(\mirror{K})) = (\rhat(K),-\nuhat(K))$.\label{i:r-nu-mirror}
\item The difference $\rhat(K) - |\nuhat(K)|$ is a nonnegative even integer. \label{i:r-minus-nu}
\item $\nuhat$ is a smooth concordance invariant, and \[ |\nuhat(K)| \leq \max(2g_4(K)-1,0) \]
where $g_4$ denotes the smooth 4-ball genus. \label{i:nu-concordance}
\item The invariant $\nuhat(K)$ is either odd or zero. \label{i:nu-parity}
\item If $V_0(K) = 0$ then $\nuhat(K) \leq 0$. \label{i:nu-v0}
\item If $\nuhat(K) \leq 0$ then $\tau(K) \leq 0$, and if $\nuhat(K)=0$ then $\tau(K)=0$. \label{i:nu-tau}
\end{enumerate}
\end{proposition}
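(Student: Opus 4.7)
My plan is to take each of the six items in turn, using \eqref{eq:nuhat-nu} as the main computational tool together with the formula for $\rhat(K) - \nuhat(K)$ in the case $\nu(K) \geq \nu(\mirror{K})$ recorded just before the proposition.

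For item \eqref{i:r-nu-mirror}, the relation $\nuhat(\mirror{K}) = -\nuhat(K)$ is immediate from \eqref{eq:nuhat-nu} since mirroring interchanges the two cases.  For $\rhat$, I would combine $S^3_{p/q}(\mirror{K}) \cong -S^3_{-p/q}(K)$ with the fact that $\hfhat(-Y) \cong \hfhat(Y)$ as $\Q$-vector spaces, so that $\dim \hfhat(S^3_{p/q}(\mirror{K})) = \dim\hfhat(S^3_{-p/q}(K))$, and then compare the formula of Proposition~\ref{prop:r-nu} applied to $K$ and to $\mirror{K}$ across all coprime $(p,q)$.  For item \eqref{i:r-minus-nu}, I would reduce (via item \eqref{i:r-nu-mirror}) to the case $\nu(K) \geq \nu(\mirror{K})$, where $\rhat(K) - |\nuhat(K)| = \sum_s(\dim H_*(\hat{A}_s) - 1)$; each $H_*(\hat{A}_s)$ is isomorphic to $\hfhat(S^3_N(K),s)$ for $N$ sufficiently large by Theorem~\ref{thm:large-surgeries}, which has odd dimension because the Euler characteristic of $\hfhat$ on each $\spc$ structure of a rational homology sphere is $\pm 1$.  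Thus every summand is a nonnegative even integer and only finitely many are nonzero.

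For item \eqref{i:nu-concordance}, the bound $|\nuhat(K)| \leq \max(2g_4(K)-1,0)$ follows from \eqref{eq:nuhat-nu} combined with the slice-genus bound $\nu^+(K) \leq g_4(K)$ of Hom--Wu (where $\nu^+ = \max(\nu,0)$), applied to both $K$ and $\mirror{K}$.  Concordance invariance of $\nuhat$ follows from expressing it through the concordance invariants $\tau$ and $\epsilon$ of Hom: the quantity $\nu(K)$ is $\tau(K)$ or $\tau(K)+1$ according to the value of $\epsilon(K)$, and \eqref{eq:nuhat-nu} then writes $\nuhat$ purely in terms of $\tau$ and $\epsilon$.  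Item \eqref{i:nu-parity} is immediate from \eqref{eq:nuhat-nu}, since $\max(2\nu-1,0)$ is either $0$ or an odd positive integer.

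For item \eqref{i:nu-v0}, note that $V_0(K) = 0$ means $v^+_0$ induces a surjection on $\cT^+$-towers, so by the definition of $\nu$ (as the least $s$ with this property) we get $\nu(K) \leq 0$; then both clauses of \eqref{eq:nuhat-nu} yield $\nuhat(K) \leq 0$.  Finally, for item \eqref{i:nu-tau}, I would argue by contrapositive: if $\tau(K) \geq 1$ then $\nu(K) \geq \tau(K) \geq 1$ while $\nu(\mirror{K}) \leq \tau(\mirror{K})+1 = -\tau(K)+1 \leq 0$, so the first clause of \eqref{eq:nuhat-nu} applies and forces $\nuhat(K) = 2\nu(K) - 1 \geq 1$.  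Hence $\nuhat(K) \leq 0$ implies $\tau(K) \leq 0$, and if in addition $\nuhat(K) = 0$ then item \eqref{i:r-nu-mirror} gives $\nuhat(\mirror{K}) = 0$ and the same argument yields $\tau(\mirror{K}) \leq 0$, i.e.\ $\tau(K) \geq 0$.  The main thing to be careful about is the case analysis in \eqref{eq:nuhat-nu} at the overlap $\nu(K) = \nu(\mirror{K})$, which is handled by the observation that $\tau(K) \neq 0$ forces the two sides of \eqref{eq:nuhat-nu} to be unambiguous.
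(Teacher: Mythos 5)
Your proof is correct and, for items \eqref{i:r-nu-mirror}, \eqref{i:nu-concordance}, \eqref{i:nu-parity}, and \eqref{i:nu-tau}, essentially identical to the paper's. For item \eqref{i:r-minus-nu} you use the sum formula $\rhat(K)-\nuhat(K)=\sum_s(\dim H_*(\hat{A}_s)-1)$ together with the oddness of each $\dim \hfhat(S^3_N(K),s)$, whereas the paper computes $\dim\hfhat(S^3_p(K))-\chi(\hfhat(S^3_p(K)))$ directly from \cite[Proposition~5.1]{osz-properties}; since $\chi$ is the number of $\spc$ structures, these are the same computation packaged term-by-term versus in total, so nothing is gained or lost.

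The one point to repair is your justification of item \eqref{i:nu-v0}. You assert that $\nu(K)$ is \emph{defined} as the least $s$ for which $v^+_s$ is an isomorphism on towers, i.e.\ the least $s$ with $V_s(K)=0$ --- but that is the definition of the Hom--Wu invariant $\nu^+(K)$, not of the Ozsv\'ath--Szab\'o invariant $\nu(K)$ appearing in \eqref{eq:nuhat-nu}, which is defined via the hat-flavor maps $\hat{v}_s$ \cite[Definition~9.1]{osz-rational}. The implication $V_0(K)=0 \Rightarrow \nu(K)\leq 0$ is still true, but it requires the comparison $\tau(K)\leq\nu(K)\leq\nu^+(K)$ from \cite[Proposition~2.3]{hom-wu}, which is exactly what the paper cites at this step. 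With that citation inserted, your observation that \emph{both} clauses of \eqref{eq:nuhat-nu} give $\nuhat(K)\leq 0$ once $\nu(K)\leq 0$ is a slight simplification of the paper's argument, which instead pins down which clause applies.
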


\begin{proof}
Claim \eqref{i:r-nu-mirror} is immediate from Proposition~\ref{prop:r-nu} and the relation $S^3_r(\mirror{K}) \cong -S^3_{-r}(K)$, together with the fact that $\dim \hfhat(Y) = \dim \hfhat(-Y)$ for all $Y$.
For \eqref{i:r-minus-nu}, we choose a positive integer $p > \nuhat(K)$ and apply Proposition~\ref{prop:r-nu} to get
\[ \dim \hfhat(S^3_p(K)) = p + (\rhat(K)-\nuhat(K)), \]
so by \cite[Proposition~5.1]{osz-properties} we have
\[ \dim \hfhat(S^3_p(K)) - \chi(\hfhat(S^3_p(K))) = \rhat(K)-\nuhat(K). \]
The left hand side is twice the dimension of the odd-graded part of $\hfhat(S^3_p(K))$, so it is evidently nonnegative and even.  The same is true of
\[ \rhat(\mirror{K}) - \nuhat(\mirror{K}) = \rhat(K) + \nuhat(K), \]
so in either case $\rhat(K) - |\nuhat(K)|$ is nonnegative and even as well.  Since $\nu(K)$ and $\nu(\mirror{K})$ are smooth concordance invariants, claims \eqref{i:nu-concordance} and \eqref{i:nu-parity} follow immediately from \eqref{eq:nuhat-nu} and the fact that $|\nu(K)| \leq g_4(K)$.

In order to prove \eqref{i:nu-v0}, we use the invariant $\nu^+(K)$ \cite{hom-wu}, which is by definition the smallest $s$ such that $V_s(K)=0$.  If $V_0(K)=0$ then \cite[Proposition~2.3]{hom-wu} tells us that
\[ \tau(K) \leq \nu(K) \leq \nu^+(K) = 0, \]
and since $\nu(\mirror{K})$ is equal to either $\tau(\mirror{K})$ or $\tau(\mirror{K})+1$ (see \cite[Equation~(34)]{osz-rational}) we have
\[ \nu(\mirror{K}) \geq \tau(\mirror{K}) = -\tau(K) \geq 0 \geq \nu(K). \]
Now \eqref{eq:nuhat-nu} tells us that $\nuhat(K) = -\max(2\nu(\mirror{K})-1,0) \leq 0$.  We prove the contrapositive of the first part of \eqref{i:nu-tau} similarly: if $\tau(K) \geq 1$ then $\nu(K) \geq \tau(K) \geq 1$ while
\[ \nu(\mirror{K}) \leq \tau(\mirror{K})+1 = -\tau(K)+1 \leq 0, \]
so $\nu(K) > \nu(\mirror{K})$, and then \eqref{eq:nuhat-nu} gives us $\nuhat(K) \geq 2\nu(K)-1 \geq 1$.  Moreover, if $\nuhat(K)=0$ then $\nuhat(\mirror{K})=0$ as well, so we have just shown that $\tau(K) \leq 0$ and $-\tau(K) = \tau(\mirror{K}) \leq 0$, hence $\tau(K)=0$ as claimed.
\end{proof}

Proposition~\ref{prop:r-nu-properties} can also be proved by repeating arguments from \cite{bs-concordance} nearly verbatim, but applied to $\hfhat(Y)$ rather than $I^\#(Y)$.  These arguments rely only on the fact that $\dim \hfhat(S^3) = 1$, together with the surgery exact triangle and adjunction inequality for $\hfhat$.

We note the following examples for later use.

\begin{lemma} \label{lem:nf-r-nu}
Suppose that $K$ is one of the genus-1 knots appearing in Theorem~\ref{thm:main-hfk} other than $5_2$ and its mirror.  Then
\[ (\rhat(K), \nuhat(K)) = (4,0). \]
We also have $(\rhat(\mirror{5_2}), \nuhat(\mirror{5_2})) = (3,1)$ and $(\rhat(5_2),\nuhat(5_2))=(3,-1)$.
\end{lemma}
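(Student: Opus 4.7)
The plan is to apply Proposition~\ref{prop:r-nu} at $(p,q)=(1,1)$ both to $K$ and to $\mirror{K}$, using Proposition~\ref{prop:r-nu-properties}~\eqref{i:r-nu-mirror} for the mirror, which yields the pair of equations
\begin{align*}
\dim_\Q \hfhat(S^3_1(K)) &= \rhat(K) + |1 - \nuhat(K)|, \\
\dim_\Q \hfhat(S^3_1(\mirror{K})) &= \rhat(K) + |1 + \nuhat(K)|.
\end{align*}
Because each $K$ under consideration has genus one, Lemma~\ref{lem:ker-v-g-1} guarantees that $U$ acts trivially on $\hfred(S^3_1(K))$, so Lemma~\ref{lem:hfhat-vs-hfred} converts any $\Q[U]$-module presentation of $\hfp(S^3_1(K))$ directly into $\dim \hfhat(S^3_1(K))$. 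It then only remains to compute these dimensions using our results from \S\ref{sec:compute-nearly-fibered} and to solve the resulting absolute-value systems.

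For $K=5_2$, Proposition~\ref{prop:1-surgery-5_2} and Lemma~\ref{lem:hfhat-vs-hfred} give $\dim\hfhat(S^3_1(5_2))=5$ and $\dim\hfhat(S^3_1(\mirror{5_2}))=3$. Writing $(r,n)=(\rhat(5_2),\nuhat(5_2))$, the system $r+|1-n|=5$ and $r+|1+n|=3$ forces $|1-n|-|1+n|=2$, whose only solutions satisfy $n\leq -1$; then $r = 4+n$, and Proposition~\ref{prop:r-nu-properties}~\eqref{i:r-minus-nu} gives $r + n = 4 + 2n \geq 0$, so $n \geq -2$. Combined with \eqref{i:nu-parity}, which requires $n$ to be odd (since it is nonzero), the unique solution is $(r,n) = (3,-1)$, and then Proposition~\ref{prop:r-nu-properties}~\eqref{i:r-nu-mirror} gives $(\rhat(\mirror{5_2}),\nuhat(\mirror{5_2})) = (3,1)$.

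For each remaining genus-1 knot $K$ in Theorem~\ref{thm:main-hfk}, Proposition~\ref{prop:nf-1-surgery-plus} identifies $\hfp(S^3_1(K)) \cong \cT^+_{(0)} \oplus \hfkhat(K,-1)$, whose corresponding $\hfhat$ dimension is $5$ by Lemma~\ref{lem:hfhat-vs-hfred}. Since the classification of Theorem~\ref{thm:main-hfk} is stated symmetrically in mirrors, the same computation applies to $\mirror{K}$, so both equations become $r + |1 \pm n| = 5$, immediately yielding $(r,n) = (4,0)$. No serious obstacle arises: once the two knot Floer computations from \S\ref{sec:compute-nearly-fibered} are in place, the lemma reduces to elementary algebra governed by the constraints of Proposition~\ref{prop:r-nu-properties}.
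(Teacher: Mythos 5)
Your proof is correct and follows essentially the same route as the paper: compute $\dim\hfhat(S^3_{\pm1}(K))$ from the surgery computations of \S\ref{sec:compute-nearly-fibered}, then solve for $(\rhat(K),\nuhat(K))$ via Proposition~\ref{prop:r-nu} and the constraints of Proposition~\ref{prop:r-nu-properties}. The only (immaterial) difference is that to pin down $\nuhat(5_2)=-1$ you invoke parts \eqref{i:r-minus-nu} and \eqref{i:nu-parity} of Proposition~\ref{prop:r-nu-properties}, whereas the paper uses the bound $|\nuhat(K)|\leq\max(2g_4(K)-1,0)\leq 1$ from part \eqref{i:nu-concordance}.
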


\begin{proof}
Proposition~\ref{prop:nf-1-surgery-hat} applies to both $K$ and $\mirror{K}$ to tell us that
\[ \dim \hfhat(S^3_1(K)) = 5 \quad\text{and}\quad \dim \hfhat(S^3_{-1}(K)) = \dim \hfhat(S^3_1(\mirror{K})) = 5. \]
By Proposition~\ref{prop:r-nu}, these can only be equal if $\nuhat(K)=0$, and then
\[ 5 = \dim \hfhat(S^3_1(K)) = 1\cdot \rhat(K) + |1- 0\cdot\nuhat(K)| \]
implies that $\rhat(K) = 4$.

Similarly, we note from Proposition~\ref{prop:1-surgery-5_2} and Lemma~\ref{lem:hfhat-vs-hfred} that
\[ \dim\hfhat(S^3_1(\mirror{5_2})) = 3 \quad\text{and}\quad \dim \hfhat(S^3_{-1}(\mirror{5_2})) = \dim \hfhat(S^3_1(5_2)) = 5. \]
Now Proposition~\ref{prop:r-nu} only tells us that $\nuhat(\mirror{5_2}) \geq 1$, but Proposition~\ref{prop:r-nu-properties} also bounds it above by $1$ and so $\nuhat(\mirror{5_2})=1$ after all.  It now follows immediately that $\rhat(\mirror{5_2})=3$, and similarly for $5_2$.
\end{proof}

\subsection{Almost L-space knots} \label{ssec:almost}

A nontrivial knot $K\subset S^3$ is said to be an \emph{L-space knot} if $S^3_r(K)$ is an L-space for some rational slope $r>0$, meaning that $\dim \hfhat(S^3_r(K)) = |H_1(S^3_r(K);\Z)|$.  This places strong restrictions on $K$.

\begin{theorem}[\cite{osz-lens,ghiggini,ni-hfk,hedden-positivity,osz-rational}] \label{thm:l-space-knots}
If $K$ is an L-space knot, then $K$ is fibered and strongly quasipositive, and $r$-surgery on $K$ is an L-space if and only if $r \geq 2g(K)-1$.
\end{theorem}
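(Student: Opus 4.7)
The plan is to assemble several foundational structure theorems for L-space knots into a single chain of implications. The fundamental input is the Ozsv\'ath--Szab\'o structure theorem for L-space knots \cite{osz-lens}: if $S^3_r(K)$ is an L-space for some $r > 0$, then $\hfkhat(K,a)$ has dimension at most $1$ in every Alexander grading $a$, and the nonzero Alexander gradings, listed in decreasing order as $a_0 > a_1 > \dots > a_{2n}$, satisfy $a_k = -a_{2n-k}$, with $a_0 = g(K)$ and the successive gaps determined by the nonzero coefficients of the Alexander polynomial (which alternate in sign). The underlying mechanism is a comparison between $|H_1(S^3_r(K);\Z)|$ and the total rank of $\hfhat(S^3_r(K))$ computed via the large surgery formula: equality forces each column $C\{0,a\}$ to be at most one-dimensional and the vertical/horizontal differentials of $\cfkinfty(K)$ to be maximally efficient.

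From this structural statement, the fibered and strongly quasipositive assertions follow quickly. Since $\hfkhat(K,g(K))$ has dimension $1$, Ni's theorem \cite{ni-hfk} that knot Floer homology detects fiberedness (with Ghiggini \cite{ghiggini} handling the genus-$1$ case) shows that $K$ is fibered. The same structure theorem forces $\tau(K) = g(K)$, since the surviving generator of $\cfhat(S^3) \simeq C\{i=0\}$ must live in the top Alexander grading where $\hfkhat(K,a) \neq 0$. Hedden's theorem \cite{hedden-positivity} then upgrades a fibered knot with $\tau(K) = g(K)$ to a strongly quasipositive one, yielding the first half of the conclusion.

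For the slope characterization, the \emph{if} direction follows by feeding the structure theorem back into the large surgery formula (Theorem~\ref{thm:large-surgeries}): the restricted form of $\cfkinfty(K)$ makes each $H_*(\hat{A}_s)$ one-dimensional for $|s| \leq N/2$ whenever $N \geq 2g(K)-1$, so $S^3_N(K)$ is an L-space at every such integer slope; one then extends to arbitrary rational $r \geq 2g(K)-1$ by the mapping cone formula of \cite{osz-rational} or, equivalently, by a monotonicity argument based on the surgery exact triangle for $\hfhat$. For the \emph{only if} direction, one argues that at $r = 2g(K)-1$ the L-space condition is on the boundary: Lemma~\ref{lem:ker-v-g-1} gives $\dim\hfred(S^3_{2g-1}(K),g-1) = \dim\hfkhat(K,g) - V_{g-1}(K)$, forcing $V_{g-1}(K) = 0$, and any smaller slope would require one of the $H_*(\hat{A}_s)$ or mapping cone summands to pick up extra generators, destroying the L-space property.

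The main obstacle is the structure theorem of \cite{osz-lens} itself, which is the deepest ingredient and the most technical; its proof requires a delicate filtered induction matching the Euler characteristic and total rank of $\hfkhat$. Given that input, the fibered and strongly quasipositive conclusions are essentially formal, and the surgery threshold is a routine large-surgery and mapping-cone computation. The one subtlety in the threshold argument is ruling out L-space behavior at rational slopes just below $2g(K)-1$, where one must compare $d$-invariants across the full affine $\Z/p\Z$ of $\spc$ structures rather than relying on a single summand.
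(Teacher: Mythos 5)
This statement is quoted background in the paper: it is attributed to \cite{osz-lens,ghiggini,ni-hfk,hedden-positivity,osz-rational} and no proof is given, so there is no internal argument to compare against. Your assembly of the standard inputs --- the Ozsv\'ath--Szab\'o structure theorem forcing $\hfkhat(K,a)$ to be at most one-dimensional with $\dim\hfkhat(K,g)=1$, Ni/Ghiggini for fiberedness, $\tau(K)=g(K)$ plus Hedden for strong quasipositivity, and the rational surgery formula for the slope threshold --- is exactly the intended chain of citations, and the first half of your sketch is fine.

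There is, however, one concrete error in your threshold argument: you claim that Lemma~\ref{lem:ker-v-g-1} forces $V_{g-1}(K)=0$. It forces the opposite. That lemma gives
\[ \dim \hfred(S^3_{2g-1}(K),g-1) = \dim\hfkhat(K,g) - V_{g-1}(K) = 1 - V_{g-1}(K), \]
and the L-space condition requires the left-hand side to vanish, so $V_{g-1}(K)=1$ for every L-space knot of genus $g\geq 1$. (Indeed the paper uses precisely this value in the proof of Lemma~\ref{lem:m52-geq-1}, via $\nu^+(K)=g(K)$ from \cite{hom-wu}.) If $V_{g-1}(K)$ were $0$, the $\spc$ structure $g-1$ on $S^3_{2g-1}(K)$ would carry a nontrivial $\hfred$ summand and the surgery would fail to be an L-space, contradicting the very property you are trying to establish. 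The ``only if'' direction should instead be run through the $d$-invariant/mapping-cone bookkeeping of \cite{osz-rational} (or the genus bound $\nuhat(K)=2g-1$ as in Remark~\ref{rem:lspace-nu-r}), where the extra generators at slopes below $2g-1$ come from the towers interacting with $V_s$, $H_s$ for $|s|\leq g-1$, not from $V_{g-1}$ vanishing. The rest of your outline is consistent with the cited literature.
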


\begin{remark} \label{rem:lspace-nu-r}
It follows quickly that a knot $K$ of genus $g \geq 1$ is an L-space knot if and only if $\rhat(K) = \nuhat(K) = 2g-1$.
\end{remark}

In this section, we develop similar restrictions on knots which fall just short of being L-space knots.  We recall the following from Definition~\ref{def:almost-lspace}.

\begin{definition} \label{def:almost}
A knot $K\subset S^3$ is an \emph{almost L-space knot} if
\[ \dim_\Q \hfhat(S^3_n(K)) = n+2 \]
for some $n \geq 2g(K)-1$.
\end{definition}

\begin{lemma} \label{lem:almost-r-nu}
A knot $K\subset S^3$ is an almost L-space knot if and only if $\rhat(K)-\nuhat(K) = 2$.
\end{lemma}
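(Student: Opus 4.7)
The plan is to apply Proposition~\ref{prop:r-nu} with the integral slope $n$ in the role of $p/q$, i.e.\ with $q=1$ and $p=n$, and then simplify.

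First, I would observe that for any $n\geq 2g(K)-1$, the quantity $n-\nuhat(K)$ is nonnegative, so that the absolute value in Proposition~\ref{prop:r-nu} can be dropped. Indeed, by Proposition~\ref{prop:r-nu-properties}\eqref{i:nu-concordance} together with the inequality $g_4(K)\leq g(K)$, we have
\[ \nuhat(K)\leq |\nuhat(K)|\leq \max(2g_4(K)-1,0)\leq 2g(K)-1\leq n, \]
using $g(K)\geq 1$ (which holds because $K$ is nontrivial by Definition~\ref{def:almost-lspace}). Thus
\[ \dim_\Q\hfhat(S^3_n(K))=\rhat(K)+n-\nuhat(K) \]
for every integer $n\geq 2g(K)-1$.

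Now the definition of an almost L-space knot asks that there exist some $n\geq 2g(K)-1$ with $\dim_\Q\hfhat(S^3_n(K))=n+2$. Substituting into the displayed formula, this is equivalent to $\rhat(K)-\nuhat(K)=2$, a condition that is independent of $n$. This simultaneously proves both directions: the existence of one such $n$ forces $\rhat(K)-\nuhat(K)=2$, and conversely, if $\rhat(K)-\nuhat(K)=2$, then the formula gives $\dim_\Q\hfhat(S^3_n(K))=n+2$ for every $n\geq 2g(K)-1$.

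There is no real obstacle here; the statement is essentially bookkeeping once the formula of Proposition~\ref{prop:r-nu} and the concordance bound on $\nuhat$ are in hand. The only thing to verify carefully is the sign issue needed to remove the absolute value, which as above follows from $\nuhat(K)\leq 2g(K)-1\leq n$. As a bonus, this argument shows that if $K$ is an almost L-space knot, then in fact $S^3_n(K)$ is an almost L-space for \emph{every} integer $n\geq 2g(K)-1$, which justifies the parenthetical remark in Definition~\ref{def:almost-lspace}.
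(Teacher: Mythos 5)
Your proof is correct and follows essentially the same route as the paper's: both drop the absolute value in Proposition~\ref{prop:r-nu} using the bound $\nuhat(K) \leq \max(2g_4(K)-1,0) \leq 2g(K)-1 \leq n$ from Proposition~\ref{prop:r-nu-properties}, reducing the condition to $\rhat(K)-\nuhat(K)=2$ independently of $n$. The observation that this also justifies the parenthetical claim in Definition~\ref{def:almost-lspace} is a nice touch, though not needed for the lemma itself.
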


\begin{proof}
We note that $K$ must be nontrivial since all surgeries on the unknot are L-spaces.  Using the inequality
\[ \nuhat(K) \leq \max(2g_4(K)-1,0) \leq 2g(K)-1 \]
of Proposition~\ref{prop:r-nu-properties}, it follows that if $n \geq 2g(K)-1$ then
\[ \dim \hfhat(S^3_n(K)) = \rhat(K) + |n - \nuhat(K)| = n + (\rhat(K)-\nuhat(K)). \]
By assumption the left side is $n+2$ for some such $n$, which proves the lemma.
\end{proof}

\begin{lemma} \label{lem:hfp-almost}
If $K \subset S^3$ is an almost L-space knot of genus $g \geq 1$, then 
\[ \hfhat(S^3_{2g-1}(K),s) \cong \begin{cases} \Q^3 & s=0 \\ \Q & 1 \leq |s| \leq g-1, \end{cases} \]
and similarly there is some $n \geq 1$ such that
\[ \hfp(S^3_{2g-1}(K),s) \cong \begin{cases} \cT^+ \oplus \Q[U]/U^n & s = 0 \\ \cT^+ & 1\leq |s| \leq g-1 \end{cases} \]
as $\Q[U]$-modules.
\end{lemma}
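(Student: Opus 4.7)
Write $N = 2g-1$. By Lemma~\ref{lem:almost-r-nu} together with Proposition~\ref{prop:r-nu}, the almost L-space hypothesis gives $\dim\hfhat(S^3_N(K)) = N+2 = 2g+1$, and this is a sum of contributions from the $N$ spin$^c$ structures on $S^3_N(K)$. The plan is to pin down how these $N$ summands add up to $N+2$, using only a parity observation together with conjugation symmetry.

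The central observation is that $\dim\hfhat(Y,\spinc)$ is always odd for any torsion spin$^c$ structure on a rational homology sphere $Y$. To see this, decompose $\hfp(Y,\spinc) \cong \cT^+ \oplus \hfred(Y,\spinc)$ as a $\Q[U]$-module. Summing the short exact sequence \eqref{eq:hfhat-from-u} over gradings gives
\[ \dim\hfhat(Y,\spinc) = \dim\ker(U) + \dim\coker(U), \]
where $U$ denotes its endomorphism of $\hfp(Y,\spinc)$. Now $U$ is surjective on $\cT^+$ with one-dimensional kernel, while on the finite-dimensional $\hfred(Y,\spinc)$ one has $\dim\ker U|_{\hfred} = \dim\coker U|_{\hfred}$ by rank--nullity, so
\[ \dim\hfhat(Y,\spinc) = 1 + 2\dim\coker\!\bigl(U|_{\hfred(Y,\spinc)}\bigr). \]
In particular $\dim\hfhat(Y,\spinc)$ is odd; it equals $1$ precisely when $\hfred(Y,\spinc) = 0$, and is at least $3$ otherwise.

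Applied to $Y = S^3_N(K)$, each of the $N = 2g-1$ spin$^c$ summands is odd and at least $1$, yet they sum to $2g+1$, so the total excess above the baseline is exactly $2$. Since each individual excess is either $0$ or at least $2$, exactly one spin$^c$ structure contributes $3$ and all others contribute $1$. The conjugation symmetry $\hfhat(Y,\spinc) \cong \hfhat(Y,\bar\spinc)$ combined with Remark~\ref{rem:s-conjugate} then forces the distinguished summand to be self-conjugate, namely $s = 0$, yielding the stated formula for $\hfhat(S^3_N(K),s)$.

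The $\hfp$ statement follows immediately: $\hfred(S^3_N(K),s) = 0$ for $s \neq 0$ gives $\hfp(S^3_N(K),s) \cong \cT^+$, while $\dim\coker(U|_{\hfred(S^3_N(K),0)}) = 1$ means $\hfred(S^3_N(K),0)$ is a cyclic $\Q[U]$-module, hence isomorphic to $\Q[U]/U^n$ for some $n \geq 1$. There is no substantive obstacle here; the only subtle point is setting up the parity observation carefully, after which the count forces everything.
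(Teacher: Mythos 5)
Your proof is correct and follows essentially the same route as the paper's: the same dimension count over the $2g-1$ spin$^c$ structures, the same conjugation-symmetry argument to force the exceptional summand to $s=0$, and the same use of the sequence \eqref{eq:hfhat-from-u} to pass between $\hfhat$ and $\hfp$. The only cosmetic difference is that you derive the oddness of $\dim\hfhat(Y,\spinc)$ from the $\Q[U]$-module structure of $\hfp(Y,\spinc)$, whereas the paper quotes $\chi(\hfhat(Y,\spinc))=1$ from Ozsv\'ath--Szab\'o and then carries out essentially your $1+2k$ computation when upgrading to the $\hfp$ statement.
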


\begin{proof}
Let $Y=S^3_{2g-1}(K)$.  By Lemma~\ref{lem:almost-r-nu} and $\nuhat(K) \leq 2g-1$ we have
\[ \sum_{s\in\Z/(2g-1)\Z} \dim \hfhat(Y,s) = \dim \hfhat(Y) = 2g+1. \]
Each $\hfhat(Y,s)$ has Euler characteristic $1$ \cite[Proposition~5.1]{osz-properties} and hence odd dimension.  Since the total dimension is $2g+1$ there must be a unique $s_0$ with
\[ \dim \hfhat(Y,s_0) = 3 \]
and $\dim \hfhat(Y,s) = 1$ for all other $s \neq s_0$.  But we have
\[ \hfhat(Y,s_0) \cong \hfhat(Y,-s_0) \]
by conjugation symmetry \cite[Theorem~2.4]{osz-properties}, recalling from Remark~\ref{rem:s-conjugate} that $s$ and $-s$ determine conjugate $\spc$ structures, so $-s_0 \equiv s_0 \pmod{2g-1}$ and therefore $s_0 = 0$.

In order to pass from $\hfhat$ to $\hfp$, we use the exact triangle \eqref{eq:hfhat-from-u} to see that if
\[ \hfp(Y,s) \cong \cT^+ \oplus \left( \bigoplus_{i=1}^k \Q[U]/U^{n_i} \right) \]
as $\Q[U]$-modules for some $k \geq 0$ and $n_1,\dots,n_k \geq 1$, then
\[ \dim \hfhat(Y,s) = \dim \coker(U) + \dim \ker(U) = k + (k+1) = 2k+1. \]
From this we conclude that $k=1$ if $s\equiv 0\pmod{2g-1}$ and $k=0$ otherwise, proving the lemma.
\end{proof}

\begin{proposition} \label{prop:almost-fibered}
Let $K$ be an almost L-space knot of genus $g \geq 1$.  Then exactly one of the following must hold.
\begin{itemize}
\item $g=1$, and $K$ is the left-handed trefoil, figure eight, or $\mirror{5_2}$.
\item $g\geq 2$, and $K$ is fibered with $V_{g-1}(K) = 1$.
\end{itemize}
\end{proposition}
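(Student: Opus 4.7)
The plan is to split the argument by whether $g = 1$ or $g \geq 2$, and in both cases extract information from the large surgery formula via Lemma~\ref{lem:hfp-almost} and Corollary~\ref{cor:hfhat-vs-hfkhat}.

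First I would handle the case $g \geq 2$. Lemma~\ref{lem:hfp-almost} gives $\dim \hfhat(S^3_{2g-1}(K), g-1) = 1$, so Corollary~\ref{cor:hfhat-vs-hfkhat} yields
\[ \dim \hfkhat(K, g) = V_{g-1}(K). \]
Proposition~\ref{prop:vs-hs} tells us $V_g(K) = 0$ and $V_{g-1}(K) \leq V_g(K) + 1$, so $V_{g-1}(K) \in \{0, 1\}$. Since $g(K) = g$ forces $\dim \hfkhat(K, g) \geq 1$, we must have $V_{g-1}(K) = 1 = \dim \hfkhat(K, g)$. Ni's fiberedness theorem then gives that $K$ is fibered, completing this case.

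Next, for the case $g = 1$: the almost L-space condition together with Lemma~\ref{lem:almost-r-nu} gives $\rhat(K) - \nuhat(K) = 2$. Since $|\nuhat(K)| \leq 2g(K) - 1 = 1$ by Proposition~\ref{prop:r-nu-properties}, Proposition~\ref{prop:r-nu} specializes to $\dim \hfhat(S^3_1(K)) = 3$. Then Corollary~\ref{cor:hfhat-vs-hfkhat} gives $\dim \hfkhat(K, 1) = V_0(K) + 1$, and $V_0(K) \in \{0, 1\}$ by Proposition~\ref{prop:vs-hs}, so $\dim\hfkhat(K,1) \in \{1,2\}$.

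This splits into two subcases. If $V_0(K) = 0$, then $\dim \hfkhat(K, 1) = 1$, so $K$ is fibered of genus $1$ by Ni's theorem, hence is a trefoil or the figure eight. The right trefoil has $V_0 = 1$, so only the left trefoil and figure eight remain. If $V_0(K) = 1$, then $\dim \hfkhat(K, 1) = 2$, and Theorem~\ref{thm:main-hfk} places $K$ among a finite explicit list of nearly fibered genus-$1$ knots. Propositions~\ref{prop:1-surgery-5_2} and~\ref{prop:nf-1-surgery-plus} between them compute $V_0$ for every knot on that list, and $\mirror{5_2}$ is the unique one with $V_0 = 1$, giving $K = \mirror{5_2}$.

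The main obstacle is the $g = 1$ case, which genuinely relies on the classification of nearly fibered genus-$1$ knots (Theorem~\ref{thm:main-hfk}) to exhaust the $V_0 = 1$ subcase; the $g \geq 2$ case reduces cleanly via Ni's theorem and the elementary constraints on the $V_s$-sequence in Proposition~\ref{prop:vs-hs}.
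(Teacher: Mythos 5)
Your proposal is correct and follows essentially the same route as the paper: both cases reduce to the identity $\dim\hfkhat(K,g)-V_{g-1}(K)=\dim\hfred(S^3_{2g-1}(K),g-1)$ (you invoke it via Corollary~\ref{cor:hfhat-vs-hfkhat}, the paper via Lemma~\ref{lem:ker-v-g-1} and Lemma~\ref{lem:hfp-almost}), with Ni's fiberedness theorem handling $g\geq 2$ and the $V_0=0$ subcase, and Theorem~\ref{thm:main-hfk} plus the $V_0$ computations of Propositions~\ref{prop:1-surgery-5_2} and~\ref{prop:nf-1-surgery-plus} handling the $V_0=1$ subcase. The only cosmetic difference is that you exclude the right-handed trefoil because $V_0(T_{2,3})=1$ rather than because it is an L-space knot; both are valid.
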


\begin{proof}
According to Lemma~\ref{lem:ker-v-g-1} we have
\begin{equation} \label{eq:hfred-hfkhat-vg-1}
\dim \hfred(S^3_{2g-1}(K),g-1) = \dim \hfkhat(K,g) - V_{g-1}(K).
\end{equation}
We also recall from Proposition~\ref{prop:vs-hs} that $V_g(K) = 0$ and $V_g(K) \leq V_{g-1}(K) \leq V_g(K)+1$, so $V_{g-1}(K)$ is either $0$ or $1$.

Now suppose that $g=1$.  In this case, we know by Lemma~\ref{lem:hfp-almost} that
\[ \hfp(S^3_1(K)) \cong \cT^+ \oplus \Q[U]/U^n \]
for some $n \geq 1$, and Lemma~\ref{lem:ker-v-g-1} says that the $U$-action on $\hfred(S^3_1(K)) \cong \Q[U]/U^n$ is trivial, so $n=1$.  Then $\dim \hfred(S^3_1(K),0) = 1$, and \eqref{eq:hfred-hfkhat-vg-1} becomes
\[ \dim \hfkhat(K,1) = \begin{cases} 1, & V_0(K) = 0 \\ 2, & V_0(K) = 1. \end{cases}  \]
Thus if $V_0(K)=0$ then $K$ is fibered \cite{ghiggini}, and the right-handed trefoil is an L-space knot, so $K$ must be the left-handed trefoil or the figure eight instead; and in the remaining cases we have $V_0(K)=1$ and $\dim \hfkhat(K,1) = 2$.  In these cases, Propositions~\ref{prop:1-surgery-5_2} and \ref{prop:nf-1-surgery-plus} tell us that
\begin{equation} \label{eq:v0-nf}
V_0(K) = -\frac{1}{2}d(S^3_1(K)) = \begin{cases} 1, & K \cong \mirror{5_2} \\ 0, & K \not\cong \mirror{5_2}, \end{cases}
\end{equation}
so $K$ must be $\mirror{5_2}$.

From now on we suppose that $g \geq 2$.  Here the $\spc$ structures $0$ and $g-1$ on $S^3_{2g-1}(K)$ are different, so by Lemma~\ref{lem:hfp-almost} we have
\[ \hfred(S^3_{2g-1}(K),g-1) = 0 \]
and so \eqref{eq:hfred-hfkhat-vg-1} becomes $0 = \dim \hfkhat(K,g) - V_{g-1}(K)$.  Thus
\[ \dim \hfkhat(K,g) = V_{g-1}(K) \leq 1. \]
But this dimension must be positive \cite[Theorem~1.2]{osz-genus}, so it is equal to $1$, and then this implies that $K$ is fibered \cite{ni-hfk}.
\end{proof}

\begin{proposition} \label{prop:almost-implies-sqp}
If $K$ is an almost L-space knot of genus $g \geq 2$, then $\tau(K) = g$ and so $K$ is strongly quasipositive.
\end{proposition}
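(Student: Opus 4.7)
The plan is to reduce to showing $\tau(K) = g$; strong quasipositivity will then follow by Hedden's theorem characterizing strongly quasipositive fibered knots. By Proposition~\ref{prop:almost-fibered}, $K$ is fibered with $\dim\hfkhat(K,g) = 1$ and $V_{g-1}(K) = 1$, so we may write $\hfkhat(K,g) \cong \Q_{(m)}$ for some Maslov grading $m$. The key task is to compute that $m = 0$; once this is in hand, the fibered hypothesis together with a standard identification of the top generator of $\hfkhat(K,g)$ as the Ozsv\'ath--Szab\'o contact class will give $\tau(K) = g$.

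To compute $m$, I apply Lemma~\ref{lem:ker-v-g-1}: the equality $\dim\hfkhat(K,g) = V_{g-1}(K) = 1$ forces $\hfred(S^3_{2g-1}(K),g-1) = 0$, so the short exact sequence
\[ 0 \to \hfkhat_{*+2}(K,g) \to H_*(A^+_{g-1}) \to H_*(B^+) \to 0 \]
realizes $H_*(A^+_{g-1})$ as a non-split extension of $H_*(B^+) \cong \cT^+_{(0)}$ by $\Q_{(m-2)}$. As a $\Q[U]$-module the middle term is therefore a single tower $\cT^+_{(d_A)}$, on which the surjection $(v^+_{g-1})_*$ is multiplication by $U^{V_{g-1}(K)} = U$. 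Matching absolute gradings (using that $U$ lowers gradings by two and that $B^+$ has its bottom at grading zero) pins down $d_A = -2$, and the image of $\hfkhat_{*+2}(K,g)$ is then the kernel of $U$, namely the bottom of the tower at grading $-2$. Since this image is represented by the class $[Ux] \in C\{-1,g-1\}$ of absolute grading $m-2$, we conclude $m = 0$.

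For the final step I invoke that for a fibered knot with $\hfkhat(K,g) \cong \Q_{(0)}$, the top generator maps to a nonzero class in $\hfhat(S^3) \cong \Q_{(0)}$ under the quotient $\cfkhat \to C\{0,g\}$ via the contact-invariant identification for fibered knots; this forces $\tau(K) = g$. Hedden's theorem that a fibered knot $K \subset S^3$ satisfies $\tau(K) = g(K)$ if and only if $K$ is strongly quasipositive then completes the proof. The principal obstacle is this last step: while the Floer-theoretic identification $m = 0$ is a direct grading calculation, passing from $m = 0$ to the non-vanishing of the induced class in $\hfhat(S^3)$ requires invoking the contact-invariant machinery, or alternatively a detailed analysis of the knot Floer complex using the extra structure imposed on $\hfkhat(K,\cdot)$ by the almost L-space hypothesis (which rules out Maslov-grading $-1$ classes in $\hfkhat(K,j)$ for $|j| < g$, ensuring $\partial' x = 0$).
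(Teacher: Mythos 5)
Your reduction to $\tau(K)=g$ via Hedden's theorem matches the paper, and your grading computation is correct: since $\dim\hfkhat(K,g)=V_{g-1}(K)=1$, Lemma~\ref{lem:ker-v-g-1} gives $\hfred(S^3_{2g-1}(K),g-1)=0$, so $H_*(A^+_{g-1})\cong\cT^+_{(-2)}$ and the generator of $\hfkhat(K,g)$ sits in Maslov grading $0$. But this is only a \emph{necessary} condition for $\tau(K)=g$, not a sufficient one, and the step you flag as the ``principal obstacle'' is a genuine gap. Concretely: from the long exact sequence of $0\to\cF_{g-1}\to\cF_g\to C\{0,g\}\to 0$, the inequality $\tau(K)\leq g-1$ is equivalent to the vanishing of the map $H_0(\cF_g)\to\hfkhat_0(K,g)$, and this is perfectly compatible with both groups being $\Q_{(0)}$ provided the connecting map $\hfkhat_0(K,g)\to H_{-1}(\cF_{g-1})$ is injective; nothing in your argument excludes that. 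The ``contact-invariant identification'' you invoke is circular here: for a fibered knot in $S^3$, the non-vanishing of the image of the top/bottom generator in $\hfhat(S^3)$ is \emph{equivalent} to $\tau(K)=g$ (equivalently, by Hedden, to strong quasipositivity), so it cannot be assumed. Your fallback claim --- that the almost L-space hypothesis rules out Maslov-grading $-1$ classes in $\hfkhat(K,j)$ for $|j|<g$ --- is neither established in the paper nor obviously true, and is not needed for the paper's argument.

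What the paper actually does, and what your proposal is missing, is to use the \emph{hat}-version consequence of the almost L-space condition: Lemma~\ref{lem:hfp-almost} gives $H_*(\hat{A}_{g-1})\cong\hfhat(S^3_{2g-1}(K),g-1)\cong\Q$. Assuming $\tau(K)\leq g-1$, the splitting of the long exact sequence for $0\to\cF_{g-1}\to\cF_g\to C\{0,g\}\to 0$ yields $H_*(\cF_{g-1})\cong\Q^2$, and then the long exact sequence of $0\to C\{-1,g-1\}\to\hat{A}_{g-1}\to C\{i=0,j\leq g-1\}\to 0$ (with middle term of rank $1$ and third term of rank $2$) forces the map $\iota_*\colon H_*(C\{-1,g-1\})\to H_*(\hat{A}_{g-1})$ to vanish. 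Since the inclusion $C\{-1,g-1\}\hookrightarrow A^+_{g-1}$ factors through $\iota$, this contradicts the injectivity of $H_*(C\{-1,g-1\})\to H_*(A^+_{g-1})$ coming from the short exact sequence of Lemma~\ref{lem:ker-v-g-1}. To repair your proof you would need to supply an argument of this kind (or some other genuine input beyond the Maslov grading of the top generator); as written, the final implication does not follow.
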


\begin{proof}
Proposition~\ref{prop:almost-fibered} says that $K$ is fibered, and that $V_{g-1}(K) = 1$.  Since $K$ is fibered, it is strongly quasipositive if and only if $\tau(K) = g$ \cite[Theorem~1.2]{hedden-positivity}.  Thus we will suppose that $\tau(K) \leq g-1$ and show that this leads to a contradiction.

The assumption that $\tau(K) \leq g-1$ is equivalent to the assertion that the map
\[ H_*(C\{i=0,j\leq g-1\}) \to H_*(C\{i=0\}) \cong \hfhat(S^3) \cong \Q \]
is surjective.  In this case the short exact sequence of complexes
\[ 0 \to C\{i=0,j\leq g-1\} \to C\{i=0\} \to C\{0,g\} \to 0 \]
gives rise to a long exact sequence in homology which splits as
\[ 0 \to \underbrace{H_{*+1}(C\{0,g\})}_{\cong\hfkhat(K,g)\cong\Q} \to H_*(C\{i=0,j\leq g-1\}) \to \underbrace{H_*(C\{i=0\})}_{\cong\hfhat(S^3)\cong\Q} \to 0, \]
so $H_*(C\{i=0,j\leq g-1\}) \cong \Q^2$.

We now consider the short exact sequence of complexes
\[ 0 \to C\{i<0, j=g-1\} \xrightarrow{\iota} \hat{A}_{g-1} \to C\{i=0, j\leq g-1\} \to 0, \]
whose first term is equal to
\[ C\{-1, g-1\} \cong C\{0,g\} \cong \hfkhat(K,g) \cong \Q. \]
The hat version of the large surgeries formula (Theorem~\ref{thm:large-surgeries}) tells us that
\[ H_*(\hat{A}_{g-1}) \cong \hfhat(S^3_{2g-1}(K),g-1) \cong \Q \]
by Lemma~\ref{lem:hfp-almost}, so we get a long exact sequence 
\[ \dots \to \underbrace{H_*(C\{-1,g-1\})}_{\cong \Q} \xrightarrow{\iota_*} \underbrace{H_*(\hat{A}_{g-1})}_{\cong \Q} \to \underbrace{H_*(C\{i=0,j\leq g-1\})}_{\Q^2} \to \dots, \]
from which the map $\iota_*: H_*(C\{-1,g-1\}) \to H_*(\hat{A}_{g-1})$ is zero.

Finally, the inclusion map $C\{-1,g-1\} \hookrightarrow A^+_{g-1}$ factors through $\iota$ as
\[ C\{-1,g-1\} \xrightarrow{\iota} \hat{A}_{g-1} \hookrightarrow A^+_{g-1}, \]
so the induced map
\[ H_*(C\{-1,g-1\}) \to H_*(A^+_{g-1}) \]
on homology must be zero, since it factors through $\iota_* = 0$.  But this map belongs to the short exact sequence
\[ 0 \to H_*(C\{-1,g-1\}) \to H_*(A^+_{g-1}) \xrightarrow{(v^+_{g-1})_*} H_*(B^+) \]
of Lemma~\ref{lem:ker-v-g-1}, so it must also be injective, and since $H_*(C\{-1,g-1\}) \cong \Q$ is nonzero, we have a contradiction.  Therefore $\tau(K)=g$, as desired.
\end{proof}

\begin{corollary} \label{cor:almost-r-nu}
If $K$ is an almost L-space knot of genus $g \geq 2$, then $(\rhat(K),\nuhat(K)) = (2g+1,2g-1)$.
\end{corollary}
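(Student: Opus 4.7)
The plan is to pin down $\nuhat(K)$ first using the strong constraint $\tau(K) = g$ supplied by Proposition~\ref{prop:almost-implies-sqp}, and then read off $\rhat(K)$ from Lemma~\ref{lem:almost-r-nu}.

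Since $\nu(K)$ equals either $\tau(K)$ or $\tau(K)+1$ (see \cite[Equation~(34)]{osz-rational}), and $\tau(K)=g \geq 2$ by Proposition~\ref{prop:almost-implies-sqp}, we get $\nu(K) \geq g \geq 2$. Applying the same to the mirror, $\nu(\mirror{K}) \leq \tau(\mirror{K}) + 1 = -g+1 \leq -1$. Hence $\nu(K) > \nu(\mirror{K})$, so the first case of \eqref{eq:nuhat-nu} applies and gives
\[ \nuhat(K) = \max(2\nu(K)-1,\, 0) = 2\nu(K)-1 \geq 2g-1. \]

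For the matching upper bound, Proposition~\ref{prop:r-nu-properties}\eqref{i:nu-concordance} together with $g_4(K) \leq g(K) = g$ yields
\[ \nuhat(K) \leq \max(2g_4(K)-1,\,0) \leq 2g-1. \]
Combining the two inequalities forces $\nuhat(K) = 2g-1$ (and, incidentally, $\nu(K)=g$). Finally, since $K$ is an almost L-space knot, Lemma~\ref{lem:almost-r-nu} gives $\rhat(K) = \nuhat(K) + 2 = 2g+1$, as claimed. There is no substantive obstacle here: the entire argument is just a squeeze on $\nuhat(K)$ using the tight value of $\tau(K)$ on one side and the $g_4$-bound on the other, with Proposition~\ref{prop:almost-implies-sqp} doing all of the real work.
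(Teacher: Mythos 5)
Your proof is correct and follows essentially the same route as the paper: both arguments use $\tau(K)=g$ from Proposition~\ref{prop:almost-implies-sqp} to force $\nu(K)>\nu(\mirror{K})$, apply \eqref{eq:nuhat-nu}, and finish with Lemma~\ref{lem:almost-r-nu}. The only (cosmetic) difference is that the paper gets the upper bound by noting $\nu(K)\leq g$ by definition, whereas you squeeze $\nuhat(K)$ from above via the $g_4$-bound in Proposition~\ref{prop:r-nu-properties}; both are valid.
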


\begin{proof}
Proposition~\ref{prop:almost-implies-sqp} says that $\tau(K)=g$.  The invariant $\nu(K)$ of \cite[Definition~9.1]{osz-rational} is equal to either $\tau(K)$ or $\tau(K)+1$ by \cite[Equation~(34)]{osz-rational}, but it is also at most $g$ by definition, so we have
\[ \nu(K)=g \quad\text{and}\quad \nu(\mirror{K}) \leq \tau(\mirror{K})+1 = -g+1. \]
Since $\nu(K) > \nu(\mirror{K})$, we apply \eqref{eq:nuhat-nu} to get $\nuhat(K)=\max(2\nu(K)-1,0) = 2g-1$.  Then $\rhat(K) = 2g+1$ as well by Lemma~\ref{lem:almost-r-nu}.
\end{proof}

\begin{remark}
Let $K$ be an almost L-space knot of genus $g\geq 2$.  Then Lemma~\ref{lem:hfp-almost} and the large surgeries formula imply that $H_*(\hat{A}_s) \cong \Q$ for all $s\geq 1$, and so one can repeat the proof of \cite[Theorem~1.2]{osz-lens} to show, among other things, that
\[ \dim \hfkhat(K,a) = 0 \text{ or } 1 \quad \text{for all } a \geq 2, \]
hence by symmetry whenever $|a| \geq 2$; the corresponding $t^a$-coefficients of $\Delta_K(t)$ must then be either $0$ or $\pm1$.  We will not pursue this further here.
\end{remark}

We conclude by noting the following consequences, which we will not use in this paper.

\begin{theorem} \label{thm:rhat-3}
We have $\rhat(K) \leq 3$ if and only if $K$ has crossing number at most $5$.
\end{theorem}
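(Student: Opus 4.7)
The plan is to prove both directions by enumerating cases on the pair $(\rhat(K),\nuhat(K))$. By Proposition~\ref{prop:r-nu-properties}, $\nuhat$ is zero or odd and $\rhat-|\nuhat|$ is a nonnegative even integer, so the constraint $\rhat\leq 3$ leaves exactly the possibilities
\[
(\rhat,\nuhat) \in \{(0,0),\ (1,\pm 1),\ (2,0),\ (3,\pm 1),\ (3,\pm 3)\}.
\]

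For the \emph{if} direction, I would verify $\rhat\leq 3$ for each of the five knots of crossing number at most five. The unknot gives $\rhat(U)=0$ from Proposition~\ref{prop:r-nu} applied to the known dimensions of $\hfhat$ of lens spaces. The torus knots $T_{2,3}$ and $T_{2,5}$ are L-space knots of genus $1$ and $2$, so Remark~\ref{rem:lspace-nu-r} gives $\rhat=1$ and $\rhat=3$. The figure eight is an almost L-space knot by Proposition~\ref{prop:almost-fibered}, and its amphichirality combined with Proposition~\ref{prop:r-nu-properties} forces $\nuhat(4_1)=0$, so Lemma~\ref{lem:almost-r-nu} yields $\rhat(4_1)=2$. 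Finally, Lemma~\ref{lem:nf-r-nu} records $\rhat(5_2)=3$.

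For the \emph{only if} direction I dispatch the first four cases using the tools already set up in this section. For $(0,0)$, Proposition~\ref{prop:r-nu} gives $\dim\hfhat(S^3_1(K))=1$, so $S^3_1(K)$ is an L-space and $K$ is an L-space knot with $\rhat=2g(K)-1$ odd, forcing $K=U$. For $(1,\pm 1)$, Remark~\ref{rem:lspace-nu-r} makes $K$ a genus-$1$ L-space knot, hence fibered by Theorem~\ref{thm:l-space-knots}; since the other genus-$1$ fibered knot $4_1$ is merely almost L-space by Proposition~\ref{prop:almost-fibered}, $K$ must be a trefoil. For $(2,0)$ we have $\dim\hfhat(S^3_1(K))=3$, so Theorem~\ref{thm:main-1-surgery-3d} gives $K\in\{\mirror{T_{2,3}},4_1,\mirror{5_2}\}$, and only $4_1$ has $\rhat=2$. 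For $(3,\pm 1)$, Lemma~\ref{lem:almost-r-nu} makes $K$ almost L-space; Corollary~\ref{cor:almost-r-nu} forces $g(K)=1$ (else $\rhat\geq 5$), so Proposition~\ref{prop:almost-fibered} together with Lemma~\ref{lem:nf-r-nu} leaves only $5_2$ and $\mirror{5_2}$.

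The remaining case $(3,\pm 3)$ is the main obstacle: here $K$ is an L-space knot of genus $2$, and I must conclude that $K=T_{2,5}$ or its mirror. The standard constraint on Alexander polynomials of L-space knots restricts $\Delta_K(t)$ to either $t^2-t+1-t^{-1}+t^{-2}$ or $t^2-1+t^{-2}$. In the first case the staircase structure of $\cfkinfty$ for an L-space knot forces $\hfkhat(K;\Q)\cong\hfkhat(T_{2,5};\Q)$ as bigraded vector spaces, and one appeals to a knot Floer detection result for $T_{2,5}$ to conclude $K=T_{2,5}$. In the second case I would have to rule out the existence of any L-space knot in $S^3$ with Alexander polynomial $t^2-1+t^{-2}$; the most natural candidate, the $(2,1)$-cable of the right-handed trefoil, is excluded by Hedden's cabling criterion since $\tfrac{1}{2}<2g(T_{2,3})-1=1$, but a more careful classification argument is needed to eliminate every other knot with this Alexander polynomial. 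I expect this last step to be the subtlest part of the proof.
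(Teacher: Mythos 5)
Your case analysis on $(\rhat(K),\nuhat(K))$ is the same strategy as the paper's proof (which, after mirroring so that $\nuhat\geq 0$, splits into $\rhat-\nuhat=2$, the almost L-space case, and $\rhat=\nuhat$, the L-space/unknot case), and your treatment of the cases $(0,0)$, $(1,\pm1)$, $(2,0)$, $(3,\pm1)$ matches the paper's use of Lemma~\ref{lem:almost-r-nu}, Corollary~\ref{cor:almost-r-nu}, Proposition~\ref{prop:almost-fibered}, and Remark~\ref{rem:lspace-nu-r}. The problem is the case $(3,\pm3)$, where you need that the only genus-$2$ L-space knot is $T_{2,5}$ up to mirroring. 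As written this is a genuine gap: you reduce to two candidate Alexander polynomials and then concede that you cannot rule out an L-space knot with $\Delta_K(t)=t^2-1+t^{-2}$, nor fully justify the first subcase without ``a knot Floer detection result for $T_{2,5}$.'' The paper closes exactly this case with a single citation to the theorem (Farber--Reinoso--Wang, cited as \cite{frw-cinquefoil}) that a genus-$2$ L-space knot must be $T_{2,5}$; that is the intended input, parallel to the use of \cite{ghiggini} for genus $1$.

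Two remarks on repairing your version. First, the subcase $\Delta_K(t)=t^2-1+t^{-2}$ does not actually occur: for an L-space knot of genus $g$ the coefficient of $t^{g-1}$ in $\Delta_K(t)$ is forced to be $-1$ (equivalently $\dim\hfkhat(K,g-1)=1$ with the staircase starting with a length-one step), so only $t^2-t+1-t^{-1}+t^{-2}$ survives; this follows from the structure theorem for $\hfkhat$ of L-space knots in \cite{osz-lens} together with the observation recorded in the paper's remark after Corollary~\ref{cor:almost-r-nu}. But second, even after that reduction you still must invoke the detection theorem to pass from $\hfkhat(K)\cong\hfkhat(T_{2,5})$ to $K\cong T_{2,5}$, so the Alexander-polynomial detour buys you nothing over citing \cite{frw-cinquefoil} directly. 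Your verification of the ``if'' direction is fine, though note that Proposition~\ref{prop:almost-fibered} is a conditional statement and does not by itself assert that $4_1$ is an almost L-space knot; you should instead record the standard computation $\dim\hfhat(S^3_{\pm1}(4_1))=\dim\hfhat(\pm\Sigma(2,3,7))=3$ and then conclude $(\rhat(4_1),\nuhat(4_1))=(2,0)$ from amphichirality as you do.
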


\begin{proof}
We replace $K$ with its mirror as needed to ensure that $\nuhat(K) \geq 0$, since this does not change $\rhat(K)$.  Now by Proposition~\ref{prop:r-nu-properties} the difference $\rhat(K)-\nuhat(K)$ is nonnegative and even, and we have
\[ 0\leq \rhat(K)-\nuhat(K) \leq \rhat(K) \leq 3, \]
so it must be either $0$ or $2$.

Supposing that the difference is $2$, then $K$ is an almost L-space knot by Lemma~\ref{lem:almost-r-nu}.  If $g(K) \geq 2$ then Corollary~\ref{cor:almost-r-nu} says that $\rhat(K) = 2g(K)+1 \geq 5$, which cannot happen.  So $g(K) = 1$, and then Proposition~\ref{prop:almost-fibered} says that $K$ is either $T_{-2,3}$, a figure eight, or $\mirror{5_2}$.  

Otherwise we have $\rhat(K) = \nuhat(K)$, so by Remark~\ref{rem:lspace-nu-r}, if $K$ is nontrivial then it must be a nontrivial L-space knot satisfying $\rhat(K) = 2g(K)-1$.  But then $\rhat(K) \leq 3$ implies that $g(K)$ is either $1$ or $2$, so $K$ must be a right-handed trefoil \cite{ghiggini} or a $(2,5)$ torus knot \cite{frw-cinquefoil}.  Up to mirroring we have now accounted for all knots of at most five crossings and ruled out everything else, so this completes the proof.
\end{proof}

\begin{theorem} \label{thm:1-surgery-3d}
If $\dim_\Q \hfhat(S^3_1(K)) = 3$, then $K$ is either the left-handed trefoil, figure eight, or $\mirror{5_2}$.
\end{theorem}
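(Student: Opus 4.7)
The plan is to apply Proposition~\ref{prop:r-nu} with $p = q = 1$, which converts the hypothesis $\dim_\Q \hfhat(S^3_1(K)) = 3$ into the single equation
\[ \rhat(K) + |1 - \nuhat(K)| = 3. \]
I would then combine this with the constraints from Proposition~\ref{prop:r-nu-properties}: $\nuhat(K)$ is either odd or zero, and $\rhat(K) - |\nuhat(K)|$ is a nonnegative even integer.

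A short case analysis will pin down the pair $(\rhat(K), \nuhat(K))$. Writing $|1 - \nuhat(K)| \in \{0, 1, 2\}$, the value $\nuhat(K) = 2$ is excluded by the parity constraint, and $\nuhat(K) = 3$ (which would force $\rhat(K) = 1$) is excluded by the inequality $\rhat(K) \geq |\nuhat(K)|$. The only surviving possibilities are
\[ (\rhat(K), \nuhat(K)) \in \{(1, -1),\ (2, 0),\ (3, 1)\}. \]

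The key observation is that in each of these three cases we have $\rhat(K) - \nuhat(K) = 2$, so by Lemma~\ref{lem:almost-r-nu} the knot $K$ is an almost L-space knot. If $g(K) \geq 2$, then Corollary~\ref{cor:almost-r-nu} would force $\nuhat(K) = 2g(K) - 1 \geq 3$, contradicting $\nuhat(K) \in \{-1, 0, 1\}$. Hence $g(K) = 1$, and Proposition~\ref{prop:almost-fibered} then concludes that $K$ must be one of $T_{-2,3}$, the figure eight, or $\mirror{5_2}$, as desired. There is no substantial obstacle; the theorem falls out as a direct corollary of the almost L-space machinery developed earlier in the section, and indeed the three possible pairs $(\rhat(K), \nuhat(K))$ correspond (via Lemma~\ref{lem:nf-r-nu} and the analogous computations for the trefoil and figure eight) to these three knots in order.
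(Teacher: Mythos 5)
Your proposal is correct and follows essentially the same route as the paper: both reduce the hypothesis via Proposition~\ref{prop:r-nu} to $\rhat(K)+|1-\nuhat(K)|=3$, use the parity and positivity constraints of Proposition~\ref{prop:r-nu-properties} to force $\rhat(K)-\nuhat(K)=2$, and then invoke Lemma~\ref{lem:almost-r-nu}, Corollary~\ref{cor:almost-r-nu}, and Proposition~\ref{prop:almost-fibered} to conclude $g(K)=1$ and identify the three knots. The only difference is cosmetic: you enumerate the surviving pairs $(\rhat(K),\nuhat(K))$ explicitly, while the paper argues directly that $\nuhat(K)\leq 1$.
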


\begin{proof}
Proposition~\ref{prop:r-nu} says that
\begin{equation} \label{eq:dim-1-surgery-3}
3 = \dim_\Q \hfhat(S^3_1(K)) = \rhat(K) + |1-\nuhat(K)|,
\end{equation}
so $\rhat(K) \leq 3$ with equality if and only if $\nuhat(K)=1$.  If $\nuhat(K) > 1$ then we have $3 > \rhat(K) \geq \nuhat(K) > 1$, so $\nuhat(K)=2$ and this contradicts Proposition~\ref{prop:r-nu-properties}.  Thus $\nuhat(K) \leq 1$ and now \eqref{eq:dim-1-surgery-3} becomes $\rhat(K)-\nuhat(K) = 2$.  So $K$ is an almost L-space knot, with genus $1$ by Corollary~\ref{cor:almost-r-nu}, and now Proposition~\ref{prop:almost-fibered} says that it must be one of the knots claimed above.
\end{proof}

\section{The mirror of $5_2$} \label{sec:mirror}

Our goal in this section is to prove that not only are nonnegative slopes characterizing for $\mirror{5_2}$, but in fact the Heegaard Floer homology of such surgeries characterizes $\mirror{5_2}$.

\begin{theorem} \label{thm:m5_2-nonnegative}
Suppose for some rational number $r\geq 0$ and knot $K \subset S^3$ that there is an isomorphism
\[ \hfp(S^3_r(K)) \cong \hfp(S^3_r(\mirror{5_2})) \]
of graded $\Q[U]$-modules.  Then $K$ is isotopic to $\mirror{5_2}$.
\end{theorem}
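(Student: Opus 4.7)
The strategy is to reduce to the almost L-space classification (Theorem~\ref{thm:main-almost}) and then to distinguish $\mirror{5_2}$ from the remaining candidates using the Alexander polynomial and finer Heegaard Floer data. The key computational inputs are that $\mirror{5_2}$ is an almost L-space knot of genus $1$ with $V_0(\mirror{5_2})=1$ and $V_s(\mirror{5_2})=0$ for $s\geq 1$ (Proposition~\ref{prop:1-surgery-5_2}), and $(\rhat(\mirror{5_2}),\nuhat(\mirror{5_2}))=(3,1)$ (Lemma~\ref{lem:nf-r-nu}); these allow one to compute $\hfp(S^3_r(\mirror{5_2}))$ explicitly via Theorems~\ref{thm:large-surgeries} and \ref{thm:mapping-cone} for any $r > 0$, and separately for $r = 0$.

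The first main step is to show that $K$ must also be an almost L-space knot. For $r = p/q > 0$, the isomorphism yields $\dim\hfhat(S^3_{p/q}(K)) = 3q+|p-q|$ via Lemma~\ref{lem:hfhat-vs-hfred} applied to each $\spc$ structure, and substituting into Proposition~\ref{prop:r-nu} gives a Diophantine equation for $(\rhat(K),\nuhat(K))$. The parity and positivity constraints of Proposition~\ref{prop:r-nu-properties}, together with the fact that the full $\Q[U]$-module structure places only a single small $\hfred$ summand in total, should force $\rhat(K)-\nuhat(K)=2$, hence $K$ is almost L-space by Lemma~\ref{lem:almost-r-nu}. The case $r = 0$ I would handle separately, using that $\hfp$ of $0$-surgery detects the Seifert genus and pins down enough of $\cfkinfty(K)$ to read off $(\rhat,\nuhat)$.

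Theorem~\ref{thm:main-almost} then reduces $K$ to four candidates: $T_{-2,3}$, the figure eight, $\mirror{5_2}$, or a fibered strongly quasipositive knot of genus $g \geq 2$. By Proposition~\ref{prop:compare-gradings}, $\Delta''_K(1) = \Delta''_{\mirror{5_2}}(1) = 4$, whereas $\Delta''_{T_{-2,3}}(1) = 2$ and $\Delta''_{4_1}(1) = -2$, so the first two candidates are immediately excluded.

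The main obstacle is ruling out the case $g(K) \geq 2$. Here Corollary~\ref{cor:almost-r-nu} pins down $(\rhat(K),\nuhat(K))=(2g+1,2g-1)$, and matching dimensions of $\hfhat$ at slope $r$ against those of $\mirror{5_2}$ restricts $r$ to a narrow range (for example, to $r = 2g-1$ when $q=1$). At such slopes, however, the $V_s$ invariants diverge sharply: Proposition~\ref{prop:almost-fibered} together with the monotonicity of Proposition~\ref{prop:vs-hs} gives $V_s(K) \geq V_{g-1}(K) = 1$ for all $1 \leq s \leq g-1$, while $V_s(\mirror{5_2}) = 0$ for $s\geq 1$. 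Reading off the resulting $d$-invariants via Theorem~\ref{thm:ni-wu-d} across the $\spc$ structures on $S^3_r(K)$ exhibits a mismatch that contradicts the $\Q[U]$-module isomorphism in each such case, completing the proof that $K \cong \mirror{5_2}$.
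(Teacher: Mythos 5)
Your overall strategy tracks the paper's quite closely: a Diophantine analysis of $(\rhat(K),\nuhat(K))$ from the dimension count, reduction to the almost L-space classification, the Casson--Walker/Alexander polynomial argument to isolate $\mirror{5_2}$ among the genus-1 candidates, and a $V_s$/$d$-invariant comparison to kill higher genus. But there is one concrete gap in your first step, and it matters for how the rest of your argument is organized.

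You assert that the dimension count plus the parity and positivity constraints ``should force $\rhat(K)-\nuhat(K)=2$.'' This is false for $r=p/q>1$. In the branch $\nuhat(K)>p/q$, the equation $p+2q=q(\rhat(K)+\nuhat(K))-p$ combined with $\nuhat(K)>\frac{p}{q}=\frac{\rhat(K)+\nuhat(K)}{2}-1$ gives $\rhat(K)-\nuhat(K)<2$, and since that difference is a nonnegative even integer it must be $0$: i.e.\ $K$ is an L-space knot with $\rhat(K)=\nuhat(K)=2g(K)-1$ and $p/q=2g(K)-2$. Your fallback observation that the $\Q[U]$-module structure ``places only a single small $\hfred$ summand in total'' does not exclude this case either: for an L-space knot at the sub-L-space slope $2g-2$ one gets $\dim\hfhat=2g=|H_1|+2$, hence exactly one $\Q$ of $\hfred$ in total, matching $S^3_{2g-2}(\mirror{5_2})$ on the nose. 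Because your genus-$\geq 2$ elimination is then routed entirely through Proposition~\ref{prop:almost-fibered} (which presupposes $K$ is an almost L-space knot), this L-space branch escapes your argument entirely. The repair is the one the paper uses: an L-space knot is strongly quasipositive, so $\nu^+(K)=g$ and hence $V_{g-1}(K)=1$ and $V_1(K)\geq 1$; meanwhile the graded isomorphism with $\hfp(S^3_{p/q}(\mirror{5_2}))$ for $p/q>1$ forces $V_s(K)=0$ for all $s\geq 1$ via the summed $d$-invariant comparison (Proposition~\ref{prop:compare-gradings}), a contradiction. Your final paragraph already invokes exactly this mechanism for almost L-space knots of genus $\geq 2$, so the fix is to run it for the L-space branch as well rather than to claim that branch never occurs.

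Two smaller remarks. For genus $\geq 2$ almost L-space knots you do not need to restrict $r$ to a ``narrow range'' before comparing $V_s$: Corollary~\ref{cor:almost-r-nu} gives $\nuhat(K)=2g-1\geq 3$, which already contradicts the bounds $|\nuhat(K)|\leq 1$ coming out of the $0<r\leq 1$ analysis, and for $r>1$ the $V_1$ contradiction above applies uniformly. Finally, your $r=0$ sketch is too thin to assess: the paper's argument there does not recover $(\rhat(K),\nuhat(K))$ from $\hfp(S^3_0(K))$ but instead uses the surgery exact triangle to pin down $\dim\hfhat(S^3_{\pm1}(K))$, recovers $\Delta_K$ from $\hfp(S^3_0(K))$ to land in the list of Theorem~\ref{thm:main-hfk}, and then separates $\mirror{5_2}$ from the other nearly fibered knots using the correction terms $d_{\pm 1/2}$, which detect $V_0(K)$ and $V_0(\mirror{K})$. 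Some substitute for that last step is needed, since the dimension counts alone do not distinguish $\mirror{5_2}$ from, say, $15n_{43522}$ at slope $0$.
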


We recall from Lemma~\ref{lem:nf-r-nu} that $\rhat(\mirror{5_2}) = 3$ and $\nuhat(\mirror{5_2}) = 1$.  Thus if $p$ and $q$ are relatively prime, with $p\neq 0$ and $q>0$, then
\begin{equation} \label{eq:m52-hfhat}
\dim \hfhat(S^3_{p/q}(\mirror{5_2})) = 3q + |p-q| = \begin{cases} p+2q, & p \geq q \\ 4q-p, & p \leq q. \end{cases}
\end{equation}
Throughout this section we will make implicit use of the fact that $\hfp(Y)$ completely determines $\hfhat(Y)$.

\begin{lemma} \label{lem:m52-leq-1}
Suppose that $0 < \frac{p}{q} \leq 1$ and that there is an isomorphism
\[ \hfp(S^3_{p/q}(K)) \cong \hfp(S^3_{p/q}(\mirror{5_2})) \]
of graded $\Q[U]$-modules.  Then $K$ is an almost L-space knot of genus 1.
\end{lemma}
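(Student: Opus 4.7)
The plan is to compare dimensions of $\hfhat$ on both sides via Proposition~\ref{prop:r-nu}. Since the hypothesized isomorphism of $\hfp$ as graded $\Q[U]$-modules determines $\hfhat$, combining Proposition~\ref{prop:r-nu} applied to $K$ with equation~\eqref{eq:m52-hfhat} (using $p \leq q$) yields
\[ q\rhat(K) + |p - q\nuhat(K)| = 4q - p. \]
The goal is to deduce from this, together with $0 < p/q \leq 1$, both that $\rhat(K) - \nuhat(K) = 2$, so that Lemma~\ref{lem:almost-r-nu} identifies $K$ as an almost L-space knot, and that $g(K) = 1$.

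To that end, I will case-split on $\nuhat(K)$, which by Proposition~\ref{prop:r-nu-properties} is either zero or odd, using throughout that $\rhat(K) - |\nuhat(K)|$ is a nonnegative even integer. The case $\nuhat(K) \geq 3$ should collapse the equation to $\rhat(K) + \nuhat(K) = 4$ (since $q\nuhat(K) \geq 3q \geq p$), contradicting $\rhat(K) \geq \nuhat(K) \geq 3$. The case $\nuhat(K) = 1$ simplifies the equation directly to $\rhat(K) = 3$. When $\nuhat(K) \leq 0$, setting $m = -\nuhat(K) \geq 0$ rearranges the equation to $\rhat(K) + m = 4 - 2p/q$; I expect to combine $\rhat(K) \geq m$ with $0 < p/q \leq 1$ to force $m \in \{0,1\}$, then use $\gcd(p,q)=1$ to restrict $q \in \{1,2\}$, and finally eliminate $q=2$ using the parity of $\rhat(K) - |\nuhat(K)|$. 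This should leave only $p=q=1$ with $(\rhat(K),\nuhat(K)) \in \{(2,0),(1,-1)\}$.

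In each surviving subcase, $\rhat(K) - \nuhat(K) = 2$, so $K$ is an almost L-space knot by Lemma~\ref{lem:almost-r-nu}, automatically nontrivial since the unknot satisfies $\rhat - \nuhat = 1$. To pin down the genus, I will invoke Corollary~\ref{cor:almost-r-nu}: if $g(K) \geq 2$ then $\rhat(K) = 2g(K)+1 \geq 5$, contradicting $\rhat(K) \leq 3$ in all cases above. Hence $g(K) = 1$. The only mildly delicate step is the parity/divisibility bookkeeping that eliminates the $q=2$ subcases when $\nuhat(K) \leq 0$; everything else is a direct constraint chase on $(\rhat(K), \nuhat(K))$.
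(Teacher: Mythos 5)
Your proof is correct and follows essentially the same route as the paper: both reduce the hypothesis to the single equation $q\,\rhat(K)+|p-q\nuhat(K)|=4q-p$ via Proposition~\ref{prop:r-nu} and equation~\eqref{eq:m52-hfhat}, chase the constraints of Proposition~\ref{prop:r-nu-properties} to force $\rhat(K)-\nuhat(K)=2$ with $|\nuhat(K)|\leq 1$, and then conclude with Lemma~\ref{lem:almost-r-nu} and Corollary~\ref{cor:almost-r-nu}; the paper merely organizes the cases by comparing $\frac{p}{q}$ with $\nuhat(K)$ (the sign of the absolute value) rather than by the value of $\nuhat(K)$, which lets it write $\frac{p}{q}=2-\frac{1}{2}(\rhat(K)-\nuhat(K))$ and skip your divisibility/parity bookkeeping for $q=2$. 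One inconsequential slip: the unknot has $\rhat=\nuhat=0$, so $\rhat-\nuhat=0$ rather than $1$; nontriviality is in any case built into Lemma~\ref{lem:almost-r-nu}.
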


\begin{proof}
By equation \eqref{eq:m52-hfhat} we have
\begin{align*}
4q-p &= q \cdot \rhat(K) + |p - q \nuhat(K)| \\
&= \begin{cases} p + q(\rhat(K)-\nuhat(K)), & \frac{p}{q} \geq \nuhat(K) \\[0.25em] q(\rhat(K)+\nuhat(K))-p, & \frac{p}{q} < \nuhat(K). \end{cases}
\end{align*}
In the case $\frac{p}{q} \leq \nuhat(K)$ this simplifies to $\rhat(K)+\nuhat(K) = 4$, and given that
\[ \rhat(K) \geq \nuhat(K) \geq \frac{p}{q} > 0, \]
Proposition~\ref{prop:r-nu-properties} says that this is only possible if $\rhat(K) = 3$ and $\nuhat(K) = 1$.

Now we suppose instead that $\frac{p}{q} > \nuhat(K)$, and then we have
\[ 4q-p = p + q(\rhat(K)-\nuhat(K)), \]
or
\[ \frac{p}{q} = 2 - \frac{\rhat(K)-\nuhat(K)}{2}. \]
Since $0 < \frac{p}{q} \leq 1$, and $\frac{1}{2}(\rhat(K)-\nuhat(K))$ is a nonnegative integer, it follows that $\frac{p}{q}=1$ and that $\rhat(K)-\nuhat(K)=2$.  But then $\nuhat(K) < \frac{p}{q} = 1$, and $r_0(K) \geq |\nuhat(K)|$ by Proposition~\ref{prop:r-nu-properties}, so $(\rhat(K),\nuhat(K))$ must be either $(2,0)$ or $(1,-1)$.

In all cases we have shown that $K$ is an almost L-space knot and $|\nuhat(K)| \leq 1$.  According to Corollary~\ref{cor:almost-r-nu}, if $g(K) \geq 2$ then $\nuhat(K) = 2g(K)-1 \geq 3$, which is impossible, so in fact $g(K)=1$ and the proof is complete.
\end{proof}

\begin{lemma} \label{lem:m52-geq-1}
Suppose that $\frac{p}{q} > 1$ and that there is an isomorphism
\[ \hfp(S^3_{p/q}(K)) \cong \hfp(S^3_{p/q}(\mirror{5_2})) \]
of graded $\Q[U]$-modules.  Then $K$ is an almost L-space knot of genus 1.
\end{lemma}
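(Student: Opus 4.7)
The plan is to parallel the proof of Lemma~\ref{lem:m52-leq-1}. Equation~\eqref{eq:m52-hfhat} gives $\dim \hfhat(S^3_{p/q}(\mirror{5_2})) = p + 2q$ when $p/q > 1$, so Proposition~\ref{prop:r-nu} forces
\[ p + 2q = q\,\rhat(K) + |p - q\,\nuhat(K)|. \]
I would split into two cases according to the sign of $p - q\nuhat(K)$. If $p \geq q\nuhat(K)$, the equation reduces to $\rhat(K) - \nuhat(K) = 2$, so by Lemma~\ref{lem:almost-r-nu} the knot $K$ is an almost L-space knot. If $p < q\nuhat(K)$, I would rearrange to obtain $\frac{p}{q} = \frac{\rhat(K)+\nuhat(K)}{2} - 1$; combined with $\frac{p}{q} < \nuhat(K)$ this yields $\rhat(K) - \nuhat(K) < 2$, which together with the fact that $\rhat(K) - |\nuhat(K)|$ is a nonnegative even integer (Proposition~\ref{prop:r-nu-properties}) and $\nuhat(K) > \frac{p}{q} > 0$ forces $\rhat(K) = \nuhat(K)$. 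Then Remark~\ref{rem:lspace-nu-r} identifies $K$ as an L-space knot with $\rhat(K) = \nuhat(K) = 2g(K)-1$ and $\frac{p}{q} = 2g(K)-2$, so that $g(K) \geq 2$.

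Next, I would apply Proposition~\ref{prop:compare-gradings} with $\mirror{5_2}$ playing the role of the genus-1 knot, using $V_0(\mirror{5_2}) = 1$ from the proof of Proposition~\ref{prop:1-surgery-5_2}, to obtain $V_0(K) = 1$ and $V_s(K) = 0$ for all $s \geq 1$, and then use these constraints to rule out $g(K) \geq 2$ in both cases. In the almost L-space case, Proposition~\ref{prop:almost-fibered} gives $V_{g(K)-1}(K) = 1$. In the L-space case, the same identity follows from Lemma~\ref{lem:ker-v-g-1}, since L-space knots are fibered with $\dim \hfkhat(K,g(K)) = 1$ and their large surgeries have vanishing $\hfred$. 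Either way, the monotonicity of Proposition~\ref{prop:vs-hs}(1) gives $V_1(K) \geq V_{g(K)-1}(K) = 1$ whenever $g(K) \geq 2$, contradicting $V_1(K) = 0$. Hence Case 2 cannot occur, and in Case 1 we must have $g(K) = 1$, so $K$ is an almost L-space knot of genus 1.

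The main subtlety is the L-space case: for $p/q > 1$ the dimension count alone does not distinguish $\mirror{5_2}$ from a hypothetical high-genus L-space knot with $\frac{p}{q} = 2g(K)-2$, and one genuinely needs the graded $\Q[U]$-module hypothesis, via Proposition~\ref{prop:compare-gradings}, to get the additional constraint $V_s(K) = 0$ for $s \geq 1$ that contradicts the nontriviality of the top $V_s$ for any L-space or almost L-space knot of genus $\geq 2$. Everything else is a routine adaptation of the $p/q \leq 1$ argument.
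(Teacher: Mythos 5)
Your proof is correct and follows essentially the same route as the paper's: the same dimension count and case split on the sign of $p - q\nuhat(K)$, and the same use of Proposition~\ref{prop:compare-gradings} to force $V_s(K)=0$ for $s\geq 1$ and contradict $V_{g(K)-1}(K)=1$ when $g(K)\geq 2$. The only cosmetic difference is that in the L-space case you derive $V_{g(K)-1}(K)=1$ from Lemma~\ref{lem:ker-v-g-1} together with the vanishing of $\hfred$ for large surgeries on L-space knots, where the paper instead cites the Hom--Wu identity $\nu^+(K)=g(K)$ for strongly quasipositive knots; both are valid.
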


\begin{proof}
By equation \eqref{eq:m52-hfhat} we have
\begin{align*}
p+2q &= q \cdot \rhat(K) + |p - q \nuhat(K)| \\
&= \begin{cases} p + q(\rhat(K)-\nuhat(K)), & \frac{p}{q} \geq \nuhat(K) \\[0.25em] q(\rhat(K)+\nuhat(K))-p, & \frac{p}{q} \leq \nuhat(K). \end{cases}
\end{align*}
Now if $\frac{p}{q} \geq \nuhat(K)$ then this immediately reduces to
\[ \rhat(K) - \nuhat(K) = 2, \]
so $K$ is an almost L-space knot by Lemma~\ref{lem:almost-r-nu}.

In the remaining case we have $\nuhat(K) > \frac{p}{q} > 1$, and so the above equation becomes
\[ p+2q = q(\rhat(K)+\nuhat(K)) - p, \]
or equivalently
\begin{equation} \label{eq:m52-p-q-geq-1}
\frac{p}{q} = \frac{\rhat(K) + \nuhat(K)}{2} - 1.
\end{equation}
Now we combine this with $\frac{p}{q} < \nuhat(K)$ and rearrange to get
\[ \rhat(K) - 2 < \nuhat(K), \]
and then by Proposition~\ref{prop:r-nu-properties} it follows that $\rhat(K) = \nuhat(K)$ and so $K$ is an L-space knot.  Remark~\ref{rem:lspace-nu-r} says that $\rhat(K)=\nuhat(K)=2g(K)-1$, so in fact \eqref{eq:m52-p-q-geq-1} becomes
\[ \frac{p}{q} = 2g(K)-2. \]
By the assumption $\frac{p}{q} > 1$ it follows that $g(K) \geq 2$.

Now in either case, if we suppose that $g(K) = g \geq 2$, then we have $V_{g-1}(K) = 1$.  Indeed, if $K$ is an almost L-space knot then this is part of Proposition~\ref{prop:almost-fibered}.  If instead $K$ is an L-space knot then it is strongly quasipositive by Theorem~\ref{thm:l-space-knots}, so the invariant $\nu^+(K)$ of \cite{hom-wu} is equal to $g(K)$ by \cite[Proposition~3]{hom-wu}; this is by definition the least $s$ such that $V_s(K)=0$, so in particular $V_{g-1}(K) = 1$ as claimed.  Either way, we have $V_1(K) \geq 1$ by Proposition~\ref{prop:vs-hs}.  But then Proposition~\ref{prop:compare-gradings} says that if $\frac{p}{q} > 1$ and \[\hfp(S^3_{p/q}(\mirror{5_2})) \cong \hfp(S^3_{p/q}(K))\] then $V_s(K)=0$ for all $s \geq 1$, so this is a contradiction.  Thus $g=1$.

We conclude that $K$ cannot be an L-space knot, since that would have implied that $g(K) \geq 2$, and so $K$ must be an almost L-space knot of genus 1 after all.
\end{proof}

Combining the above lemmas yields the following.

\begin{proposition} \label{prop:m52-positive}
Suppose that $\frac{p}{q} > 0$ and that there is an isomorphism
\[ \hfp(S^3_{p/q}(K)) \cong \hfp(S^3_{p/q}(\mirror{5_2})) \]
of graded $\Q[U]$-modules.  Then $K$ is isotopic to $\mirror{5_2}$.
\end{proposition}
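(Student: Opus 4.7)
The plan is to combine the two lemmas just proved with Proposition~\ref{prop:almost-fibered} to reduce to a finite list of candidates, and then separate these candidates using the Casson--Walker invariant via Proposition~\ref{prop:compare-gradings}.

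First I would observe that the hypothesis $p/q>0$ falls into one of the two cases $0<p/q\le 1$ or $p/q>1$, so Lemma~\ref{lem:m52-leq-1} or Lemma~\ref{lem:m52-geq-1} applies and in either case tells us that $K$ is an almost L-space knot of genus $1$. Proposition~\ref{prop:almost-fibered} then narrows $K$ down to exactly one of three possibilities: the left-handed trefoil $T_{-2,3}$, the figure eight knot, or $\mirror{5_2}$.

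The remaining task is to rule out the first two. For this I would invoke Proposition~\ref{prop:compare-gradings}, which says that the quantity $\Delta''_K(1)$ is determined by the graded $\Q[U]$-module $\hfp(S^3_{p/q}(K))$ together with the slope $p/q$. Since we are given that $\hfp(S^3_{p/q}(K)) \cong \hfp(S^3_{p/q}(\mirror{5_2}))$, the two knots must share the same value of $\Delta''(1)$. A direct computation from $\Delta_{T_{\pm2,3}}(t) = t-1+t^{-1}$, $\Delta_{\text{fig }8}(t) = -t+3-t^{-1}$, and $\Delta_{\mirror{5_2}}(t) = 2t-3+2t^{-1}$ gives
\[ \Delta''_{T_{-2,3}}(1) = 2, \qquad \Delta''_{\text{fig }8}(1) = -2, \qquad \Delta''_{\mirror{5_2}}(1) = 4, \]
which are pairwise distinct. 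Hence $K$ can be neither the trefoil nor the figure eight, leaving $K \cong \mirror{5_2}$ as the only possibility.

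The main obstacle is essentially absent once the two preceding lemmas are in hand, since all the hard work of extracting $r_0$ and $\nuhat$ from the total dimension of $\hfhat$ and comparing with the list of known almost L-space knots of genus $1$ has already been done. The only genuinely new input needed is that the second derivative of the Alexander polynomial at $1$ is an invariant of $\hfp$ at fixed slope, and that observation is exactly Proposition~\ref{prop:compare-gradings}; together with the distinctness of the three values of $\Delta''(1)$ it closes the argument immediately.
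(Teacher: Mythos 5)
Your proposal is correct and follows essentially the same route as the paper: both reduce to an almost L-space knot of genus $1$ via Lemmas~\ref{lem:m52-leq-1} and \ref{lem:m52-geq-1}, then use Proposition~\ref{prop:compare-gradings} to pin down $\Delta''_K(1)=4$ and thereby eliminate the trefoil and figure eight from the list in Proposition~\ref{prop:almost-fibered}. The only cosmetic difference is that the paper writes the genus-$1$ Alexander polynomial in the general form $at+(1-2a)+at^{-1}$ and solves for $a$, while you compute $\Delta''(1)$ directly for each of the three candidates.
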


\begin{proof}
We know that $K$ is an almost L-space knot of genus 1, by Lemma~\ref{lem:m52-leq-1} if $0 < \frac{p}{q} \leq 1$ and by Lemma~\ref{lem:m52-geq-1} if $\frac{p}{q} > 1$.  Then its Alexander polynomial must have the form
\[ \Delta_K(t) = at + (1-2a) + at^{-1} \]
for some $a\in\Z$.  We have $\Delta''_K(1) = 2a$, whereas $\Delta''_{\mirror{5_2}}(t) = 4$, so $a=2$ by Proposition~\ref{prop:compare-gradings}.  This proves that
\[ \Delta_K(t) = \Delta_{\mirror{5_2}}(t) = 2t-3+2t^{-1}. \]
But none of the genus-1 knots in Proposition~\ref{prop:almost-fibered} have this Alexander polynomial except for $\mirror{5_2}$ itself, so $K \cong \mirror{5_2}$.
\end{proof}

We can also handle zero-surgery by a somewhat different argument.

\begin{proposition} \label{prop:zero-surgery}
Suppose for some knot $K \subset S^3$ that there is an isomorphism
\[ \hfp(S^3_0(K)) \cong \hfp(S^3_0(\mirror{5_2})) \]
of graded $\Q[U]$-modules.  Then $K \cong \mirror{5_2}$.  Similarly, if we have an isomorphism
\[ \hfp(S^3_0(K)) \cong \hfp(S^3_0(5_2)) \]
then $K \cong 5_2$.
\end{proposition}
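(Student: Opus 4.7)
The plan is to parallel Proposition~\ref{prop:m52-positive} by extracting enough invariants of $K$ from $\hfp(S^3_0(K))$ to force $K$ into the short list of Theorem~\ref{thm:main-hfk}, and then eliminate all but $\mirror{5_2}$ by a graded comparison. Since $\rhat$ and $\nuhat$ govern only $p/q \neq 0$, I replace them with genus detection and torsion-coefficient extraction for $0$-surgery.

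First, the $\spc$-summand decomposition of $\hfp(S^3_0(K))$ determines $g(K)$: for every knot $J\subset S^3$, Ozsv\'ath--Szab\'o's genus detection theorem gives $\hfp(S^3_0(J),s) \cong \cT^+$ exactly when $|s|\geq g(J)$, with $\hfred(S^3_0(J), g(J)-1) \neq 0$. The hypothesis therefore yields $g(K) = g(\mirror{5_2}) = 1$. Second, each Euler characteristic $\chi(\hfred(S^3_0(K),s))$ equals, up to sign, the torsion coefficient $t_{|s|}(K)$, a specific linear combination of Alexander coefficients. Running through all $s$ with $|s| < 1$, this determines $\Delta_K = 2t - 3 + 2t^{-1}$. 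In particular $\chi(\hfkhat(K,1)) = 2$, so $\dim\hfkhat(K,1) \geq 2$.

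Third, I would promote this to an equality via the integer surgery mapping cone formula specialized to $n = 0$: $\hfp(S^3_0(K),0)$ is the homology of a mapping cone built from $A^+_0$ and $B^+$, and its graded dimension is a prescribed function of $V_0(K)$, $H_0(K)$, $\dim\hfkhat(K,0)$, and $\dim\hfkhat(K,1)$. Matching graded dimensions against the model $\hfp(S^3_0(\mirror{5_2}))$ forces $\dim\hfkhat(K,1) = 2$. Theorem~\ref{thm:main-hfk} then restricts $K$ to one of $5_2$, $15n_{43522}$, $\Wh^\pm(T_{2,3},2)$, $P(-3,3,2n+1)$ and their mirrors; reading off Table~\ref{fig:hfk-table}, the Alexander polynomial constraint $\Delta_K = 2t - 3 + 2t^{-1}$ leaves precisely the six knots $5_2$, $\mirror{5_2}$, $15n_{43522}$, $\mirror{15n_{43522}}$, $\Wh^-(T_{2,3},2)$, and $\mirror{\Wh^-(T_{2,3},2)}$.

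Finally, I would compute $\hfp(S^3_0(\cdot))$ as a \emph{graded} $\Q[U]$-module for each of these six candidates, by building $\cfkinfty$ from the nearly-fibered data in Table~\ref{fig:hfk-table} along the lines of Propositions~\ref{prop:1-surgery-5_2} and \ref{prop:nf-1-surgery-plus}, then feeding the result into the $n=0$ integer surgery formula, and check that only $\mirror{5_2}$ produces a $\hfp(S^3_0)$ matching the hypothesis. The main obstacle is this final graded comparison: since the three candidates and their mirrors share the same total dimension of $\hfkhat$ in each Alexander grading, the distinguishing invariants must come from the absolute Maslov gradings recorded in Table~\ref{fig:hfk-table} (and, for each mirror pair, from $\tau$ and $V_0$, which control the absolute $d$-invariant of $\hfp(S^3_0)$). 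The $5_2$ statement follows by running the same argument verbatim, with the same six candidates, and using $\tau(5_2) = -1$ in place of $\tau(\mirror{5_2}) = 1$ in the final Maslov-grading comparison to pick out $5_2$ instead of $\mirror{5_2}$.
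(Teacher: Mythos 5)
Your outline tracks the paper's proof fairly closely in its first half: both arguments use the non-torsion $\spc$ structures of the $0$-surgery (adjunction plus genus detection) to force $g(K)\leq 1$, both recover $\Delta_K(t)=2t-3+2t^{-1}$ from $\hfp(S^3_0(K))$ via the torsion coefficients, and both use the parity of $\chi(\hfkhat(K,1))=2$ together with an upper bound to conclude $\dim\hfkhat(K,1)=2$ and invoke Theorem~\ref{thm:main-hfk}. Where you differ is in how the upper bound is obtained and, more importantly, in the endgame. For the bound, the paper avoids the $n=0$ mapping cone entirely: it computes $\dim\hfhat(S^3_0(\mirror{5_2}))=4$ from $(\rhat,\nuhat)=(3,1)$ and the surgery exact triangle, deduces $\dim\hfhat(S^3_1(K))\in\{3,5\}$, and then applies Corollary~\ref{cor:hfhat-vs-hfkhat} to get $\dim\hfkhat(K,1)-V_0(K)\leq 2$. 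Your claim that the graded dimension of the $0$-surgery cone is ``a prescribed function of $V_0$, $H_0$, $\dim\hfkhat(K,0)$, $\dim\hfkhat(K,1)$'' is not literally true --- the cone is built from $H_*(A^+_0)$, which depends on the differentials of $\cfkinfty(K)$ and not just on the dimensions of $\hfkhat$ --- so this step would need to be rerouted, though the exact-triangle route is available to you.

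The genuine soft spot is your final step. You propose to ``build $\cfkinfty$ from the nearly-fibered data in Table~\ref{fig:hfk-table}'' for all six candidates and compute $\hfp(S^3_0(\cdot))$ for each; but Table~\ref{fig:hfk-table} records only $\hfkhat$, and the paper is explicit (see the discussion before Proposition~\ref{prop:nf-1-surgery-hat}) that $\cfkinfty$ is \emph{not} determined for the knots other than $5_2$ and $\mirror{5_2}$ because of possible diagonal differentials. So the brute-force comparison, as stated, cannot be carried out from the given data. The paper sidesteps this with a much lighter argument that your parenthetical almost names: the correction terms $d_{\pm 1/2}(S^3_0(K))$ are visible in the graded $\Q[U]$-module $\hfp(S^3_0(K))$, and by \cite[Proposition~4.12]{osz-absolutely} together with Theorem~\ref{thm:ni-wu-d} they equal $-2V_0(K)+\tfrac12$ and $2V_0(\mirror{K})-\tfrac12$. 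Since $(V_0(K),V_0(\mirror{K}))$ is $(1,0)$ for $\mirror{5_2}$, $(0,1)$ for $5_2$, and $(0,0)$ for every other knot on the list (by Propositions~\ref{prop:1-surgery-5_2} and \ref{prop:nf-1-surgery-plus}), this pair alone separates the two target knots from all remaining candidates and from each other. If you replace your computation of six full surgery modules with this single pair of $d$-invariants, your argument closes.
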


\begin{proof}
We show first that $g(K) \leq 1$.  Supposing instead that $K$ has genus $g \geq 2$, there is a non-torsion $\spc$ structure $\spinc_{g-1}$ for which $\hfp(S^3_0(K),\spinc_{g-1}) \neq 0$, namely the one specified by $\langle c_1(\spinc_{g-1}), [\hat\Sigma]\rangle = 2g-2$ for a capped-off Seifert surface $\hat\Sigma$, by the isomorphism $\hfp(S^3_0(K),\spinc_{g-1}) \cong \hfkhat(K,g)$ of \cite[Corollary~4.5]{osz-knot} together with the fact that $\hfkhat$ detects genus \cite[Theorem~1.2]{osz-genus}.
On the other hand, since $5_2$ and its mirror both have genus $1$, we have 
\[ \hfp(S^3_0(5_2),\spinc) \cong \hfp(S^3_0(\mirror{5_2}),\spinc) \cong 0 \]
in all non-torsion $\spc$ structures, by the adjunction inequality \cite[Theorem~7.1]{osz-properties}.  Thus $g\leq 1$ as claimed.

Next, we recall from Lemma~\ref{lem:nf-r-nu} that $(\rhat(\mirror{5_2}),\nuhat(\mirror{5_2})) = (3,1)$, so we have
\begin{align*}
\dim \hfhat(S^3_1(\mirror{5_2})) &= 3, &
\dim \hfhat(S^3_{-1}(\mirror{5_2})) &= 5
\end{align*}
and so $\dim \hfhat(S^3_0(\mirror{5_2})) = 4$ by the surgery exact triangle for $\hfhat$, since it differs by 1 from each of these other dimensions.  We also have $\dim \hfhat(S^3_0(5_2))=4$ by the same argument, so in either case $\dim \hfhat(S^3_0(K)) = 4$, and then
\[ \dim \hfhat(S^3_1(K)) = 3 \text{ or } 5 \]
again by the surgery exact triangle.  This means that $K$ cannot be unknotted, so $g(K)=1$.  We apply Corollary~\ref{cor:hfhat-vs-hfkhat} to get
\[ \dim \hfkhat(K,1) - V_0(K) = \frac{\dim\hfhat(S^3_1(K)) - 1}{2} = 1 \text{ or } 2, \]
and $g(K) = 1$ implies that $0 \leq V_0(K) \leq 1$ by Proposition~\ref{prop:vs-hs}, hence $\dim\hfkhat(K,1) \leq 3$.

Now we use the fact that $\hfp(S^3_0(K))$ determines the Alexander polynomial $\Delta_K(t)$, by \cite[Proposition~10.14]{osz-properties} and \cite[Theorem~10.17]{osz-properties}, to see that
\[ \Delta_K(t) = \Delta_{5_2}(t) = 2t-3+2t^{-1}. \]
But the linear coefficient $2$ is equal to the Euler characteristic $\chi(\hfkhat(K,1))$, so in particular $\dim \hfkhat(K,1)$ must be even.  It follows from the above bound that
\[ \dim \hfkhat(K,1) = 2 \]
and so $K$ must be one of the knots listed in Theorem~\ref{thm:main-hfk}.

Finally, we can read the correction terms $d_{\pm1/2}(S^3_0(K))$ off of $\hfp(S^3_0(K))$, since they are defined as the grading of the bottom-most element of a tower $\cT^+$ in grading $\pm\frac{1}{2} \pmod{2}$.  According to \cite[Proposition~4.12]{osz-absolutely}, these are determined by the formulas
\begin{align*}
d_{1/2}(S^3_0(K)) &= d(S^3_1(K)) + \tfrac{1}{2}, \\
d_{-1/2}(S^3_0(K)) &= d(S^3_{-1}(K)) - \tfrac{1}{2} = d(-S^3_1(\mirror{K})) - \tfrac{1}{2} \\
&= -d(S^3_1(\mirror{K})) - \tfrac{1}{2}.
\end{align*}
Now Theorem~\ref{thm:ni-wu-d} tells us that
\begin{align*}
d_{1/2}(S^3_0(K)) &= -2V_0(K) + \tfrac{1}{2}, &
d_{-1/2}(S^3_0(K)) &= 2V_0(\mirror{K}) - \tfrac{1}{2}
\end{align*}
and so $\hfp(S^3_0(K))$ determines both $V_0(K)$ and $V_0(\mirror{K})$.  But we saw in \eqref{eq:v0-nf} that if $K$ is one of the knots in Theorem~\ref{thm:main-hfk}, then
\[ (V_0(K),V_0(\mirror{K})) = \begin{cases} (1,0), & K \cong \mirror{5_2} \\ (0,1), & K \cong 5_2 \\ (0,0), & \text{otherwise}, \end{cases} \]
so $\hfp(S^3_0(\mirror{5_2}))$ and $\hfp(S^3_0(5_2))$ are different from each other and from each of the invariants $\hfp(S^3_0(K))$ where $K$ is another of the knots in Theorem~\ref{thm:main-hfk}.  This completes the proof.
\end{proof}

Combining Proposition~\ref{prop:m52-positive} in the case $r>0$ and Proposition~\ref{prop:zero-surgery} for $r=0$, this completes the proof of Theorem~\ref{thm:m5_2-nonnegative}. \hfill\qed

\section{The knot $5_2$} \label{sec:5_2}

In this section we start to consider whether positive slopes are characterizing slopes for $5_2$.  We will achieve partial results in this direction without using the mapping cone formula (Theorem~\ref{thm:mapping-cone}), which we then apply in Section~\ref{sec:5_2-mapping-cone}.

\begin{lemma} \label{lem:5_2-V_0}
Suppose that there is some knot $K$ and some rational $r>0$ such that \[ \hfp(S^3_r(5_2)) \cong \hfp(S^3_r(K)) \] as graded $\Q[U]$-modules.  Then $V_s(K) = 0$ for all $s \geq 0$.  In addition, if $g(K) =1$ then $\Delta_K(t)=\Delta_{5_2}(t) = 2t-3+2t^{-1}$.
\end{lemma}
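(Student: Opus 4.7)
The plan is to derive this lemma almost immediately from Proposition~\ref{prop:compare-gradings} together with the computation $V_0(5_2)=0$ that falls out of the proof of Proposition~\ref{prop:1-surgery-5_2}. Writing $r=p/q$ with $p,q$ positive coprime integers, I apply Proposition~\ref{prop:compare-gradings} with the roles of $K$ and $K'$ played by $5_2$ and our $K$, respectively. Since $g(5_2)=1$, that proposition gives $V_0(K)=V_0(5_2)$, independently of $g(K)$ and of whether $r>1$ or $r\leq 1$. From the explicit description of $\cfkinfty(5_2)$ in Figure~\ref{fig:cfk-5_2} (used in Proposition~\ref{prop:1-surgery-5_2}), $(v^+_0)_*\colon \cT^+\to \cT^+$ is an isomorphism for $5_2$, so $V_0(5_2)=0$; thus $V_0(K)=0$.

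To upgrade this to $V_s(K)=0$ for every $s\geq 0$, I appeal to the monotonicity and nonnegativity in Proposition~\ref{prop:vs-hs}: $V_s(K)\geq V_{s+1}(K)\geq 0$, so $V_0(K)=0$ forces $V_s(K)=0$ for all $s\geq 0$.

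For the second assertion, I assume $g(K)=1$. The symmetry and normalization of the Alexander polynomial force
\[ \Delta_K(t) = at + (1-2a) + at^{-1} \]
for some $a\in\Z$, since $\deg\Delta_K\leq g(K)=1$, $\Delta_K(t)=\Delta_K(t^{-1})$, and $\Delta_K(1)=1$. A direct computation gives $\Delta''_K(1)=2a$, while $\Delta_{5_2}(t)=2t-3+2t^{-1}$ gives $\Delta''_{5_2}(1)=4$. By the first conclusion of Proposition~\ref{prop:compare-gradings}, our isomorphism hypothesis forces $\Delta''_K(1)=\Delta''_{5_2}(1)=4$, hence $a=2$, and therefore $\Delta_K(t)=2t-3+2t^{-1}$ as claimed.

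There is no real obstacle here: the lemma is essentially a bookkeeping consequence of Proposition~\ref{prop:compare-gradings} combined with the earlier computation $V_0(5_2)=0$, and the Alexander polynomial step uses only the general form of a genus-one Alexander polynomial together with the Casson--Walker invariant identity already packaged into Proposition~\ref{prop:compare-gradings}.
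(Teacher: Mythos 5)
Your proof is correct and follows essentially the same route as the paper: both deduce $V_0(5_2)=0$ from the explicit computation behind Proposition~\ref{prop:1-surgery-5_2}, transfer it to $K$ via Proposition~\ref{prop:compare-gradings}, extend to all $s\geq 0$ by the monotonicity in Proposition~\ref{prop:vs-hs}, and pin down the genus-one Alexander polynomial using $\Delta''_K(1)=\Delta''_{5_2}(1)$. No gaps.
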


\begin{proof}
We recall from Proposition~\ref{prop:1-surgery-5_2} that
\[ V_0(5_2) = -\frac{1}{2}d(S^3_1(5_2)) = 0, \]
and then Propositions~\ref{prop:compare-gradings} and \ref{prop:vs-hs} say that $V_0(K)=0$ and that the sequence of $V_s(K)$ is nonincreasing, proving the first claim.  The second claim also follows from Proposition~\ref{prop:compare-gradings}, once we use $g(K)=1$ to write $\Delta_K(t) = at + (1-2a) + at^{-1}$ for some $a$ and then observe that
\[ a = \frac{\Delta''_K(1)}{2} = \frac{\Delta''_{5_2}(1)}{2} = 2. \qedhere \]
\end{proof}

\begin{lemma} \label{lem:5_2-r-minus-nu}
Suppose for some knot $K\not\cong 5_2$ and some rational $r>0$ that \[ \hfp(S^3_r(5_2)) \cong \hfp(S^3_r(K)) \] as graded $\Q[U]$-modules.  Then $\rhat(K)=4$ and $\nuhat(K)=0$.
\end{lemma}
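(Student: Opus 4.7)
The plan is to combine the dimension formula of Proposition~\ref{prop:r-nu} with the vanishing of $V_0(K)$ supplied by Lemma~\ref{lem:5_2-V_0} to narrow $(\rhat(K),\nuhat(K))$ to two options, and then to eliminate the undesired one using the classification of genus-one almost L-space knots. The main obstacle is the elimination step; the rest is a short arithmetic reduction.

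Writing $r=p/q$ with $p,q>0$ coprime, I would begin by substituting $(\rhat(5_2),\nuhat(5_2))=(3,-1)$ from Lemma~\ref{lem:nf-r-nu} into Proposition~\ref{prop:r-nu} to obtain
\[ \dim\hfhat(S^3_{p/q}(5_2)) = 3q+|p-q(-1)| = 4q+p. \]
Since $\hfp$ determines $\hfhat$, the same value must hold for $K$. Lemma~\ref{lem:5_2-V_0} gives $V_0(K)=0$, and Proposition~\ref{prop:r-nu-properties}(v) then forces $\nuhat(K)\le 0$, so $|p-q\nuhat(K)|=p-q\nuhat(K)$ and Proposition~\ref{prop:r-nu} applied to $K$ collapses to
\[ \rhat(K)+|\nuhat(K)|=4. \]
Parts (ii) and (iv) of Proposition~\ref{prop:r-nu-properties}, namely that $\rhat-|\nuhat|$ is a nonnegative even integer and $|\nuhat|$ is either $0$ or odd, then leave only the two possibilities $(\rhat(K),\nuhat(K))\in\{(4,0),(3,-1)\}$.

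To rule out $(3,-1)$ under the hypothesis $K\not\cong 5_2$, I would use Proposition~\ref{prop:r-nu-properties}(i) to convert this into $(\rhat(\mirror{K}),\nuhat(\mirror{K}))=(3,1)$, so that $\rhat(\mirror{K})-\nuhat(\mirror{K})=2$ and Lemma~\ref{lem:almost-r-nu} identifies $\mirror{K}$ as an almost L-space knot. Corollary~\ref{cor:almost-r-nu} excludes $g(\mirror{K})\ge 2$, since that case would force $\nuhat(\mirror{K})=2g(\mirror{K})-1\ge 3$, so $\mirror{K}$ has genus one; Proposition~\ref{prop:almost-fibered} then restricts $\mirror{K}$ to the left-handed trefoil $T_{-2,3}$, the figure eight knot, or $\mirror{5_2}$. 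A quick computation of $(\rhat,\nuhat)$ for each candidate, using Remark~\ref{rem:lspace-nu-r} on $T_{2,3}$ together with mirroring to obtain $(1,-1)$ for $T_{-2,3}$, amphichirality to force $\nuhat=0$ (and hence $(2,0)$) for the figure eight, and Lemma~\ref{lem:nf-r-nu} to obtain $(3,1)$ for $\mirror{5_2}$, shows that only $\mirror{K}\cong \mirror{5_2}$ is compatible with the required $(3,1)$. This forces $K\cong 5_2$, contradicting the hypothesis and leaving $(\rhat(K),\nuhat(K))=(4,0)$ as the only remaining case.
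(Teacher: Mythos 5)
Your proof is correct, and its overall structure — the dimension count via Proposition~\ref{prop:r-nu}, the reduction to $\rhat(K)-\nuhat(K)=4$ using $V_0(K)=0$ and $\nuhat(K)\le 0$, the parity argument narrowing to $(4,0)$ and $(3,-1)$, and the mirroring trick feeding into Lemma~\ref{lem:almost-r-nu}, Corollary~\ref{cor:almost-r-nu}, and Proposition~\ref{prop:almost-fibered} — matches the paper's argument step for step. The only place you diverge is in how the three genus-one candidates are eliminated: the paper invokes the Alexander polynomial constraint from Lemma~\ref{lem:5_2-V_0} (a genus-one $K$ satisfying the hypothesis must have $\Delta_K=2t-3+2t^{-1}$, which the trefoil and figure eight fail), whereas you compute $(\rhat,\nuhat)$ directly for each candidate, getting $(1,-1)$ for $T_{-2,3}$ via Remark~\ref{rem:lspace-nu-r} and mirroring, $(2,0)$ for the figure eight via amphichirality and Lemma~\ref{lem:almost-r-nu}, and $(3,1)$ for $\mirror{5_2}$ via Lemma~\ref{lem:nf-r-nu}. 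Both eliminations are valid and of comparable length; yours has the minor advantage of staying entirely inside the $(\rhat,\nuhat)$ bookkeeping rather than reaching back to the Alexander polynomial, while the paper's reuses a fact it needs elsewhere anyway.
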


\begin{proof}
Write $r = \frac{p}{q}$ for some coprime $p,q > 0$.  We note that since $\frac{p}{q} > 0 > \nuhat(5_2)$, we have
\begin{align*}
\dim \hfhat(S^3_{p/q}(5_2)) &= q \cdot \rhat(5_2) + |p - q\nuhat(5_2)| \\
&= 3q + |p+q| = p+4q,
\end{align*}
and by hypothesis this is equal to $\dim \hfhat(S^3_{p/q}(K))$.

We next observe that $\nuhat(K) \leq 0$: according to Proposition~\ref{prop:r-nu-properties}, it is enough to show that $V_0(K) = 0$, and this was already proved in Lemma~\ref{lem:5_2-V_0}.  Thus $\frac{p}{q} > \nuhat(K)$, and we have
\begin{align*}
\dim \hfhat(S^3_{p/q}(K)) &= q\cdot \rhat(K) + (p-q\nuhat(K)) \\
&= p + q(\rhat(K)-\nuhat(K)).
\end{align*}
This is equal to $\dim \hfhat(S^3_{p/q}(5_2)) = p+4q$, so we must have $\rhat(K)-\nuhat(K)=4$.

Now since $0 \leq \rhat(K) = \nuhat(K)+4 \leq 4$ and $\rhat(K) \geq |\nuhat(K)|$, the only possibilities for these invariants are
\[ (\rhat(K),\nuhat(K)) = (4,0) \text{ or } (3,-1) \text{ or } (2,-2), \]
and the last is impossible because Proposition~\ref{prop:r-nu-properties} says that $\nuhat(K)$ must be $0$ or odd.  If $(\rhat(K),\nuhat(K))=(3,-1)$ then $(\rhat(\mirror{K}), \nuhat(\mirror{K})) = (3,1)$, so $\mirror{K}$ is an almost L-space knot by Lemma~\ref{lem:almost-r-nu}, and then it must have genus 1 by Corollary~\ref{cor:almost-r-nu}.  Now Proposition~\ref{prop:almost-fibered} says that either $\mirror{K} \cong \mirror{5_2}$, or $\Delta_K(t) = \Delta_{\mirror{K}}(t)$ is different from $\Delta_{5_2}(t)$.  But the first option is ruled out by the assumption $K \not\cong 5_2$, and the second by Lemma~\ref{lem:5_2-V_0}.  We conclude that $(\rhat(K),\nuhat(K))$ cannot be $(3,-1)$, and so the only remaining possibility is $(4,0)$.
\end{proof}

\begin{proposition} \label{prop:5_2-fibered}
Suppose for some rational $r > 0$ and some knot $K \not\cong 5_2$ that
\[ \hfp(S^3_r(K)) \cong \hfp(S^3_r(5_2)) \]
as graded $\Q[U]$-modules.  Then $\tau(K)=0$, and the following must hold.
\begin{itemize}
\item If $g(K)=1$ then $K$ is either $15n_{43522}$ or $\Wh^-(T_{2,3},2)$, up to mirroring.
\item If $g(K) \geq 2$ then $K$ is fibered, and
\[ H_*(A^+_s(K)) \cong \begin{cases} \cT^+ \oplus \Q, & |s|=g(K)-1 \\ \cT^+, & \text{otherwise} \end{cases} \]
for all $|s| \leq g(K)-1$.  In this case the maps
\[ v^+_s: A^+_s(K) \to B^+(K) \quad\text{and}\quad h^+_{-s}: A^+_{-s}(K) \to B^+(K) \]
are quasi-isomorphisms for $0 \leq s \leq g(K)-2$.
\end{itemize}
\end{proposition}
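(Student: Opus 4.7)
The plan is to extract global invariants of $K$ from the hypothesis, and then to apply the large surgeries formula to pin down the local structure of $\cfkinfty(K)$. Applying Lemma~\ref{lem:5_2-r-minus-nu} gives $\rhat(K) = 4$ and $\nuhat(K) = 0$, whence Proposition~\ref{prop:r-nu-properties}(\ref{i:nu-tau}) immediately yields $\tau(K) = 0$. Lemma~\ref{lem:5_2-V_0} adds that $V_s(K) = 0$ for all $s \geq 0$, which together with Proposition~\ref{prop:vs-hs} gives $H_{-s}(K) = 0$ in the same range.

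In the case $g(K) = 1$, the full dimension $\dim \hfhat(S^3_1(K)) = \rhat(K) + |1-\nuhat(K)| = 5$ is concentrated in the unique $\spc$ structure, so Corollary~\ref{cor:hfhat-vs-hfkhat} with $V_0(K)=0$ forces $\dim \hfkhat(K,1) = 2$. Combined with $\Delta_K(t) = 2t-3+2t^{-1}$ from Lemma~\ref{lem:5_2-V_0}, Theorem~\ref{thm:main-hfk} restricts $K$ to $5_2$, $15n_{43522}$, $\Wh^-(T_{2,3},2)$, or their mirrors. The hypothesis $K \not\cong 5_2$ excludes one option, and Lemma~\ref{lem:nf-r-nu} gives $\rhat(\mirror{5_2}) = 3 \neq 4$, excluding $\mirror{5_2}$; the four remaining options are exactly those listed.

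For $g = g(K) \geq 2$, I would apply the large surgeries formula at $p = 2g-1$, so that every $\spc$ structure on $S^3_{2g-1}(K)$ corresponds to some $s$ with $|s| \leq g-1$, and each dimension $d_s := \dim H_*(\hat{A}_s(K))$ is odd (since each Euler characteristic equals $1$) with $d_s = d_{-s}$ by conjugation symmetry. The total $\sum_s d_s = 2g+3$ exceeds the trivial lower bound $2g-1$ by exactly $4$, so a short case analysis using parity and symmetry leaves only two possibilities: either $d_0 = 5$ with all other $d_s = 1$, or $d_{\pm s_0} = 3$ for a unique $0 < s_0 \leq g-1$ with all other $d_s = 1$. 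To select between these, I would invoke Lemma~\ref{lem:ker-v-g-1} together with $V_{g-1}(K) = 0$ and the fact that $\hfkhat$ detects Seifert genus (so $\dim \hfkhat(K,g) \geq 1$): combined, these force $\dim \hfred(S^3_{2g-1}(K), g-1) \geq 1$ and hence $d_{g-1} \geq 3$. This rules out the first case and forces $s_0 = g-1$ in the second, so that $\dim \hfkhat(K,g) = 1$ and $K$ is fibered by \cite{ni-hfk}.

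The structural claims now follow by unpacking these dimensions. For $|s| < g-1$, $d_s = 1$ forces $H_*(A^+_s(K)) \cong \cT^+$, while for $|s| = g-1$, Lemma~\ref{lem:ker-v-g-1} gives $U \cdot \hfred(S^3_{2g-1}(K), g-1) = 0$ with $\dim \hfred = 1$, yielding $H_*(A^+_{g-1}(K)) \cong \cT^+ \oplus \Q$; conjugation symmetry handles $s = -(g-1)$. For the quasi-isomorphism claim, $(v^+_s)_*$ and $(h^+_{-s})_*$ act on $\cT^+ \subset H_*(A^+_{\pm s}(K))$ as multiplication by $U^{V_s}$ and $U^{H_{-s}} = U^{V_s}$ respectively, and for $0 \leq s \leq g-2$ both exponents vanish while the domains have no reduced part, so these are isomorphisms on homology. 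The main obstacle throughout is isolating $s_0 = g-1$ rather than $s_0 = 0$ or some intermediate value, which relies crucially on having both the vanishing $V_{g-1}(K) = 0$ and the positivity of $\dim \hfkhat(K,g)$ guaranteed by genus detection.
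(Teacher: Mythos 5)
Your proposal is correct and follows essentially the same route as the paper's proof: the same global inputs ($\rhat(K)=4$, $\nuhat(K)=0$, $V_s(K)=0$ for $s\geq 0$), the same dimension count $\dim\hfhat(S^3_{2g-1}(K))=2g+3$ distributed over $\spc$ structures via parity and conjugation symmetry, and the same use of Lemma~\ref{lem:ker-v-g-1} with genus and fiberedness detection to force $\dim\hfkhat(K,g)=1$. The only cosmetic differences are that you exclude $\mirror{5_2}$ in the genus-one case via $\rhat(\mirror{5_2})=3$ rather than $V_0(\mirror{5_2})=1$, and you organize the $g\geq 2$ count as a two-case enumeration where the paper bounds $\dim\hfhat(S^3_{2g-1}(K),g-1)\leq 3$ directly; both are equivalent.
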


\begin{proof}
Let $g=g(K)$.  Lemma~\ref{lem:5_2-r-minus-nu} tells us that $\rhat(K)=4$ and $\nuhat(K)=0$, so $\tau(K)=0$ by Proposition~\ref{prop:r-nu-properties}, and we also have
\[ \dim \hfhat(S^3_{2g-1}(K)) = 4 + |(2g-1)-0| = 2g+3. \]
Lemma~\ref{lem:5_2-V_0} says that $V_{g-1}(K) = 0$, so Corollary~\ref{cor:hfhat-vs-hfkhat} becomes
\begin{equation} \label{eq:5_2-hfkhat-top}
\dim \hfkhat(K,g) = \frac{\dim \hfhat(S^3_{2g-1}(K),g-1) - 1}{2}.
\end{equation}
We will use this to bound $\dim \hfkhat(K,g)$ from above.

We suppose first that $g=1$.  In this case we have
\[ \dim \hfhat(S^3_1(K),0) = \dim \hfhat(S^3_1(K)) = 2g+3 = 5, \]
so \eqref{eq:5_2-hfkhat-top} becomes $\dim \hfkhat(K,1) = 2$.  From Lemma~\ref{lem:5_2-V_0} we have $\Delta_K(t) = 2t-3+2t^{-1}$, so Theorem~\ref{thm:main-hfk} now tells us that $K$ must be one of $5_2$, $15n_{43522}$, or $\Wh^-(T_{2,3},2)$ up to mirroring.  But we have assumed that $K$ is not $5_2$, and it cannot be $\mirror{5_2}$ since $V_0(\mirror{5_2})=1$, so this leaves only the knots named in the proposition.

Now we suppose instead that $g \geq 2$.  In this case, the unique self-conjugate element of
\[ \spc(S^3_{2g-1}(K)) \cong \Z/(2g-1)\Z \]
is identified with $0$, and in particular it is different from $g-1$, which is conjugate to $1-g$.  Since $\dim \hfkhat(S^3_{2g-1}(K),s)$ is odd for all $s$, we use the conjugation symmetry of $\hfhat$ (see Remark~\ref{rem:s-conjugate}) to show that
\begin{align*}
2g+3 &= \dim \hfhat(S^3_{2g-1}(K)) \\
&= \sum_{s\in\Z/(2g-1)\Z} \dim \hfhat(S^3_{2g-1}(K),s) \\
&= 2\dim \hfhat(S^3_{2g-1}(K),g-1) + \sum_{|s| \leq g-2} \dim \hfhat(S^3_{2g-1}(K),s) \\
&\geq 2 \dim \hfhat(S^3_{2g-1}(K),g-1) + (2g-3),
\end{align*}
since there are $2g-3$ different summands on the right.  This shows that
\[ \dim \hfhat(S^3_{2g-1}(K), g-1) \leq 3, \]
and then \eqref{eq:5_2-hfkhat-top} becomes $\dim \hfkhat(K,g) \leq 1$.  But $\dim \hfkhat(K,g)$ must be positive, so equality holds, which implies that
\begin{itemize}
\item $\dim \hfkhat(K,g) = 1$, and then $K$ must be fibered \cite{ni-hfk}; and
\item $\dim \hfhat(S^3_{2g-1}(K),s)$ is $3$ if $s \equiv \pm(g-1) \pmod{2g-1}$, and $1$ otherwise.
\end{itemize}
Applying Lemmas~\ref{lem:ker-v-g-1} and \ref{lem:hfhat-vs-hfred}, we conclude that
\[ \hfred(S^3_{2g-1}(K),s) \cong \begin{cases} \Q, & s=\pm(g-1) \\ 0, & 2-g \leq s \leq g-2. \end{cases} \]
The large surgery formula (Theorem~\ref{thm:large-surgeries}) says that
\[ H_*(A^+_s) \cong \hfp(S^3_{2g-1}(K),s) \]
whenever $|s| \leq g-1$, so this completes the description of $H_*(A^+_s)$.

Now if $0 \leq s \leq g-2$ then $v^+_s: A^+_s \to B^+$ induces a map on homology of the form
\[ (v^+_s)_*: \cT^+ \cong H_*(A^+_s) \to H_*(B^+) \cong \cT^+, \]
and this map is multiplication by $U^{V_s(K)}$, but Lemma~\ref{lem:5_2-V_0} says that $V_s(K) = 0$ and so $(v^+_s)_*$ is an isomorphism.  The map $(h^+_{-s})_*$ has the same form and is identified with multiplication by $U^{H_{-s}(K)}$, but Proposition~\ref{prop:vs-hs} says that $H_{-s}(K) = V_s(K) = 0$, so $(h^+_{-s})_*$ is an isomorphism as well.
\end{proof}

\subsection{$\hfkhat$ in the higher genus case} \label{ssec:5_2-fibered-hfk}

Suppose that we have a homeomorphism
\[ S^3_r(K) \cong S^3_r(5_2) \]
for some slope $r>0$ and some knot $K$ of genus $g \geq 2$.  Then Proposition~\ref{prop:5_2-fibered} says that $K$ is fibered, that $\tau(K)=0$, and that
\[ H_*(A^+_s) \cong \begin{cases} \cT^+ \oplus \Q, & |s|=g-1 \\ \cT^+, & \text{otherwise}. \end{cases} \]
In addition, Lemma~\ref{lem:5_2-V_0} together with Proposition~\ref{prop:vs-hs} tells us that
\[ V_s(K) = \begin{cases} 0, & s \geq 0 \\ |s|, & s < 0 \end{cases} \]
for all $s\in\Z$.  We will use all of this information to determine $\hfkhat(K)$ as a bigraded vector space.

\begin{lemma} \label{lem:As-tower-grading}
There is some integer $d\in\Z$ such that
\[ H_*(A^+_s) \cong \begin{cases}
\cT^+_{(0)} \oplus \Q^{\vphantom{+}}_{(d)}, & s=g-1 \\
\cT^+_{(2-2g)} \oplus \Q^{\vphantom{+}}_{(d+2-2g)}, & s=1-g \\
\cT^+_{(\min(0,2s))}, & \text{otherwise}.
\end{cases} \]
\end{lemma}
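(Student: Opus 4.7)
The plan is to split the lemma into two independent parts: the grading of the tower $\cT^+$ in each $H_*(A^+_s)$ (which will drop out of the known values of $V_s(K)$), and the gradings of the two extra $\Q$ summands at $s = \pm(g-1)$ (which I will tie together using conjugation symmetry).

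For the tower grading I would argue as follows. The chain map $v^+_s : A^+_s \to B^+$ is grading-preserving, and $H_*(B^+) \cong \cT^+_{(0)}$ since $B^+ \simeq \cfp(S^3)$. Writing the tower summand of $H_*(A^+_s)$ as $\cT^+_{(a_s)}$, the restriction $(v^+_s)_*|_{\cT^+} : \cT^+_{(a_s)} \to \cT^+_{(0)}$ is a grading-preserving surjection equal to multiplication by $U^{V_s(K)}$, and matching gradings on either side forces $a_s = -2V_s(K)$. Combining this with the values $V_s(K) = \max(0,-s)$ recorded in the discussion preceding the lemma (from Lemma~\ref{lem:5_2-V_0} together with the symmetry $V_{-s} = V_s + s$ of Proposition~\ref{prop:vs-hs}), we get $a_s = \min(0, 2s)$ for every $s \in \Z$. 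This yields both the generic case of the lemma and the tower parts in the two exceptional cases.

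For the two $\Q$-summand gradings, let $d$ be the grading of the $\Q$ summand in $H_*(A^+_{g-1})$ and $d'$ that in $H_*(A^+_{1-g})$. I will apply the large surgery formula (Theorem~\ref{thm:large-surgeries}) with $p = 2g-1$, which yields absolutely graded isomorphisms
\[ H_*(A^+_{g-1}) \cong \hfp(S^3_{2g-1}(K),\, g-1) \quad\text{and}\quad H_*(A^+_{1-g}) \cong \hfp(S^3_{2g-1}(K),\, 1-g), \]
once we translate each so that the tower bottom lands in grading $d(S^3_{2g-1}(K),s)$. The computations $V_{g-1}(K) = 0$ and $H_{1-g}(K) = V_{g-1}(K) = 0$, combined with Theorem~\ref{thm:ni-wu-d}, give $d(S^3_{2g-1}(K),g-1) = d(S^3_{2g-1}(U),g-1)$ and similarly at $s = 1-g$, and these two $d$-invariants of the lens space $S^3_{2g-1}(U)$ coincide because $g-1$ and $1-g$ are conjugate $\spc$ structures modulo $2g-1$ (Remark~\ref{rem:s-conjugate}). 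The conjugation symmetry of $\hfp$ then identifies the two right-hand sides as absolutely graded $\Q[U]$-modules, so in particular their $\Q$-summands sit in the same Maslov grading. The two grading shifts from $H_*(A^+_s)$ to $\hfp$ differ by exactly $2g-2$, since the tower bottoms sit at $0$ and $2-2g$ in $H_*(A^+_{g-1})$ and $H_*(A^+_{1-g})$ respectively, and unwinding this identification forces $d' = d + (2-2g)$.

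The only real obstacle I anticipate is bookkeeping: tracking three grading conventions simultaneously (the grading on $H_*(A^+_s)$ inherited from $\cfkinfty(K)$, the absolute Maslov grading on $\hfp(S^3_{2g-1}(K), s)$, and the $\Z/(2g-1)\Z$-labeling of $\spc$ structures coming from Theorem~\ref{thm:large-surgeries} and Remark~\ref{rem:s-conjugate}), and signing everything correctly. Once these are aligned, the argument is essentially immediate from ingredients already in hand.
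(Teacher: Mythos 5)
Your proof is correct and follows essentially the same route as the paper: the tower gradings come from the fact that $(v^+_s)_*$ is a grading-preserving map modeled on multiplication by $U^{V_s(K)}$, and the gradings of the two $\Q$ summands are tied together via the large surgery formula and conjugation symmetry. The only difference is that you promote the comparison of $H_*(A^+_{g-1})$ and $H_*(A^+_{1-g})$ to absolute gradings using Theorem~\ref{thm:ni-wu-d} and the $d$-invariants of the lens space, whereas the paper's proof uses only the relatively graded isomorphism (the relative position of the $\Q$ summand above the tower bottom is all that is needed); your extra step is valid but not necessary.
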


\begin{proof}
We consider each of the maps
\[ (v^+_s)_*: H_*(A^+_s) \to H_*(B^+) \cong \cT^+_{(0)}, \]
which are induced by projections at the chain level.  For $s\geq 0$ we have $V_s(K)=0$, so these maps restrict to graded isomorphisms on the towers $\cT^+ \subset H_*(A^+_s)$; thus these towers have their bottom-most elements in grading $0$.  By contrast, for $s < 0$ the maps $(v^+_s)_*$ are modeled on multiplication by $U^{V_s(K)} = U^{|s|}$, so the element of $\cT^+ \subset H_*(A^+_s)$ in grading $0$ is at height $|s|$ in the tower, meaning that the bottom element has grading $-2|s| = 2s$.

Having determined the grading on each tower, we set $d$ equal to the grading of the $\Q$ summand of $H_*(A^+_{g-1})$.  Then it only remains to identify the grading on the $\Q$ summand of $H_*(A^+_{1-g})$.  We apply the large surgery formula, Theorem~\ref{thm:large-surgeries}, to get relatively graded isomorphisms
\begin{align*}
\hfp(S^3_{2g-1}(K),g-1) &\cong H_*(A^+_{g-1}), \\
\hfp(S^3_{2g-1}(K),1-g) & \cong H_*(A^+_{1-g}).
\end{align*}
By conjugation symmetry these $\hfp$ invariants are isomorphic to each other, so we also have a relatively graded isomorphism
\[ H_*(A^+_{g-1}) \cong H_*(A^+_{1-g}). \]
But this means that the grading of the $\Q$ summand of $H_*(A^+_{1-g})$ must be $d$ greater than that of the bottom of the tower $\cT^+_{(2-2g)}$, so its grading is $d+2-2g$ as claimed.
\end{proof}

We now start with the top-most Alexander grading of $\hfkhat(K)$, which we already know to be 1-dimensional because $K$ is fibered.

\begin{lemma} \label{lem:hfk-top-grading}
We have $\hfkhat(K,g) \cong \Q_{(d+2)}$ and $\hfkhat(K,-g) \cong \Q_{(d+2-2g)}$, where $d$ is the integer from Lemma~\ref{lem:As-tower-grading}.
\end{lemma}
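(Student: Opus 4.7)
The plan is to extract both Maslov gradings directly from the short exact sequence of graded $\Q[U]$-modules provided by Lemma~\ref{lem:ker-v-g-1}, namely
\[ 0 \to \hfkhat_{*+2}(K,g) \to H_*(A^+_{g-1}) \xrightarrow{(v^+_{g-1})_*} H_*(B^+) \to 0. \]
Since $K$ is fibered of genus $g$, we have $\dim \hfkhat(K,g) = 1$, so it is concentrated in a single Maslov grading, say $\hfkhat(K,g) \cong \Q_{(m)}$. The grading shift ``$*+2$'' records that the copy of $\hfkhat(K,g)$ embedded in $A^+_{g-1}$ is realized as $U \cdot C\{0,g\} = C\{-1,g-1\}$, so it sits two gradings lower than $\hfkhat(K,g)$ itself.

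Next, I will plug in the description of the two outer terms. By Lemma~\ref{lem:As-tower-grading},
\[ H_*(A^+_{g-1}) \cong \cT^+_{(0)} \oplus \Q^{\vphantom{+}}_{(d)}, \]
while $H_*(B^+) \cong \cT^+_{(0)}$, and the map $(v^+_{g-1})_*$ restricts to the identity on the towers because $V_{g-1}(K) = 0$. Consequently its kernel is precisely the $\Q_{(d)}$ summand, which is identified by the short exact sequence with $\hfkhat_{*+2}(K,g)$. Matching gradings forces $m - 2 = d$, i.e.\ $\hfkhat(K,g) \cong \Q_{(d+2)}$.

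For the bottom Alexander grading I will invoke the standard conjugation symmetry
\[ \hfkhat_m(K,a) \cong \hfkhat_{m-2a}(K,-a) \]
of knot Floer homology. Applied to $a = g$ and $m = d+2$, this immediately yields $\hfkhat(K,-g) \cong \Q_{(d+2-2g)}$, completing the proof.

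The argument is essentially bookkeeping once Lemmas~\ref{lem:ker-v-g-1} and \ref{lem:As-tower-grading} are in hand, so no genuine obstacle arises; the only subtle point is keeping track of the grading shift induced by the $U$-action when identifying the $\Q_{(d)}$ summand with $\hfkhat_{*+2}(K,g)$ rather than with $\hfkhat(K,g)$ itself.
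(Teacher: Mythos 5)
Your proof is correct and follows the same route as the paper: both extract $\hfkhat(K,g) \cong \Q_{(d+2)}$ from the short exact sequence of Lemma~\ref{lem:ker-v-g-1}, identifying the kernel of $(v^+_{g-1})_*$ with the $\Q_{(d)}$ summand and accounting for the $*+2$ shift, then apply the conjugation symmetry $\hfkhat_m(K,a) \cong \hfkhat_{m-2a}(K,-a)$ for the grading at $-g$. Your explicit justification of why the kernel is exactly $\Q_{(d)}$ (via $V_{g-1}(K)=0$) is a detail the paper leaves implicit, but the argument is the same.
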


\begin{proof}
Lemma~\ref{lem:ker-v-g-1} gives us a short exact sequence
\[ 0 \to \hfkhat_{*+2}(K,g) \to \cT^+_{(0)}\oplus \Q^{\vphantom{+}}_{(d)} \xrightarrow{(v^+_{g-1})_*} \cT^+_{(0)} \to 0, \]
where $(v^+_{g-1})_*$ has kernel $\Q_{(d)}$.  The grading on $\hfkhat(K,-g)$ now comes from the symmetry
\[ \hfkhat_m(K,a) \cong \hfkhat_{m-2a}(K,-a) \]
of \cite[Equation~(3)]{osz-knot}.
\end{proof}

Throughout the remainder of this section we write
\[ \cF_s = C\{i=0,j\leq s\} \]
to denote the filtration
\[ 0 \subset \cF_{-g} \subset \cF_{1-g} \subset \dots \subset \cF_{g} \]
of $\cfhat(S^3)$ whose associated graded groups are the various $\hfkhat(K,a)$.  In particular the short exact sequence
\[ 0 \to \cF_{s-1} \hookrightarrow \cF_s \to C\{0,s\} \to 0 \]
of chain complexes gives rise to a long exact sequence
\begin{equation} \label{eq:ses-F_s}
\cdots \to H_*(\cF_{s-1}) \to H_*(\cF_s) \to \hfkhat_*(K,s) \to H_{*-1}(\cF_{s-1}) \to \cdots .
\end{equation}

\begin{lemma} \label{lem:pi_s-exact-sequence}
For all $s\in\Z$, there is a long exact sequence
\[ \cdots \to H_{*-(2s-2)}(\cF_{-s}) \to H_*(A^+_{s-1}) \xrightarrow{(\pi_s)_*} H_*(A^+_s) \to H_{*-(2s-1)}(\cF_{-s}) \to \cdots , \]
and $(v^+_{s-1})_*$ is equal to the composition
\[ H_*(A^+_{s-1}) \xrightarrow{(\pi_s)_*} H_*(A^+_s) \xrightarrow{(v^+_s)_*} H_*(B^+). \]
\end{lemma}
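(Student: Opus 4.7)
The starting observation is that each $A^+_s$ is naturally a quotient complex of $C = \cfkinfty(K)$: the subspace $S_s := C\{i \leq -1,\, j \leq s-1\}$ is preserved by $\partial^\infty$ (since the differential does not increase either filtration), and $A^+_s \cong C/S_s$ with the induced differential.  Because $S_{s-1} \subseteq S_s$, there is a canonical surjective chain map
\[ \pi_s: A^+_{s-1} = C/S_{s-1} \twoheadrightarrow C/S_s = A^+_s, \]
whose kernel is $S_s/S_{s-1} \cong C\{i \leq -1,\, j = s-1\}$.  The factorization $v^+_{s-1} = v^+_s \circ \pi_s$ is then immediate from the definitions, since both sides project $A^+_{s-1}$ onto $B^+ = C\{i \geq 0\}$ by discarding the $\{i \leq -1\}$ part---either in one stroke for $v^+_{s-1}$, or in two stages for $v^+_s \circ \pi_s$ (which first discards $C\{i \leq -1,\, j = s-1\}$ and then $C\{i \leq -1,\, j \geq s\}$).

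The short exact sequence $0 \to C\{i \leq -1,\, j = s-1\} \to A^+_{s-1} \xrightarrow{\pi_s} A^+_s \to 0$ induces the expected long exact sequence on homology, so the remaining content of the lemma reduces to the claim
\[ H_*(C\{i \leq -1,\, j = s-1\}) \cong H_{*-(2s-2)}(\cF_{-s}) \]
as graded $\Q$-vector spaces.  The plan is to decompose this identification into two steps.  First, multiplication by $U^{s-1}$ is a chain automorphism of $\cfkinfty(K)$ that shifts the bifiltration by $(-(s-1),-(s-1))$ and lowers the homological grading by $2(s-1)$; it restricts to a grading-shifting chain isomorphism
\[ U^{s-1}: C\{i \leq -1,\, j = s-1\} \xrightarrow{\cong} C\{i \leq -s,\, j = 0\}. \]
Second, the chain homotopy equivalence exchanging the $i$ and $j$ filtrations---the same swap that appears in the definition of $h^+_s$ earlier in this section---restricts to a grading-preserving quasi-isomorphism $C\{i \leq -s,\, j = 0\} \simeq C\{i = 0,\, j \leq -s\} = \cF_{-s}$.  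Composing, one obtains a quasi-isomorphism between the kernel and $\cF_{-s}$ that shifts grading by $-(2s-2)$, which produces the desired identification of homologies.  Substituting into the long exact sequence then gives the form stated in the lemma.

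The point requiring care is the second step, namely restricting the $i \leftrightarrow j$ symmetry to the relevant one-row and one-column subquotients.  This uses the bifiltration-compatibility of the symmetry (which sends $C\{i \leq i_0,\, j \leq j_0\}$ to $C\{i \leq j_0,\, j \leq i_0\}$), so that it descends to the appropriate subquotients, together with the fact that on these subquotients it preserves homological grading (a consequence of the Alexander--Maslov symmetry $\hfkhat_m(K, a) \cong \hfkhat_{m-2a}(K, -a)$); the $U^{s-1}$ translation then absorbs the overall grading shift of $2s-2$ needed in the final formula.
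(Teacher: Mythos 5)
Your proposal is correct and follows essentially the same route as the paper: the short exact sequence $0 \to C\{i\leq-1,\ j=s-1\} \to A^+_{s-1} \xrightarrow{\pi_s} A^+_s \to 0$, the chain-level factorization $v^+_{s-1}=v^+_s\circ\pi_s$, and the identification of the kernel with $\cF_{-s}$ via $U^{s-1}$ followed by the $i\leftrightarrow j$ swap. Your extra care about why the swap restricts to the relevant subquotients is a reasonable elaboration of a step the paper leaves implicit.
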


\begin{proof}
There is a short exact sequence of chain complexes
\begin{equation} \label{eq:A_s-projection}
0 \to C\{i\leq-1,j=s-1\} \to A^+_{s-1} \xrightarrow{\pi_s} A^+_{s} \to 0
\end{equation}
in which $\pi_s$ is projection.  Then $v^+_{s-1} = v^+_s \circ \pi_s$ at the chain level, hence $(v^+_{s-1})_*$ factors as claimed.  We also have a chain homotopy equivalence
\begin{align*}
C_*\{i\leq-1,j=s-1\} &\xrightarrow{U^{s-1}} C_{*-(2s-2)}\{i\leq-s,j=0\} \\
&\xrightarrow{\hphantom{U^{s-1}}} C_{*-(2s-2)}\{i=0,j\leq-s\}
\end{align*}
so the long exact sequence of homology groups associated to \eqref{eq:A_s-projection} takes the form promised by the lemma.
\end{proof}

\begin{lemma} \label{lem:homology-F_0}
We have
\[ H_*(\cF_0) \cong \begin{cases} \Q_{(0)}, & g \geq 3 \\ \Q_{(0)} \oplus \Q_{(d)}, & g = 2. \end{cases} \]
\end{lemma}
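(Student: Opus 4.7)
The plan is to apply the long exact sequence from Lemma~\ref{lem:pi_s-exact-sequence} with $s=0$:
\[ \cdots \to H_{n+2}(\cF_0) \to H_n(A^+_{-1}) \xrightarrow{(\pi_0)_*} H_n(A^+_0) \to H_{n+1}(\cF_0) \to H_{n-1}(A^+_{-1}) \to \cdots , \]
together with the description of each $H_*(A^+_s)$ furnished by Lemma~\ref{lem:As-tower-grading}. The first step is to pin down the map $(\pi_0)_*$ on the tower summand of $H_*(A^+_{-1})$. Lemma~\ref{lem:pi_s-exact-sequence} asserts the factorization $(v^+_{-1})_* = (v^+_0)_* \circ (\pi_0)_*$. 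By Lemma~\ref{lem:5_2-V_0} we have $V_0(K)=0$, so $(v^+_0)_*$ is the identity on the tower, and by Proposition~\ref{prop:vs-hs}(3) we have $V_{-1}(K)=V_0(K)+1=1$, so $(v^+_{-1})_*$ is multiplication by $U$ on the tower. Consequently $(\pi_0)_*$ restricted to $\cT^+_{(-2)}\subset H_*(A^+_{-1})$ kills the bottom class in grading $-2$ and is surjective onto $\cT^+_{(0)}=H_*(A^+_0)$.

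Case $g\geq 3$. In this case Lemma~\ref{lem:As-tower-grading} gives $H_*(A^+_{-1})\cong \cT^+_{(-2)}$ and $H_*(A^+_0)\cong \cT^+_{(0)}$ with no extra summands, so $\ker((\pi_0)_*)=\Q_{(-2)}$ and $\mathrm{coker}((\pi_0)_*)=0$. Feeding this into the LES collapses it into a short exact sequence
\[ 0 \to \mathrm{coker}((\pi_0)_*)_{m-1} \to H_m(\cF_0) \to \ker((\pi_0)_*)_{m-2} \to 0, \]
which forces $H_m(\cF_0)$ to vanish for $m\neq 0$ and to be one-dimensional at $m=0$. The absolute grading is determined by tracking the $+2$ shift in the chain homotopy equivalence $C\{i\leq -1,\,j=-1\}\xrightarrow{U^{-1}} C\{i=0,\,j\leq 0\}[2]$ used to rewrite the LES in terms of $\cF_0$; this identifies the unique nonzero class as lying in grading $0$.

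Case $g=2$. Now $H_*(A^+_{-1})\cong \cT^+_{(-2)}\oplus \Q_{(d-2)}$ while $H_*(A^+_0)\cong \cT^+_{(0)}$ is unchanged. Grading constraints show that the restriction of $(\pi_0)_*$ to the extra $\Q_{(d-2)}$ summand either vanishes or is a nonzero scalar multiple of the inclusion onto the grading-$(d-2)$ element of $\cT^+_{(0)}$ (the latter requiring $d\geq 2$ even). An elementary linear-algebra check shows that in \emph{either} scenario the tower-surjectivity of $(\pi_0)_*$ already established forces $\mathrm{coker}((\pi_0)_*)=0$ and $\ker((\pi_0)_*)=\Q_{(-2)}\oplus\Q_{(d-2)}$ (in the nonzero case, the kernel at grading $d-2$ is spanned by a mixed element combining the $\Q_{(d-2)}$ generator with a tower element). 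Running the same SES as above now yields a class in $H_0(\cF_0)$ of grading $0$ and an additional class in $H_d(\cF_0)$ of grading $d$, producing $H_*(\cF_0)\cong \Q_{(0)}\oplus \Q_{(d)}$ (these coincide as a two-dimensional space in grading $0$ when $d=0$).

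The substantive work is entirely in the first step, understanding $(\pi_0)_*$ on both the tower and the reduced summand; once this is in hand the LES produces the answer mechanically. The main pitfall is keeping the grading conventions straight, particularly the way the $U^{-1}$ shift enters the identification of $C\{i\leq -1,\,j=-1\}$ with $\cF_0$, since this is what determines the absolute Maslov grading of the classes we extract from the kernel of $(\pi_0)_*$.
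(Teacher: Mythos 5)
Your proposal is correct and follows essentially the same route as the paper: apply Lemma~\ref{lem:pi_s-exact-sequence} with $s=0$, use the factorization $(v^+_{-1})_* = (v^+_0)_*\circ(\pi_0)_*$ together with $V_0(K)=0$ and $V_{-1}(K)=1$ to see that $(\pi_0)_*$ is surjective with kernel supported in the expected gradings, and read off $H_{*+2}(\cF_0)\cong\ker(\pi_0)_*$ from the resulting short exact sequences. Your extra case analysis for $g=2$ (whether $(\pi_0)_*$ kills the reduced $\Q_{(d-2)}$ summand or not) is a point the paper elides, and your observation that the graded kernel is $\Q_{(-2)}\oplus\Q_{(d-2)}$ either way is exactly what justifies the paper's one-line conclusion there.
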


\begin{proof}
We apply Lemma~\ref{lem:pi_s-exact-sequence} with $s=0$: supposing for now that $g\geq 3$, the composition
\[ \begin{tikzcd}
H_*(A^+_{-1}) \ar[r,"(\pi_0)_*"] \ar[d,"\cong"] &
H_*(A^+_0) \ar[r,"(v^+_0)_*"] \ar[d,"\cong"] &
H_*(B^+) \ar[d,"\cong"] \\
\cT^+_{(-2)} \ar[r,"(\pi_0)_*"] &
\cT^+_{(0)} \ar[r,"U^{V_0(K)}=1"] &
\cT^+_{(0)}
\end{tikzcd} \]
is equal to $(v^+_{-1})_*$ and hence identified with multiplication by $U^{V_{-1}(K)} = U$.  In particular, the map $(\pi_0)_*$ is surjective and also identified with multiplication by $U$, so the long exact sequence of Lemma~\ref{lem:pi_s-exact-sequence} splits as
\[ 0 \to H_{i+2}(\cF_0) \to H_i(A^+_{-1}) \xrightarrow{(\pi_0)_*} H_i(A^+_0) \to 0 \]
for each $i$, and we have
\[ H_{i+2}(\cF_0) \cong \ker((\pi_0)_*) \cong \begin{cases} \Q, & i = -2 \\ 0, & \text{otherwise} \end{cases} \]
since $-2$ is the grading of the bottom-most element of $H_*(A^+_{-1}) \cong \cT^+_{(-2)}$.

Now suppose that $g=2$.  Then we factor $(v^+_{-1})_*$ as
\[ \begin{tikzcd}
H_*(A^+_{-1}) \ar[r,"(\pi_0)_*"] \ar[d,"\cong"] &
H_*(A^+_0) \ar[r,"(v^+_0)_*"] \ar[d,"\cong"] &
H_*(B^+) \ar[d,"\cong"] \\
\cT^+_{(-2)}\oplus\Q^{\vphantom{+}}_{(d-2)} \ar[r,"(\pi_0)_*"] &
\cT^+_{(0)} \ar[r,"U^{V_0(K)}=1"] &
\cT^+_{(0)}
\end{tikzcd} \]
where the gradings on $H_*(A^+_{-1})$ come from Lemma~\ref{lem:As-tower-grading}.  In this case $(\pi_0)_*$ is still surjective, so once again we identify its kernel $\Q_{(-2)} \oplus \Q_{(d-2)}$ with $H_{*+2}(\cF_0)$.
\end{proof}

\begin{proposition} \label{prop:hfk-nonpositive}
We have $\hfkhat(K,-g) \cong \Q_{(d+2-2g)}$, and $\hfkhat(K,1-g) \cong \Q^2_{(d+3-2g)}$.  If $g \geq 3$ then 
\[ \hfkhat(K,s) \cong \begin{cases} \Q_{(d+4-2g)}, & s=2-g \\ 0, & 3-g \leq s \leq -1 \\ \Q_{(0)}, & s=0. \end{cases} \]
If $g=2$ instead, then $\hfkhat(K,0) \cong \Q^{\vphantom{2}}_{(0)} \oplus \Q^2_{(d)}$.
\end{proposition}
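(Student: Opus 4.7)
The plan is to compute $H_*(\cF_{-s})$ for each $s$ with $1 \leq s \leq g-1$ by iterating Lemma~\ref{lem:pi_s-exact-sequence}, and then to read off each $\hfkhat(K,s)$ from the long exact sequence~\eqref{eq:ses-F_s} of the filtration. The base case is $H_*(\cF_{-g}) \cong \hfkhat(K,-g) \cong \Q_{(d+2-2g)}$ from Lemma~\ref{lem:hfk-top-grading}, together with $\cF_{-g-1} = 0$.

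First I compute $H_*(\cF_{-s})$. For $1 \leq s \leq g-2$ (a nonempty range only when $g \geq 3$), Lemma~\ref{lem:As-tower-grading} gives $H_*(A^+_{s-1}) \cong H_*(A^+_s) \cong \cT^+_{(0)}$; the factorization $(v^+_{s-1})_* = (v^+_s)_* \circ (\pi_s)_*$ combined with $V_{s-1}(K) = V_s(K) = 0$ forces $(\pi_s)_*$ to be an isomorphism of $\cT^+$, and then the argument of Lemma~\ref{lem:homology-F_0} gives $H_*(\cF_{-s}) = 0$. For $s = g-1$, where $H_*(A^+_{g-1}) \cong \cT^+_{(0)} \oplus \Q_{(d)}$, the same factorization forces $(\pi_{g-1})_*$ to be injective with image the tower summand and cokernel $\Q_{(d)}$; chasing the long exact sequence of Lemma~\ref{lem:pi_s-exact-sequence} at $s = g-1$ collapses it to an isomorphism $H_{*-(2g-3)}(\cF_{1-g}) \cong \Q_{(d)}$, i.e.\ $H_*(\cF_{1-g}) \cong \Q_{(d+3-2g)}$.

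Second, I extract $\hfkhat$ from~\eqref{eq:ses-F_s}, using that each inclusion $\cF_{s-1} \hookrightarrow \cF_s$ is degree-preserving and hence zero whenever the generators of source and target live in distinct gradings. For the step $\cF_{-g} \hookrightarrow \cF_{1-g}$ the inclusion is zero for this reason, and the long exact sequence splits as
\[ 0 \to \Q_{(d+3-2g)} \to \hfkhat(K,1-g) \to \Q_{(d+3-2g)} \to 0, \]
giving $\hfkhat(K,1-g) \cong \Q^2_{(d+3-2g)}$. For $g \geq 3$ the step $\cF_{1-g} \hookrightarrow \cF_{2-g}$ has $H_*(\cF_{2-g}) = 0$, so the connecting map yields $\hfkhat(K,2-g) \cong \Q_{(d+4-2g)}$; for $g \geq 4$ and $3-g \leq s \leq -1$, both flanking homologies vanish so $\hfkhat(K,s) = 0$; and the step $\cF_{-1} \hookrightarrow \cF_0$ with $H_*(\cF_{-1}) = 0$ and $H_*(\cF_0) \cong \Q_{(0)}$ (Lemma~\ref{lem:homology-F_0}) gives $\hfkhat(K,0) \cong \Q_{(0)}$.

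For $g = 2$ the last step is different: here $H_*(\cF_{-1}) \cong \Q_{(d-1)}$ comes from the $s = 1$ case above, while $H_*(\cF_0) \cong \Q_{(0)} \oplus \Q_{(d)}$ by Lemma~\ref{lem:homology-F_0}, and the inclusion-induced map between them is zero by grading mismatch unless $d = 1$; in that edge case the only grading-compatible target would be the $\Q_{(0)}$-summand, but a nontrivial such map would send a class in $H_*(\cF_{-1})$ to the $\tau$-generator of $H_*(\cF_g)$, forcing $\tau(K) \leq -1$ and contradicting $\tau(K) = 0$. Hence the long exact sequence again splits, yielding $\hfkhat(K,0) \cong \Q_{(0)} \oplus \Q^2_{(d)}$. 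The main work in all of this is not any individual diagram chase but the sustained bookkeeping of Maslov gradings through the cascade of long exact sequences, with the one genuine subtlety being the $d = 1$ edge case in the genus-two argument, which is precisely where the input $\tau(K) = 0$ is used.
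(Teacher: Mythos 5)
Your proposal is correct and follows essentially the same route as the paper: compute $H_*(\cF_{-s})$ for $1\leq s\leq g-1$ via the factorization $(v^+_{s-1})_* = (v^+_s)_*\circ(\pi_s)_*$ and the long exact sequence of Lemma~\ref{lem:pi_s-exact-sequence}, then extract each $\hfkhat(K,s)$ from \eqref{eq:ses-F_s} using grading mismatches to split the sequences, with the $g=2$, $d=1$ edge case handled by the same $\tau(K)=0$ argument the paper uses.
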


\begin{proof}
The computation of $\hfkhat(K,-g)$ is Lemma~\ref{lem:hfk-top-grading}.  When $s=g-1$, we can factor $(v^+_{g-2})_*$ as
\[ \begin{tikzcd}
H_*(A^+_{g-2}) \ar[r,"(\pi_{g-1})_*"] \ar[d,"\cong"] &
H_*(A^+_{g-1}) \ar[r,"(v^+_{g-1})_*"] \ar[d,"\cong"] &
H_*(B^+) \ar[d,"\cong"] \\
\cT^+_{(0)} \ar[r,"(\pi_{g-1})_*"] &
\cT^+_{(0)} \oplus \Q_{(d)}^{\vphantom{+}} \ar[r,"U^{V_s(K)}=1"] &
\cT^+_{(0)},
\end{tikzcd} \]
and the composition is an isomorphism $\cT^+_{(0)} \to \cT^+_{(0)}$ since $V_{g-2}(K)=0$.  Thus $(\pi_{g-1})_*$ is injective, with cokernel $\Q_{(d)}$.  Now the sequence of Lemma~\ref{lem:pi_s-exact-sequence} splits as
\[ 0 \to H_*(A^+_{g-2}) \xrightarrow{(\pi_{g-1})_*} H_*(A^+_{g-1}) \to H_{*-(2g-3)}(\cF_{1-g}) \to 0, \]
so we have $H_*(\cF_{1-g}) \cong \Q_{(d-(2g-3))}$.  But we also know that
\[ H_*(\cF_{-g}) \cong \hfkhat(K,-g) \cong \Q_{(d+2-2g)} \]
by Lemma~\ref{lem:hfk-top-grading}, so the induced map $H_*(\cF_{-g}) \to H_*(\cF_{1-g})$ must be zero for grading reasons.  Thus when $s=1-g$ the exact sequence \eqref{eq:ses-F_s} splits and we have
\[ \hfkhat_*(K,1-g) \cong H_*(\cF_{1-g}) \oplus H_{*-1}(\cF_{-g}) \cong \Q^2_{(d+3-2g)}. \]

Now if $g \geq 3$ then we consider the map $(v^+_{s-1})_*$ for each of $s=1,2,\dots,g-2$ in turn.  In each case $(v^+_{s-1})_*$ factors as
\[ \begin{tikzcd}
H_*(A^+_{s-1}) \ar[r,"(\pi_s)_*"] \ar[d,"\cong"] &
H_*(A^+_s) \ar[r,"(v^+_s)_*"] \ar[d,"\cong"] &
H_*(B^+) \ar[d,"\cong"] \\
\cT^+_{(0)} \ar[r,"(\pi_s)_*"] &
\cT^+_{(0)} \ar[r,"U^{V_s(K)}=1"] &
\cT^+_{(0)},
\end{tikzcd} \]
and is an isomorphism, since it is identified with multiplication by $U^{V_{s-1}(K)} = 1$ as a map $\cT^+_{(0)} \to \cT^+_{(0)}$.  It follows that each $(\pi_s)_*$ is an isomorphism, so the exact sequence of Lemma~\ref{lem:pi_s-exact-sequence} tells us that
\[ H_*(\cF_{-s}) = 0, \quad s=1,2,\dots,g-2. \]
Applying the long exact sequence \eqref{eq:ses-F_s} for $s=3-g,4-g,\dots,0$, we know that $H_*(\cF_{s-1}) = 0$ for each $s$, and so
\[ \hfkhat_*(K,s) \cong H_*(\cF_{s}) \cong \begin{cases} \Q_{(0)}, & s=0 \\ 0, & 3-g \leq s \leq -1, \end{cases} \]
the case $s=0$ having been computed in Lemma~\ref{lem:homology-F_0}.

Similarly, if we take $s=2-g$ in \eqref{eq:ses-F_s} then we get a long exact sequence
\begin{equation} \label{eq:ses-F_s-2-g}
\cdots \to H_*(\cF_{1-g}) \to H_*(\cF_{2-g}) \to \hfkhat_*(K,2-g) \to H_{*-1}(\cF_{1-g}) \to \cdots.
\end{equation}
For $g \geq 3$ we have seen that $H_*(\cF_{2-g}) = 0$, and so
\[ \hfkhat_*(K,2-g) \cong H_{*-1}(\cF_{1-g}) \cong \Q_{(d+4-2g)}. \]
If $g=2$ instead, then we have computed above that
\[ H_*(\cF_{-1}) = H_*(\cF_{1-g}) \cong \Q_{(d-1)} \]
while $H_*(\cF_0) \cong \Q_{(0)} \oplus \Q_{(d)}$ by Lemma~\ref{lem:homology-F_0}, so it remains to be seen whether the map $\iota: H_*(\cF_{-1}) \to H_*(\cF_{0})$ is zero or not.

Assuming that $g=2$, we now consider the inclusion-induced maps
\[ \begin{tikzcd}
H_*(\cF_{-1}) \ar[r,"\iota"] \ar[d,"\cong"] &
H_*(\cF_0) \ar[r] \ar[d,"\cong"] &
\hfhat(S^3) \ar[d,"\cong"] \\
\Q_{(d-1)} \ar[r] &
\Q_{(0)} \oplus \Q_{(d)} \ar[r] &
\Q_{(0)},
\end{tikzcd} \]
where $H_*(\cF_{-1}) = H_*(\cF_{1-g})$ was computed above, and we used Lemma~\ref{lem:homology-F_0} to identify $H_*(\cF_0)$.  If $\iota$ is nonzero then for degree reasons we must have $d=1$, and then its image is the $\Q_{(0)}$ summand of $H_*(\cF_0)$.  But the map $H_*(\cF_0) \to \hfhat(S^3)$ is surjective since $\tau(K) \leq 0$, so it must be nonzero on this $\Q_{(0)}$ summand, in which case the composition across the top row is also surjective.  This would in turn imply that $\tau(K) \leq -1$, contradicting Proposition~\ref{prop:5_2-fibered}.  We conclude that $\iota = 0$, so \eqref{eq:ses-F_s-2-g} splits as
\[ 0 \to H_*(\cF_0) \to \hfkhat_*(K,0) \to H_{*-1}(\cF_{-1}) \to 0. \]
Thus $\hfkhat_*(K,0) \cong \Q_{(0)} \oplus \Q^2_{(d)}$, completing the proof.
\end{proof}

\section{The mapping cone formula and $5_2$} \label{sec:5_2-mapping-cone}

Suppose for some knot $K\not\cong 5_2$ and some rational slope $r>0$ that $S^3_r(K) \cong S^3_r(5_2)$.  In this section we will apply the mapping cone formula, Theorem~\ref{thm:mapping-cone}, to compare $\hfp(S^3_r(K))$ to $\hfp(S^3_r(5_2))$.

Throughout this section we will assume that $K$ has genus $g \geq 2$.  Then Proposition~\ref{prop:5_2-fibered} says that $K$ is fibered, and that we can write
\[ H_*(A^+_{g-1}(K)) \cong \cT^+_{(0)} \oplus \Q^{\vphantom{+}}_{(d)} \]
for some integer $d\in\Z$.  Proposition~\ref{prop:hfk-nonpositive} then describes $\hfkhat(K)$ completely in terms of $g$ and $d$.

We also record from Lemma~\ref{lem:5_2-V_0}, together with Proposition~\ref{prop:vs-hs}, the values
\begin{align*}
V_s(K) &= \begin{cases} 0, & s \geq 0 \\ -s, & s < 0, \end{cases} &
H_s(K) &= \begin{cases} s, & s \geq 0 \\ 0, & s < 0. \end{cases}
\end{align*}
The values of $V_s(5_2)$ and $H_s(5_2)$ are identical, so we will refer to these throughout without reference to the particular knot.

\subsection{Preliminaries}

We begin by recording some facts about the mapping cone formula which will simplify our computations.

\begin{proposition} \label{prop:truncate}
Let $K \subset S^3$ be a nontrivial knot of genus $g \geq 1$, and let $p,q > 0$ be relatively prime integers.  Fix an integer $i$, and suppose there are some integers $s \leq s'$ such that
\begin{itemize}
\item $h^+_{\lfloor\frac{i+pt}{q}\rfloor}$ is a quasi-isomorphism for all $t<s$, and
\item $v^+_{\lfloor\frac{i+pt}{q}\rfloor}$ is a quasi-isomorphism for all $t>s'$.
\end{itemize}
Define truncated complexes
\begin{align*}
\bA^{[s,s']}_{i,p/q} &= \bigoplus_{s\leq t \leq s'} \left(t,A^+_{\lfloor\frac{i+pt}{q}\rfloor}\right), &
\bB^{[s,s']}_{i,p/q} &= \bigoplus_{s< t \leq s'} \left(t,B^+\right),
\end{align*}
and a map
\begin{align*}
D^{[s,s']}_{i,p/q}: \bA^{[s,s']}_{i,p/q} &\to \bB^{[s,s']}_{i,p/q} \\
(t,a_t) &\mapsto (t, v^+_{\lfloor\frac{i+pt}{q}\rfloor}(a_t)) + (t+1,h^+_{\lfloor\frac{i+pt}{q}\rfloor}(a_t))
\end{align*}
where we interpret $(s,v^+_{\lfloor\frac{i+ps}{q}\rfloor}(a_s))$ and $(s'+1,h^+_{\lfloor\frac{i+ps'}{q}\rfloor}(a_{s'}))$ as zero.  Then there is an isomorphism
\[ \hfp(S^3_{p/q}(K),i) \cong \ker\left( (D^{[s,s']}_{i,p/q})_*: H_*(\bA^{[s,s']}_{i,p/q}) \to H_*(\bB^{[s,s']}_{i,p/q}) \right) \]
of relatively graded $\Q[U]$-modules.
\end{proposition}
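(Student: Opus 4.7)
The plan is to truncate the infinite mapping cone of Theorem~\ref{thm:mapping-cone} onto the finite model in the statement of the proposition. Starting from $\hfp(S^3_{p/q}(K),i) \cong H_*(\bX^+_{i,p/q})$, I would peel off the two ``tails'' of the cone by constructing acyclic subcomplexes at each end, identify the resulting subquotient with $\cone(D^{[s,s']}_{i,p/q})$, and then pass from its homology to the claimed kernel.

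For the left tail, take $\bX^{\ell} \subseteq \bX^+_{i,p/q}$ to be the graded submodule generated by the summands $(t,A^+_{\lfloor(i+pt)/q\rfloor})$ with $t<s$ together with the summands $(t,B^+)$ with $t\leq s$. The formula for $D^+_{i,p/q}$ sends such an $A^+$-summand to $B^+$-summands at positions $t$ and $t+1$, both $\leq s$, so $\bX^{\ell}$ is a subcomplex. To prove acyclicity, filter by $F_T \subseteq \bX^{\ell}$ consisting of the $A^+$-summands at $t \leq T$ and the $B^+$-summands at $t \leq T+1$, for $T \leq s-1$. The associated graded piece $F_T/F_{T-1}$ is isomorphic to the mapping cone of $h^+_{\lfloor(i+pT)/q\rfloor}$, since the $v^+$-contribution of the newly added $A^+$-summand lies in the $B^+$-summand at position $T$, which is already in $F_{T-1}$. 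By hypothesis each such cone is acyclic, and Remark~\ref{rem:v-h-isomorphisms} ensures the same holds for all $T$ with $\lfloor(i+pT)/q\rfloor \leq -g(K)$, which handles the bottom of the filtration. A direct-limit argument (or equivalently the corresponding spectral sequence) then gives $H_*(\bX^{\ell}) = 0$.

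The symmetric construction inside $\bX^+_{i,p/q}/\bX^{\ell}$ produces an acyclic subcomplex $\bX^{r}$ generated by the $A^+$-summands at $t > s'$ together with the $B^+$-summands at $t > s'$, whose acyclicity uses the hypothesis on $v^+$ and the dual part of Remark~\ref{rem:v-h-isomorphisms}. By direct inspection the induced differential on the iterated quotient $(\bX^+_{i,p/q}/\bX^{\ell})/\bX^{r}$ is exactly $D^{[s,s']}_{i,p/q}$: the boundary contributions $v^+_{\lfloor(i+ps)/q\rfloor}(a_s)$ and $h^+_{\lfloor(i+ps')/q\rfloor}(a_{s'})$ dropped in the definition of $D^{[s,s']}_{i,p/q}$ are precisely the pieces eliminated by the two quotient maps. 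Hence $\hfp(S^3_{p/q}(K),i) \cong H_*(\cone(D^{[s,s']}_{i,p/q}))$ as relatively graded $\Q[U]$-modules.

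The main obstacle is the final identification $H_*(\cone(D^{[s,s']}_{i,p/q})) \cong \ker((D^{[s,s']}_{i,p/q})_*)$. The long exact sequence of a mapping cone produces a short exact sequence
$$0 \to \coker\bigl((D^{[s,s']}_{i,p/q})_*\bigr) \to H_*(\cone(D^{[s,s']}_{i,p/q})) \to \ker\bigl((D^{[s,s']}_{i,p/q})_*\bigr) \to 0$$
with an implicit degree shift on the kernel term, so the identification amounts to verifying that $(D^{[s,s']}_{i,p/q})_*$ surjects onto each tower $\cT^+ \subseteq H_*(B^+)$ at the interior positions $t_0 \in \{s+1,\dots,s'\}$. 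The image of that map contains $U^{V_{\lfloor(i+pt_0)/q\rfloor}}\cT^+ + U^{H_{\lfloor(i+p(t_0-1))/q\rfloor}}\cT^+$, and the bookkeeping needed to show that this covers all of $\cT^+$, while also confirming the $\Q[U]$-module structure and relative gradings on both sides, is the delicate ingredient: it is where the joint constraints from Proposition~\ref{prop:vs-hs} and the specific choice of truncation window $[s,s']$ must be combined.
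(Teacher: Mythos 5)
Your overall strategy is the same as the paper's: quotient the full mapping cone $\bX^+_{i,p/q}$ by the two acyclic tails, identify the iterated quotient with the cone of $D^{[s,s']}_{i,p/q}$, and then extract the kernel from the resulting long exact sequence. The acyclicity argument for the tails (filtering so that the associated graded pieces are the cones of the individual $h^+$ or $v^+$ maps, with Remark~\ref{rem:v-h-isomorphisms} providing chain-level isomorphisms that tame the infinite ends) is fine, and is in fact spelled out in more detail than in the paper.

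The gap is the final step, which you flag as ``the main obstacle'' but do not carry out, and which you also misdiagnose. To collapse the short exact sequence $0 \to \coker\bigl((D^{[s,s']}_{i,p/q})_*\bigr) \to H_*(\cone(D^{[s,s']}_{i,p/q})) \to \ker\bigl((D^{[s,s']}_{i,p/q})_*\bigr) \to 0$ one must show that $(D^{[s,s']}_{i,p/q})_*$ is surjective, but no delicate interaction with Proposition~\ref{prop:vs-hs} is involved. First, $H_*(B^+) \cong \cT^+$ has no reduced summand, so surjecting onto the towers is surjecting onto everything. Second, each of $(v^+_t)_*$ and $(h^+_t)_*$ restricted to the tower of $H_*(A^+_t)$ is multiplication by $U^{V_t}$ resp.\ $U^{H_t}$ as a map $\cT^+ \to \cT^+$, and multiplication by \emph{any} nonnegative power of $U$ on $\cT^+ = \Q[U,U^{-1}]/U\cdot\Q[U]$ is already surjective; your expression $U^{V}\cT^+ + U^{H}\cT^+$ equals $\cT^+$ term by term, so there is nothing to ``cover.''  The only genuine content is the zigzag structure: an element of $(t,A^+)$ hits the two adjacent $B^+$ summands, but because the $v^+$ component in column $s$ is dropped, the restriction of $(D^{[s,s']}_{i,p/q})_*$ to the towers in columns $s,\dots,s'-1$ is triangular with surjective diagonal entries $(h^+)_*$, and back-substitution (solving from column $s'-1$ down to column $s$) gives surjectivity of the whole map. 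This is exactly the paper's brief concluding argument; once you supply it, your proof is complete.
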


\begin{proof}
Theorem~\ref{thm:mapping-cone} gives a relatively graded isomorphism between $\hfp(S^3_{p/q}(K),i)$ and the homology of the mapping cone $\bX^+_{i,p/q}$, which we can write as
\[ \begin{tikzcd}
\cdots \ar[dr,"h^+"] &
A^+_{\lfloor\frac{i+(t-1)p}q\rfloor} \ar[d,"v^+"] \ar[dr,"h^+"] &
A^+_{\lfloor\frac{i+t p}q\rfloor} \ar[d,"v^+"] \ar[dr,"h^+"] &
A^+_{\lfloor\frac{i+(t+1)p}q\rfloor} \ar[d,"v^+"] \ar[dr,"h^+"] &
\cdots \\
\cdots & B^+ & B^+ & B^+ & \cdots
\end{tikzcd} \]
where we understand each $h^+$ or $v^+$ with domain $A^+_j$ to mean $h^+_j$ or $v^+_j$ respectively.  We observe that the subcomplex
\[ \begin{tikzcd}
\cdots \ar[dr,"h^+"] &
A^+_{\lfloor\frac{i+(s-1)p}q\rfloor} \ar[d,"v^+"] \ar[dr,"h^+"] &
\hphantom{A^+_{\lfloor\frac{i+s p}q\rfloor}} &
\hphantom{A^+_{\lfloor\frac{i+(s+1)p}q\rfloor}} &
\hphantom{\cdots} \\
\cdots & B^+ & B^+, & \hphantom{B^+} & \hphantom{\cdots}
\end{tikzcd} \]
consisting of all summands $(t,A^+_{\lfloor\frac{i+tp}{q}\rfloor})$ with $t<s$ and all $(t,B^+)$ with $t\leq s$, is acyclic because each of its $h^+$ maps is a quasi-isomorphism.  Similarly, the subcomplex
\[ \begin{tikzcd}
\hphantom{\cdots} &
\hphantom{A^+_{\lfloor\frac{i+(s'-1)p}q\rfloor}} &
\hphantom{A^+_{\lfloor\frac{i+s'p}q\rfloor}} &
A^+_{\lfloor\frac{i+(s'+1)p}q\rfloor} \ar[d,"v^+"] \ar[dr,"h^+"] &
\cdots \\
\phantom{\cdots} & \hphantom{B^+} & \hphantom{B^+} & B^+ & \cdots,
\end{tikzcd} \]
consisting of all summands $(t,A^+_{\lfloor\frac{i+tp}{q}\rfloor})$ and $(t,B^+)$ with $t > s'$, is acyclic because each of its $v^+$ maps is a quasi-isomorphism.  Thus we may take the quotient of $\bX^+_{i,p/q}$ by each of these subcomplexes in turn, and the projection maps are both quasi-isomorphisms.  But this leaves the truncated complex
\[ \begin{tikzcd}
A^+_{\lfloor\frac{i+ps}{q}\rfloor} \ar[dr,"h^+"] &
A^+_{\lfloor\frac{i+p(s+1)}q\rfloor} \ar[d,"v^+"] \ar[dr,"h^+"] &
\cdots \ar[dr,"h^+"] &
A^+_{\lfloor\frac{i+p(s'-1)}q\rfloor} \ar[d,"v^+"] \ar[dr,"h^+"] &
A^+_{\lfloor\frac{i+ps'}q\rfloor} \ar[d,"v^+"] \\
 & B^+ & \cdots & B^+ & B^+,
\end{tikzcd} \]
which is precisely the mapping cone $\bX^{[s,s']}_{i,p/q}$ of $D^{[s,s']}_{i,p/q}$, and so we have
\[ \hfp(S^3_{p/q}(K),i) \cong H_*(\bX^{[s,s']}_{i,p/q})). \]

The truncated mapping cone fits into a long exact sequence
\[ \cdots \to H_{*+1}(\bX^{[s,s']}_{i,p/q}) \to H_*(\bA^{[s,s']}_{i,p/q}) \xrightarrow{(D^{[s,s']}_{i,p/q})_*} H_*((\bB^{[s,s']}_{i,p/q}) \to \dots, \]
and so it now suffices to prove that $(D^{[s,s']}_{i,p/q})_*$ is surjective, cf.\ \cite[Lemma~2.8]{ni-wu}.  But the restriction of $(D^{[s,s']}_{i,p/q})_*$ to all of the tower summands
\[ \cT^+ \subset H_*(A^+_{\lfloor\frac{i+pt}{q}\rfloor}) \subset \bigoplus_{s\leq t \leq s'} H_*(A^+_{\lfloor\frac{i+pt}{q}\rfloor}) \cong H_*(\bA^{[s,s']}_{i,p/q}) \]
has the form 
\[ \begin{tikzcd}
\cT^+ \ar[dr,"h^+_*"] &
\cT^+ \ar[d,"v^+_*"] \ar[dr,"h^+_*"] &
\cdots \ar[dr,"h^+_*"] &
\cT^+ \ar[d,"v^+_*"] \ar[dr,"h^+_*"] &
\cT^+ \ar[d,"v^+_*"] 
& \subset & H_*(\bA^{[s,s']}_{i,p/q}) \ar[d,"(D^{[s,s']}_{i,p/q})_*"] \\
 & \cT^+ & \cdots & \cT^+ & \cT^+ & \cong & H_*(\bB^{[s,s']}_{i,p/q}),
\end{tikzcd} \]
and each of the $v^+_*$ and $h^+_*$ components are surjective, so it follows that the total map is surjective as well.  This identifies $H_*(\bX^{[s,s']}_{i,p/q})$, and hence $\hfp(S^3_{p/q}(K),i)$, with the kernel of $(D^{[s,s']}_{i,p/q})_*$ up to an overall grading shift, as promised.
\end{proof}

\begin{corollary} \label{cor:large-rational}
Let $K \subset S^3$ be a nontrivial knot of genus $g \geq 1$, and let $p,q > 0$ be relatively prime integers.  Fix an integer $i$, and suppose there is some $s \in \Z$ such that
\begin{itemize}
\item $h^+_{\lfloor\frac{i+pt}{q}\rfloor}$ is a quasi-isomorphism for all $t<s$, and
\item $v^+_{\lfloor\frac{i+pt}{q}\rfloor}$ is a quasi-isomorphism for all $t>s$.
\end{itemize}
Then $\hfp(S^3_{p/q}(K),i) \cong H_*(A^+_{\lfloor\frac{i+ps}{q}\rfloor})$ as relatively graded $\Q[U]$-modules.
\end{corollary}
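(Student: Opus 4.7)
The plan is to obtain Corollary~\ref{cor:large-rational} as the degenerate case $s'=s$ of Proposition~\ref{prop:truncate}, which has already been proved. Note that the hypotheses of the corollary are exactly what Proposition~\ref{prop:truncate} requires under the specialization $s'=s$: the map $h^+_{\lfloor(i+pt)/q\rfloor}$ is a quasi-isomorphism for all $t<s$, and $v^+_{\lfloor(i+pt)/q\rfloor}$ is a quasi-isomorphism for all $t>s=s'$. So one should just verify that the resulting truncated mapping cone collapses to $A^+_{\lfloor(i+ps)/q\rfloor}$, and then quote the proposition.

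First I would unpack the definitions of $\bA^{[s,s]}_{i,p/q}$ and $\bB^{[s,s]}_{i,p/q}$ in this extreme case. The first is indexed by $s\leq t\leq s$ and hence consists of the single summand $(s, A^+_{\lfloor(i+ps)/q\rfloor})$, while the second is indexed by $s<t\leq s$ and is therefore the empty direct sum, i.e.\ zero. Consequently the chain map
\[ D^{[s,s]}_{i,p/q}: \bA^{[s,s]}_{i,p/q} \to \bB^{[s,s]}_{i,p/q} \]
must itself be zero (this is also consistent with the explicit formula, since the boundary terms $(s, v^+_{\lfloor(i+ps)/q\rfloor}(a_s))$ and $(s+1, h^+_{\lfloor(i+ps)/q\rfloor}(a_s))$ are both declared zero by the convention in the statement of Proposition~\ref{prop:truncate}).

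Then I would conclude by citing Proposition~\ref{prop:truncate}, which provides a relatively graded $\Q[U]$-module isomorphism
\[ \hfp(S^3_{p/q}(K),i) \cong \ker\bigl( (D^{[s,s]}_{i,p/q})_*\bigr). \]
Since $D^{[s,s]}_{i,p/q}$ is the zero map to the zero complex, this kernel is all of $H_*(\bA^{[s,s]}_{i,p/q}) = H_*(A^+_{\lfloor(i+ps)/q\rfloor})$, which gives the desired isomorphism.

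There is no real obstacle here; the entire content is packaged into Proposition~\ref{prop:truncate}, and this corollary is just the observation that when the hypotheses pin down a single ``surviving'' index $s$, the truncated mapping cone reduces to one of the $A^+$ complexes and nothing further needs to be computed. If anything, the only thing to be mildly careful about is that the conventions in Proposition~\ref{prop:truncate} do allow $s=s'$ (they require only $s\leq s'$), so the application is legitimate.
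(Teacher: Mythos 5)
Your proof is correct and is exactly the paper's argument: the authors likewise take $s'=s$ in Proposition~\ref{prop:truncate}, so that $\bB^{[s,s]}_{i,p/q}=0$ and $\hfp(S^3_{p/q}(K),i)$ is identified with the kernel of the zero map out of $H_*(A^+_{\lfloor(i+ps)/q\rfloor})$, i.e.\ with all of it.
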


\begin{proof}
We apply Proposition~\ref{prop:truncate} to identify $\hfp(S^3_{p/q}(K),i)$ with the kernel of the map
\[ (D^{[s,s]}_{i,p/q})_*: H_*(A^+_{\lfloor\frac{i+ps}{q}\rfloor}) \to 0. \qedhere \]
\end{proof}

\begin{proposition} \label{prop:very-large-rational}
Let $K\subset S^3$ be a knot of genus $g \geq 1$, and fix $i\in\Z$ and $\frac{p}{q} \geq 2g-1$.  Then there is at most one $s\in\Z$ such that 
\[ 1-g \leq \left\lfloor\frac{i+ps}{q}\right\rfloor \leq g-1, \]
and we have
\[ \hfp(S^3_{p/q}(K),i) = \begin{cases} H_*(A^+_{\lfloor\frac{i+ps}{q}\rfloor}) & \text{if }s\text{ exists} \\ \cT^+ & \text{otherwise} \end{cases} \]
as relatively graded $\Q[U]$-modules.
\end{proposition}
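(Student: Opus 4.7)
The plan is to derive this directly from Corollary~\ref{cor:large-rational} together with Remark~\ref{rem:v-h-isomorphisms}. Throughout, write $f(t) = \lfloor (i+pt)/q \rfloor$ for the non-decreasing function $\Z \to \Z$ whose values label the $A^+$ summands in the mapping cone. The key arithmetic fact is that consecutive jumps satisfy $f(t+1) - f(t) \geq 2g-1$: this difference equals $\lfloor p/q \rfloor$ or $\lceil p/q \rceil$, and $p/q \geq 2g-1$ forces both of these to be at least $2g-1$.

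For the uniqueness claim, suppose toward contradiction that $s_1 < s_2$ both satisfy $1-g \leq f(s_j) \leq g-1$. Iterating the jump inequality gives $f(s_2) - f(s_1) \geq (2g-1)(s_2-s_1) \geq 2g-1$, while the interval $[1-g, g-1]$ has width $2g-2$. This is the contradiction that forces uniqueness.

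When some $s$ exists with $f(s) \in [1-g, g-1]$, the plan is to apply Corollary~\ref{cor:large-rational} directly. For $t \geq s+1$ the jump estimate gives $f(t) \geq (1-g) + (2g-1) = g$, so by Remark~\ref{rem:v-h-isomorphisms} the map $v^+_{f(t)}$ is a chain-level isomorphism; symmetrically, for $t \leq s-1$ we have $f(t) \leq (g-1)-(2g-1) = -g$, so $h^+_{f(t)}$ is a chain-level isomorphism. Corollary~\ref{cor:large-rational} then yields a relatively graded isomorphism $\hfp(S^3_{p/q}(K),i) \cong H_*(A^+_{f(s)})$, as claimed.

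When no such $s$ exists, the non-decreasing function $f$ takes values in $(-\infty,-g]\cup[g,\infty)$ and tends to $\pm\infty$ as $t\to\pm\infty$, so there is a largest integer $s^*$ with $f(s^*) \leq -g$, for which necessarily $f(s^*+1) \geq g$. Taking $s = s^*+1$ in Corollary~\ref{cor:large-rational}, both hypotheses are verified exactly as in the previous paragraph, so $\hfp(S^3_{p/q}(K),i) \cong H_*(A^+_{f(s^*+1)})$; since $f(s^*+1) \geq g$, Remark~\ref{rem:v-h-isomorphisms} identifies $A^+_{f(s^*+1)}$ with $B^+$ at the chain level, so its homology is $H_*(B^+) \cong \cT^+$. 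There is no real obstacle in this argument: the only substantive step is the jump inequality, and once that is in hand uniqueness, existence, and the identification of the answer in the two cases all follow formally.
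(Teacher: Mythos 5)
Your proposal is correct and follows essentially the same route as the paper: both arguments reduce to Corollary~\ref{cor:large-rational} via the observation that consecutive values of $\lfloor(i+pt)/q\rfloor$ jump by at least $\lfloor p/q\rfloor \geq 2g-1$, which gives uniqueness of $s$ and verifies the quasi-isomorphism hypotheses on either side. The only cosmetic difference is in the ``no $s$'' case, where the paper truncates at the least $\sigma$ with $\lfloor(i+p\sigma)/q\rfloor \geq 0$ while you take the successor of the largest $s^*$ with value $\leq -g$; these are the same index.
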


\begin{proof}
Suppose first that $s$ exists.  The desired inequality is equivalent to
\[ q(1-g) \leq i+ps < qg. \]
Thus if there is a solution $s$, then for all integers $t > s$ we have
\[ i+pt \geq i+p(s+1) \geq q(1-g)+p \geq q(1-g) + q(2g-1) = qg, \]
while for all integers $t < s$ we have
\[ i+pt \leq i+p(s-1) < qg -p \leq qg - q(2g-1) = q(1-g). \]
In either case $t$ cannot be a solution, so if $s$ exists then it is unique.  But then we know that
\begin{itemize}
\item $h^+_{\lfloor\frac{i+pt}{q}\rfloor}$ is a quasi-isomorphism for all $t<s$, since $\lfloor\frac{i+pt}{q}\rfloor \leq -g$; and
\item $v^+_{\lfloor\frac{i+pt}{q}\rfloor}$ is a quasi-isomorphism for all $t>s$, since $\lfloor\frac{i+pt}{q}\rfloor \geq g$.
\end{itemize}
So Corollary~\ref{cor:large-rational} tells us that $\hfp(S^3_{p/q}(K),i) \cong H_*(A^+_{\lfloor\frac{i+ps}{q}\rfloor})$, as claimed.

Now if no such $s$ exists, then we let $\sigma$ be the least integer such that $\lfloor\frac{i+p\sigma}{q}\rfloor \geq 0$.  It follows that $\lfloor\frac{i+pt}{q}\rfloor \leq -g$ for all $t < \sigma$, and that $\lfloor\frac{i+pt}{q}\rfloor \geq g$ for all $t > \sigma$, so now Corollary~\ref{cor:large-rational} says that
\[ \hfp(S^3_{p/q}(K),i) \cong H_*(A^+_{\lfloor\frac{i+p\sigma}{q}\rfloor}). \]
But in fact $\lfloor\frac{i+p\sigma}{q}\rfloor \geq g$, so $H_*(A^+_{\lfloor\frac{i+p\sigma}{q}\rfloor}) \cong H_*(B^+) \cong \cT^+$ and this completes the proof.
\end{proof}

\subsection{Computations for $5_2$} \label{ssec:5_2-computations}

We begin by computing $\hfp(S^3_{p/q}(5_2),i)$ for all slopes $\frac{p}{q} \geq 1$.  We recall from \eqref{eq:5_2-A0} that
\[ H_*(A^+_0(5_2)) \cong \cT^+_{(0)} \oplus \Q^2_{(0)}. \]

\begin{lemma} \label{lem:5_2-large-rational}
If $\frac{p}{q} \geq 1$ and $0 \leq i \leq p-1$, then we have
\[ \hfp(S^3_{p/q}(5_2),i) \cong \begin{cases} \cT^+_{(0)} \oplus \Q^2_{(0)}, & i = 0,1,\dots,q-1 \\ \cT^+_{(0)}, & \text{otherwise} \end{cases} \]
as relatively graded $\Q[U]$-modules.
\end{lemma}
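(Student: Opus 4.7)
The plan is to prove the lemma as a direct application of Proposition~\ref{prop:very-large-rational} combined with the computation of $H_*(A^+_0(5_2))$ from equation~\eqref{eq:5_2-A0}.

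First I would note that since $g(5_2) = 1$ and $\frac{p}{q} \geq 1 = 2g(5_2)-1$, Proposition~\ref{prop:very-large-rational} applies directly. That proposition tells us that there is at most one integer $s$ with $1-g \leq \lfloor \frac{i+ps}{q}\rfloor \leq g-1$; here this inequality collapses to $\lfloor\frac{i+ps}{q}\rfloor = 0$, i.e., $0 \leq i+ps < q$. A routine check determines for which $i$ such an $s$ exists: given $0 \leq i \leq p-1$ and $p \geq q > 0$, the bounds force $s \geq 0$ (since $-i/p > -1$) and $s < 1$ (since $(q-i)/p \leq 1$), so the only candidate is $s=0$, and this works precisely when $i < q$.

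Next I would apply Proposition~\ref{prop:very-large-rational} to conclude
\[
\hfp(S^3_{p/q}(5_2),i) \cong
\begin{cases}
H_*(A^+_0(5_2)), & 0 \leq i \leq q-1, \\
\cT^+, & q \leq i \leq p-1,
\end{cases}
\]
as relatively graded $\Q[U]$-modules. Finally, I would invoke equation~\eqref{eq:5_2-A0}, which states $H_*(A^+_0(5_2)) \cong \cT^+_{(0)} \oplus \Q^2_{(0)}$, to match the stated formula. Since the isomorphism furnished by Proposition~\ref{prop:very-large-rational} is only relatively graded, the $(0)$ subscripts simply record that the bottom of the tower and the $\Q^2$ summand lie in the same grading, which is exactly what \eqref{eq:5_2-A0} already provides.

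There is no serious obstacle; the entire argument is bookkeeping once the right "very large surgery" consequence of the mapping cone formula is invoked. The only thing to watch carefully is the arithmetic in step~1 showing that $s=0$ is the unique candidate, and that it succeeds iff $0 \leq i \leq q-1$, which hinges on the hypothesis $p \geq q$. Note that the absolute grading (via Theorem~\ref{thm:ni-wu-d}) is not needed for this statement, since the claim is only relatively graded; this will be addressed separately, if at all, in subsequent arguments.
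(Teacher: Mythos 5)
Your proposal is correct and follows essentially the same route as the paper: both apply Proposition~\ref{prop:very-large-rational} with $g=g(5_2)=1$, determine that the unique candidate $s=0$ exists exactly when $0\leq i\leq q-1$ (using $p\geq q$), and then quote equation~\eqref{eq:5_2-A0} for $H_*(A^+_0(5_2))$. The arithmetic in your step~1 is sound and matches the paper's residue-class argument.
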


\begin{proof}
The condition $\lfloor\frac{i+ps}{q}\rfloor = 0$ is equivalent to
\[ 0 \leq i+ps < q, \]
so we can find such an $s$ if and only if $i\equiv 0,1,\dots,q-1 \pmod{p}$, or (since we assumed $0 \leq i \leq p-1$) if and only if $0 \leq i \leq q-1$.  If $s$ does not exist then $\hfp(S^3_{p/q}(K),i) \cong \cT^+$ by Proposition~\ref{prop:very-large-rational} (applied with $g=g(5_2)=1$).  If instead $s$ exists, then we must have $0\leq i \leq q-1$, and now Proposition~\ref{prop:very-large-rational}, together with \eqref{eq:5_2-A0}, says that
\[ \hfp(S^3_{p/q}(5_2),i) \equiv H_*(A^+_0(5_2)) \cong \cT^+_{(0)} \oplus \Q^2_{(0)} \]
as relatively graded $\Q[U]$-modules.
\end{proof}

\begin{proposition} \label{prop:5_2-small-rational}
Suppose that $0 < \frac{p}{q} < 1$.  Then
\[ \hfp(S^3_{p/q}(5_2), i) \cong \cT^+_{(0)} \oplus \Q^{2n_i}_{(0)} \]
as relatively graded $\Q[U]$-modules, where $n_i$ is the number of $t\in\Z$ such that $0 \leq i+pt < q$.
\end{proposition}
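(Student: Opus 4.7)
The plan is to apply the truncation formulation of the mapping cone formula (Proposition~\ref{prop:truncate}) to reduce the infinite complex computing $\hfp(S^3_{p/q}(5_2),i)$ to a short bidiagonal map whose kernel can be read off by hand. For fixed $i \in \Z$, I would set $T_i = \{t \in \Z : 0 \leq i + pt < q\}$, a set of size $n_i$. Since $0 < p < q$, this corresponds to a real interval of length $q/p > 1$, so it is a nonempty contiguous block of integers $\{s_0, s_0+1, \dots, s_1\}$ with $n_i = s_1 - s_0 + 1$.

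For $t < s_0$ we have $\lfloor(i + pt)/q\rfloor \leq -1$, so $h^+_{\lfloor(i+pt)/q\rfloor}$ is a quasi-isomorphism by Remark~\ref{rem:v-h-isomorphisms} (using $g(5_2) = 1$); for $t > s_1$ we have $\lfloor(i + pt)/q\rfloor \geq 1$, so $v^+_{\lfloor(i+pt)/q\rfloor}$ is a quasi-isomorphism. Proposition~\ref{prop:truncate} therefore identifies $\hfp(S^3_{p/q}(5_2), i)$, as a relatively graded $\Q[U]$-module, with the kernel of the induced map
\[
(D^{[s_0, s_1]}_{i, p/q})_* \colon H_*(A^+_0(5_2))^{\oplus n_i} \longrightarrow H_*(B^+(5_2))^{\oplus(n_i - 1)},
\]
whose $t$-th output block (for $s_0 < t \leq s_1$) is $(h^+_0)_*(x_{t-1}) + (v^+_0)_*(x_t)$.

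Next I would plug in $H_*(A^+_0(5_2)) \cong \cT^+_{(0)} \oplus \Q^2_{(0)}$ from \eqref{eq:5_2-A0} together with $V_0(5_2) = H_0(5_2) = 0$ (Proposition~\ref{prop:vs-hs}), so that both $(v^+_0)_*$ and $(h^+_0)_*$ are graded surjections onto $\cT^+_{(0)}$ whose kernel is exactly the $\Q^2$ summand: on the tower they act by multiplication by $U^0 = 1$, and on the reduced part they must vanish since the target carries no reduced homology. The $\Q^2$ components are therefore unconstrained, contributing $(\Q^2)^{n_i} = \Q^{2 n_i}$ to the kernel. Writing $\tau_t$ for the tower coordinate of $x_t$, the remaining $n_i - 1$ constraints $(h^+_0)_*(\tau_{t-1}) + (v^+_0)_*(\tau_t) = 0$ are bidiagonal with invertible entries over $\Q$, so solutions are parametrized by $\tau_{s_0}$ alone and form a single copy of $\cT^+$.

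The main obstacle I anticipate is checking that the resulting $\cT^+ \oplus \Q^{2 n_i}$ matches the claimed relative grading $\cT^+_{(0)} \oplus \Q^{2 n_i}_{(0)}$. Within each copy of $H_*(A^+_0(5_2))$ the $\Q^2$ summand sits in the same grading as the bottom of its own tower, and the shifts built into the mapping cone convention to make $D$ homogeneous of degree $-1$ act coherently across the $n_i$ copies of $A^+_0$ so that the tower equations $\tau_t = -(v^+_0)_*^{-1}(h^+_0)_*(\tau_{t-1})$ are themselves homogeneous. Once this bookkeeping is verified, the bottom of the kernel's $\cT^+$ and each of the $\Q^2$ summands coincide in relative Maslov grading, yielding the advertised isomorphism.
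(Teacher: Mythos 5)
Your approach is the same as the paper's: truncate via Proposition~\ref{prop:truncate} to the block of indices with $\lfloor(i+pt)/q\rfloor=0$, substitute $H_*(A^+_0(5_2)) \cong \cT^+_{(0)}\oplus\Q^2_{(0)}$ and $V_0(5_2)=H_0(5_2)=0$, and read off the kernel of the resulting bidiagonal map; the setup, the identification of the block with $n_i$, and the verification of the truncation hypotheses are all correct.

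The one step I would push back on is the assertion that $(v^+_0)_*$ and $(h^+_0)_*$ ``must vanish on the reduced part since the target carries no reduced homology.'' That inference is not valid: a $\Q[U]$-module map can send a $U$-torsion class to the $U$-torsion bottom of the target tower $\cT^+$, and the gradings permit exactly this here (the $\Q^2_{(0)}$ summands sit in degree $0$ while the target towers bottom out in degree $-1$, which is where a degree $-1$ map would send them). The claim happens to be true for $(v^+_0)_*$ --- from Figure~\ref{fig:cfk-5_2} the classes $a,b$ generating the $\Q^2$ live in $C\{-1,0\}$, so the projection $v^+_0$ kills them at the chain level --- but it requires an argument, and for $(h^+_0)_*$ it is less immediate. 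The paper sidesteps this entirely with a dimension count: the restriction of $(D^{[s,s']}_{i,p/q})_*$ to the tower bottoms is already surjective onto $H_0(\bB^{[s,s']}_{i,p/q}) \cong \Q^{n_i-1}$, so the degree-$0$ part of the kernel has dimension $3n_i-(n_i-1)=2n_i+1$ no matter where the $\Q^2$ summands go, and combined with the alternating-sum tower this forces the kernel to be $\cT^+_{(0)}\oplus\Q^{2n_i}_{(0)}$ as a graded $\Q[U]$-module. I would either supply the missing verification for the reduced classes or replace that step with the dimension count, which makes the question moot.
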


\begin{proof}
We define a pair of integers $s, s'$ by
\begin{align*}
s &= \min\{ t \in \Z \mid i+pt \geq 0 \}, \\
s' &= \max\{ t \in \Z \mid i+pt \leq q-1 \}.
\end{align*}
Then $p<q$ implies that $s \leq s'$, and for all $t \in \Z$ we have 
\[ \left\lfloor\frac{i+pt}{q}\right\rfloor = 0 \quad\Longleftrightarrow\quad s \leq t \leq s', \]
so $n_i = s' - s + 1$.

Now Proposition~\ref{prop:truncate} says that $\hfp(S^3_{p/q}(5_2),i)$ is isomorphic to the kernel of $(D^{[s,s']}_{i,p/q})_*$.  Recalling again from \eqref{eq:5_2-A0} that $H_*(A^+_0) \cong \cT^+_{(0)} \oplus \Q^2_{(0)}$, this map has the form
\[ \begin{tikzcd}
\cT^+_{(0)}\oplus\Q^2_{(0)} \ar[dr,"h^+_*"] &
\cT^+_{(0)}\oplus\Q^2_{(0)} \ar[d,"v^+_*"] \ar[dr,"h^+_*"] &
\cdots \ar[dr,"h^+_*"] &
\cT^+_{(0)}\oplus\Q^2_{(0)} \ar[d,"v^+_*"] \ar[dr,"h^+_*"] &
\cT^+_{(0)}\oplus\Q^2_{(0)} \ar[d,"v^+_*"] 
\\
 & \cT^+_{(-1)} & \cdots & \cT^+_{(-1)} & \cT^+_{(-1)}.
\end{tikzcd} \]
Here we are able to assign these gradings to each summand because $V_0(5_2) = H_0(5_2) = 0$, and so each of the maps $v^+_* = (v^+_0)_*$ and $h^+_* = (h^+_0)_*$ gives a degree-$(-1)$ isomorphism between the respective towers.

We see by inspection that $\ker (D^{[s,s']}_{i,p/q})_*$ contains a tower $\cT^+$ whose bottom-most element is in grading $0$, as an alternating sum of the bottom-most elements of the towers $\cT^+_{(0)} \subset H_*(A^+_{\lfloor\frac{i+pt}{q}\rfloor})$, $s \leq t \leq s'$.  The map $(D^{[s,s']}_{i,p/q})_*$ also sends
\[ H_0(\bA^{[s,s']}_{i,p/q}) \cong \Q^{3(s'-s+1)} \]
onto 
\[ H_{-1}(\bB^{[s,s']}_{i,p/q}) \cong \Q^{s'-s}, \]
so its kernel has total dimension $2(s'-s)+3$ in degree zero.  We conclude that
\[ \hfp(S^3_{p/q}(K), i) \cong \cT^+_{(0)} \oplus \Q^{2(s'-s+1)}_{(0)} \]
as relatively graded $\Q[U]$-modules.
\end{proof}

\subsection{General facts about the kernel of $U$} \label{ssec:K-ker-U}

We will show that under most circumstances, a positive $r$-surgery on a knot of genus at least $2$ cannot have the same Heegaard Floer homology as the corresponding surgery on $5_2$.  We will handle the cases $r < 1$ and $r \geq 1$ in the next few subsections; before that, we prepare for this work here by proving some general facts about the kernel of the $U$-action on $\hfp$ of these surgeries.


\begin{lemma} \label{lem:gK-tower-gradings}
Let $K$ be a knot of genus $g \geq 2$, and suppose for some relatively prime integers $p,q > 0$ that
\[ \hfp(S^3_{p/q}(K)) \cong \hfp(S^3_{p/q}(5_2)) \]
as absolutely graded $\Q[U]$-modules.  Fix an integer $i$, and lift the relative gradings on the complexes $\bA^+_{i,p/q}$ and $\bB^+_{i,p/q}$ to absolute $\Z$-gradings so that $D^+_{i,p/q}$ has degree $-1$.  Let $d_s$ denote the grading of the bottom-most element of the tower
\[ \cT^+ \subset (s, H_*(A^+_{\lfloor\frac{i+ps}{q}\rfloor})) \subset H_*(\bA^+_{i,p/q}) \]
for each $s$.
\begin{enumerate}
\item If $\lfloor\frac{i+ps}{q}\rfloor \geq 0$, then $d_{s+1} = d_s + 2\lfloor\frac{i+ps}{q}\rfloor$.
\item If $\lfloor\frac{i+ps}{q}\rfloor \leq 0$, then $d_s = d_{s-1} + 2\lfloor\frac{i+ps}{q}\rfloor$.
\item If $\lfloor\frac{i+ps}{q}\rfloor \leq 0$ and $\lfloor\frac{i+p(s+1)}{q}\rfloor \geq 0$, then $d_s = d_{s+1}$.
\end{enumerate}
\end{lemma}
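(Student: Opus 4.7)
The plan is to introduce a parallel sequence of gradings $e_s$ on the $\bB^+_{i,p/q}$ side of the mapping cone, relate these to the $d_s$ via the tower-level descriptions of $(v^+_{j_s})_*$ and $(h^+_{j_s})_*$ (which are multiplication by $U^{V_{j_s}(K)}$ and $U^{H_{j_s}(K)}$, respectively), and then substitute the explicit values $V_s(K) = \max(0,-s)$ recorded at the start of this section.

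Writing $j_s = \lfloor(i+ps)/q\rfloor$, define $e_s$ to be the absolute grading of the bottom of $\cT^+ \subset (s, H_*(B^+)) \subset H_*(\bB^+_{i,p/q})$. Since $U$ lowers grading by $2$ in $\cT^+ = \Q[U,U^{-1}]/U\Q[U]$, the element $U^{-V_{j_s}(K)}\cdot\mathrm{bot}$ in the source tower sits at grading $d_s + 2V_{j_s}(K)$, and by construction $(v^+_{j_s})_*$ sends it to the bottom of the target tower. Combined with the fact that $D^+_{i,p/q}$ lowers grading by $1$, I obtain
\[ e_s = d_s + 2V_{j_s}(K) - 1, \qquad e_{s+1} = d_s + 2H_{j_s}(K) - 1, \]
the second equation coming from the analogous argument with $(h^+_{j_s})_*$ into the $(s+1)$-st $B^+$ summand. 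Subtracting and applying the identity $H_s(K) = V_{-s}(K)$ from Proposition~\ref{prop:vs-hs} together with $V_s(K) = \max(0,-s)$ gives $H_{j_s}(K) - V_{j_s}(K) = j_s$ uniformly in $s$, hence $e_{s+1} - e_s = 2j_s$.

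The three claims then reduce to case-by-case arithmetic, using that $p/q > 0$ forces $j_{s-1} \leq j_s \leq j_{s+1}$. In Case (1), $j_s \geq 0$ makes $V_{j_s} = V_{j_{s+1}} = 0$, so $d_s = e_s + 1$ and $d_{s+1} = e_{s+1} + 1$, whence $d_{s+1} - d_s = 2j_s$. In Case (2), $j_s \leq 0$ gives $V_{j_s} = -j_s$ and $V_{j_{s-1}} = -j_{s-1}$, so $d_s = e_s + 2j_s + 1$ and $d_{s-1} = e_{s-1} + 2j_{s-1} + 1$; combining these with $e_s - e_{s-1} = 2j_{s-1}$ produces $d_s - d_{s-1} = 2j_s$. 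In Case (3), $d_s = e_s + 2j_s + 1$ and $d_{s+1} = e_{s+1} + 1$ combine to give $d_{s+1} - d_s = 2j_s - 2j_s = 0$. The only step that requires care is the initial grading identification of $e_s$ via the tower element $U^{-V_{j_s}(K)}\cdot\mathrm{bot}$; once that is in place, the rest is purely arithmetic.
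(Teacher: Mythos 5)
Your proposal is correct and follows essentially the same route as the paper: both arguments restrict $(D^+_{i,p/q})_*$ to the towers, use that the $v^+$ and $h^+$ components are modeled on multiplication by $U^{V_{j_s}(K)}$ and $U^{H_{j_s}(K)}$ together with the degree $-1$ convention to relate $d_s$ to the gradings $e_t$ of the $B^+$ towers, and then substitute $V_s(K)=\max(0,-s)$, $H_s(K)=\max(0,s)$. Your only organizational difference is to first extract the uniform identity $e_{s+1}-e_s=2j_s$ before the case analysis, which the paper instead performs case by case; the substance is identical.
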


\begin{proof}
If $\lfloor\frac{i+ps}{q}\rfloor \geq 0$, then the map $(D^+_{i,p/q})_*$ on homology restricts to the sum of all of the towers $(s,\cT^+_{(d_s)}) \subset H_*(\bA^+_{i,p/q})$ as
\[ \begin{tikzcd}
\cdots & t=s-1 & s & s+1 & \dots \\[-0.75em]
\cdots \ar[dr,"h^+_*"] &
\cT^+_{(d_{s-1})} \ar[d,"v^+_*"] \ar[dr,"h^+_*"] &
\cT^+_{(d_s)} \ar[d,"v^+_*"] \ar[dr,"h^+_*"] &
\cT^+_{(d_{s+1})} \ar[d,"v^+_*"] \ar[dr,"h^+_*"] &
\dots \\
\cdots & \cT^+_{(e_{s-1})} & \cT^+_{(e_s)} & \cT^+_{(e_{s+1})} & \dots
\end{tikzcd} \]
for some integers $e_{s-1},\ e_s,\ e_{s+1}$.

Let $n = \lfloor\frac{i+ps}{q}\rfloor$.  If $n \geq 0$ then $H_n(K) = n$, so the $h^+_*$ map with domain in column $s$ above has the form
\[ (h^+_n)_*: \cT^+_{(d_s)} \xrightarrow{U^n} \cT^+_{(e_{s+1})}, \]
sending a generator in degree $d_s+2n$ to one in degree $e_{s+1}$, so we have
\[ (d_s+2n)-1 = e_{s+1}. \]
But then $\lfloor\frac{i+p(s+1)}{q}\rfloor \geq n \geq 0$, so the $v^+_*$ map in column $s+1$ is identified with the identity map $\cT^+_{(d_{s+1})} \to \cT^+_{(e_{s+1})}$ and thus
\[ d_{s+1} = e_{s+1}+1 = d_s + 2n. \]
Similarly, if $n \leq 0$ then we have $H_{\lfloor\frac{i+p(s-1)}{q}\rfloor} = 0$ and $V_n = -n$, hence
\[ (d_s + 2(-n)) - 1 = e_s = d_{s-1} - 1, \]
or $d_s = d_{s-1}+2n$.

In the case where $\lfloor\frac{i+ps}{q}\rfloor \leq 0$ and $\lfloor\frac{i+p(s+1)}{q}\rfloor \geq 0$, we note that the $h^+_*$ and $v^+_*$ maps into the $\cT^+_{(e_{s+1})}$ tower in column $s+1$ are both modeled on multiplication by $1$, since $H_{\lfloor\frac{i+ps}{q}\rfloor}(K) = 0$ and $V_{\lfloor\frac{i+p(s+1)}{q}\rfloor}(K) = 0$.  Thus
\[ d_s = e_{s+1}+1 = d_{s+1}, \]
completing the proof.
\end{proof}

\begin{lemma} \label{lem:grading-homology-tower}
Assume the hypotheses and notation of Lemma~\ref{lem:gK-tower-gradings}, and let
\[ s_0 = \min\left\{ t \in \Z \,\middle\vert\, \left\lfloor\frac{i+pt}{q}\right\rfloor \geq 0 \right\}. \]
Fix integers $s$ and $s'$ satisfying the hypotheses of Proposition~\ref{prop:truncate}, and consider the map
\[ (D^{[s,s']}_{i,p/q})_*: H_*(\bA^{[s,s']}_{i,p/q}) \to H_*(\bB^{[s,s']}_{i,p/q}) \]
between the homologies of the corresponding truncated complexes.  If $s \leq s_0 \leq s'$, then
\[ \ker(D^{[s,s']}_{i,p/q})_* \cap \ker(U) \]
contains a $\Q$ submodule in grading $d_{s_0}$.
\end{lemma}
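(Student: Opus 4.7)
The plan is to produce $\omega \in \ker((D^{[s,s']}_{i,p/q})_*) \cap \ker(U)$ in grading $d_{s_0}$ as a nonzero $\Q$-linear combination of the tower bottoms $b_t \in \cT^+ \subset H_*(A^+_{\lfloor(i+pt)/q\rfloor})$ over a suitable range of $t$. Each $b_t$ is $U$-torsion, so the $\ker(U)$ condition is automatic; and since $(D^{[s,s']}_{i,p/q})_*$ is a $\Q[U]$-module map, its restriction to the grading-$d_{s_0}$ part of $\ker(U)$ in the domain lands in the grading-$(d_{s_0}-1)$ part of $\ker(U)$ in the codomain, so it will suffice to identify these two graded pieces as explicit finite-dimensional $\Q$-vector spaces and apply rank-nullity.

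Write $n_t = \lfloor(i+pt)/q\rfloor$. By Lemma~\ref{lem:5_2-V_0} and Proposition~\ref{prop:vs-hs} we have $V_s(K) = \max(0,-s)$ and $H_s(K) = \max(0,s)$, so on tower bottoms the maps $(v^+_{n_t})_*$ and $(h^+_{n_t})_*$ are isomorphisms when $n_t \geq 0$ (respectively $n_t \leq 0$) and annihilate $b_t$ otherwise. Let $s_1 = \max\{t : n_t \leq 0\}$, so $s_1 \geq s_0 - 1$ with equality iff $n_{s_0} \geq 1$. Combining the three cases of Lemma~\ref{lem:gK-tower-gradings} gives $d_t = d_{s_0}$ exactly for $t$ in the ``valley'' $[s_0-1, s_1+1]$, and $d_t > d_{s_0}$ otherwise. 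Setting $t_0 = \max(s, s_0 - 1)$ and $t_1 = \min(s', s_1 + 1)$, the hypothesis $s \leq s_0 \leq s'$ gives $t_0 \leq s_0 \leq t_1$, and $\{b_t : t_0 \leq t \leq t_1\}$ spans a $(t_1 - t_0 + 1)$-dimensional subspace of the grading-$d_{s_0}$ part of $\ker(U) \cap H_*(\bA^{[s,s']}_{i,p/q})$.

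On the codomain side, the degree-$(-1)$ convention on $D^+$ forces the bottom $\beta_u$ of $(u, H_*(B^+))$ to lie in grading $d_u - 1$ when $n_u \geq 0$ (read off from $(v^+_{n_u})_*(b_u)$) and in grading $d_{u-1} - 1$ when $n_u \leq -1$ (read off instead from $(h^+_{n_{u-1}})_*(b_{u-1})$, which is nonzero since then $n_{u-1} \leq 0$). A case check shows this grading equals $d_{s_0} - 1$ precisely for $u \in [s_0, s_1+1]$, and intersecting with the index range $s < u \leq s'$ of $\bB^{[s,s']}_{i,p/q}$ yields $u \in [t_0+1, t_1]$, of total dimension $t_1 - t_0$. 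Thus the restriction of $(D^{[s,s']}_{i,p/q})_*$ to the grading-$d_{s_0}$ part of $\ker(U)$ maps a space of dimension at least $t_1 - t_0 + 1$ into one of dimension exactly $t_1 - t_0$, and rank-nullity produces the required nonzero $\omega$.

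The main bookkeeping subtlety will be the grading of $\beta_{s_0-1}$: here $(v^+_{n_{s_0-1}})_*$ kills the tower bottom, so the grading has to be extracted instead from $(h^+_{n_{s_0-2}})_*(b_{s_0-2})$, giving $d_{s_0-2} - 1$, which is strictly greater than $d_{s_0} - 1$ by the valley analysis and so correctly excludes $\beta_{s_0-1}$ from the codomain count. Aside from this grading check, the argument reduces to a pure dimension count on the valley where $d_t = d_{s_0}$.
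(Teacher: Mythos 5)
Your argument is correct and follows essentially the same route as the paper's: both identify the ``valley'' of indices $t$ with $d_t = d_{s_0}$ via Lemma~\ref{lem:gK-tower-gradings}, and both reduce to the behavior of $(v^+)_*$ and $(h^+)_*$ on the tower bottoms there, using $V_{n}(K)=\max(0,-n)$ and $H_{n}(K)=\max(0,n)$. The only difference is packaging: the paper exhibits the explicit cycle $z=\sum_t (-1)^t x_t$ over the truncated valley and checks directly that $Uz=0$ and $(D^{[s,s']}_{i,p/q})_*(z)=0$, whereas you deduce existence of such an element by rank--nullity from the same grading data.
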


\begin{proof}
Consider the restriction of $(D^{[s,s']}_{i,p/q})_*$ to the sum of all the towers $(t,\cT^+_{(d_t)}) \subset H_*(\bA^{[s,s']}_{i,p/q})$.  By hypothesis we have
\[ \left\lfloor\frac{i+p(s_0-1)}{q}\right\rfloor < 0 \quad\text{and}\quad \left\lfloor\frac{i+ps_0}{q}\right\rfloor \geq 0, \]
so Lemma~\ref{lem:gK-tower-gradings} says that the sequence of gradings $\big(d_t\big)$ satisfies
\[ \cdots > d_s > d_{s+1} > \dots > d_{s_0-1} = d_{s_0} \leq d_{s_0+1} \leq \cdots \leq d_{s'} \leq \cdots. \]
Let
\[ s_1 = \max \{ t \in \Z \mid d_t = d_{s_0} \}, \]
so that for all $t\in\{s,\dots,s'\}$, we have $d_t = d_{s_0}$ if and only if $s_0-1 \leq t \leq s_1$.  Then near the indices $[s_0-1,s_1+1]$, the restriction of $(D^{[s,s']}_{i,p/q})_*$ has the form
\[ \begin{tikzcd}
t=s_0-1 & s_0 & \cdots & s_1 & s_1+1 \\[-0.75em]
\cT^+_{(d_{s_0})} \ar[d,"v^+_*"] \ar[dr,"1"] &
\cT^+_{(d_{s_0})} \ar[d,"1"] \ar[dr,"1"] &
\cdots \ar[dr,"1"] &
\cT^+_{(d_{s_0})} \ar[d,"1"] \ar[dr,"h^+_*"] &
\cT^+_{(d_{s_1+1})} \ar[d,"v^+_*"] \\
\cT^+ & \cT^+_{(d_{s_0-1})} & \cdots & \cT^+_{(d_{s_0-1})} & \cT^+
\end{tikzcd} \]
in which we omit any columns at either end whose indices are not in $[s,s']$.

To see that the maps labeled ``$1$'' are indeed modeled on multiplication by $U^0 = 1$, we note that they are one of
\begin{itemize}
\item an $h^+_*$ map with domain in column $s_0-1$, and then since $\lfloor\frac{i+p(s_0-1)}{q}\rfloor < 0$ we have $H_{\lfloor\frac{i+p(s_0-1)}{q}\rfloor}(K) = 0$;
\item a $v^+_*$ map with domain in column $t \geq s_0$, and then since $\lfloor\frac{i+pt}{q}\rfloor \geq 0$ we have $V_{\lfloor\frac{i+pt}{q}\rfloor}(K) = 0$; or
\item an $h^+_*$ map from column $t \geq s_0$ to column $t+1$ where $d_t = d_{t+1} = d_{s_0}$, and then Lemma~\ref{lem:gK-tower-gradings} says that
\[ 0 = d_{t+1} - d_t = 2\left\lfloor\frac{i+pt}{q}\right\rfloor, \]
so that $H_{\lfloor\frac{i+pt}{q}\rfloor}(K) = H_0(K) = 0$.
\end{itemize}
Moreover, the $v^+_*$ map in column $s_0-1$ is modeled on multiplication by $U^a$, where
\[ a = V_{\lfloor\frac{i+p(s_0-1)}{q}\rfloor}(K) = -\left\lfloor\frac{i+p(s_0-1)}{q}\right\rfloor \geq 1. \]
Similarly the $h^+_*$ map in column $s_1$ is modeled on multiplication by $U^b$, where
\begin{align*}
b = H_{\lfloor\frac{i+ps_1}{q}\rfloor}(K) &= \left\lfloor\frac{i+ps_1}{q}\right\rfloor \\
&= \tfrac{1}{2}\left( d_{s_1+1} - d_{s_1} \right) > 0,
\end{align*}
by Lemma~\ref{lem:gK-tower-gradings} and the definition of $s_1$.

We now label generators at the bottom of each tower by
\begin{align*}
x_t &\in (t, \cT^+_{(d_t)}) \subset H_*(\bA^{[s,s']}_{i,p/q}), &
y_t &\in (t, \cT^+) \subset H_*(\bB^{[s,s']}_{i,p/q}),
\end{align*}
so that $Ux_t=0$ and $Uy_t=0$ for all $t$, and the various $v^+_*$ and $h^+_*$ maps carry elements of the form $U^i x_t$ to elements of the form $U^j y_t$ and $U^k y_{t+1}$ respectively.  We then define
\[ z = \sum_{t=s_0-1}^{s_1} (-1)^t x_t \subset H_{d_{s_0}}(\bA^{[s,s']}_{i,p/q}), \]
treating any terms whose indices are not in $[s,s']$ as zero, and it follows from the above discussion that $Uz=0$ and that
\[ (D^{[s,s']}_{i,p/q})_*(z) = (-1)^{s_0-1} y_{s_0} + \left( \sum_{t=s_0}^{s_1-1} (-1)^t(y_t + y_{t+1})\right) + (-1)^{s_1} y_{s_1} = 0. \]
Thus $z$ generates the desired $\Q$ summand.
\end{proof}

\begin{lemma} \label{lem:gK-large-ker-U}
Assume the hypotheses and notation of Lemma~\ref{lem:gK-tower-gradings}, and let $s \leq s'$ be integers satsifying the hypotheses of Proposition~\ref{prop:truncate}.  Suppose that
\[ \left\lfloor\frac{i+ps'}{q}\right\rfloor = g-1. \]
If $d\in\Z$ denotes the integer such that $H_*(A^+_{g-1}) \cong \cT^+_{(0)} \oplus \Q^{\vphantom{+}}_{(d)}$, as in Lemma~\ref{lem:As-tower-grading}, then we can write
\[ (s',H_*(A^+_{g-1})) \cong \cT^+_{(d_{s'})} \oplus \Q^{\vphantom{+}}_{(d_{s'}+d)} \]
as $\Q[U]$-modules such that the $\Q_{(d_{s'}+d)}$ summand lies in
\[ \ker(D^{[s,s']}_{i,p/q})_* \cap \ker(U). \]
\end{lemma}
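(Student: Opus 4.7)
The plan is to identify the $\Q_{(d_{s'}+d)}$ summand of $(s', H_*(A^+_{g-1}))$ with the image of $\hfkhat_{*+2}(K,g)$ under the short exact sequence of Lemma~\ref{lem:ker-v-g-1}, and then read off the desired vanishing directly from the definitions.

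First I would verify that this identification respects the decomposition of Lemma~\ref{lem:As-tower-grading}. The short exact sequence
\[ 0 \to \hfkhat_{*+2}(K,g) \to H_*(A^+_{g-1}) \xrightarrow{(v^+_{g-1})_*} H_*(B^+) \to 0 \]
of Lemma~\ref{lem:ker-v-g-1} splits, and the tower $\cT^+_{(0)}\subset H_*(A^+_{g-1}) \cong \cT^+_{(0)} \oplus \Q_{(d)}$ maps isomorphically onto $H_*(B^+)$ under $(v^+_{g-1})_*$, because $V_{g-1}(K) = 0$ by Lemma~\ref{lem:5_2-V_0} (using $g-1 \geq 1 \geq 0$). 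Therefore the kernel $\ker(v^+_{g-1})_*$ is exactly the $\Q_{(d)}$ summand, and it may be canonically identified with (a grading shift of) $\hfkhat(K,g)$. Since Lemma~\ref{lem:ker-v-g-1} also asserts that $U$ acts trivially on $\hfkhat(K,g)$, the $\Q_{(d)}$ summand of $H_*(A^+_{g-1})$ lies in $\ker U$. After the absolute grading shift that places the bottom of the tower in position $s'$ at grading $d_{s'}$, this summand is $\Q_{(d_{s'}+d)} \subset (s', H_*(A^+_{g-1}))$, and it is still annihilated by $U$.

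Next I would unpack $(D^{[s,s']}_{i,p/q})_*$ on this summand. By the formula defining $D^{[s,s']}_{i,p/q}$ in Proposition~\ref{prop:truncate}, it sends an element of $(s', H_*(A^+_{g-1}))$ to a sum of two pieces: the $v^+$-component in $(s', B^+)$ and the $h^+$-component in $(s'+1, B^+)$. The second piece is interpreted as zero because $s'+1$ lies outside the index range $\{s+1,\dots, s'\}$ used for $\bB^{[s,s']}_{i,p/q}$ --- this is exactly the boundary convention built into the truncated differential. The first piece is $(v^+_{g-1})_*$ applied to the $\Q_{(d)}$ summand, which vanishes by the previous paragraph. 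Hence the $\Q_{(d_{s'}+d)}$ summand is in $\ker (D^{[s,s']}_{i,p/q})_* \cap \ker U$, proving the lemma.

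I do not expect any genuine obstacle here: the argument is pure bookkeeping once one observes that the $\Q$ summand in $H_*(A^+_{g-1})$ is precisely the extra $\hfkhat(K,g)$ contribution, which is killed both by $U$ and by $v^+_{g-1}$, and that the potentially troublesome $h^+$ term is cut off by the truncation boundary at $s'$.
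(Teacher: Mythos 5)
Your proof is correct and follows essentially the same route as the paper's: both arguments come down to arranging for the $\Q$ summand to lie in $\ker(v^+_{g-1})_*$, noting that it is annihilated by $U$, and observing that the $h^+$ component out of column $s'$ is discarded by the truncation convention. One inference is worded more strongly than you justify it: from the fact that the tower maps isomorphically onto $H_*(B^+)$ you conclude that $\ker(v^+_{g-1})_*$ \emph{is} the $\Q_{(d)}$ summand, but for a complement fixed in advance this can fail when $d=0$, since the kernel may then be spanned by $z-\lambda x_{s'}$ with $x_{s'}$ the bottom of the tower and $\lambda\neq 0$; this is precisely the case the paper's proof handles by replacing the generator $z$ with $z-\lambda x_{s'}$. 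Your argument still goes through because the lemma only asks that \emph{some} $\Q[U]$-module splitting have the stated property, and $\ker(v^+_{g-1})_*$ --- being one-dimensional, concentrated in grading $d$, killed by $U$ by Lemma~\ref{lem:ker-v-g-1}, and meeting the tower trivially --- always furnishes one; you should just phrase this as (re)choosing the splitting rather than asserting that the pre-given summand coincides with the kernel.
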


\begin{proof}
The rightmost portion of the truncated mapping cone complex has the form
\[ \begin{tikzcd}
 \cdots & t=s'-1 & s' \\[-0.75em]
\cdots \ar[dr,"h^+_*"] &
H_*(A^+_{\lfloor\frac{i+p(s'-1)}{q}\rfloor}) \ar[d,"v^+_*"] \ar[dr,"h^+_*"] &
\cT^+_{(d_{s'})} \oplus \Q^{\vphantom{+}}_{(d_{s'}+d)} \ar[d,"(v^+_{g-1})_*"] \\
\cdots & \cT^+ & \cT^+_{(d_{s'}-1)},
\end{tikzcd} \]
where the grading on the bottom $\cT^+$ in column $s'$ follows from $V_{g-1}(K)=0$.  Let $x_{s'}$ and $y_{s'}$ be bottom-most elements of the towers at the top and bottom of column $s'$, chosen so that $(v^+_{g-1})_*(x_{s'}) = y_{s'}$.

Let $z$ generate the $\Q_{(d_{s'}+d)}$ summand in column $s'$, so that $Uz=0$.  If $(v^+_{g-1})_*(z)=0$ then we are done, since $z$ generates the desired submodule.  Otherwise, we observe that
\[ U \cdot (v^+_{g-1})_*(z) = (v^+_{g-1})_*(Uz) = (v^+_{g-1})_*(0) = 0, \]
so $(v^+_{g-1})_*(z)$ must be a nonzero element at the bottom of the $\cT^+_{(d_{s'}-1)}$ tower.  In this case, we can write
\[ (v^+_{g-1})_*(z) = \lambda y_{s'} = \lambda \cdot (v^+_{g-1})_*(x_{s'}) \]
for some nonzero $\lambda\in\Q$.  For grading reasons it now follows that $d=0$, since $z$ must lie in grading $d_{s'}$, and so
\[ z - \lambda x_{s'} \in \ker(v^+_{g-1})_*. \]
Now we can write the $H_*(A^+_{g-1})$ in column $s'$ as the $\Q[U]$-module
\[ \cT^+\langle x_{s'}\rangle \oplus \Q\langle z-\lambda x_{s'}\rangle, \]
and the $\Q$ summand is in $\ker(v^+_{g-1})_* = \ker(D^{[s,s']}_{i,p/q})_*$ as well as $\ker(U)$, as desired.
\end{proof}

\subsection{Small positive surgeries} \label{ssec:K-small}

In Proposition~\ref{prop:5_2-small-rational} we showed that if $0 < r < 1$, then there is a relatively graded isomorphism of the form
\[ \hfp(S^3_r(5_2),i) \cong \cT^+_{(0)} \oplus \Q^{2n_i}_{(0)} \]
for all $i$.  We will show that this cannot be the case for $\hfp(S^3_r(K))$ if $K$ is a knot of genus at least 2 that satisfies the hypotheses of Proposition~\ref{prop:5_2-fibered}.

\begin{proposition} \label{prop:gK-large-r-small}
Let $K$ be a knot of genus $g \geq 2$, and fix relatively prime positive integers $q > p > 0$.  Then
\[ \hfp(S^3_{p/q}(K)) \not\cong \hfp(S^3_{p/q}(5_2)) \]
as absolutely graded $\Q[U]$-modules.
\end{proposition}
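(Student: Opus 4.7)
The plan is a proof by contradiction. Suppose $\hfp(S^3_{p/q}(K)) \cong \hfp(S^3_{p/q}(5_2))$ as absolutely graded $\Q[U]$-modules. Such an isomorphism pairs up the summands indexed by $\spc$ structures, and Proposition~\ref{prop:5_2-small-rational} shows that $\ker(U) \subset \hfp(S^3_{p/q}(5_2), j)$ lies in a single grading for every $j$. Hence $\ker(U) \subset \hfp(S^3_{p/q}(K), i)$ must lie in a single grading as well, for every $i \in \{0,\ldots,p-1\}$. I aim to exhibit an $i$ for which this conclusion fails.

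For each $i$, I take the truncation $s = \min\{t : \lfloor(i+pt)/q\rfloor \geq 1-g\}$ and $s' = \max\{t : \lfloor(i+pt)/q\rfloor \leq g-1\}$. Because $p < q$, the sequence $t \mapsto \lfloor(i+pt)/q\rfloor$ increases by $0$ or $1$ at each step and takes every integer value, so $\lfloor(i+ps)/q\rfloor = 1-g$ and $\lfloor(i+ps')/q\rfloor = g-1$. Combining Proposition~\ref{prop:5_2-fibered} with Remark~\ref{rem:v-h-isomorphisms} verifies the hypotheses of Proposition~\ref{prop:truncate}, which identifies $\hfp(S^3_{p/q}(K), i)$ with $\ker(D^{[s,s']}_{i,p/q})_*$ as a relatively graded $\Q[U]$-module. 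Let $s_0(i) = \min\{t : \lfloor(i+pt)/q\rfloor \geq 0\}$, which lies in $[s, s']$ by monotonicity, and let $d \in \Z$ denote the integer from Lemma~\ref{lem:As-tower-grading} for which $H_*(A^+_{g-1}(K)) \cong \cT^+_{(0)} \oplus \Q_{(d)}$. Applying Lemmas~\ref{lem:grading-homology-tower} and \ref{lem:gK-large-ker-U} produces $\Q$ summands of $\ker(D^{[s,s']}_{i,p/q})_* \cap \ker(U)$ at gradings $d_{s_0(i)}$ and $d_{s'(i)}+d$, respectively.

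By Lemma~\ref{lem:gK-tower-gradings}, these gradings satisfy $d_{s'(i)} - d_{s_0(i)} = 2\sum_{t=s_0(i)}^{s'(i)-1}\lfloor(i+pt)/q\rfloor$. So the two $\Q$ summands lie in different gradings precisely when
\[ d \neq -2\sum_{t=s_0(i)}^{s'(i)-1}\lfloor(i+pt)/q\rfloor, \]
and the desired contradiction follows as soon as I can show that this sum is not constant in $i$ as $i$ ranges over $\{0,1,\ldots,p-1\}$.

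The main obstacle will be verifying this variation. The strategy is to write the sum as $\sum_{k=0}^{g-1} k\cdot N_k(i)$, where $N_k(i) = |\{t \in [s_0(i), s'(i)-1] : \lfloor(i+pt)/q\rfloor = k\}|$, and to observe that each $N_k(i)$ takes only the values $\lfloor q/p\rfloor$ or $\lceil q/p\rceil$, with the ``large'' values being redistributed among the indices $k=0,\ldots,g-1$ as $i$ shifts by one. For $p \geq 2$, analyzing these redistributions (e.g.\ by tracking the unique $t$ in each interval $[(kq-i)/p, ((k+1)q-i)/p)$ that is lost or gained when $i \mapsto i+1$) yields at least two distinct weighted sums, and picking an $i$ achieving the wrong value relative to $d$ produces the required contradiction. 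The case $p=1$ leaves only a single residue class, so the above sum is constant; there the plan is to extract additional kernel elements from the remaining extra $\Q$ summands in $H_*(A^+_{g-1})$ at inner columns (whose $(h^+_{g-1})_*$-images in $H_*(B^+)$ span a low-dimensional subspace, so appropriate linear combinations must vanish and yield further $\Q$ summands of $\ker(D^{[s,s']})_* \cap \ker(U)$ at yet more gradings), again forcing $\ker(U)$ to occupy multiple gradings.
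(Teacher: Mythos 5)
Your overall framework coincides with the paper's: assume the isomorphism, use Proposition~\ref{prop:5_2-small-rational} to force $\ker(U) \subset \hfp(S^3_{p/q}(K),i)$ into a single grading for every $i$, and contradict this via the two kernel classes of Lemmas~\ref{lem:grading-homology-tower} and~\ref{lem:gK-large-ker-U}. Your identity $d_{s'(i)}-d_{s_0(i)}=2\sum_{t=s_0(i)}^{s'(i)-1}\lfloor(i+pt)/q\rfloor$ is correct, as is the reduction to showing this sum $\Sigma(i)$ is non-constant in $i$. But the two steps you defer are genuine gaps, not routine checks. First, the non-constancy of $\Sigma(i)$ for $p\geq 2$ is true but your ``redistribution'' sketch is not a proof. (One workable route: Abel summation gives $\Sigma(i)=C+\tfrac{1}{p}\bigl((g-1)\,r_g(i)-\sum_{j=1}^{g-1}r_j(i)\bigr)$ with $r_j(i)=(i-jq)\bmod p$ and $C$ independent of $i$; comparing the jumps of this expression at $i\equiv q-1$ and at $i\equiv gq-1 \pmod p$ shows constancy would force $p\mid g-1$ and then $p=1$.) Second, and more seriously, $p=1$ is not covered by the main argument, and the backup plan is too vague to certify: ``appropriate linear combinations must vanish'' glosses over the fact that when $d=0$ or $d=2g-2$ the inner $\Q$ summands of $H_*(A^+_{g-1})$ are \emph{not} automatically in $\ker(D^{[s,s']}_{i,p/q})_*$ (their $v^+$- or $h^+$-images can land on the bottom of a target tower), and correcting them requires adding bottom-of-tower elements from adjacent columns and then re-verifying membership in $\ker(U)$ and the resulting gradings. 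That correction is the real content of the case and is absent.

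For comparison, the paper's proof needs neither the combinatorial non-constancy statement nor a case division on $p$. It fixes the single residue $i\equiv gq-1\pmod p$, so that (using $p<q$) the columns $t=s'-1$ and $t=s'$ \emph{both} carry $H_*(A^+_{g-1})\cong\cT^+\oplus\Q_{(d)}$. If $\ker(U)$ sat in one grading, the two standard kernel classes would force $d_{s'}+d=d_{s_0}$, and since $d_{s'}=d_{s'-1}+2(g-1)\geq d_{s_0}+2(g-1)$ this gives $d\leq 2-2g$; that inequality then kills both the $(v^+_{g-1})_*$- and $(h^+_{g-1})_*$-images of the $\Q$ summand in column $s'-1$ for grading reasons, producing a third kernel class in the strictly lower grading $d_{s'-1}+d$. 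I would suggest either adopting that argument outright, or proving the non-constancy claim as above for $p\geq 2$ and then running the paper's two-column argument just for $p=1$.
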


\begin{proof}
If $\hfp(S^3_{p/q}(K)) \cong \hfp(S^3_{p/q}(5_2))$, then $K$ satisfies the conclusions of Proposition~\ref{prop:5_2-fibered}.  In this case, Proposition~\ref{prop:5_2-small-rational} says that for all $i$, the submodule
\[ \ker(U) \subset \hfp(S^3_{p/q}(5_2),i) \]
lies in a single homological grading.  Thus the same must be true for
\[ \ker(U) \subset \hfp(S^3_{p/q}(K),i), \]
so we will find an integer $i$ for which this is not the case, giving a contradiction.

We fix an integer $i$ between $0$ and $p-1$, inclusive, such that
\[ i \equiv gq-1 \pmod{p}. \]
We then define
\begin{align*}
s &= \min\{ t \in \Z \mid i+pt \geq (1-g)q \}, &
s' &= \frac{gq-1-i}{p}.
\end{align*}
By construction we have
\[ \left\lfloor\frac{i+ps'}{q}\right\rfloor = g-1 \quad\text{and}\quad \left\lfloor\frac{i+p(s'+1)}{q}\right\rfloor \geq g, \]
and since $1 \leq p+1 \leq q$ we have 
\[ \left\lfloor\frac{i+p(s'-1)}{q}\right\rfloor = \left\lfloor\frac{gq-(p+1)}{q}\right\rfloor = g-1 \]
as well.  We also observe that $\lfloor\frac{i+pt}{q}\rfloor \geq 0$ if and only if $t \geq 0$, and so $s \leq t \leq s'$.

According to Proposition~\ref{prop:truncate}, we can identify $\hfp(S^3_{p/q}(K),i)$ with the kernel of
\[ (D^{[s,s']}_{i,p/q})_*: H_*(\bA^{[s,s']}_{i,p/q}) \to H_*(\bB^{[s,s']}_{i,p/q}) \]
up to an overall grading shift, so it will suffice to show that
\[ \ker (D^{[s,s']}_{i,p/q})_* \cap \ker(U) \]
does not lie in a single homological grading.  Supposing otherwise, we choose an arbitrary lift of the relative gradings on $\bA^{[s,s']}_{i,p/q}$ and $\bB^{[s,s']}_{i,p/q}$ to an absolute $\Z$-grading, and let $d_t \in \Z$ denote the bottom-most grading in each tower
\[ \cT^+_{(d_t)} \subset (t, H_*(A^+_{\lfloor\frac{i+pt}{q}\rfloor})) \subset H_*(\bA^{[s,s']}_{i,p/q}). \]
Lemma~\ref{lem:grading-homology-tower} now says that there is a $\Q$-submodule of $\ker(D^{[s,s']}_{i,p/q})_*$ in grading $d_{s_0}$, and Lemma~\ref{lem:gK-large-ker-U} says that there is also a $\Q$-submodule in grading $d_{s'}+d$, hence
\[ d_{s'}+d = d_{s_0} \]
by hypothesis.  But according to Lemma~\ref{lem:gK-tower-gradings}, we also have
\begin{align*}
d_{s'} &= d_{s'-1} + 2\left\lfloor\frac{i+p(s'-1)}{q}\right\rfloor \\
&= d_{s'-1} + 2(g-1) \\
&\geq d_{s_0} + 2(g-1), 
\end{align*}
so then $d = d_{s_0} - d_{s'} \leq 2-2g$.

We now examine the rightmost portion of the truncated complex $\bX^{[s,s']}_{i,p/q}$.  Since $\lfloor\frac{i+p(s'-1)}{q}\rfloor = g-1$, the last two columns have the form
\[ \begin{tikzcd}
 \cdots & t=s'-1 & s' \\[-0.75em]
\cdots \ar[dr,"h^+_*",outer sep=-2pt] &
\cT^+_{(d_{s'-1})} \oplus \Q^{\vphantom{+}}_{(d_{s'-1}+d)} \ar[d,"(v^+_{g-1})_*"] \ar[dr,"(h^+_{g-1})_*"] &
\cT^+_{(d_{s'})} \oplus \Q^{\vphantom{+}}_{(d_{s'}+d)} \ar[d,"(v^+_{g-1})_*"] \\
\cdots & \cT^+_{(d_{s'-1}-1)} & \cT^+_{(d_{s'}-1)},
\end{tikzcd} \]
with $d_{s'} = d_{s'-1} + 2(g-1)$ as above.  Since $d \leq 2-2g \leq -2$, the map $(v^+_{g-1})_*$ in column $s'-1$ must send the $\Q_{(d_{s'-1}+d)}$ submodule to zero for grading reasons.  That same submodule must be sent by $(h^+_{g-1})_*$ into column $s'$, in grading
\begin{align*}
d_{s'-1}+d - 1 &= (d_{s'} - 2(g-1)) + d-1 \\
&= (d_{s'}+d) + (1-2g) \\
&\leq d_{s'} - 1 - 2g.
\end{align*}
This is strictly less than the bottom-most grading of the corresponding tower, so this image also must be zero, and it follows that in column $s'-1$ we have
\[ \Q_{(d_{s'-1}+d)} \subset \ker (D^{[s,s']}_{i,p/q})_* \cap \ker(U) \]
as well.  Since
\[ d_{s'-1}+d = (d_{s'}+d) - (2g-2) < d_{s'}+d, \]
it follows that $\ker (D^{[s,s']}_{i,p/q})_* \cap \ker(U)$ is not supported in a single grading, and so we have a contradiction.
\end{proof}

\subsection{Large positive surgeries} \label{ssec:K-large}

In this subsection we attempt to understand when there can be a homeomorphism
\[ S^3_r(K) \cong S^3_r(5_2) \]
for some slope $r\geq 1$ and some knot $K$ of genus at least $2$.  We implicitly identify
\[ \spc(S^3_{p/q}(K)) \cong \Z/p\Z \]
throughout, as in the statement of Theorem~\ref{thm:mapping-cone}.

The following lemma will help us find $\spc$ structures $\spinc$ where $\hfp(S^3_{p/q}(K),\spinc)$ differs from $\hfp$ of any $\spc$ structure on $S^3_{p/q}(5_2)$.

\begin{lemma} \label{lem:5_2-find-i}
Let $g \geq 2$ be an integer, and let $p>q>0$ be relatively prime positive integers such that $p$ does not divide $2g-2$.  Then there exists an integer $i\in\Z$ for which the equation
\[ \left\lfloor \frac{i+ps}{q} \right\rfloor = g-1 \]
has an integer solution $s \in \Z$, but the equation
\[ \left\lfloor \frac{i+ps}{q} \right\rfloor = 1-g \]
does not.
\end{lemma}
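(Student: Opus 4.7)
The plan is to reformulate the existence of a solution $s$ to the equation $\lfloor (i+ps)/q\rfloor = n$ as a condition on the residue class of $i$ modulo $p$, and then to compare the two resulting subsets of $\Z/p\Z$ for $n = g-1$ and $n = 1-g$.

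First I would observe that $\lfloor (i+ps)/q\rfloor = n$ has an integer solution $s$ if and only if there exist $s\in\Z$ and $r\in\{0,1,\dots,q-1\}$ with $i + ps = nq + r$, equivalently
\[ i \equiv nq + r \pmod p \text{ for some } r \in \{0,1,\dots,q-1\}. \]
So the set of admissible residues is
\[ S_n := \{ nq, nq+1, \dots, nq+q-1\} \pmod p \subset \Z/p\Z. \]
Since $p > q$, the $q$ integers $nq, nq+1, \dots, nq+q-1$ are pairwise non-congruent modulo $p$, so $|S_n| = q$; moreover $S_n$ is an ``arc'' of $q$ consecutive residues in the cyclic group $\Z/p\Z$ (possibly wrapping around).

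Next I would show $S_{g-1} \neq S_{1-g}$. Observe that $S_{1-g}$ is the translate of $S_{g-1}$ by $-(2g-2)q \bmod p$. Two arcs of the same length $q < p$ in $\Z/p\Z$ are equal if and only if the translation identifying them is trivial, since the stabilizer in $\Z/p\Z$ of any proper arc is just $\{0\}$. Hence $S_{g-1} = S_{1-g}$ would force $(2g-2)q \equiv 0 \pmod p$, and because $\gcd(p,q) = 1$ this is equivalent to $p \mid 2g-2$, contradicting the hypothesis. Therefore $S_{g-1} \neq S_{1-g}$, and since the two sets have the same cardinality $q$, we can pick $i \in S_{g-1} \setminus S_{1-g}$, which is the desired residue.

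The argument is almost entirely elementary; the only step that requires any care is the ``arc stabilizer'' fact, which I would justify by noting that if an arc of length $q < p$ is fixed by a nonzero translation $d \in \Z/p\Z$, then iterating gives that the arc is a union of cosets of the subgroup $\langle d \rangle$, which has order $p/\gcd(p,d) > 1$ dividing $p$; but then $|S_{g-1}| = q$ would have to be a multiple of this order, forcing the arc to coincide with a union of nontrivial cosets, which is inconsistent with it being a proper arc unless $d = 0$. This is the only potentially subtle point, but it is very clean.
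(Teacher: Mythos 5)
Your proposal is correct and follows essentially the same route as the paper: both translate the solvability of $\lfloor(i+ps)/q\rfloor = n$ into membership of $i$ in an arc of $q$ consecutive residues modulo $p$, observe that the two arcs for $n = g-1$ and $n = 1-g$ are translates of each other by $(2g-2)q$, and use coprimality of $p$ and $q$ plus the hypothesis $p \nmid 2g-2$ to conclude the arcs are distinct sets of equal size, so one contains an element outside the other. The paper asserts the "distinct translates of a proper arc are distinct sets" step without comment, whereas you justify it explicitly; your justification is sound (a clean way to phrase it is that a proper arc has a unique left endpoint, which any translation fixing the arc must fix).
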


\begin{proof}
We note that $\lfloor\frac{i+ps}{q}\rfloor = g-1$ admits a solution $s\in\Z$ if and only if
\[ q(g-1) \leq i+ps \leq qg - 1, \]
or equivalently if and only if
\begin{equation} \label{eq:5_2-K-fibered-large-1}
i \equiv qg - j \pmod{p} \text{ for some } j\in\{1,2,\dots,q\}.
\end{equation}
Similarly, the equation $\lfloor\frac{i+ps}{q}\rfloor = 1-g$ has a solution $s\in\Z$ if and only if
\[ q(1-g) \leq i+ps \leq q(2-g)-1, \]
or equivalently if and only if
\begin{equation} \label{eq:5_2-K-fibered-large-2}
i \equiv q(2-g)-k \pmod{p} \text{ for some } k\in\{1,2,\dots,q\}.
\end{equation}
Each of \eqref{eq:5_2-K-fibered-large-1} and \eqref{eq:5_2-K-fibered-large-2} is solved by exactly $q$ residue classes modulo $p$, and these solutions coincide if and only if
\[ qg \equiv q(2-g) \pmod{p}, \]
which is equivalent to $2g-2 \equiv 0\pmod{p}$ since $p$ and $q$ are coprime.  But we have assumed that this is not the case, so the set of $i$ in \eqref{eq:5_2-K-fibered-large-1} is not a subset of the set in \eqref{eq:5_2-K-fibered-large-2}, and hence there is some $i$ which satisfies \eqref{eq:5_2-K-fibered-large-1} but not \eqref{eq:5_2-K-fibered-large-2}.  This is the desired $i$.
\end{proof}

\begin{proposition} \label{prop:5_2-medium-fibered}
Let $K \subset S^3$ be a nontrivial knot of genus $g \geq 2$, and let $p \geq q > 0$ be relatively prime positive integers.  If there is an isomorphism
\[ \hfp(S^3_{p/q}(K)) \cong \hfp(S^3_{p/q}(5_2)) \]
of graded $\Q[U]$-modules, then $p$ divides $2g-2$.
\end{proposition}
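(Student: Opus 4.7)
My plan is to argue the contrapositive, so suppose $p \nmid 2g-2$. Since $p = q = 1$ would force $p \mid 2g-2$, we may assume $p > q \geq 1$. Lemma~\ref{lem:5_2-find-i} then furnishes $i \in \{0, 1, \ldots, p-1\}$ for which $\lfloor(i+ps)/q\rfloor = g-1$ has a unique solution $s_0 \in \Z$ while $\lfloor(i+ps)/q\rfloor = 1-g$ has no solution. (Uniqueness holds because the map $t \mapsto \lfloor(i+pt)/q\rfloor$ is strictly increasing when $p \geq q$.)

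I would apply Proposition~\ref{prop:truncate} with $s' = s_0$ and with $s$ the smallest integer satisfying $\lfloor(i+ps)/q\rfloor \geq 2-g$. The condition on $s'$ works because for $t > s_0$ we have $\lfloor(i+pt)/q\rfloor \geq g$, making $v^+$ a chain-level isomorphism by Remark~\ref{rem:v-h-isomorphisms}; the condition on $s$ works because for $t < s$, the omission of the value $1-g$ forces $\lfloor(i+pt)/q\rfloor \leq -g$, making $h^+$ a chain-level isomorphism. Inside the truncation range, Proposition~\ref{prop:5_2-fibered} identifies $H_*(A^+_{\lfloor(i+pt)/q\rfloor}) \cong \cT^+$ for every $t \neq s_0$, and $H_*(A^+_{g-1}) \cong \cT^+ \oplus \Q$ at $t = s_0$.

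The crux is to show that $\hfp(S^3_{p/q}(K), i) \cong \cT^+ \oplus \Q$, so that $\dim \hfred(S^3_{p/q}(K), i) = 1$. Heuristically, reduced summands in $\hfp$ arise from the extra $\Q$'s in $H_*(A^+_{\pm(g-1)})$, but our truncation omits the $1-g$ summand altogether. By Lemma~\ref{lem:ker-v-g-1} the single extra $\Q$ at $t = s_0$ lies in $\ker(v^+_{g-1})_* \cap \ker(U)$; since its $h^+$ component out of the top of the truncation is interpreted as zero, it persists as a reduced summand in $\ker(D^{[s,s_0]}_{i,p/q})_*$. A grading analysis in the spirit of Lemmas~\ref{lem:gK-tower-gradings} and \ref{lem:grading-homology-tower}, using $V_j(K) = \max(0,-j)$ and $H_j(K) = \max(0,j)$, should then confirm that the tower part of the kernel is precisely one copy of $\cT^+$ and that the extra $\Q$ is not absorbed into or duplicated by this tower.

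By Lemma~\ref{lem:5_2-large-rational}, however, $\dim \hfred(S^3_{p/q}(5_2), i') \in \{0, 2\}$ for every $\spc$ structure $i'$. Since the hypothesized $\Q[U]$-module isomorphism respects $\spc$ decompositions, this is a contradiction, forcing $p \mid 2g-2$. The main technical hurdle is the dimension count: one must verify that the extra $\Q$ contributes exactly one reduced summand (not zero, and not two), using grading arguments parallel to those in the proof of Proposition~\ref{prop:gK-large-r-small}.
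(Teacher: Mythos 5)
Your proposal is correct and follows the same strategy as the paper's proof: use Lemma~\ref{lem:5_2-find-i} to find an $i$ whose associated floors hit $g-1$ but never $1-g$, truncate the mapping cone via Proposition~\ref{prop:truncate}, show the lone extra $\Q$ in $H_*(A^+_{g-1})$ survives into $\ker(D^{[s,s']}_{i,p/q})_*$ (this is exactly Lemma~\ref{lem:gK-large-ker-U}), and contradict Lemma~\ref{lem:5_2-large-rational}. The one difference is your truncation window: you start at the smallest $t$ with $\lfloor(i+pt)/q\rfloor \geq 2-g$, whereas the paper starts at the smallest $t$ with $\lfloor(i+pt)/q\rfloor \geq 0$, which is legitimate because Proposition~\ref{prop:5_2-fibered} makes $h^+_j$ a quasi-isomorphism for all $2-g \leq j < 0$, not just for $j \leq -g$. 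With that tighter window the ``technical hurdle'' you flag evaporates: every $v^+_*$ inside the window is multiplication by $U^{V_j}=1$ on towers, so in the zig-zag each tower element $x_{t+1}$ is forced to equal $-(v^+_*)^{-1}h^+_*(x_t)$ and the kernel of the tower part is visibly the single $\cT^+$ in the leftmost column; no grading analysis is needed, and the kernel is $\cT^+\oplus\Q$ on the nose. Your larger window also works --- to the left of the first column with nonnegative floor the $h^+_*$ maps are tower isomorphisms and determine $x_t$ from $x_{t+1}$, while to the right the $v^+_*$ maps are tower isomorphisms and determine $x_{t+1}$ from $x_t$, so everything is determined by the entry in the transition column and the tower part of the kernel is again one $\cT^+$ --- but you would need to write this out rather than defer it to a grading argument, and the simpler route is just to shrink the window.
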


\begin{proof}
We suppose that $p \nmid 2g-2$.  Then Lemma~\ref{lem:5_2-large-rational} says that
\[ \dim_\Q \hfred(S^3_{p/q}(5_2),i) \cong 0 \text{ or } 2 \text{ for all } i, \]
so for the sake of a contradiction it will suffice to find $i$ such that $\hfred(S^3_{p/q}(K),i)$ is $1$-dimensional.  We start by applying Lemma~\ref{lem:5_2-find-i} to find $i \in \Z$ and $s' \in \Z$ such that
\[ \left\lfloor\frac{i+ps'}{q}\right\rfloor = g-1 \]
and such that $\lfloor\frac{i+pt}{q}\rfloor = 1-g$ has no solutions $t\in\Z$; this will be the desired $i$.

Let $s$ be the least integer satisfying
\[ \left\lfloor\frac{i+ps}{q}\right\rfloor \geq 0. \]
Then $h^+_{\lfloor\frac{i+pt}{q}\rfloor}$ is a quasi-isomorphism for all $t < s$, since then $\lfloor\frac{i+pt}{q}\rfloor$ is negative but not equal to $1-g$; if $1-g < \lfloor\frac{i+pt}{q}\rfloor < 0$ then this is part of Proposition~\ref{prop:5_2-fibered}, and if $\lfloor\frac{i+pt}{q}\rfloor < 1-g$ then this is true for arbitrary genus-$g$ knots.  Likewise $v^+_{\lfloor\frac{i+pt}{q}\rfloor}(K)$ is a quasi-isomorphism for all $t > s'$, since then $\lfloor\frac{i+pt}{q}\rfloor \geq g$.  Thus Proposition~\ref{prop:truncate} says that $\hfp(S^3_{p/q}(K),i)$ is isomorphic to the kernel of the truncated map
\[ (D^{[s,s']}_{i,p/q})_*: H_*(\bA^{[s,s']}_{i,p/q}) \to H_*(\bB^{[s,s']}_{i,p/q}). \]
The domain is a sum of relatively graded $\Q[U]$-modules of the form
\[ H_*(A^+_{\lfloor\frac{i+pt}{q}\rfloor}) \cong \begin{cases} \cT^+, & s \leq t < s' \\ \cT^+ \oplus \Q, & t=s', \end{cases} \]
and we know that $H_*(B^+) \cong \cT^+$, so $(D^{[s,s']}_{i,p/q})_*$ has the form
\[ \begin{tikzcd}[column sep=2.5em]
t=s & s+1 & \dots & s'-1 & s' \\[-0.75em]
\cT^+ \ar[dr,"h^+_*"] &
\cT^+ \ar[d,"v^+_*"] \ar[dr,"h^+_*"] &
\cdots\vphantom{\cT^+} \ar[dr,"h^+_*"] &
\cT^+ \ar[d,"v^+_*"] \ar[dr,"h^+_*"] &
\cT^+ \oplus \Q \ar[d,"(v^+_{g-1})_*"] \\
& \cT^+ & \cdots & \cT^+ & \cT^+.
\end{tikzcd} \]

Lemma~\ref{lem:gK-large-ker-U} says that we can arrange for the $\Q$ summand in column $s'$ above to belong to $\ker(D^{[s,s']}_{i,p/q})_*$.  Having done so, we see that $\ker(D^{[s,s']}_{i,p/q})_*$ is isomorphic as a $\Q[U]$-module to the direct sum of that $\Q$ with the kernel of
\[ \begin{tikzcd}[column sep=2.5em]
t=s & s+1 & \dots & s'-1 & s' \\
\cT^+ \ar[dr,"h^+_*"] &
\cT^+ \ar[d,"1"] \ar[dr,"h^+_*"] &
\cdots\vphantom{\cT^+} \ar[dr,"h^+_*"] &
\cT^+ \ar[d,"1"] \ar[dr,"h^+_*"] &
\cT^+ \ar[d,"1"] \\
  & \cT^+ & \cdots & \cT^+ & \cT^+.
\end{tikzcd} \]
(Here each $v^+_*$ map is identified with multiplication by $U^0=1$, since $t \geq s$ implies that $\lfloor\frac{i+pt}{q}\rfloor \geq 0$ and hence $V_{\lfloor\frac{i+pt}{q}\rfloor}(K) = 0$.)
But this kernel can be identified with the $\cT^+$ in column $s$, so now we apply Proposition~\ref{prop:truncate} to conclude that
\[ \hfp(S^3_{p/q}(K),i) \cong \ker(D^{[s,s']}_{i,p/q})_* \cong \cT^+ \oplus \Q \]
up to an overall grading shift.  This says that $\hfred(S^3_{p/q}(K),i) \cong \Q$, which gives the desired contradiction.
\end{proof}

Proposition~\ref{prop:5_2-medium-fibered} takes care of most slopes $r \geq 1$ (for knots of a fixed genus $g$) without making use of gradings on the mapping cone complex.  By being careful about gradings, we can handle the remaining non-integral cases as well.  

\begin{proposition} \label{prop:5_2-medium-fibered-2}
Let $K$ be a nontrivial knot of genus $g \geq 2$, and let $p \geq q > 0$ be relatively prime positive integers.  If there is an isomorphism
\[ \hfp(S^3_{p/q}(K)) \cong \hfp(S^3_{p/q}(5_2)) \]
of graded $\Q[U]$-modules, then $q=1$ and $p$ divides $2g-2$.
\end{proposition}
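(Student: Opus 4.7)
The plan is to leverage Proposition~\ref{prop:5_2-medium-fibered}, which already forces $p\mid 2g-2$, and rule out the remaining possibility $q\geq 2$. So assume for contradiction that $p\geq q\geq 2$ and $p\mid 2g-2$, and write $2g-2 = pm$ for a positive integer $m$. I will exhibit an $\spc$ structure $i$ for which $\ker(U)\subset \hfp(S^3_{p/q}(K),i)$ is not concentrated in a single homological grading. Since Lemma~\ref{lem:5_2-large-rational} says every $\hfp(S^3_{p/q}(5_2),j)$ is either $\cT^+_{(0)}$ or $\cT^+_{(0)}\oplus\Q^2_{(0)}$, and in particular has $\ker(U)$ supported in one grading, this gives the desired contradiction.

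Because $p\mid 2g-2$, I choose $i \equiv q(1-g)\pmod{p}$ so that the equation $i + ps^- = q(1-g)$ has an integer solution $s^-$; then $s^+ := s^- + qm$ satisfies $i+ps^+ = q(g-1)$, so $\lfloor(i+ps^\pm)/q\rfloor = \pm(g-1)$. Since $p/q > 1$, no other value of $s$ achieves these floors, and the intermediate values $\lfloor(i+pt)/q\rfloor$ for $s^- < t < s^+$ lie strictly between $1-g$ and $g-1$. By Remark~\ref{rem:v-h-isomorphisms} the truncation hypotheses of Proposition~\ref{prop:truncate} are satisfied at $(s^-, s^+)$, so $\hfp(S^3_{p/q}(K),i)$ is identified with $\ker(D^{[s^-,s^+]}_{i,p/q})_*$. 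The only nonstandard columns are the endpoints, which by Lemma~\ref{lem:As-tower-grading} contribute
\[ \cT^+_{(d_{s^-})}\oplus \Q_{(d_{s^-}+d)} \quad\text{and}\quad \cT^+_{(d_{s^+})}\oplus \Q_{(d_{s^+}+d)} \]
at $s^-$ and $s^+$ respectively, while each intermediate column contributes $\cT^+_{(d_t)}$.

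Lemma~\ref{lem:gK-large-ker-U} supplies a $\Q$ summand in $\ker(D^+)\cap\ker(U)$ at grading $d_{s^+}+d$. For the left endpoint I need an analogous statement about $h^+_{1-g}$, which is the only component of $D^+$ acting on column $s^-$. Here $(h^+_{1-g})_*$ restricted to the tower is an isomorphism $\cT^+_{(d_{s^-})} \to \cT^+_{(d_{s^-}-1)}$ since $H_{1-g}(K)=0$. Running the argument of Lemma~\ref{lem:gK-large-ker-U} verbatim, the image of the $\Q_{(d_{s^-}+d)}$ summand is killed by $U$ and hence lies at the bottom of the target tower, so either it is already zero or $d=0$ and one can subtract a scalar multiple of the tower generator to arrange a new $\Q$ summand in $\ker(h^+_{1-g})_*$. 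Either way, we obtain a $\Q$ summand in $\ker(D^+)\cap\ker(U)$ at grading $d_{s^-}+d$.

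It remains to verify that these two $\Q$ summands sit in different gradings. Iterating Lemma~\ref{lem:gK-tower-gradings}(1)--(3) across the columns from $s^-$ to $s^+$ collapses to
\[ d_{s^+} - d_{s^-} = 2\sum_{t=s^-+1}^{s^+-1}\left\lfloor\frac{i+pt}{q}\right\rfloor. \]
With my choice of $i$, one has $\lfloor(i+p(s^-+k))/q\rfloor = (1-g) + \lfloor pk/q\rfloor$ for $0\leq k\leq qm$, so partitioning $k = jq+\ell$ with $0\leq j < m$ and $0\leq \ell < q$ and using the standard identity $\sum_{\ell=0}^{q-1}\lfloor p\ell/q\rfloor = (p-1)(q-1)/2$, together with $pm = 2g-2$, reduces the sum to $m(1-q)/2$. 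Hence $d_{s^+}-d_{s^-} = m(1-q)$, which is strictly negative for $q\geq 2$. This places the two $\Q$ summands of $\ker(U)\subset\hfp(S^3_{p/q}(K),i)$ in distinct gradings, contradicting the $5_2$ computation and completing the proof. The main obstacle is the grading bookkeeping in the final sum; the analog of Lemma~\ref{lem:gK-large-ker-U} at the left endpoint is formal once one observes that $(h^+_{1-g})_*$ is surjective on homology with $U=0$ on its image.
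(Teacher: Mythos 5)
Your proof is correct, and the way you eliminate $q\geq 2$ is genuinely different from the paper's. The paper also truncates a mapping cone running from $A^+_{1-g}$ to $A^+_{g-1}$, but it fixes $s'=0$, varies the $\spc$ structure over $i=q(g-1)+j$ for $j=0,\dots,q-1$, and derives from Lemmas~\ref{lem:grading-homology-tower} and \ref{lem:gK-large-ker-U} a formula expressing the invariant $-d=d_{s'}-d_{s_0}$ as a sum of floors depending on $j$; since $d$ depends only on $K$, the term-by-term comparison of $j=0$ against $j=q-1$ forces $q\mid p$, a contradiction. This requires a case split on the parity of $qe$ and has the side benefit of producing the explicit value of $d$ that is reused in Theorem~\ref{thm:5_2-positive}. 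You instead fix a single $\spc$ structure whose truncation window has $A^+_{1-g}$ and $A^+_{g-1}$ at its two ends, and exhibit two classes of $\ker(U)$ in gradings $d_{s^\pm}+d$; your left-endpoint analogue of Lemma~\ref{lem:gK-large-ker-U} is legitimate and for the same reason as the original (the only outgoing map from column $s^-$ in the truncation is $(h^+_{1-g})_*$, which is a degree~$-1$ isomorphism of towers since $H_{1-g}(K)=0$, so either the $\Q$ summand already dies or $d=0$ and one corrects by a multiple of the tower bottom). Your grading computation also checks out: telescoping parts (1)--(3) of Lemma~\ref{lem:gK-tower-gradings} gives $d_{s^+}-d_{s^-}=2\sum_{t=s^-+1}^{s^+-1}\lfloor (i+pt)/q\rfloor$, and with $i+ps^-=q(1-g)$ and $\sum_{\ell=0}^{q-1}\lfloor p\ell/q\rfloor=(p-1)(q-1)/2$ this collapses to $m(1-q)<0$ for $q\geq 2$ (and, reassuringly, to $0$ when $q=1$, consistent with conjugation symmetry). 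So $\ker(U)$ fails to be concentrated in a single grading, contradicting Lemma~\ref{lem:5_2-large-rational}. Your route avoids both the cross-$\spc$ comparison and the parity case analysis, at the cost of not computing $d$; the paper's route is longer but feeds directly into the proof of Theorem~\ref{thm:5_2-positive}.
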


\begin{proof}
Proposition~\ref{prop:5_2-medium-fibered} tells us that $p$ divides $2g-2$, so it remains to be seen that $q=1$.  We will assume to the contrary that $q \geq 2$.  If we write $e=\frac{2g-2}{p}$ then $\frac{p}{q} = \frac{2g-2}{qe}$, and the assumption $q \geq 2$ means that $\frac{p}{q}$ is neither $2g-2$ nor $g-1$, so it follows that $qe \geq 3$, or $\frac{p}{q} \leq \frac{2g-2}{3}$.

As usual, we will take $d\in\Z$ such that
\[ H_*(A^+_{g-1}) \cong \cT^+_{(0)} \oplus \Q^{\vphantom{+}}_{(d)}, \]
as guaranteed by Lemma~\ref{lem:As-tower-grading}.  This integer $d$ depends only on $K$, which is the key fact we will use below to rule out any case where $q \geq 2$.

Fixing some choice of 
\[ i = q(g-1)+j, \quad j=0,1,\dots,q-1, \]
we take $s=-qe$ and $s'=0$, and then we have
\[ \left\lfloor\frac{i+ps}{q}\right\rfloor = \left\lfloor\frac{(q(g-1)+j)-pqe}{q}\right\rfloor = \left\lfloor g-1-pe + \frac{j}{q}\right\rfloor = 1-g \]
and $\left\lfloor\frac{i + ps'}{q}\right \rfloor = g-1$, while (since $\frac{p}{q} \geq 1$) $\lfloor\frac{i+p(s-1)}{q}\rfloor \leq -g$ and $\lfloor\frac{i+p(s'+1)}{q}\rfloor \geq g$.  Thus
\[ \hfp(S^3_{p/q}(K),i) \cong \ker\left( (D^{[s,s']}_{i,p/q})_*: H_*(\bA^{[s,s']}_{i,p/q}) \to H_*(\bB^{[s,s']}_{i,p/q}) \right) \]
by Proposition~\ref{prop:truncate}.  We put an absolute $\Z$-grading on the truncated mapping cone complex $\bX^{[s,s']}_{i,p/q}$, with $d_t$ denoting the bottom-most grading for the tower in each summand $(t,H_*(A^+_{\lfloor\frac{i+pt}{q}\rfloor}))$ as usual, and we let
\[ s_0 = \min \left\{ t \in \Z \,\middle\vert\, \left\lfloor\frac{i+pt}{q}\right\rfloor \geq 0 \right\}. \]
Then Lemmas~\ref{lem:grading-homology-tower} and \ref{lem:gK-large-ker-U} tell us that 
\[ \ker(D^{[s,s']}_{i,p/q})_* \cap \ker(U) \]
contains $\Q$ submodules in gradings $d_{s_0}$ and $d_{s'}+d$ respectively.  But by Proposition~\ref{lem:5_2-large-rational} these gradings must be the same, so we have
\[ -d = d_{s'} - d_{s_0}. \]
We remark that since $\frac{p}{q} \leq \frac{2g-2}{3}$, it follows that $s_0 \leq s'-1$.

We now attempt to work out this value in more detail.  According to Lemma~\ref{lem:gK-tower-gradings}, we have
\[ -d = d_{s'} - d_{s_0} = \sum_{t=s_0}^{s'-1} (d_{t+1}-d_t) = 2\sum_{t=s_0}^{s'-1} \left\lfloor \frac{i+pt}{q} \right\rfloor, \]
which, since $i=q(g-1)+j$, can be written as
\begin{equation} \label{eq:d-from-i-j}
-d = 2\sum_{t=s_0}^{s'-1} \left((g-1) + \left\lfloor \frac{j+pt}{q} \right\rfloor\right).
\end{equation}
We note that
\begin{align*}
\left\lfloor \frac{i+pt}{q} \right\rfloor \geq 0 &\Longleftrightarrow (q(g-1)+j)+pt \geq 0 \\
&\Longleftrightarrow t \geq -q\left(\frac{g-1}{p}\right) - \frac{j}{p} = -q\left(\frac{e}{2}\right) - \frac{j}{p}
\end{align*}
and so we have
\begin{equation} \label{eq:s0-2g-2}
s_0 = \left\lceil -\frac{qe}{2} - \frac{j}{p}\right\rceil = -\left\lfloor \frac{qe}{2} + \frac{j}{p} \right\rfloor,
\end{equation}
while $s'-1=-1$ since $s'=0$ by definition.  This makes it clear that while the various $d_t$ may have depended on our choice of $i$ and on the absolute grading on $\bX^{[s,s']}_{i,p/q}$, the expression \eqref{eq:d-from-i-j} for $d$ depends only on $p$, $q$, $g$, and our choice of $j\in\{0,1,\dots,q-1\}$.  But we have already remarked that $d$ depends only on $K$, so we will show that different choices of $j$ lead to different values of $d$ and thus get a contradiction.

Supposing first that $q \cdot e$ is even, we have $\frac{qe}{2} \in \Z$ while $0 \leq \frac{j}{p} \leq \frac{q-1}{p} < 1$, and so
\[ s_0 = -q\left(\frac{e}{2}\right) \quad\text{for } j=0,1,2,\dots,q-1. \]
In particular, the indices in the sum \eqref{eq:d-from-i-j} are the same for each such choice of $j$, and the individual summands are monotonically increasing in $j$.  But the value of $d$ must be independent of $j$, so the sum in \eqref{eq:d-from-i-j} must be the same term-by-term for $j=0$ as it is for $j=q-1$.  Thus we have
\[ \left\lfloor\frac{0+pt}{q}\right\rfloor = \left\lfloor\frac{(q-1)+pt}{q}\right\rfloor \quad\text{for }s_0 \leq t \leq s'-1. \]
And this in turn requires that $0+pt$ be a multiple of $q$: otherwise, there will be some $j\in\{1,\dots,q-1\}$ such that $j+pt$ is a multiple of $q$, and then we have
\[ \left\lfloor\frac{0+pt}{q}\right\rfloor \leq  \left\lfloor\frac{j-1+pt}{q}\right\rfloor < \left\lfloor\frac{j+pt}{q}\right\rfloor \leq \left\lfloor\frac{(q-1)+pt}{q}\right\rfloor. \]
In the case $t=-1$ it follows that $-p$ is a multiple of $q$, but since $p$ and $q$ are coprime and $q \geq 2$ this is impossible.

In the remaining case both $q$ and $e = \frac{2g-2}{p}$ are odd, so in particular $p$ must be even.  In this case $\frac{qe}{2}$ is a half-integer, with floor $\frac{qe-1}{2} \geq 1$ since $q > 1$, so we compute from \eqref{eq:s0-2g-2} that
\[ s_0 = \begin{cases}
-\lfloor\frac{qe}{2}\rfloor, & 0 \leq j \leq \frac{p}{2}-1 \\
-\lfloor\frac{qe}{2}\rfloor-1, & \frac{p}{2} \leq j \leq q-1.
\end{cases} \]
(We note that $q-1<p$, so that $0 \leq \frac{j}{p} < 1$ for all such $j$.)  If the second possibility occurs then the $t=s_0$ term in the sum \eqref{eq:d-from-i-j} is
\begin{align*}
g-1 + \left\lfloor\frac{j+ps_0}{q}\right\rfloor
&= g-1 + \left\lfloor\frac{j+p(-\frac{qe+1}{2})}{q}\right\rfloor \\
&= g-1 + \left\lfloor\frac{j - q(g-1) - \frac{p}{2}}{q}\right\rfloor \\
&= \left\lfloor\frac{j - \frac{p}{2}}{q}\right\rfloor = 0,
\end{align*}
so we may as well omit it and begin with $t=s_0+1 = -\lfloor\frac{qe}{2}\rfloor$.  Thus either way \eqref{eq:d-from-i-j} becomes
\[ -d = 2\sum_{t=-\frac{qe-1}{2}}^{-1} \left((g-1) + \left\lfloor \frac{j+pt}{q} \right\rfloor\right) \]
for any of $j=0,1,\dots,q-1$.  Now by exactly the same argument as in the case $\frac{qe}{2}\in\Z$, we set $t=-1$ and let $j$ be either of $0$ and $q-1$, and we conclude that
\[ \left\lfloor\frac{-p}{q}\right\rfloor = \left\lfloor\frac{(q-1)-p}{q}\right\rfloor \]
and then that $-p$ is a multiple of $q$, giving a contradiction.

We have now found a contradiction in all cases where $p \mid 2g-2$ and $q \geq 2$, so we conclude that $q=1$ after all.
\end{proof}

\subsection{Conclusion}

Combining earlier results throughout this section and Section~\ref{sec:5_2}, we have nearly proved the following.

\begin{theorem} \label{thm:5_2-positive}
Let $K \not\cong 5_2$ be a knot of genus $g \geq 2$ in $S^3$, and suppose for some rational $r > 0$ that
\[ S^3_r(K) \cong S^3_r(5_2). \]
Then $r$ is an integer dividing $2g-2$.  Moreover, in these cases $\hfkhat(K)$ is completely determined by the integers $g$ and
\[ d = \begin{cases}
-(g-1)\left(\frac{g-1}{r}-1\right), & r \mid g-1 \\[0.5em]
-\frac{(2g-2-r)^2}{4r}, & r \nmid g-1
\end{cases} \]
as in Proposition~\ref{prop:hfk-nonpositive}.  In particular, $K$ has Alexander polynomial
\[ \Delta_K(t) = t^g - 2t^{g-1} + t^{g-2} + 1 + t^{2-g} - 2t^{1-g} + t^{-g}. \]
\end{theorem}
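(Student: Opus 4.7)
The plan is to stitch together the results of Sections~\ref{sec:5_2} and \ref{sec:5_2-mapping-cone} and then extract one final numerical invariant from the mapping cone. The homeomorphism $S^3_r(K) \cong S^3_r(5_2)$ induces a graded $\Q[U]$-module isomorphism $\hfp(S^3_r(K)) \cong \hfp(S^3_r(5_2))$. Proposition~\ref{prop:gK-large-r-small} immediately rules out any rational $r$ with $0 < r < 1$, and Proposition~\ref{prop:5_2-medium-fibered-2} rules out every remaining rational $r \geq 1$ except those of the form $r = p \in \Z_{>0}$ with $p \mid 2g-2$. So the restriction on $r$ is immediate.

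Next, I fix such an $r = p$ and invoke Proposition~\ref{prop:5_2-fibered} to conclude that $K$ is fibered with $\tau(K) = 0$ and $H_*(A^+_{g-1}(K)) \cong \cT^+_{(0)} \oplus \Q_{(d)}$ for some integer $d$ in the sense of Lemma~\ref{lem:As-tower-grading}. Proposition~\ref{prop:hfk-nonpositive} then determines the full bigraded $\hfkhat(K)$ from $g$ and $d$, so everything reduces to computing $d$. To do so, I would repeat the truncated mapping-cone argument from the proof of Proposition~\ref{prop:5_2-medium-fibered-2}, now specialized to $q = 1$, $i = g-1$, $s = -e$, and $s' = 0$, where $e := (2g-2)/p$. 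One checks that $\lfloor(i+ps)/q\rfloor = 1-g$ and $\lfloor(i+ps')/q\rfloor = g-1$, so the hypotheses of Proposition~\ref{prop:truncate} hold. Since Lemma~\ref{lem:5_2-large-rational} gives $\hfp(S^3_r(5_2), i) \cong \cT^+_{(0)} \oplus \Q^2_{(0)}$, the submodule $\ker(U)$ is concentrated in a single homological grading on the $5_2$ side; on the $K$ side, Lemmas~\ref{lem:grading-homology-tower} and \ref{lem:gK-large-ker-U} produce $\Q$-submodules inside $\ker(D^{[s,s']}_{i,p/q})_* \cap \ker(U)$ in gradings $d_{s_0}$ and $d_{s'}+d$ respectively, and matching these forces $d = d_{s_0} - d_{s'}$.

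The main technical step is the explicit evaluation of this difference via the telescope
\[ d_{s_0} - d_{s'} = -\sum_{t=s_0}^{-1}(d_{t+1} - d_t) = -2\sum_{t=s_0}^{-1}\left((g-1)+pt\right) \]
coming from Lemma~\ref{lem:gK-tower-gradings}, where $s_0$ is the least $t$ with $\lfloor(i+pt)/q\rfloor \geq 0$.  The cleanest approach is to split into cases by the parity of $e$. When $r \mid g-1$, $e$ is even and $s_0 = -(g-1)/r$; summing the resulting arithmetic progression and simplifying yields $d = -(g-1)\bigl(\tfrac{g-1}{r} - 1\bigr)$. When $r \nmid g-1$, the hypothesis $r \mid 2g-2$ forces $r$ to be even and $e$ to be odd, so $s_0 = -(e-1)/2$; the same sum, combined with $pe = 2g-2$, collapses to $d = -p(e-1)^2/4 = -\tfrac{(2g-2-r)^2}{4r}$. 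This is the step that requires the most bookkeeping but is otherwise routine.

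With $d$ pinned down, Proposition~\ref{prop:hfk-nonpositive} together with the symmetry $\hfkhat_m(K,a) \cong \hfkhat_{m-2a}(K,-a)$ completely determines $\hfkhat(K)$ as a bigraded $\Q$-vector space, and the Alexander polynomial is then computed as the graded Euler characteristic $\sum_a \chi(\hfkhat(K,a)) t^a$. The one mild subtlety is verifying that $d$ is even in both cases so that every sign is positive: in the first case $d = -rk(k-1)$ with $k = (g-1)/r$, and $k(k-1)$ is always even as the product of consecutive integers; in the second case $d = -r\bigl(\tfrac{e-1}{2}\bigr)^2$ with $r$ even. Tallying the dimensions and Maslov parities from Proposition~\ref{prop:hfk-nonpositive} (including the extra $\Q^2$ summand in Alexander grading $0$ when $g=2$, which supplies the $+2$ needed in the constant coefficient) then yields exactly $t^g - 2t^{g-1} + t^{g-2} + 1 + t^{2-g} - 2t^{1-g} + t^{-g}$.
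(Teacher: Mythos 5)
Your proposal is correct and follows essentially the same route as the paper: the same three propositions eliminate all slopes except integers dividing $2g-2$, and the value of $d$ is then extracted by specializing the truncated mapping-cone/grading argument of Proposition~\ref{prop:5_2-medium-fibered-2} to $(p,q,i,j)=(r,1,g-1,0)$ with $s'=0$ and telescoping via Lemma~\ref{lem:gK-tower-gradings}, exactly as in the paper's proof. Your case analysis on the parity of $e=(2g-2)/r$ and the verification that $d$ is even (needed for the Alexander polynomial signs) also match the paper's computation.
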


\begin{proof}
We have shown that
\[ \hfp(S^3_r(K)) \not\cong \hfp(S^3_r(5_2)) \]
in each of the following cases:
\begin{itemize}
\item when $0<r<1$, by Proposition~\ref{prop:gK-large-r-small};
\item when $r=\frac{p}{q} \geq 1$ with $p \nmid 2g-2$, by Proposition~\ref{prop:5_2-medium-fibered};
\item when $r=\frac{p}{q} \geq 1$ is non-integral and $p \mid 2g-2$, by Proposition~\ref{prop:5_2-medium-fibered-2}.
\end{itemize}
This leaves only the cases where $r$ is an integer dividing $2g-2$.

In the remaining cases, we once again write $H_*(A^+_{g-1}) \cong \cT^+_{(0)} \oplus \Q^{\vphantom{+}}_{(d)}$, and then $\hfkhat(K)$ is determined by $g$ and $d$ according to Proposition~\ref{prop:hfk-nonpositive}.  Following the argument and notation from the proof of Proposition~\ref{prop:5_2-medium-fibered-2}, with $(p,q,i,j) = (r,1,g-1,0)$, we set $s'=0$ and
\[ s_0 = -\left\lfloor\frac{e}{2}\right\rfloor = -\left\lfloor\frac{g-1}{p}\right\rfloor \]
as in \eqref{eq:s0-2g-2}.  Then by \eqref{eq:d-from-i-j} we see that $d$ is even, hence Proposition~\ref{prop:hfk-nonpositive} determines the Alexander polynomial of $K$ as promised; and we have
\begin{align*}
-d &= 2\sum_{t=s_0}^{-1} \left((g-1) + \left\lfloor \frac{j+pt}{q} \right\rfloor\right) \\
&= (2g-2)|s_0| + 2\sum_{t'=1}^{|s_0|} r\cdot(-t') \\
&= (2g-2)|s_0| - r|s_0|(|s_0|+1).
\end{align*}
When $p$ divides $g-1$ we have $|s_0| = \frac{g-1}{r}$, and thus
\begin{align*}
-d &= \frac{2(g-1)^2}{r} - (g-1)\left(\frac{g-1}{r}+1\right) \\
&= (g-1) \left( \frac{g-1}{r} - 1 \right).
\end{align*}
Otherwise, since $p$ divides $2g-2$ it follows that $\frac{g-1}{p}$ is a half-integer; then
\[ s_0 = -\left(\frac{g-1}{p}-\frac{1}{2}\right) = - \frac{2g-2-p}{2p} \]
and $p$ is an even integer.  Since $r=p$ we have
\begin{align*}
-d &= \frac{(2g-2)(2g-2-r)}{2r} - \left(\frac{2g-2-r}{2}\right)\left(\frac{2g-2+r}{2r}\right) \\
&= \tfrac{1}{4r}\left( \left(2(2g-2)^2 - 2r(2g-2)\right) - \left((2g-2)^2 - r^2\right) \right) \\
&= \frac{(2g-2 - r)^2}{4r}.
\end{align*}
Thus $d$ is exactly as claimed.
\end{proof}

\begin{remark}
We can collapse the Alexander--Maslov bigrading $(a,m)$ on $\hfkhat(K)$ into a single grading $\delta=m-a$.  If $S^3_r(K) \cong S^3_r(5_2)$ for some $r>0$, then according to Proposition~\ref{prop:hfk-nonpositive}, all of $\hfkhat(K)$ except for a $\Q_{(0)}$ summand in Alexander grading $0$ must be supported in $\delta$-grading $d+2-g$.  Using Theorem~\ref{thm:5_2-positive} (for which we recall the assumption $g\geq 2$), we see that if $r \mid g-1$ then
\[ d \leq 2-2g < g-2, \]
whereas if $r \nmid g-1$ then
\[ d \leq 0 \leq g-2 \]
with equality on the left and on the right if and only if $r=2g-2$ and $g=2$, respectively.  In any case $\hfkhat(K)$ is supported in non-positive $\delta$-gradings, and it is thin if and only if $g(K)=r=2$.
\end{remark}

\section{Quantum obstructions to surgery} \label{sec:quantum}

Ito \cite{ito-lmo} used the LMO invariant of closed 3-manifolds to produce obstructions to cosmetic and other surgeries in terms of finite type invariants.  These include the coefficients $a_{2n}(K)$ of the Conway polynomial
\[ \nabla_K(z) = a_0(K) + a_2(K)z^2 + a_4(K)z^4 + \dots, \]
as well as an invariant $v_3(K) \in \frac{1}{4}\Z$ which is determined by the Jones polynomial of $K$.  In particular, he proved the following, which we will apply to improve Theorem~\ref{thm:5_2-positive}.

\begin{theorem}[{\cite[Corollary~1.3(iv)]{ito-lmo}}] \label{thm:ito-obstruction}
Suppose for some knots $K,K' \subset S^3$ and rational $r\neq 0$ that $S^3_r(K) \cong S^3_r(K')$.  Then either
\begin{enumerate}
\item $a_4(K) = a_4(K')$ and $v_3(K) = v_3(K')$, or
\item $a_4(K) \neq a_4(K')$ and $v_3(K) \neq v_3(K')$, in which case
\begin{equation} \label{eq:ito-lmo-r}
r = -\frac{5(a_4(K)-a_4(K'))}{4(v_3(K)-v_3(K'))}.
\end{equation}
\end{enumerate}
\end{theorem}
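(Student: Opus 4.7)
The plan is to invoke the surgery formula for finite type invariants of rational homology 3-spheres coming from the LMO invariant, following Ito's framework; the statement to be proved is literally the content of [ito-lmo, Corollary 1.3(iv)], so the proposal is really to sketch how that result is derived.

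First, I would recall that the LMO invariant $Z^{\mathrm{LMO}}(Y)$ of a rational homology 3-sphere $Y$ is a universal additive finite type invariant valued in a graded space of Jacobi diagrams, whose low-degree part recovers classical invariants such as the Casson--Walker invariant. For $Y = S^3_{p/q}(K)$, one has an explicit surgery formula expressing the truncation $Z^{\mathrm{LMO}}_{\leq 2}(S^3_{p/q}(K))$ in terms of the slope $p/q$ and of the low-degree terms of the Kontsevich integral $Z(K)$. After identifying diagrams, the numerical content at degree $\leq 2$ is carried by two independent scalar coefficients, and each of these is an explicit polynomial in $p/q$, $a_2(K)$, $a_4(K)$, and $v_3(K)$; the coefficients of $a_2$ and the constant term in these polynomials depend only on $(p,q)$ and hence are invariants of $Y$ alone.

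Next, assuming $S^3_r(K) \cong S^3_r(K')$ with $r = p/q \neq 0$, I would equate the two degree-$\leq 2$ LMO scalars computed from $K$ with those computed from $K'$. Subtracting, one obtains a homogeneous linear system
\[
\begin{pmatrix} \alpha_1(r) & \beta_1(r) \\ \alpha_2(r) & \beta_2(r) \end{pmatrix}
\begin{pmatrix} a_4(K) - a_4(K') \\ v_3(K) - v_3(K') \end{pmatrix} = 0,
\]
where the $\alpha_i$ and $\beta_i$ are the explicit rational functions of $r$ extracted from the surgery formula. A direct computation, which is the heart of Ito's argument, shows that the determinant of this matrix vanishes precisely when $4r\,(v_3(K)-v_3(K')) + 5\,(a_4(K)-a_4(K')) = 0$.

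From this dichotomy the theorem follows: either the matrix is invertible, forcing both $a_4(K)=a_4(K')$ and $v_3(K)=v_3(K')$ (case (1)); or the determinant vanishes, in which case both differences are nonzero (since if one is zero the other must be as well, returning us to case (1)) and solving for $r$ yields \eqref{eq:ito-lmo-r} (case (2)). The main obstacle is the explicit identification of the coefficients $\alpha_i(r), \beta_i(r)$, i.e.\ showing that the degree-$2$ LMO scalars of $S^3_{p/q}(K)$ depend on $a_4(K)$ and $v_3(K)$ exactly in the $5\!:\!4r$ ratio appearing in \eqref{eq:ito-lmo-r}; this is precisely the calculation carried out in Ito's paper, which we invoke as a black box.
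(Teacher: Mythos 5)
The paper does not actually prove this statement: it is quoted directly from Ito's work, and the only substantive contribution the paper makes is the Remark immediately following it, which corrects a sign. In that sense your decision to treat Ito's computation as a black box matches what the paper does. However, two things in your sketch deserve comment.

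First, the $2\times 2$ determinant framing is not coherent. The entries $\alpha_i(r),\beta_i(r)$ of your matrix depend only on $r$ (and possibly on invariants of the surgered manifold), not on $K$ or $K'$, so its determinant cannot ``vanish precisely when $4r\,(v_3(K)-v_3(K'))+5\,(a_4(K)-a_4(K'))=0$.'' In fact the degree-$1$ part of the LMO invariant is the Casson--Walker invariant, whose surgery formula involves only $a_2$; its role is to force $a_2(K)=a_2(K')$ (so that the quadratic $a_2^2$ terms cancel when you subtract), and it contributes a row of zeros to your system. The entire dichotomy comes from the single degree-$2$ equation $\alpha_2(r)\bigl(a_4(K)-a_4(K')\bigr)+\beta_2(r)\bigl(v_3(K)-v_3(K')\bigr)=0$ with $\alpha_2(r),\beta_2(r)\neq 0$ for $r\neq 0$: either both differences vanish, or neither does and the ratio determines $r$. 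You do eventually state this correct logic parenthetically, but the determinant argument surrounding it should be discarded.

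Second, and more importantly for correctness: if you invoke Ito's Corollary~1.3(iv) verbatim you will get the wrong sign, namely $r=+\frac{5(a_4(K)-a_4(K'))}{4(v_3(K)-v_3(K'))}$. The paper's Remark explains that Ito's Theorem~1.2 contains the term $-\frac{5a_4(K)}{4}\cdot\frac{1}{r^2}$ in the surgery formula for $\lambda_2(S^3_r(K))$, but that the minus sign was dropped in his proof of the corollary. Any re-derivation along the lines you propose must start from the correctly signed surgery formula; this sign is not a cosmetic point, since it is used downstream in Proposition~\ref{prop:quantum-obstruction-g-even} and Remark~\ref{rem:pretzel-1-surgery} to pin down $r$ and the genus.
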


\begin{remark}
The sign in front of the right side of \eqref{eq:ito-lmo-r} was omitted in \cite{ito-lmo}.  In fact, \cite[Theorem~1.2]{ito-lmo} gives a surgery formula for the degree-2 part $\lambda_2(S^3_r(K))$ of the LMO invariant, in which one of the terms is $-\frac{5a_4(K)}{4} \cdot \frac{1}{r^2}$.  In the proof of \cite[Corollary~1.3(iv)]{ito-lmo} this term appears without the minus sign, which accounts for the discrepancy.
\end{remark}

In order to apply Theorem~\ref{thm:ito-obstruction} to a potential surgery $S^3_r(K) \cong S^3_r(5_2)$, we first recall that the Conway polynomial can be recovered from the Alexander polynomial by the relation
\[ \Delta_K(t^2) = \nabla_K(t-t^{-1}). \]
In particular, we have
\[ \nabla_{5_2}(t-t^{-1}) = 2t^2 - 3 + 2t^{-2} = 1 + 2(t-t^{-1})^2, \]
so $\nabla_{5_2}(z) = 1+2z^2$ and thus $a_4(5_2) = 0$.  The computation of $a_4(K)$ is more involved.

\begin{lemma} \label{lem:compute-a4}
Suppose for some knot $K \not\cong 5_2$ and $r\in\Q$ that $S^3_r(K) \cong S^3_r(5_2)$.  If $g(K) \geq 2$, then $a_2(K) = 2$ and $a_4(K) = (g(K)-1)^2$.
\end{lemma}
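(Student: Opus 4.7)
The plan is to determine $\Delta_K(t)$ using earlier results and then compute $a_2(K)$ and $a_4(K)$ directly from the Alexander--Conway conversion.

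First, I would observe that $r>0$: if $r\leq 0$, then combining the orientation-preserving homeomorphism with the relation $S^3_r(\cdot)\cong -S^3_{-r}(\mirror{\cdot})$ produces an orientation-preserving homeomorphism $S^3_{-r}(\mirror{K})\cong S^3_{-r}(\mirror{5_2})$ with $-r\geq 0$, so Theorem~\ref{thm:main-m5_2-nonnegative} forces $\mirror{K}\cong\mirror{5_2}$ and hence $K\cong 5_2$, contrary to hypothesis. With $r>0$ in hand, Lemma~\ref{lem:5_2-V_0} gives $\Delta_K(t)=2t-3+2t^{-1}$ when $g(K)=1$, while Theorem~\ref{thm:5_2-positive} gives
\[
\Delta_K(t)=t^g-2t^{g-1}+t^{g-2}+1+t^{2-g}-2t^{1-g}+t^{-g}
\]
when $g:=g(K)\geq 2$.

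To read off $a_2$ and $a_4$, I would write $\Delta_K(t)=c_0+\sum_{k\geq 1}c_k(t^k+t^{-k})$ and substitute $t=e^{u/2}$, so that $z=t-t^{-1}=2\sinh(u/2)$ and $\nabla_K(z)=\Delta_K(e^u)$. Expanding $\cosh(ku)$ in $u$ gives
\[
\Delta_K(e^u)=1+\Big(\sum_{k\geq 1}c_kk^2\Big)u^2+\Big(\sum_{k\geq 1}\tfrac{c_kk^4}{12}\Big)u^4+O(u^6),
\]
and inverting $z=u+\tfrac{u^3}{24}+O(u^5)$ to obtain $u^2=z^2-\tfrac{1}{12}z^4+O(z^6)$ then yields
\[
a_2(K)=\sum_{k\geq 1}c_kk^2,\qquad a_4(K)=\sum_{k\geq 1}c_k\frac{k^2(k^2-1)}{12}.
\]

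It remains to plug in each case. For $g(K)=1$, only $c_1=2$ is nonzero, so $a_2=2$ and $a_4=0=(g-1)^2$. For $g\geq 2$, the nonzero coefficients with $k\geq 1$ are $c_g=1$, $c_{g-1}=-2$, and (when $g\geq 3$) $c_{g-2}=1$; the case $g=2$ folds the $t^{g-2}$ term into $c_0$, but this does not affect either sum since the factor $k^2$ vanishes at $k=0$. Thus in every case
\[
a_2=(g-2)^2-2(g-1)^2+g^2=2,
\]
and writing $n=g-1$ one computes
\[
12\,a_4=n(n-2)(n-1)^2-2n^2(n^2-1)+n(n+2)(n+1)^2=12n^2,
\]
so $a_4=(g-1)^2$, as claimed. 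The computation is routine polynomial algebra once $\Delta_K$ has been identified, so there is no significant obstacle.
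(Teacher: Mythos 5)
Your proof is correct, but it reaches the coefficients of $\nabla_K$ by a genuinely different route than the paper. The paper's proof takes $\Delta_K=f_{g}$ from Theorem~\ref{thm:5_2-positive}, observes the three-term relation $(f_g(t)-1)(t+t^{-1})=(f_{g+1}(t)-1)+(f_{g-1}(t)-1)$, converts it into the recurrence $p_{g+1}(z)=(z^2+2)(p_g(z)-1)-p_{g-1}(z)+2$ for the associated Conway polynomials, and proves $p_g(z)=1+2z^2+(g-1)^2z^4+O(z^6)$ by induction from the base cases $g=2,3$. You instead derive the closed-form identities $a_2(K)=\sum_k c_kk^2$ and $a_4(K)=\sum_k c_k\,k^2(k^2-1)/12$ for a symmetric Alexander polynomial $c_0+\sum_{k\geq 1}c_k(t^k+t^{-k})$ normalized by $\Delta_K(1)=1$, via the substitution $t=e^{u/2}$, and then evaluate directly; the telescoping computation $n(n+2)(n+1)^2-2n^2(n^2-1)+n(n-2)(n-1)^2=12n^2$ with $n=g-1$ checks out, as does your treatment of the degenerate $g=2$ case, where the $t^{g-2}$ term falls into $c_0$ but contributes nothing because of the $k^2$ factor. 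Your version buys a reusable general formula and avoids the induction; the paper's version avoids any power-series manipulation. You are also somewhat more careful than the paper on two peripheral points: you explicitly rule out $r\leq 0$ via Theorem~\ref{thm:main-m5_2-nonnegative} before invoking results whose hypotheses require $r>0$, and you verify the genus-one case directly ($\Delta_K=2t-3+2t^{-1}$ by Lemma~\ref{lem:5_2-V_0}, whence $a_2=2$ and $a_4=0=(g-1)^2$) rather than asserting $g\geq 2$ outright.
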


\begin{proof}
Theorem~\ref{thm:5_2-positive} tells us that $r$ is a positive integer dividing $2g(K)-2$, and that if we write
\[ f_g(t) = t^g - 2t^{g-1} + t^{g-2} + 1 + t^{2-g} - 2t^{1-g} + t^{-g} \]
for all integers $g \geq 2$, then $\Delta_K(t) = f_{g(K)}(t)$.  These polynomials satisfy the relation
\[ (f_g(t) - 1)(t+t^{-1}) = (f_{g+1}(t)-1) + (f_{g-1}(t)-1) \]
for all $g \geq 3$, and if we write $t=s^2$ then this becomes
\begin{equation} \label{eq:conway-relation}
(f_g(s^2)-1)\big( (s-s^{-1})^2 + 2 \big) = f_{g+1}(s^2) + f_{g-1}(s^2) - 2.
\end{equation}

Define polynomials $p_g(z)$ for all $g \geq 2$ such that
\[ p_g(s-s^{-1}) = f_g(s^2). \]
We can check that
\begin{align*}
p_2(z) &= 1 + 2z^2 + z^4 \\
p_3(z) &= 1 + 2z^2 + 4z^4 + z^6,
\end{align*}
and then \eqref{eq:conway-relation} becomes
\[ \big(p_g(s-s^{-1})-1\big) \big( (s-s^{-1})^2 + 2 \big) = p_{g+1}(s+s^{-1}) + p_{g-1}(s+s^{-1}) - 2. \]
Substituting $z = s-s^{-1}$, we have
\begin{equation} \label{eq:conway-recurrence}
p_{g+1}(z) = (z^2+2)(p_g(z)-1) - p_{g-1}(z) + 2
\end{equation}
for all $g \geq 3$, and moreover $p_{g(K)}(z)$ is the Conway polynomial $\nabla_K(z)$.

We now claim by induction that
\[ p_g(K) = 1 + 2z^2 + (g-1)^2z^4 + O(z^6) \]
for all $g \geq 2$.  It is certainly true for $g=2$ and $g=3$, and then for $g\geq 3$ we examine \eqref{eq:conway-recurrence} modulo $z^6$ to get
\begin{align*}
p_{g+1}(z) &\equiv (z^2+2)\left(2z^2+(g-1)^2z^4\right) - \left(1 + 2z^2 + (g-2)^2z^4\right) + 2 \\
&\equiv \left((2g^2-4g+4) z^4 + 4z^2\right) - \left((g^2-4g+4)z^4 + 2z^2\right) + 1 \\
&\equiv g^2 z^4 + 2z^2 + 1 \pmod{z^6}
\end{align*}
exactly as claimed.  But this means that the coefficients $a_2(K)$ and $a_4(K)$ of $z^2$ and $z^4$ in $\nabla_K(z) = p_{g(K)}(z)$ are $2$ and $(g(K)-1)^2$ respectively, proving the lemma.
\end{proof}

We can evaluate $v_3(K)$ in terms of the Jones polynomial $V_K(q)$ as follows.

\begin{lemma} \label{lem:4v3-jones}
We have $4v_3(K) = -\frac{1}{36}( V'''_K(1) + 3 V''_K(1) )$.
\end{lemma}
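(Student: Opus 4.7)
The plan is to derive the formula by expanding the Jones polynomial as a power series about $q=1$ and identifying the order-3 coefficient as a multiple of $v_3$.

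First, I would substitute $q = e^h$ and expand $V_K(e^h)$ as a formal power series in $h$. A routine computation with the chain rule gives
\begin{equation*}
V_K(e^h) = V_K(1) + V'_K(1)\,h + \tfrac{1}{2}\bigl(V''_K(1)+V'_K(1)\bigr)h^2 + \tfrac{1}{6}\bigl(V'''_K(1)+3V''_K(1)+V'_K(1)\bigr)h^3 + O(h^4).
\end{equation*}
Since $V_K(1)=1$ and $V'_K(1)=0$ for every knot $K$, the coefficient of $h^3$ simplifies to $\tfrac{1}{6}\bigl(V'''_K(1)+3V''_K(1)\bigr)$, while the coefficient of $h^2$ is $\tfrac{1}{2}V''_K(1) = -3a_2(K)$ (the classical identification).

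Next, I would invoke Bar-Natan's theorem that each coefficient of $h^n$ in $V_K(e^h)$ is a finite type invariant of order at most $n$. The coefficient of $h^3$ is therefore an order-$\le 3$ invariant, and after subtracting its order-$\le 2$ piece (which, being a scalar multiple of $a_2$, is automatically zero because the space of primitive Vassiliev invariants in odd orders skips degree $2$ here) it represents a primitive invariant of order exactly $3$. The space of primitive Vassiliev invariants of order $3$ is one-dimensional, so this coefficient must be a scalar multiple of Ito's $v_3$.

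Finally, I would pin down the scalar by evaluating both sides on a single reference knot where Ito's normalization of $v_3$ is tabulated---most naturally the right-handed trefoil, whose Jones polynomial is explicit so $V''_{3_1}(1)$ and $V'''_{3_1}(1)$ can be computed directly. Matching the two values yields the constant $-\tfrac{1}{36}$ (accounting for the factor of $4$ that Ito builds into $4v_3$), after which the identity extends to all knots by uniqueness.

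The main obstacle will be bookkeeping of normalizations: Ito's $v_3$ lives in $\tfrac14\Z$ and is fixed by its role as a coefficient in the degree-$2$ LMO expansion, so one must take care to use the same sign/scale convention as \cite{ito-lmo} when comparing with the trefoil calculation. Once this is aligned, the argument is essentially formal.
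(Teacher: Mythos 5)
Your proposal is correct in substance and its computational core coincides with the paper's: both expand $V_K(e^h)$, identify the $h^3$-coefficient $j_3(K)$ as $\tfrac{1}{6}\bigl(V'''_K(1)+3V''_K(1)+V'_K(1)\bigr)$, and use $V'_K(1)=0$. Where you diverge is in how $v_3$ gets tied to $j_3$: the paper simply quotes \cite[Lemma~2.1]{ito-lmo}, which states $v_3(K)=-\tfrac{1}{24}j_3(K)$ in Ito's normalization, so no Vassiliev theory is needed; you instead re-derive the proportionality from Bar-Natan's theorem plus the one-dimensionality of degree-$3$ primitive Vassiliev invariants and then calibrate on the trefoil. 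That route works, but two caveats. First, your stated reason that the order-$\le 2$ piece of $j_3$ vanishes ("the space of primitive Vassiliev invariants in odd orders skips degree $2$") is not an argument: a priori $j_3 = \alpha v_3 + \beta a_2 + \gamma$, and you must kill $\beta$ and $\gamma$. The clean way is parity under mirroring: $V_{\mirror{K}}(q)=V_K(q^{-1})$ gives $j_3(\mirror{K})=-j_3(K)$, while $a_2$ and constants are even and $v_3$ is odd, forcing $\beta=\gamma=0$ (the constant also dies on the unknot). Second, your calibration step still requires knowing Ito's normalization of $v_3$ on some knot, which in practice sends you back to \cite{ito-lmo} anyway; the paper's direct citation is shorter and avoids the detour through the classification of low-order invariants. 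What your approach buys is independence from the specific form of Ito's Lemma~2.1: it shows that \emph{any} odd degree-$3$ invariant normalized on one knot must satisfy such a formula.
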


\begin{proof}
We note from \cite[Lemma~2.1]{ito-lmo} that if we evaluate the Jones polynomial
\[ V_K(q) = \sum_{i\in\Z} c_i q^i \]
at $q=e^h$ and write the corresponding power series as
\[ \sum_{n=0}^\infty j_n(K) h^n = V_K(e^h) = \sum_{i\in\Z} c_i \left(\sum_{n=0}^\infty \frac{(ih)^n}{n!} \right), \]
then $v_3(K) = -\frac{1}{24} j_3(K)$.  Comparing $h^3$-coefficients gives us
\[ 4v_3(K) = -\frac{1}{6} j_3(K) = -\frac{1}{36} \sum_{i\in\Z} c_i \cdot i^3. \]
At the same time,  we have
\[ V_K'''(1) + 3V_K''(1) + V_K'(1) = \sum_{i\in\Z} c_i \cdot \left( (i^3-3i^2+2i) + 3(i^2-i) + i \right) = \sum_{i\in\Z} c_i \cdot i^3, \]
and we know that $V_K'(1) = 0$ \cite[\S12]{jones-polynomial}, so the lemma follows.
\end{proof}

\begin{example} \label{ex:4v3-52}
We know that
\[ V_{5_2}(q) = q^{-1} - q^{-2} + 2q^{-3} - q^{-4} + q^{-5} - q^{-6} \]
and since $V'''_{5_2}(1) = 144$ and $V''_{5_2}(1) = -12$, we get $4v_3(5_2) = -3$.
\end{example}

We can use this obstruction to prove that non-characterizing slopes for $5_2$ cannot arise from other knots of genus 1.

\begin{proposition} \label{prop:5_2-positive-genus-1}
Suppose for some knot $K$ of genus 1 and some $r\in\Q$ that $S^3_r(K) \cong S^3_r(5_2)$.  Then $K$ is isotopic to $5_2$.
\end{proposition}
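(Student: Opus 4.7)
The plan is to reduce, case by case on the sign of $r$, to a short list of candidates for $K$ and then eliminate each candidate using Ito's obstruction, Theorem~\ref{thm:ito-obstruction}.

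First I would dispose of $r \leq 0$ by mirroring. If $S^3_r(K)\cong S^3_r(5_2)$ with $r\leq 0$, then the relation $S^3_r(K)\cong -S^3_{-r}(\mirror{K})$ gives an orientation-preserving homeomorphism $S^3_{-r}(\mirror{K})\cong S^3_{-r}(\mirror{5_2})$ with $-r\geq 0$, which induces an isomorphism of $\hfp$ as graded $\Q[U]$-modules. Theorem~\ref{thm:main-m5_2-nonnegative} then forces $\mirror{K}\cong \mirror{5_2}$, hence $K\cong 5_2$.

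For $r>0$, suppose $K\not\cong 5_2$ for contradiction. The homeomorphism yields $\hfp(S^3_r(K))\cong \hfp(S^3_r(5_2))$ as graded $\Q[U]$-modules, so Proposition~\ref{prop:5_2-fibered} (the genus-one bullet) applies and forces $K$ to lie in the finite set
\[ \mathcal{K} = \{\, 15n_{43522},\ \mirror{15n_{43522}},\ \Wh^-(T_{2,3},2),\ \mirror{\Wh^-(T_{2,3},2)}\,\}. \]
Every knot in $\mathcal{K}$ has the same Alexander polynomial as $5_2$, namely $2t-3+2t^{-1}$, so its Conway polynomial is $\nabla(z)=1+2z^2$ and therefore $a_4(K)=0=a_4(5_2)$. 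Consequently case (2) of Theorem~\ref{thm:ito-obstruction} cannot occur, and case (1) can only occur if also $v_3(K)=v_3(5_2)=-\tfrac{3}{4}$ (using Example~\ref{ex:4v3-52}).

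The remaining task is thus to compute $v_3$ for the two representatives $15n_{43522}$ and $\Wh^-(T_{2,3},2)$ via Lemma~\ref{lem:4v3-jones} and their Jones polynomials (available from KnotInfo for the former and from the standard satellite/skein formulas for the twisted Whitehead double of $T_{2,3}$), and to verify that in each case $v_3$ is not equal to $\pm\tfrac{3}{4}$. The mirror check $v_3(\mirror{K})=-v_3(K)$, which follows from $V_{\mirror{K}}(q)=V_K(q^{-1})$ and Lemma~\ref{lem:4v3-jones}, then handles both orientations simultaneously. Once these numerical inequalities are in hand, Theorem~\ref{thm:ito-obstruction} gives the desired contradiction and the proof is complete.

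The main obstacle is the verification of those $v_3$ values: the Jones polynomial of $15n_{43522}$ is a fixed Laurent polynomial to be looked up and differentiated, while the Jones polynomial of $\Wh^-(T_{2,3},2)$ must be obtained by a cabling or skein computation. All of these reduce to finite, explicit calculations, with no further conceptual input required.
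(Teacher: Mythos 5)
Your proposal follows the paper's proof essentially verbatim: reduce via Proposition~\ref{prop:5_2-fibered} to the candidates $15n_{43522}$ and $\Wh^-(T_{2,3},2)$ up to mirroring, note $a_4(K)=a_4(5_2)=0$, and eliminate them through Theorem~\ref{thm:ito-obstruction} by showing $v_3(K)\neq v_3(5_2)$. The computation you defer does work out: the paper records $4v_3(K)=\pm 7$ and $\pm 1$ for the two candidates versus $4v_3(5_2)=-3$, so your argument closes exactly as the paper's does.
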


\begin{proof}
If $K\not\cong 5_2$ then Proposition~\ref{prop:5_2-fibered} says that $K$ is either $15n_{43522}$ or $\Wh^-(T_{2,3},2)$, up to mirroring.  But in these cases we have
\[ a_4(K) = a_4(5_2) = 0, \]
since $\Delta_K(t) = \Delta_{5_2}(t) = 2t-3+2t^{-1}$, and yet we can compute from Lemma~\ref{lem:4v3-jones} that
\[ 4v_3(K) = \pm 7 \text{ or } \pm1 \]
respectively, while $4v_3(5_2) = -3$.  Thus Theorem~\ref{thm:ito-obstruction} says that $S^3_r(K) \not\cong S^3_r(5_2)$ after all.
\end{proof}

We can now use Lemmas~\ref{lem:compute-a4} and \ref{lem:4v3-jones} to identify potentially non-characterizing slopes.

\begin{proposition} \label{prop:quantum-obstruction-g-even}
Suppose that $S^3_r(K) \cong S^3_r(5_2)$ for some integer $r \geq 1$, and that $K$ is not isotopic to $5_2$.  Then the Jones polynomial $V_K(q)$ satisfies $\frac{1}{36} V_K'''(1) \in \Z$, and we have
\[ r = \frac{5(g(K)-1)^2}{\frac{1}{36}V_K'''(1)-4}. \]
Moreover, if $g(K)$ is even then $r$ divides $g(K)-1$.
\end{proposition}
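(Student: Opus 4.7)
The plan is to combine Ito's LMO obstruction (Theorem~\ref{thm:ito-obstruction}) with the Alexander--polynomial input from Lemma~\ref{lem:compute-a4} and the Jones--polynomial expression for $v_3$ in Lemma~\ref{lem:4v3-jones}, extract the formula for $r$ by a direct substitution, and then deduce the divisibility statement via a 2-adic parity argument.

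First I would dispose of small-genus cases. By Proposition~\ref{prop:5_2-positive-genus-1} the hypothesis $K \not\cong 5_2$ already forces $g := g(K) \geq 2$, so Lemma~\ref{lem:compute-a4} applies and gives $a_2(K) = 2$ and $a_4(K) = (g-1)^2$. Since $a_4(5_2) = 0$ (from $\nabla_{5_2}(z) = 1 + 2z^2$), we have $a_4(K) \neq a_4(5_2)$, so Theorem~\ref{thm:ito-obstruction} lands us in its second case, forcing $v_3(K) \neq v_3(5_2)$ and
\[ r = -\frac{5(g-1)^2}{4v_3(K) - 4v_3(5_2)}. \]

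Next I would rewrite $4v_3(K)$ in terms of $V_K'''(1)$. Combining Lemma~\ref{lem:4v3-jones} with the classical identity $V_K''(1) = -6\,a_2(K)$ (obtained by differentiating $V_K(e^h)$ twice at $h=0$ and comparing with $j_2(K) = -3\,a_2(K)$), the computed value $a_2(K) = 2$ yields $V_K''(1) = -12$, hence
\[ 4v_3(K) \;=\; -\tfrac{1}{36}\bigl(V_K'''(1) - 36\bigr) \;=\; -\tfrac{1}{36}V_K'''(1) + 1. \]
Combined with $4v_3(5_2) = -3$ from Example~\ref{ex:4v3-52}, this gives $4v_3(K) - 4v_3(5_2) = -\bigl(\tfrac{1}{36}V_K'''(1) - 4\bigr)$, and the Ito formula becomes
\[ r \;=\; \frac{5(g-1)^2}{\tfrac{1}{36}V_K'''(1) - 4}, \]
as claimed. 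The integrality $\tfrac{1}{36}V_K'''(1) \in \Z$ drops out at once, since $v_3(K) \in \tfrac{1}{4}\Z$ forces $4v_3(K) = -\tfrac{1}{36}V_K'''(1) + 1$ to be an integer.

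Finally, for the divisibility claim I would argue by parity on the preceding identity. When $g$ is even, $g-1$ is odd, so $5(g-1)^2$ is odd; the relation $r \cdot \bigl(\tfrac{1}{36}V_K'''(1) - 4\bigr) = 5(g-1)^2$ therefore forces $r$ to be odd. Combined with Theorem~\ref{thm:5_2-positive}, which already gives $r \mid 2g-2 = 2(g-1)$, the oddness of $r$ upgrades this to $r \mid g-1$, as desired. The main obstacle is really just bookkeeping: matching sign conventions carefully when translating Ito's formula, and recognizing that the standing hypothesis $v_3 \in \tfrac14\Z$ is precisely what gives the integrality of $\tfrac{1}{36}V_K'''(1)$. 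Once the closed form for $r$ is in hand, the parity argument is immediate.
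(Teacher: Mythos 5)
Your proposal is correct and follows essentially the same route as the paper: invoke Proposition~\ref{prop:5_2-positive-genus-1} to get $g\geq 2$, feed $a_4(K)=(g-1)^2\neq 0=a_4(5_2)$ from Lemma~\ref{lem:compute-a4} into Theorem~\ref{thm:ito-obstruction}, convert $4v_3$ to $V_K'''(1)$ via Lemma~\ref{lem:4v3-jones} and $V_K''(1)=-12$, and finish the divisibility claim by the same parity argument. The only cosmetic difference is that you obtain $V_K''(1)=-12$ from $a_2(K)=2$ and $j_2=-3a_2$, whereas the paper uses $\Delta_K''(1)=4$ from Proposition~\ref{prop:compare-gradings} and $V_K''(1)=-3\Delta_K''(1)$; these are equivalent standard identities.
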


\begin{proof}
Write $g=g(K)$.  We know that $g \geq 2$ by Proposition~\ref{prop:5_2-positive-genus-1}, hence Lemma~\ref{lem:compute-a4} says that $a_4(K) = (g-1)^2$, which is different from $a_4(5_2) = 0$.  We thus apply Theorem~\ref{thm:ito-obstruction} to see that
\[ r = -\frac{5(a_4(K)-a_4(5_2))}{4(v_3(K)-v_3(5_2))} = -\frac{5(g-1)^2}{4v_3(K) + 3}. \]
Proposition~\ref{prop:compare-gradings} tells us that $\Delta''_K(1) = \Delta''_{5_2}(1) = 4$, so $V''_K(1) = -3\Delta''_K(1) = -12$, again by \cite[\S12]{jones-polynomial}.  Thus
\[ 4v_3(K) - 4v_3(5_2) = -\frac{1}{36}(V_K'''(1) - 36) + 3 = 4 - \frac{V_K'''(1)}{36}, \]
which must be an integer since $4v_3(K)$ is, and this completes the determination of $r$.

Now supposing that $g$ is even, we have expressed $r$ as a divisor of the odd integer $5(g-1)^2$.  Thus $r$ is odd, and it divides $2g-2$ by Theorem~\ref{thm:5_2-positive}, so it must in fact divide $g-1$ as claimed.
\end{proof}

This last result allows us to complete the proof of Theorem~\ref{thm:main-5_2-positive}.

\begin{proof}[Proof of Theorem~\ref{thm:main-5_2-positive}]
If $S^3_r(K) \cong S^3_r(5_2)$ but $K \not\cong 5_2$, then Proposition~\ref{prop:5_2-positive-genus-1} says that $K$ has genus $g \geq 2$.  In this case Theorem~\ref{thm:5_2-positive} says that $r$ is a positive integer dividing $2g-2$, and that $\hfkhat(K)$ has the claimed form.  The only remaining claim is that if $g$ is even then $r$ divides $g-1$, and this is part of Proposition~\ref{prop:quantum-obstruction-g-even}.
\end{proof}

\begin{remark} \label{rem:pretzel-1-surgery}
As a final example of the effectiveness of Proposition~\ref{prop:quantum-obstruction-g-even}, let us suppose that $S^3_r(5_2) \cong S^3_r(P(-3,3,2n))$ for some integers $r \geq 1$ and $n$.  Since $P(-3,3,2n)$ has genus $2$, Proposition~\ref{prop:quantum-obstruction-g-even} says that $r=1$.  Moreover, an exercise with the skein relation for the Jones polynomial shows that
\begin{align*}
V_{P(-3,3,2n)}(q) &= q^{-2n}V_{P(-3,3,0)}(q) + (1-q^{-2n}) \\
&= -q^{-2n-3} + q^{-2n-2} - q^{-2n-1} + 2q^{-2n} - q^{-2n+1} + q^{-2n+2} - q^{-2n+3} + 1.
\end{align*}
(We note that $P(-3,3,0) \cong T_{2,3}\#T_{-2,3}$.)  From this one can show that
\[ \tfrac{1}{36}V_{P(-3,3,2n)}'''(1) - 4 = 2n-3, \]
so $r=1=\frac{5}{2n-3}$ implies that $2n=8$.
\end{remark}

In Section~\ref{sec:non-characterizing} we will see that $S^3_1(5_2)$ is in fact homeomorphic to $S^3_1(P(-3,3,8))$.

\section{Non-characterizing slopes for $5_2$} \label{sec:non-characterizing}

In this section we prove that $1$ is not a characterizing slope for $5_2$.

\begin{proposition} \label{prop:bdc-surgery-5_2}
For any integer $n\in\Z$, the 3-manifold $S^3_n(5_2)$ is the branched double cover of the link $L_n$ shown in Figure~\ref{fig:bdc-5_2}.
\end{proposition}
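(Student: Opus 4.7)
The plan is to apply the Montesinos trick for strongly invertible knots. As a 2-bridge knot, $5_2$ admits a strong inversion $\tau$, with fixed-point set an unknotted axis $U$ meeting $5_2$ transversely in exactly two points; this involution can be read off directly from the diagram in Figure~\ref{fig:5_2} as the $\pi$-rotation about a horizontal axis through the middle of the diagram. The first step would be to draw this explicitly and verify that it indeed preserves $5_2$ setwise while reversing its orientation.

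Next, I would take the quotient by $\tau$: the underlying manifold $S^3/\tau$ is again $S^3$, the image of $U$ becomes an unknotted branch locus $\widetilde U$ for the two-fold cover, and the image of $5_2$ becomes a proper arc $\alpha$ in a 3-ball with endpoints on $\widetilde U$. The \emph{base link} $\widetilde U$ together with the decorated data recording $\alpha$ and a small neighborhood of $5_2$ gives the tangle description of $(S^3,5_2)$ as a branched double cover.

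The Montesinos trick then produces the surgery manifolds: since an integer slope $n$ on $5_2$ is $\tau$-invariant, the involution extends to an involution $\widehat\tau$ on $S^3_n(5_2)$ whose quotient is once again $S^3$. The branch set of the extended cover is the link $L_n$ obtained from $\widetilde U$ by replacing the image of $\nu(5_2)$ (which contains the arc $\alpha$ as a $0$-tangle in the Seifert framing) by the rational tangle of slope $1/n$. Concretely, this realizes $L_n$ as a link with a distinguished twist region into which one inserts $n$ half-twists (with sign and multiplicity determined by the framing convention), and $S^3_n(5_2)$ is the branched double cover of $L_n$.

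The final step is simply to verify, by a diagrammatic isotopy, that the link so produced matches the explicit link $L_n$ displayed in Figure~\ref{fig:bdc-5_2}. The main obstacle is bookkeeping: one must carefully identify the meridional framing inherited by the quotient tangle in order to match the surgery coefficient $n$ with the correct number of twists in the twist region, and must check that the apparent ambient link (away from the twist region) is consistent with the image of $5_2 \cup U$ after quotienting. Both are direct calculations on pictures, and together they give the claimed identification.
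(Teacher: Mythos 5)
Your proposal is correct and follows essentially the same route as the paper: both apply the Montesinos trick to a strong inversion of $5_2$, quotient to obtain a tangle, and fill with rational tangles so that integral surgeries correspond to inserting half-twists. The only point you defer to ``bookkeeping'' --- calibrating which twist count corresponds to which slope $n$ --- is handled in the paper by noting that one particular rational filling yields an unknot (hence the $\tfrac{1}{0}$-surgery $S^3$), so the half-twist fillings realize exactly the integral slopes, and then computing $\det(L_n)=|n|$ to fix the identification.
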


\begin{proof}
The knot $5_2$ is strongly invertible, meaning that there is an involution $\tau:S^3\to S^3$ such that $\tau(5_2)=5_2$, and the fixed set of $\tau$ is an unknot $U$ meeting $5_2$ in two points.  In the quotient $S^3/\tau \cong S^3$, we remove a neighborhood of $5_2/\tau$; this turns $U/\tau$ into a tangle with four endpoints, whose branched double cover is $S^3\setminus N(5_2)$, and we can fill in this tangle by gluing in a rational tangle to get a link $L_r$ whose branched double cover is any Dehn surgery $S^3_r(5_2)$.
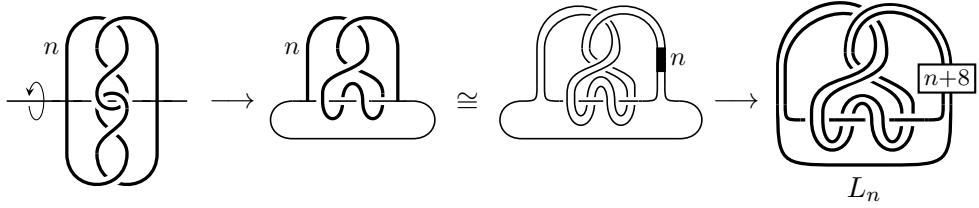
\begin{figure}
\tikzset{twistregion/.style={draw, fill=white, thick, minimum width=0.6cm}}
\tikzset{thinlink/.style = { white, double = black, line width = 1.2pt, double distance = 0.6pt, looseness=1.75 }}
\newcommand{\AxisRotator}{\tikz [x=0.10cm,y=0.25cm,line width=.2ex,-stealth] \draw[thin] (0,0) arc (-165:165:1 and 1);}
\begin{tikzpicture}

\begin{scope}
\draw[semithick] (-0.8,0) -- (1.6,0);
\draw[link] (0.8,0.1) to[out=90,in=270,looseness=1] ++(-0.4,0.6);
\draw[link] (0,0) -- ++(0,0.7) node[left,black,inner sep=2pt] {\small$n$} to[out=90,in=90] ++(0.8,0) to[out=270,in=90,looseness=1] ++(-0.4,-0.6) to[out=270,in=270] ++(0.4,0);
\draw[link] (0.4,0.7) to[out=90,in=90] ++(0.8,0) -- ++(0,-0.7);
\draw[link] (0.4,-0.7) to[out=270,in=270] ++(0.8,0) -- ++(0,0.7);
\draw[link] (0,0) -- ++(0,-0.7) to[out=270,in=270] ++(0.8,0) to[out=90,in=270,looseness=1] ++(-0.4,0.6) to[out=90,in=90] ++(0.4,0);
\draw[link] (0.8,-0.1) to[out=270,in=90,looseness=1] ++(-0.4,-0.6);
\begin{scope}
\clip (0.2,-0.2) rectangle (0.6,0.2);
\draw[link] (0.8,0.7) to[out=270,in=90,looseness=1] ++(-0.4,-0.6) to[out=270,in=270] ++(0.4,0);
\end{scope}
\draw[semithick] (-0.2,0) -- (0.2,0) (1.0,0) -- (1.4,0);
\node at (-0.4,0) {\AxisRotator};
\end{scope}

\node at (2.2,-0.02) {$\longrightarrow$};

\begin{scope}[xshift=3.2cm]
\draw[semithick] (-0.2,0) -- (1.4,0);
\draw[link] (1,0.1) to[out=90,in=270,looseness=1] ++(-0.6,0.6);
\draw[link] (0,0) -- ++(0,0.7) node[left,black,inner sep=2pt] {\small$n$} to[out=90,in=90] ++(0.8,0) to[out=270,in=90,looseness=1] ++(-0.6,-0.6) -- ++(0,-0.1) to[out=270,in=270,looseness=3] ++(0.2667,0) to[out=90,in=90,looseness=3] ++(0.2666,0) to[out=270,in=270,looseness=3] ++(0.2667,0) -- ++(0,0.1);
\draw[link] (0.4,0.7) to[out=90,in=90] ++(0.8,0) -- ++(0,-0.7);
\draw[semithick] (-0.1,0) -- (0.1,0) (1.1,0) -- (1.3,0);
\draw[thinlink] (0.35,0) -- ++(0.2,0) ++(0.3,0) -- ++(0.3,0);
\draw[semithick,looseness=2] (-0.2,0) to[out=180,in=180] ++(0,-0.5) -- ++(1.6,0) to[out=0,in=0] ++(0,0.5);
\end{scope}

\node at (5.325,-0.02) {$\cong$};

\begin{scope}[xshift=6.5cm]
\draw[semithick] (0.05,0) -- (1.15,0);

\draw[thinlink] (0.95,0) -- ++(0,0.1) to[out=90,in=270,looseness=1] ++(-0.6,0.6);
\draw[thinlink] (1.05,0) -- ++(0,0.2) to[out=90,in=270,looseness=0.75] ++(-0.6,0.5);

\draw[thinlink] (-0.25,0.2) -- ++(0,0.5) to[out=90,in=90] ++(1.1,0) to[out=270,in=90,looseness=1] ++(-0.6,-0.6) -- ++(0,-0.1) to[out=270,in=270,looseness=4] ++(0.1667,0) to[out=90,in=90,looseness=3] ++(0.3667,0) to[out=270,in=270,looseness=4] ++(0.1667,0);
\draw[thinlink] (-0.15,0.2) -- ++(0,0.5) to[out=90,in=90,looseness=1.5] ++(0.9,0.1) to[out=270,in=90,looseness=1.25] ++(-0.6,-0.7)  -- ++(0,-0.1) to[out=270,in=270,looseness=3] ++(0.3667,0) to[out=90,in=90,looseness=4] ++(0.1667,0) to[out=270,in=270,looseness=3] ++(0.3667,0);

\draw[thinlink] (0.35,0.7) to[out=90,in=90] ++(1.1,0) -- ++(0,-0.5);
\draw[thinlink] (0.45,0.7) to[out=90,in=90] ++(0.9,0) -- ++(0,-0.5);
\draw[fill=black] (1.35,0.7) rectangle (1.45,0.4) node[right,midway,inner sep=3pt] {\small$n$};

\draw[thinlink] (0.35,0) -- ++(0.25,0) ++(0.25,0) -- ++(0.25,0);

\draw[thinlink] (-0.25,0.2) to[out=270,in=0] ++(-0.2,-0.2);
\draw[thinlink] (-0.15,0.2) to[out=270,in=180] ++(0.2,-0.2);
\draw[thinlink] (1.35,0.2) to[out=270,in=0] ++(-0.2,-0.2);
\draw[thinlink] (1.45,0.2) to[out=270,in=180] ++(0.2,-0.2);
\draw[thinlink,looseness=2] (-0.45,0) to[out=180,in=180] ++(0,-0.5) -- ++(2.1,0) to[out=0,in=0] ++(0,0.5);
\end{scope}

\node at (8.9,-0.02) {$\longrightarrow$};

\begin{scope}[xshift=10cm,yshift=-0.25cm]
\draw[link] (-0.25,0) -- (1.45,0);

\draw[link] (1.10,0) -- ++(0,0.1) to[out=90,in=270,looseness=1] ++(-0.75,0.8);
\draw[link] (1.25,0) -- ++(0,0.2) to[out=90,in=270,looseness=0.75] ++(-0.8,0.7);

\draw[link] (-0.55,0.2) -- ++(0,0.5) to[out=90,in=90] ++(1.45,0.2) to[out=270,in=90,looseness=1] ++(-0.85,-0.8) -- ++(0,-0.1) to[out=270,in=270,looseness=4] ++(0.25,0) to[out=90,in=90,looseness=2] ++(0.55,0) to[out=270,in=270,looseness=4] ++(0.25,0);
\draw[link] (-0.45,0.2) -- ++(0,0.5) to[out=90,in=90,looseness=1.5] ++(1.2,0.3) to[out=270,in=90,looseness=1.25] ++(-0.85,-0.9)  -- ++(0,-0.1) to[out=270,in=270,looseness=2.5] ++(0.55,0) to[out=90,in=90,looseness=3] ++(0.25,0) to[out=270,in=270,looseness=2.5] ++(0.55,0);

\draw[link] (0.35,0.9) to[out=90,in=90] ++(1.4,-0.2) -- ++(0,-0.5);
\draw[link] (0.45,0.9) to[out=90,in=90] ++(1.2,-0.2) -- ++(0,-0.5);
\node[twistregion,inner sep=2pt] at (1.7,0.55) {\footnotesize$n{+}8$};
\draw[link] (0.2,0) -- ++(0.4,0) ++(0.4,0) -- ++(0.4,0);

\draw[link] (1.75,0.2) to[out=270,in=0] ++(-0.8,-0.8) -- node[midway,below,black] {$L_n$} ++(-0.7,0) to[out=180,in=270] ++(-0.8,0.8);
\draw[link] (-0.45,0.2) to[out=270,in=180] ++(0.2,-0.2);
\draw[link] (1.65,0.2) to[out=270,in=0] ++(-0.2,-0.2);
\end{scope}

\end{tikzpicture}
\caption{A link $L_n$ whose branched double cover is $S^3_n(5_2)$.  We quotient $5_2$ by a rotation $\tau$ around the indicated axis of symmetry, simplify the resulting diagram by an isotopy, and then replace the arc $5_2/\tau$ by a rational tangle.  The box labeled ``$n+8$'' corresponds to $n+8$ signed half-twists.}
\label{fig:bdc-5_2}
\end{figure}

This process is illustrated in Figure~\ref{fig:bdc-5_2}.  In order to determine that the box with $n+8$ twists actually corresponds to $S^3_n(5_2)$, we observe that replacing it with the rational tangle
\[ \begin{tikzpicture}
\begin{scope}
\clip (0,0) circle (0.75);
\draw[link] (-0.5,0.75) to[out=270,in=270] ++(1,0);
\draw[link] (-0.5,-0.75) to[out=90,in=90] ++(1,0);
\draw[thin] (0,0) circle (0.75);
\end{scope}
\end{tikzpicture} \]
turns $L_n$ into an unknot, whose branched double cover $S^3$ is the result of $\frac{1}{0}$-surgery on $5_2$.  Then each possible number of half-twists corresponds to a surgery with slope at distance $1$ from $\frac{1}{0}$, so these are exactly the integral slopes.  We can finally compute that $\det(L_n) = |n|$, so that $\dcover(L_n)$ is identified with $S^3_n(5_2)$ as claimed.
\end{proof}

\begin{remark}
Another construction of links with branched double cover $S^3_n(5_2)$ was given in \cite[Lemma~8.3]{bs-concordance}, where the argument was specialized to $n=-3$ but works for arbitrary integers.  That construction uses a different involution, and hence produces different links (illustrated in \cite[Figure~12]{bs-concordance}) in general.
\end{remark}

\begin{proposition} \label{prop:slope-1}
There is an orientation-preserving homeomorphism
\[ S^3_1(5_2) \cong S^3_1(P(-3,3,8)). \]
\end{proposition}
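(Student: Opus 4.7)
The plan is to apply the Montesinos trick in the same spirit as Proposition~\ref{prop:bdc-surgery-5_2}, and verify that $S^3_1(P(-3,3,8))$ arises as the branched double cover of a link isotopic to $L_1$. Both $5_2$ and the pretzel knot $P(-3,3,8)$ are strongly invertible, so for each of them $1$-surgery can be presented as $\dcover(L)$ for an appropriate link $L$ obtained by quotienting by the strong inversion, deleting a neighborhood of the quotient arc, and filling in with an integral rational tangle encoding the surgery slope.

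First I would write down the quotient of $P(-3,3,8)$ by its strong involution. In the standard pretzel diagram the axis of symmetry meets the knot in two points, and the quotient tangle consists of three vertical columns of half-twists (of type $-3$, $3$, and $8$) strung together along an arc. As in Proposition~\ref{prop:bdc-surgery-5_2}, removing a neighborhood of the quotient arc and plugging in the rational tangle that corresponds to integral slope $+1$ produces a link $L'$ whose branched double cover is $S^3_1(P(-3,3,8))$; exactly as in the $5_2$ case, one sanity-checks the framing by verifying that the $\tfrac{1}{0}$-plug yields the unknot, and that $\det(L') = 1$ matches $|H_1(S^3_1(P(-3,3,8)))|=1$.

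The main task is then to exhibit an isotopy $L' \simeq L_1$ in $S^3$. The three twist boxes for $P(-3,3,8)$, after the rational tangle insertion, simplify dramatically: the $(-3,3)$ twist regions are algebraically cancelling, and one expects that a sequence of flype/twist moves collapses the pretzel quotient onto exactly the tangle of Figure~\ref{fig:bdc-5_2}, producing the $n=1$ instance of $L_n$ (so that the $n+8 = 9$ box arises as $8+1$, reflecting the $+1$ slope and the third twist parameter of the pretzel). Once $L' \cong L_1$ is verified at the level of link diagrams, branched double cover functoriality gives $S^3_1(P(-3,3,8)) \cong \dcover(L') \cong \dcover(L_1) \cong S^3_1(5_2)$, and orientation-preservation is automatic because the branched double cover of an oriented link has a canonical orientation.

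The one step that is not purely formal is the tangle/link isotopy $L' \simeq L_1$: the hard part will be recognizing that the pretzel-type quotient tangle, after inserting the $+1$-tangle corresponding to $1$-surgery, is Reidemeister-equivalent to the tangle drawn in Figure~\ref{fig:bdc-5_2}. I would carry this out by explicitly drawing both links in a common rational tangle decomposition and comparing their Conway notations, or equivalently by computing the two-fold branched cover invariants (linking form on $H_1$, Casson--Walker invariant, $d$-invariants) to confirm the numerics match before and after the tangle moves, using the computations already in the paper (in particular Proposition~\ref{prop:1-surgery-5_2} pins down $\hfp(S^3_1(5_2))$, and Remark~\ref{rem:pretzel-1-surgery} singles out $n=4$). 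The orientation-preservation then follows since every step is a diagrammatic isotopy of links.
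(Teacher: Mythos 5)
Your overall strategy is the same as the paper's: present both $S^3_1(5_2)$ and $S^3_1(P(-3,3,8))$ as branched double covers via the Montesinos trick and then identify the two branch loci. The gap is in the identification step, which is the entire content of the proposition once the two quotient links are drawn. You assert that ``one expects'' a sequence of flype/twist moves to collapse the pretzel quotient onto the tangle of Figure~\ref{fig:bdc-5_2}, but no such sequence is exhibited, and the heuristic that the $n+8=9$ box should arise as ``$8+1$'' conflates the framing correction in the $5_2$ quotient (which produces the $+8$ offset for \emph{every} integer slope $n$) with the third pretzel parameter; these are unrelated a priori. Your proposed fallback --- checking that the linking form, Casson--Walker invariant, and $d$-invariants of the two branched double covers agree --- cannot close the gap, because agreement of these invariants does not imply that the links are isotopic or that the covers are homeomorphic. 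The paper avoids hand-finding the isotopy by feeding both links to SnapPy, which certifies (via hyperbolic geometry) that each is $14n_{14254}$ or its mirror; some such rigorous identification is needed.

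A second, related issue is your claim that orientation-preservation is ``automatic.'' That is true only if you have an honest isotopy of the two links; since the identification you can realistically obtain (as in the paper) is only up to mirroring, the branched double covers are a priori homeomorphic only up to orientation, and $S^3_1(5_2) \cong -S^3_1(P(-3,3,8))$ must be excluded separately. The paper does this by computing Casson invariants: $\lambda(S^3_1(5_2)) = \tfrac{1}{2}\Delta_{5_2}''(1) = 2$ while $\lambda(-S^3_1(P)) = -\tfrac{1}{2}\Delta_{\mirror{P}}''(1) = -2$, so the orientation-reversing possibility is ruled out. Your write-up needs either an explicit link isotopy (which would indeed make the orientation claim automatic) or an orientation argument of this kind.
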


\begin{proof}
Let $P=P(-3,3,8)$ for convenience.  Then $P$ is strongly invertible, and we can adapt the proof of \cite[Proposition~7.6]{bs-concordance}, which was originally due to Ken Baker, to realize $S^3_1(P)$ as the branched double cover of a knot $K \subset S^3$, as shown in Figure~\ref{fig:P-8-dcover}.
\begin{figure}
\tikzset{twistregion/.style={draw, fill=white, thick, minimum width=0.6cm}}
\tikzset{thinlink/.style = { white, double = black, line width = 1.2pt, double distance = 0.6pt, looseness=1.75 }}
\newcommand{\AxisRotator}{\tikz [x=0.10cm,y=0.25cm,line width=.2ex,-stealth] \draw[thin] (0,0) arc (-165:165:1 and 1);}
\begin{tikzpicture}
\begin{scope} 
\draw[semithick] (-1.6,0.75) -- (1.6,0.75); 
\draw[link,looseness=1] (0,1.5) to[out=270,in=90] ++(0.4,-0.5) ++(-0.4,0) to[out=270,in=90] ++(0.4,-0.5) ++(-0.4,0) to[out=270,in=90] ++(0.4,-0.5) to[out=270,in=270,looseness=1.75] ++(0.4,0) to[out=90,in=270] ++(0.4,0.5) ++(-0.4,0) to[out=90,in=270] ++(0.4,0.5) ++(-0.4,0) to[out=90,in=270] ++(0.4,0.5);
\draw[link,looseness=1] (1.2,0) to[out=90,in=270] ++(-0.4,0.5) ++(0.4,0) to[out=90,in=270] ++(-0.4,0.5) ++(0.4,0) to[out=90,in=270] ++(-0.4,0.5) to[out=90,in=90,looseness=1.75] ++(-0.4,0) to[out=270,in=90] ++(-0.4,-0.5) ++(0.4,0) to[out=270,in=90] ++(-0.4,-0.5) ++(0.4,0) to[out=270,in=90] ++(-0.4,-0.5);
\draw[link] (0,1.5) to[out=90,in=90] ++(-0.4,0) to[out=270,in=90] ++(0,-1.5) to[out=270,in=270] ++(0.4,0);
\draw[link,looseness=1] (1.2,1.5) to[out=90,in=0] ++(-0.4,0.5) to[out=180,in=0] node[midway,above,black] {$1$} ++(-1.2,0) to[out=180,in=90] ++(-0.4,-0.5) to[out=270,in=90] ++(0,-1.5) to[out=270,in=180] ++(0.4,-0.5) to[out=0,in=180] ++(1.2,0) to[out=0,in=270] ++(0.4,0.5);
\node[twistregion] at (-0.6,1.25) {$4$};
\node[twistregion] at (-0.6,0.25) {$4$};
\draw[semithick] (-1.2,0.75) -- ++(1,0); 
\node at (-1.2,0.75) {\AxisRotator};
\end{scope}

\node at (2.2,0.725) {$\longrightarrow$};

\begin{scope}[xshift=3cm]
\draw[thinlink] (-0.1,0) -- ++(0,0.5) to[out=90,in=180,looseness=1.25] ++(0,0.75) -- ++(0.2,0) to[out=0,in=90,looseness=1.25] ++(0,-0.75) -- ++(0,-0.5);
\draw[thinlink] (0.1,0) to[out=270,in=270] (0.8,0) -- ++(0,1.75) to[out=90,in=90] ++(0.8,0);
\draw[thinlink] (-0.1,0) to[out=270,in=270] (1.0,0) -- ++(0,1.75) to[out=90,in=90] ++(0.4,0);
\draw[thinlink,looseness=1.25] (1.6,1.75) -- ++(0,-0.25) to[out=270,in=0] ++(-0.8,-1) to[out=180,in=180,looseness=4] ++(0,0.2) to[out=0,in=270] ++(0.6,0.8) -- ++(0,0.25);
\draw[thinlink] (0.8,0.6) -- ++(0,0.2) ++(0.2,-0.2) -- ++(0,0.2); 
\draw[link,looseness=1.25] (-0.1,1.25) to[out=90,in=180] ++(1,0.5) to[out=0,in=0,looseness=5] ++(0,-0.2) to[out=180,in=90] ++(-0.8,-0.3);
\draw[thinlink] (0.8,1.65) -- ++(0,0.1) to[out=90,in=90] ++(0.8,0); 
\draw[thinlink] (1.0,1.65) -- ++(0,0.1) to[out=90,in=90] ++(0.4,0);
\draw[semithick] (-0.1,0.5) to[out=90,in=180,looseness=1.25] ++(0,0.75) -- ++(0.2,0) to[out=0,in=90,looseness=1.25] ++(0,-0.75);
\node[twistregion,semithick] at (0,0.25) {$4$};
\node at (0,1.95) {\small$1$};
\end{scope}

\node at (5.1,0.725) {$\longrightarrow$};

\begin{scope}[xshift=6cm]
\draw[link,looseness=1.5] (0.3,1.35) to[out=180,in=180] ++(0,0.2) -- ++(0.9,0) to[out=0,in=0] ++(0,-0.2) -- ++(-0.5,0) to[out=180,in=0,looseness=1] ++(-0.4,-0.2);
\draw[link] (-0.1,0) -- ++(0,0.8) to[out=90,in=180,looseness=1.25] ++(0.7,0.95) -- ++(0.5,0) to[out=0,in=0,looseness=2.5] ++(0,-0.6) -- ++(-0.4,0) to[out=180,in=0,looseness=1] ++(-0.4,0.2) ++(0,-0.2) to[out=180,in=90,looseness=1] ++(-0.2,-0.2) -- (0.1,0);
\draw[link] (0.1,0) to[out=270,in=270] (0.8,0) -- ++(0,1.75) to[out=90,in=90] ++(1.2,0);
\draw[link] (-0.1,0) to[out=270,in=270] (1.0,0) -- ++(0,1.75) to[out=90,in=90] ++(0.8,0);
\draw[link,looseness=1.25] (2,1.75) -- ++(0,-0.25) to[out=270,in=0] ++(-1.2,-1) to[out=180,in=180,looseness=4] ++(0,0.2) to[out=0,in=270] ++(1,0.8) -- ++(0,0.25);
\draw[link] (0.8,0.6) -- ++(0,0.2) ++(0.2,-0.2) -- ++(0,0.2); 
\draw[link] (0.3,1.15) ++(0.2,0.2) ++(0.2,0) -- ++(0.4,0) ++(0,-0.2) -- ++(-0.4,0);
\node[twistregion] at (0,0.25) {$4$};
\end{scope}

\node at (8.35,0.75) {$\cong$};

\begin{scope}[xshift=9cm]
\draw[link] (-0.1,0) ++(0,0.8) to[out=90,in=180,looseness=1.25] ++(0.7,0.95) -- ++(0.5,0) to[out=0,in=0,looseness=2.5] ++(0,-0.2) -- ++(-0.5,0) to[out=180,in=180] ++(0,-0.2) -- ++(0.5,0) to[out=0,in=0] ++(0,-0.2) -- ++(-0.5,0) to[out=180,in=90,looseness=1] ++(-0.5,-0.35);
\draw[link] (0.1,0) \foreach \i in {1,...,4} { to[out=90,in=270,looseness=1] ++(-0.2,0.2) ++ (0.2,0) };
\draw[link] (-0.1,0) \foreach \i in {1,...,4} { to[out=90,in=270,looseness=1] ++(0.2,0.2) ++ (-0.2,0) };
\draw[link] (0.1,0) to[out=270,in=270] (0.8,0) -- ++(0,1.75) to[out=90,in=90] ++(1.2,0);
\draw[link] (-0.1,0) to[out=270,in=270] (1.0,0) -- ++(0,1.75) to[out=90,in=90] ++(0.8,0);
\draw[link,looseness=1.25] (2,1.75) -- ++(0,-0.25) to[out=270,in=0] ++(-1.2,-1) to[out=180,in=180,looseness=4] ++(0,0.2) to[out=0,in=270] ++(1,0.8) -- ++(0,0.25);
\draw[link] (0.8,0.6) -- ++(0,0.2) ++(0.2,-0.2) -- ++(0,0.2); 
\draw[link] (0.3,1.15) ++(0.2,0.2) ++(0.2,0.2) -- ++(0.4,0) ++(0,-0.4) -- ++(-0.4,0);
\node at (1.5,0.05) {$K$};
\end{scope}

\end{tikzpicture}
\caption{Identifying $S^3_1(P(-3,3,8))$ as a branched double cover $\dcover(K)$.  We quotient $P(-3,3,8)$ by a rotation $\tau$ around an axis of symmetry and simplify by an isotopy, following \cite[Figure~7]{bs-concordance}.  We then replace a neighborhood of the arc $P(-3,3,8)/\tau$ with a rational tangle, and isotope further to get the desired knot $K$.}
\label{fig:P-8-dcover}
\end{figure}
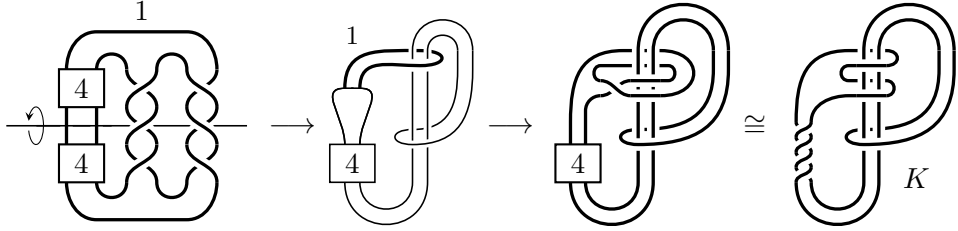

We now claim that $K$ is isotopic to the knot $L_1$ from Figure~\ref{fig:bdc-5_2}, and so
\[ S^3_1(P) \cong \dcover(K) \cong \dcover(L_1) \cong S^3_1(5_2) \]
by Proposition~\ref{prop:bdc-surgery-5_2}.  Rather than find this isotopy explicitly, we observe that SnapPy recognizes each of $K$ and $L_1$ as either $14n_{14254}$ or its mirror, so that
\[ S^3_1(P) \cong \dcover(K) \quad\text{and}\quad S^3_1(5_2) \cong \dcover(L_1) \]
are homeomorphic up to orientation.  But we cannot have $S^3_1(5_2) \cong -S^3_1(P)$, since their Casson invariants satisfy
\begin{align*}
\lambda(S^3_1(5_2)) &= \tfrac{1}{2}\Delta_{5_2}''(1) = 2, \\
\lambda(-S^3_1(P)) &= \lambda(S^3_{-1}(\mirror{P})) = -\tfrac{1}{2}\Delta_{\mirror{P}}''(1) = -2.
\end{align*}
(This computation follows from $\Delta_{\mirror{P}}(t) = t^2-2t+3-2t^{-1}+t^{-2}$.)  Thus $S^3_1(5_2) \cong S^3_1(P)$ as oriented 3-manifolds.
\end{proof}

\section{The $\Sigma(2,3,11)$ realization problem} \label{sec:2-3-11}

Let $Y = -\Sigma(2,3,11)$.  Then we have orientation-preserving homeomorphisms
\[ Y \cong S^3_{1/2}(T_{2,3}) \cong S^3_1(\mirror{5_2}). \]
(Up to an overall orientation reversal, the latter identification is the case $S^3_{-1}(K(2,4)) \cong S^3_{-1/2}(K(2,2))$ of \cite[Proposition~7.2]{bs-concordance}, for example.)  Our goal in this section is to prove that these are the only ways to express $Y$ as Dehn surgery on a knot in $S^3$.

\begin{theorem} \label{thm:2-3-11}
Suppose for some knot $K\subset S^3$ and some rational $r\in\Q$ that
\[ S^3_r(K) \cong -\Sigma(2,3,11). \]
Then $(K,r)$ is either $(T_{2,3}, \frac{1}{2})$ or $(\mirror{5_2},1)$.
\end{theorem}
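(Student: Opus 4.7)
The plan is to use the known realization $-\Sigma(2,3,11) \cong S^3_1(\mirror{5_2})$, together with Proposition~\ref{prop:1-surgery-5_2}, to compute $\dim_\Q \hfhat(-\Sigma(2,3,11)) = 3$, and then combine this with the dimension formula of Proposition~\ref{prop:r-nu} and the characterization theorems already established in this paper to pin down $(K,r)$.  Writing $r=p/q$ in lowest terms with $q>0$, the fact that $|H_1(-\Sigma(2,3,11);\Z)|=1$ forces $|p|=1$.  Since a nontrivial knot has $\rhat(K)\geq 1$ (the unknot only gives lens spaces), Proposition~\ref{prop:r-nu} gives
\[ 3 \;=\; q\,\rhat(K)+|p-q\,\nuhat(K)|, \]
so $q\leq 3$; and $q=3$ is impossible because it would force $p\equiv 0\pmod{3}$.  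Thus $r\in\{1,\tfrac12,-1,-\tfrac12\}$.

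For $r=1$, the isomorphism $\hfp(S^3_1(K))\cong \hfp(S^3_1(\mirror{5_2}))$ is immediate from the given homeomorphism, and Theorem~\ref{thm:main-m5_2-nonnegative} then forces $K\cong \mirror{5_2}$.  For $r=\tfrac12$, the parity and oddness constraints from Proposition~\ref{prop:r-nu-properties} leave only $(\rhat(K),\nuhat(K))=(1,1)$; Remark~\ref{rem:lspace-nu-r} then identifies $K$ as a genus-$1$ L-space knot, which (by the genus detection for L-space knots of Theorem~\ref{thm:l-space-knots} together with the fact that $\hfkhat$ detects $T_{2,3}$) must be $T_{2,3}$, yielding the second claimed solution.

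For $r=-1$, I would mirror to reduce to the condition $S^3_1(\mirror{K})\cong \Sigma(2,3,11)$, which again has $\dim\hfhat=3$, and then apply Theorem~\ref{thm:main-1-surgery-3d} to conclude that $\mirror{K}\in\{T_{-2,3},4_1,\mirror{5_2}\}$, i.e., $K\in\{T_{2,3},4_1,5_2\}$.  These three candidates are ruled out by comparing the Casson invariant $\lambda(-\Sigma(2,3,11))=\tfrac12\Delta''_{\mirror{5_2}}(1)=2$ with $\lambda(S^3_{-1}(K))=-\tfrac12\Delta''_K(1)$, which equals $-1,\ 1,\ -2$ for $T_{2,3},\ 4_1,\ 5_2$ respectively.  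The case $r=-\tfrac12$ is analogous: the parity constraint forces $(\rhat(K),\nuhat(K))=(1,-1)$, so that $\mirror{K}$ is a genus-$1$ L-space knot and $K\cong T_{-2,3}$; but then $S^3_{-1/2}(T_{-2,3})\cong -S^3_{1/2}(T_{2,3})\cong \Sigma(2,3,11)$, which has the wrong orientation.

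The bulk of the work is organizational bookkeeping: enumerating the allowable $(\rhat(K),\nuhat(K))$ by hand using Proposition~\ref{prop:r-nu-properties}, and carefully tracking orientations to separate $\Sigma(2,3,11)$ from $-\Sigma(2,3,11)$ when passing to mirrors.  The conceptually deepest input is Theorem~\ref{thm:main-m5_2-nonnegative}, which dispatches the crucial $r=1$ case in a single step; without it one would have to apply Theorem~\ref{thm:main-1-surgery-3d} and rule out the other two candidates $T_{-2,3}$ and $4_1$ by Casson invariants as in the $r=-1$ case, a step that is routine but strictly weaker in spirit than the full $\hfp$ rigidity provided by Theorem~\ref{thm:main-m5_2-nonnegative}.
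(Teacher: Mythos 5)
Your proof is correct, and it diverges from the paper's in how it dispatches every case other than $r=1$. The paper writes $r=1/n$ and kills all negative $n$ at once with a single $d$-invariant inequality: $d(-\Sigma(2,3,11))=2$, whereas Theorem~\ref{thm:ni-wu-d} forces $d(S^3_{1/(-n)}(\mirror{K}))\leq d(S^3)=0$ for $n<0$; it then handles $n\geq 3$ by showing $3=n\rhat(K)+|1-n\nuhat(K)|$ forces $\rhat(K)=0$ and hence $\dim\hfhat=1$, a contradiction, and it disposes of $r=\frac12$ by simply citing that all slopes are characterizing for $T_{2,3}$. You instead bound $q\leq 2$ up front (essentially the contrapositive of the paper's $n\geq 3$ step, resting on the claim $\rhat(K)\geq 1$ for nontrivial $K$ --- true, but your parenthetical justification should be expanded to a line: $\rhat=0$ forces $\nuhat=0$, so large surgeries are L-spaces, and Remark~\ref{rem:lspace-nu-r} then gives $\rhat=2g-1\geq 1$ unless $K$ is unknotted), and then treat the four residual slopes by hand: Theorem~\ref{thm:main-1-surgery-3d} plus Casson invariants for $r=-1$, and the $(\rhat,\nuhat)$ parity constraints of Proposition~\ref{prop:r-nu-properties} plus genus-one L-space knot detection for $r=\pm\frac12$. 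What the paper's route buys is uniformity --- one inequality eliminates every negative slope with no enumeration of candidates --- while your route buys independence from Theorem~\ref{thm:ni-wu-d} at that step and keeps the $r=\frac12$ case internal to the paper's own machinery rather than outsourcing it to \cite{osz-characterizing}; the cost is more bookkeeping and the extra inputs of Theorem~\ref{thm:main-1-surgery-3d} and the Casson surgery formula. All of your individual computations (the values $-1$, $1$, $-2$ versus the target $\lambda=2$, and the orientation mismatch $S^3_{-1/2}(T_{-2,3})\cong\Sigma(2,3,11)$) check out.
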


This is equivalent to Theorem~\ref{thm:main-2-3-11}, as can be seen by the identity $S^3_r(K) \cong -S^3_{-r}(\mirror{K})$.

\begin{proof}[Proof of Theorem~\ref{thm:2-3-11}]
Since $Y$ is a homology sphere, we can write $r=\frac{1}{n}$ for some nonzero $n\in\Z$.  If $n=1$ and hence $r=1$, we have
\[ \hfp(S^3_1(K)) \cong \hfp(Y) \cong \hfp(S^3_1(\mirror{5_2})). \]
We then apply Theorem~\ref{thm:main-m5_2-nonnegative} to conclude that $K \cong \mirror{5_2}$.  Similarly, if $r=\frac{1}{2}$ then we must have $K \cong T_{2,3}$, since all slopes are characterizing slopes for the right-handed trefoil \cite{osz-characterizing}.

Supposing from now on that $n$ is neither $1$ nor $2$, we first claim that $n \geq 3$.  Indeed, we know that
\[ d(S^3_{1/(-n)}(\mirror{K})) = d(-S^3_{1/n}(K)) = d(-Y) = 2, \]
where we have read $d(Y) = d(S^3_1(\mirror{5_2})) = -2$ off of Proposition~\ref{prop:1-surgery-5_2}.  But if $n<0$, or equivalently $-n > 0$, then Theorem~\ref{thm:ni-wu-d} says that
\[ d(S^3_{1/(-n)}(\mirror{K})) \leq d(S^3_{1/(-n)}(U)) = d(S^3) = 0. \]
This would be a contradiction, so we must have $n>0$ and hence $n\geq 3$ as claimed.

Now that we have $n\geq 3$, we compute that $\dim \hfhat(Y) = \dim \hfhat(S^3_1(\mirror{5_2})) = 3$ from Proposition~\ref{prop:1-surgery-5_2} and Lemma~\ref{lem:hfhat-vs-hfred}.  Thus
\begin{align*}
3 = \dim \hfhat(S^3_{1/n}(K)) &= n\cdot\rhat(K) + |1-n\nuhat(K)| \\
& \geq 3\cdot \rhat(K) + 1,
\end{align*}
since $\rhat(K) \geq |\nuhat(K)| \geq 0$
and since $1-n\nuhat(K) \equiv 1 \pmod{n}$ is nonzero.  This is only possible if $\rhat(K) = 0$, in which case $\nuhat(K) = 0$ as well and then $\dim \hfhat(S^3_{1/n}(K))$ must be $1$ rather than $3$, so we have a contradiction.  This completes the proof.
\end{proof}

\bibliographystyle{alpha}
\bibliography{References}

\end{document}